\setlist[enumerate]{topsep=0pt}
\setlist[itemize]{topsep=0pt}
\newtheorem{corollary}{Corollary}
\newtheorem{definition}{Definition}
\newtheorem{proposition}{Proposition}
\newtheorem{theorem}{Theorem}
\newcommand{\norm} [1] {\left\lVert #1 \right\rVert}
\newcommand{\card} [1] {\left\vert #1 \right\vert}
\newcommand{\cM} {\mathcal{M}}
\newcommand{\cN} {\mathcal{N}}
\newcommand{\cW} {\mathcal{W}}
\newcommand{\cY} {\mathcal{Y}}
\newcommand{\union} {\ \cup \ }
\newcommand{\spacecomma}{\hspace{0.08em},\,}
\def\colOne {orange!90!black}
\def\colTwo {cyan!75!black}
\def\colThree {red!75!black}
\def\opac{0.5}
\def\opacTen{50}
\newcommand\tableofcontentsA{%
    \@starttoc{toca}%
}
\def \nd {circle (1/\scl * 2.5pt)}
\def \grd[#1][#2] {\draw[dotted] (0,0) grid (#1,#2);}
\def \dummyNodes[#1][#2][#3][#4] {\node at (#1,#2) {}; \node at (#3,#4) {};}
\def \dummyNodesPolar[#1][#2] {\node at (#1) {}; \node at (#2) {};} 
\def \SSYTscl {0.85\width}
\def\P[#1][#2] {\node at (#1,#2) {\color{blue}{$P$}};}
\def\polarP[#1][#2] {\node at (#1:#2) {\color{blue}{$P$}};}
\def\leaf[#1][#2] {\node at (#1,#2) {\tikz \fill[blue] circle (2pt);};}
\def \onetile {-- ++(0,1) -- ++(1,0) ++(-1,-1) -- ++(1,0)}
\def \twotiles {-- ++(0,1) -- ++(2,0) ++(-2,-1) -- ++(2,0) ++(-1,1) edge[dash pattern = on 3pt off 2.5pt] ++(0,-1) ++(1,-1)}
\def \finaledge {-- ++(0,1)}
\def \pathStep {++(1,0)} 
\def \pathEdge {-- ++(1,0)}
\def \pathNoEdge {++(1,0)}
\def \ladderStep {++(0,-0.75)}
\def \ladderSS {+(1,0)}
\def \ladderNodes{\ladderStep \halfnd \ladderSS \halfnd}
\def \hor {-- ++(1,0) ++(-1,-0.75)}
\def \finalhor {-- ++(1,0)}
\def \ver {-- ++(0,-0.75) +(1,0.75) -- +(1,0) ++(0,-0.75)}
\def \finalver {-- ++(0,-0.75) +(1,0.75) -- +(1,0)}
\def \ladderDummyNodes[#1] {\node at (0.5,0.2) (A) {}; \node at (0.5,#1) (B) {};} 
\def \glassDummyNodes {\node at (1,1.2) (A) {}; \node at (1,-3.2) (B) {};}
\def \glassUp {-- ++(0.5,1)}
\def \glassDn {-- ++(0.5,-1)}
\def \plates [#1] {
    \draw (0,0) -- (#1,0);
    \draw (0,-1) -- (#1,-1);
    \draw (0,-2) -- (#1,-2);
}
\def \un {-- ++(0,-1)}
\def \bin {-- ++(-1,-1) ++(1,1) -- ++(1,-1)}
\def \tern {-- ++(-1,-1) ++(1,1) -- ++(0,-1) ++(0,1) -- ++(1,-1)}
\def \widebin {-- ++(-1.5,-1) ++(1.5,1) -- ++(1.5,-1)}
\def \widetern {-- ++(-1.5,-1) ++(1.5,1) -- ++(0,-1) ++(0,1) -- ++(1.5,-1)}
\def \verywidetern {-- ++(-2.5,-1) ++(2.5,1) -- ++(0,-1) ++(0,1) -- ++(2.5,-1)}
\def \smlunnd {++(0,-1) \smlnd}
\def \smlbinnd {++(-1,-1) \smlnd ++(2,0) \smlnd}
\def \smlwidebinnd {++(-1.5,-1) \smlnd ++(3,0) \smlnd}
\def \ternnd {++(-1,-1) \nd ++(1,0) \nd ++(1,0) \nd}
\def \smlternnd {++(-1,-1) \smlnd ++(1,0) \smlnd ++(1,0) \smlnd}
\def \wideternnd {++(-1.5,-1) \nd ++ (1.5,0) \nd ++(1.5,0) \nd}
\def \smlwideternnd {++(-1.5,-1) \smlnd ++ (1.5,0) \smlnd ++(1.5,0) \smlnd}
\def \verywideternnd {++(-2.5,-1) \nd ++ (2.5,0) \nd ++(2.5,0) \nd}
\def \unlabel[#1][#2] {
    \node at ([shift={(-0.4,-0.05)}]#1,#2) {$f($};
    \node at ([shift={(0.3,-0.05)}]#1,#2) {$)$};
}
\def \binlabel[#1][#2] {
    \node at ([shift={(-0.4,0.05)}]#1,#2) {$($};
    \node at ([shift={(0.4,0.05)}]#1,#2) {$)$};
    \node at ([shift={(0,-0.3)}]#1,#2) {$\star$};
}
\def\ternlabel[#1][#2] {
    \node at ([shift={(-0.4,0.1)}]#1,#2) {$t($};
    \node at ([shift={(0.4,0.1)}]#1,#2) {$)$};
    \node at ([shift={(-0.2,-0.5)}]#1,#2) {$,$};
    \node at ([shift={(0.2,-0.5)}]#1,#2) {$,$};
}
\def \leaflabel[#1][#2] {
    \node at ([shift={(0,-0.3)}]#1,#2) {$\epsilon$};
}
\def \root {\tikz [scale = 0.15] \fill (0,0) -- (1,1.5) -- (-1,1.5) -- (0,0);}
\def \smlroot {\tikz [scale = 0.12] \fill (0,0) -- (1,1.5) -- (-1,1.5) -- (0,0);}
\def \smlnd {circle (0.8/\scl * 2.5pt)}
\def \ndlabel[#1][#2][#3] {\node at ([shift={(-0.3,0)}]#1,#2) {\scriptsize #3};}
\def \dyckUp {-- ++(1,1)}
\def \dyckDn {-- ++(1,-1)}
\def \dyckTwoDn {-- ++(1,-2)}
\def \motUp{-- ++(1,1)}
\def \motAc{-- ++(1,0)}
\def \motDn{-- ++(1,-1)}
\def \schUp {-- ++(0,1)}
\def \schAc {-- ++(1,0)}
\def \schDiag {-- ++(1,1)}
\def \whtNd[#1] {\draw[fill = white] #1 \smlnd;}
\def \blkNd[#1] {\fill #1 \smlnd;}
\def \V {\tikz [scale = 0.07] \draw (-1,2) -- (0,0) -- (1,2);}
\def \drawDashedChord[#1][#2] {\draw[dashed] (#1) to [bend right = 70] (#2);}
\def \dissectionArc[#1][#2] {\draw[red] (#1) to (#2);}
\def \trulyBlankPolygonSetup {
    \foreach \i in {1,...,\numPoints}
        {
        \coordinate (\i) at (-90 - 180/\numPoints + 360/\numPoints * \i:5);
        }
    \draw[ultra thick] (1) to (\numPoints);   
    
    \dummyNodes[0][-4.5][0][6]
}
\def \blankPolygonSetup {
    \foreach \i in {1,...,\numPoints}
        {
        \coordinate (\i) at (-90 - 180/\numPoints + 360/\numPoints * \i:5);
        }
    \draw[ultra thick] (1) to (\numPoints);   
    \draw[dashed] (-67.5:5) arc (-67.5:247.5:5);
    \dummyNodes[0][6][0][-4.5]
}
\def\polygonSetup {
    \foreach \i in {1,...,\numPoints}
        {
        \coordinate (\i) at (-90 - 180/\numPoints + 360/\numPoints * \i:5);
        }    
    \draw[ultra thick] (1) to (\numPoints);    
    \foreach \i [evaluate = \i as \ieval using \i - 1] in {2,...,\numPoints}
        {
        \draw (\i) to (\ieval);
        }
    \dummyNodes[0][6][0][-4.5]
}
\def \halfnd {circle (0.6/\scl * 2.5pt)}
\def \blankPartitionSetup {\draw (0,0) circle [radius = 5];
    \foreach \i in {1,...,\numPoints}
        {
        \coordinate (\i) at (180 - 360/\numPoints * \i + 360/\numPoints:5);
        }
    \dummyNodes[0][-6][0][6]
}
\def \partitionSetupOne {\draw (0,0) circle [radius = 5];
    \foreach \i in {1,...,\numPoints}
        {
        \draw[fill] (180 - 360/\numPoints * \i + 360/\numPoints:5) circle (10pt);
        \coordinate (\i) at (180 - 360/\numPoints * \i + 360/\numPoints:5);
        \node at (180 - 360/\numPoints * \i + 360/\numPoints:6.5) (\i\i) {\scriptsize $\i$};
        }
    \dummyNodes[0][-6][0][6]
}
\def \partitionSetup {\draw (0,0) circle [radius = 5];
    \foreach \i in {1,...,\numPoints}
        {
        \draw[fill] (180 - 360/\numPoints * \i + 360/\numPoints:5) circle (10pt);
        \coordinate (\i) at (180 - 360/\numPoints * \i + 360/\numPoints:5);
        \node at (180 - 360/\numPoints * \i + 360/\numPoints:7) (\i\i) {\scriptsize $\i$};
        }
    \dummyNodes[0][-6][0][6]
}
\def \partitionDrawArc[#1][#2] {\draw (#1) to [bend right = 30] (#2);}
\newcounter{fibFamily} 
\newenvironment{fibFamily}[1][]{
    \refstepcounter{fibFamily} \par\medskip
    \textbf{$\mathbb{F}_{\thefibFamily}:\ $ #1}
    }{\medskip}
\newcommand{\fibFamRef}[1]{$\mathbb{F}_{\ref{#1}}$}
\newcommand{\fibBaseRef}[1]{$\mathcal{F}_{\ref{#1}}$}
\newcommand{\mathFibBaseRef}[1]{\mathcal{F}_{\ref{#1}}}
\newcommand{\fibMag}[1]{$\left( \mathcal{F}_{\ref{#1}}, f_{\ref{#1}}, g_{\ref{#1}} \right)$}
\newcounter{motFamily} 
\newenvironment{motFamily}[1][]{
    \refstepcounter{motFamily} \par\medskip
    \textbf{$\mathbb{M}_{\themotFamily}:\ $ #1}
    }{\medskip}
\newcommand{\motFamRef}[1]{$\mathbb{M}_{\ref{#1}}$}
\newcommand{\motBaseRef}[1]{$\mathcal{M}_{\ref{#1}}$}
\newcommand{\mathMotBaseRef}[1]{\mathcal{M}_{\ref{#1}}}
\newcommand{\motMag}[1]{$\left( \mathMotBaseRef{#1}, f_{\ref{#1}}, \star_{\ref{#1}} \right)$}
\newcounter{schFamily} 
\newenvironment{schFamily}[1][]{
    \refstepcounter{schFamily} \par\medskip
    \textbf{$\mathbb{S}_{\theschFamily}:\ $ #1}
    }{\medskip}
\newcommand{\schFamRef}[1]{$\mathbb{S}_{\ref{#1}}$}
\newcommand{\schBaseRef}[1]{$\mathcal{S}_{\ref{#1}}$}
\newcommand{\mathSchBaseRef}[1]{\mathcal{S}_{\ref{#1}}}
\newcommand{\schMag}[1]{$\left( \mathSchBaseRef{#1}, f_{\ref{#1}}, \star_{\ref{#1}} \right)$}
\newcounter{fcFamily} 
\newenvironment{fcFamily}[1][]{
    \refstepcounter{fcFamily} \par\medskip
    \textbf{$\mathbb{T}_{\thefcFamily}:\ $ #1}
    }{\medskip}
\newcommand{\fcFamRef}[1]{$\mathbb{T}_{\ref{#1}}$}
\newcommand{\mathfcBaseRef}[1]{\mathcal{T}_{\ref{#1}}}
\newcommand{\fcMag}[1]{$\left( \mathfcBaseRef{#1}, t_{\ref{#1}} \right)$}
\begin{document}

\setcounter{page}{1}

\title{Fibonacci, Motzkin, Schr\"oder, Fuss-Catalan and other Combinatorial Structures: Universal and Embedded  Bijections}

\author{R. Brak\thanks{rb1@unimelb.edu.au}\hspace{1ex}   and N.\ Mahony\thanks{nedm@unimelb.edu.au}
    \vspace{0.15 in} \\
    School of Mathematics and Statistics,\\
    The University of Melbourne\\
    Parkville,  Victoria 3052,\\
    Australia\\
 }
 
  \newcommand{\fami}{\mathcal{F}}
  \newcommand{\bn}{\bm{n}}

\maketitle

\begin{abstract} 
A combinatorial structure, $\fami$, with counting sequence $\{a_n\}_{n\ge 0}$ and ordinary generating function $G_\fami=\sum_{n\ge0} a_n x^n$, is positive algebraic if $G_\fami$ satisfies a polynomial equation $G_\fami=\sum_{k=0}^N p_k(x)\,G_\fami^k $ and $p_k(x)$ is a polynomial in $x$ with \emph{non-negative integer coefficients}. 
We show that every such family is associated with a normed $\bn$-magma. An $\bn$-magma with $\bn=(n_1,\dots, n_k)$ is a pair $\cM$ and $\fami$ where $\cM$ is a set of combinatorial structures and $\fami$ is a tuple of $n_i$-ary maps $f_i\,:\,\cM^{n_i}\to \cM$. A norm is a super-additive size map $\norm{\cdot}\,:\, \cM\to \mathbb{N} $. 

If the normed $\bn$-magma is free then we show there exists a recursive, norm preserving, universal bijection between all positive algebraic families $\fami_i$ with the same counting sequence. 
A free $\bn$-magma is defined using a universal mapping principle.  We state a theorem which provides a combinatorial method of proving if a particular $\bn$-magma is free. 
We illustrate this by defining  several  $\bn$-magmas:  eleven $(1,1)$-magmas (the Fibonacci families), seventeen $(1,2)$-magmas (nine Motzkin and eight Schr\"oder families) and seven $(3)$-magmas  (the Fuss-Catalan families). 
We prove they are all free and hence obtain a universal bijection for each   $\bn$. 
We also show how the $\bn$-magma structure  manifests as an embedded bijection.



 \end{abstract}

\vfill
\paragraph{Keywords:}  

\newpage
{\small
\tableofcontents
}
\newpage

\section*{List of combinatorial families}

{\small 
\tableofcontentsA

}
\newpage 

 \section{Definitions and  general results}

 We generalise the Catalan results of Brak \cite{BRAK} to arbitrary positive algebraic combinatorial families. A combinatorial structure, $\fami$, with counting sequence $\{a_n\}_{n\ge 0}$ and ordinary generating function $G_\fami(x)=\sum_{n\ge0} a_n x^n$ is positive algebraic if $G_\fami(x)$ satisfies a polynomial equation   
\begin{equation}\label{eq_posalg}
     \sum_{k=0}^N p_k(x)\,G_\fami^k =G_\fami
\end{equation}
 and $p_k(x)=\sum_{i=0}^{m_k} b^{(k)}_{i}x^i$  is a degree $m_k$ polynomial in $x$ with \emph{non-negative  integer coefficients} (ie.\ all $ b^{(k)}_{i}\ge0$). 
 This case contains a large  number of well known combinatorial families such as Fibonacci, Catalan, Motzkin, Schr\"oder and Fuss-Catalan.
 
We show that  positive algebraic families are  associated with   particular normed $\bn$-magmas (a norm is a super-additive size map). An $\bn$-magma with $\bn=(n_1,\dots, n_k)$ is a pair $(\cM,\fami)$ where $\cM$ is a set  and $\fami$ is a tuple of $n_i$-ary maps $f_i\,:\,\cM^{n_i}\to \cM$. 
To each monomial $ b^{(k)}_{i}x^i G_\fami^k$ in \eqref{eq_posalg} we associate $b^{(k)}_{i}$ unique $k$-ary maps. 
These  are the maps that constitute the  $n_i$-ary maps of the  $\bn$-magma. 
These maps have to be carefully defined for each combinatorial family to ensure they satisfy certain required properties.

\subsection{Magma definitions}

In this section we generalise a number  of definitions from \cite{BRAK}.

A magma is an algebraic structure defined in \cite{bourbaki} as a pair \mbox{($\cM$, $\star$)} where $\cM$ is a non-empty countable set called the \textbf{base set} and $\star$ is a \textbf{product map} 
\begin{equation*}
    \star: \cM \times \cM \rightarrow \cM.
\end{equation*}
If \mbox{($\cN$, $\bullet$)} is a magma, then a \textbf{magma morphism} $\theta$ from $\cM$ to $\cN$ is a map $\theta: \cM \rightarrow \cN$ such that for all $m, m' \in \cM$, 
\begin{equation*}
    \theta(m \star m') = \theta(m) \bullet \theta(m').
\end{equation*}
We generalise these definitions to allow for arbitrary $n$-ary maps.

\begin{definition}[$\bm{n}$-magma] \label{def:generalMagma}
Let $\cM$ be a non-empty countable set called the \textbf{base set}. An \textbf{$\bm{n}$-magma} defined on $\cM$, where $\bm{n} = (n_1, \hdots, n_k)$ with $n_1 \leq \cdots \leq n_k$, is a $(k + 1)$-tuple \mbox{($\cM$, $f_1$, $\hdots$, $f_k$)} where 
\begin{equation*}
    f_i: \cM^{n_i} \rightarrow \cM
\end{equation*} is an $n_i$-ary map, for each $i = 1, \hdots, k$. If \mbox{($\cM$, $f_1$, $\hdots$, $f_k$)} and \mbox{($\cN$, $g_1$, $\hdots$, $g_k$)} are two \mbox{$\bm{n}$-magma}s, then an \textbf{$\bm{n}$-magma morphism} from $\cM$ to $\cN$ is a map $\theta: \cM \rightarrow \cN$ such that for all $i \in \{ 1, \hdots, k \}$ and all $m_1, \hdots, m_{n_i} \in \cM$, 
\begin{equation*}
    \theta( f_i(m_1, \hdots,m_{n_i}) ) = g_i( \theta(m_1), \hdots, \theta(m_{n_i}) ).
\end{equation*}
\end{definition}

In this definition, we adopt the convention that we write the maps in the same order for all distinct \mbox{$\bm{n}$-magma}s for the same $\bm{n}$. 
Hence if \mbox{($\cM$, $f_1$, $\hdots$, $f_k$)} and \mbox{($\cN$, $g_1$, $\hdots$, $g_k$)} are two \mbox{$\bm{n}$-magma}s, then $f_i$ and $g_i$ have the same arity, for each $i = 1, \hdots, k$. 
We also require that if we have more than one map with the same arity, then the order in which we write the maps is significant and thus permuting the maps defines a different \mbox{$\bm{n}$-magma}. 
For example, the two $(n_1, n_1)$-magmas ($\cM$, $f_1$, $f_2$) and ($\cM'$, $f_1'$, $f_2'$) are equal if and only if $\cM' = \cM$, $f_1' = f_1$ and $f_2' = f_2$. 
This distinction is important later when we prove that the Cartesian $(1,1)$-magma is free.

We are interested in when an \mbox{$\bm{n}$-magma} is free. 
To this end we start with the universal mapping definition of free, but will later  give a theorem --  Theorem \ref{thm:uniqFactNormedThenFree} -- enabling us to give a combinatorial, rather than a universal mapping, proof of when an \mbox{$\bm{n}$-magma} is free.

\begin{definition}[Free \mbox{$\bm{n}$-magma} Universal Mapping Principle] \label{def:freenmagma}
Let $\bm{n} = (n_1, \hdots, n_k)$ and let \mbox{($\cM$, $f_1$, $\hdots$, $f_k$)} be an \mbox{$\bm{n}$-magma}. Then \mbox{($\cM$, $f_1$, $\hdots$, $f_k$)} is \textbf{free} if the following is true: There exists a set $Y$ and a map $i: Y \rightarrow \cM$ such that for all \mbox{$\bm{n}$-magma}s \mbox{($\cN$, $f_1'$, $\hdots$, $f_k'$)} and for all maps $\varphi: Y \rightarrow \cN$, there exists a unique (up to isomorphism) \mbox{$\bm{n}$-magma} morphism $\theta: \cM \rightarrow \cN$ such that $\varphi = \theta \circ i$, that is, the diagram
\begin{center}
    \begin{tikzcd}
        \cM \arrow[r, "\theta"] & \cN \\
        Y \arrow[u, "i" left] \arrow[ru, "\varphi" below]
    \end{tikzcd}
\end{center}
commutes. The image of the set $Y$ in $\cM$, $X = \text{Img}(Y)$, will be called the \textbf{set of generators} of $\cM$.
\end{definition}

We now define a size function on the base set of the \mbox{$\bm{n}$-magma}. We will call the function a  norm as we will require it to be ``super-additive'' (defined below) with respect to each of the $\bm{n}$-magma maps. 
The norm is used in two essential ways. Firstly, it partitions the base set $\cM$ into sets of elements which have the same  size. The size of the sets in this partition defines the counting sequence. Secondly, the super-additivity of the norm map is used, along with unique factorisation, to give a combinatorial characterisation of free \mbox{$\bm{n}$-magma}s.  Let $\mathbb{N}=\{1,2,3,\dots\}$ denote the set of positive integers.

\begin{definition}[Norm] \label{def:norm}
Let \mbox{($\cM$, $f_1$, $\hdots$, $f_k$)} be an $\bm{n}$-magma, where $\bm{n} = (n_1, \hdots, n_k)$. A \textbf{norm} is a map $\norm{\cdot}: \cM \rightarrow \mathbb{N}$ that satisfies the \textbf{super-additive} conditions:
\begin{enumerate}
    \item All unary maps $f_i: \cM \rightarrow \cM$ must satisfy
    \begin{equation*}
    \norm{f_i(m)} > \norm{m}\,.
\end{equation*}
 
\item  All   maps $f_i: \cM^{n_i} \rightarrow \cM$, $n_i>1$, must satisfy
\begin{equation*}
    \norm{f_i(m_1, \hdots, m_{n_i})} \geq \sum_{j = 1}^{n_i} \norm{m_j}\,.
\end{equation*}

\end{enumerate}
If \mbox{($\cM$, $f_1$, $\hdots$, $f_k$)} has a norm, then it will be called a \textbf{normed $\bm{n}$-magma}. 
\end{definition}

Note, it is important that there are \emph{no} elements in $\cM$ of size zero, ie.\ we use the set $\mathbb{N}$ in the norm definition and \emph{not} the set $\mathbb{N}_0=\{0\}\,\cup\,\mathbb{N}$. 
Furthermore,  any norm defined on an \mbox{$\bm{n}$-magma} is not necessarily unique.
The non-uniqueness is significant as for example   there exist Motzkin and Schr\"oder families given by the same $(1,2)$-magma. 
Both families have the same base set, as well as the same unary and binary maps, however they have different norm maps.

We now consider the problem of factorisation in \mbox{$\bm{n}$-magma}s and discuss how it is related to the existence of a norm. 
We show that the existence of a norm guarantees that recursive factorisation of any \mbox{$\bm{n}$-magma} element will terminate. 
We begin by defining reducible and irreducible elements.

\begin{definition}[Reducible, irreducible elements] \label{def:reducibleIrreducible}
Let \mbox{($\cM$, $f_1$, $\hdots$, $f_k$)}  be an \mbox{$\bm{n}$-magma}, where \mbox{$\bm{n} = (n_1, \hdots, n_k)$}. The image of the maps $f_i$ in $\cM$, is called the \textbf{set of reducible elements}. 
Letting $\cM_i^+ = \text{Img}(f_i)$ for each  $i = 1, \hdots, k$, we can define the set of reducible elements as $\cM^+ = \bigcup_{i = 1}^k \cM_i^+$. The elements of the set $\cM^0 = \cM \backslash \cM^+$ are called \textbf{irreducible elements} and the set $\cM^0$ is called the \textbf{set of irreducibles}.
\end{definition}

A unique factorisation \mbox{$\bm{n}$-magma}   describes when every element of the base set can be written uniquely in terms of the irreducible elements and the \mbox{$\bm{n}$-magma} maps. 
 
\begin{definition}[Unique factorisation] \label{def:uniqFact}
Let \mbox{($\cM$, $f_1$, $\hdots$, $f_k$)}  be an \mbox{$\bm{n}$-magma}, where \mbox{$\bm{n} = (n_1, \hdots, n_k)$}. 
If 
\begin{enumerate}[label=(\roman*), topsep=0pt]
    \item every map $f_i: \cM^{n_i} \rightarrow \cM$ is injective, and
    \item $\cM_i^+ \cap \cM_j^+ = \emptyset$ for all $i, j \in \{ 1, \hdots, k \}$ such that $i \neq j$,
\end{enumerate}
then we will call \mbox{($\cM$, $f_1$, $\hdots$, $f_k$)} a \textbf{unique factorisation \mbox{$\bm{n}$-magma}}. 
\end{definition}

Note that (ii) requires that the images of the maps forms a partition of the set of reducible elements $\cM^+$.

We will be interested only in unique factorisation \mbox{$\bm{n}$-magma}s since this property holds for all combinatorial structures we consider.   

\emph{Thus in the remainder of this paper we assume all \mbox{$\bm{n}$-magma} are  unique factorisation \mbox{$\bm{n}$-magma}s.}

\subsection{General \texorpdfstring{$\bm{n}$}{n}-magma theorems}

In this section we present a number of general results about \mbox{$\bm{n}$-magma}s. 
These results will prove useful in later sections and allow us to describe certain properties which \mbox{$\bm{n}$-magma}s may possess. 
Propositions \ref{prop:magmaMin}, \ref{prop:normedIFF} and \ref{prop:freenMagmaIsomorphism}, along with Theorem \ref{thm:uniqFactNormedThenFree}, are generalisations of results stated and proven in \cite{BRAK}. 

\begin{proposition} \label{prop:magmaMin}
Let \mbox{($\cM$, $f_1$, $\hdots$, $f_k$)} be a normed $(n_1, \hdots, n_k)$-magma with non-empty base set $\cM$, and let the set of elements with minimal norm be $\cM_{\text{min}} \subset \cM$. Then $\cM_{\text{min}}$ is non-empty and all elements of $\cM_{\text{min}}$ are irreducible.
\end{proposition}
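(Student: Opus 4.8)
The plan is to extract a minimal norm value by the well-ordering principle and then show that any reducible element of minimal norm would force the existence of a strictly smaller-norm element (or an impossible inequality), contradicting minimality.

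First I would establish non-emptiness. Since $\cM$ is non-empty and $\norm{\cdot}$ maps into $\mathbb{N} = \{1, 2, 3, \dots\}$, the set of attained values $\{\norm{m} : m \in \cM\}$ is a non-empty subset of $\mathbb{N}$ and hence, by the well-ordering principle, has a least element $N_0 \geq 1$. Any $m \in \cM$ with $\norm{m} = N_0$ lies in $\cM_{\text{min}}$, so $\cM_{\text{min}} \neq \emptyset$. Moreover this identifies $\cM_{\text{min}} = \{ m \in \cM : \norm{m} = N_0 \}$, and records the key minimality fact that $\norm{m'} \geq N_0$ for every $m' \in \cM$.

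Next, for irreducibility I would argue by contradiction. Suppose some $m \in \cM_{\text{min}}$ is reducible, so that $m \in \cM^+ = \bigcup_{i=1}^k \cM_i^+$ and therefore $m = f_i(m_1, \dots, m_{n_i})$ for some index $i$ and some $m_1, \dots, m_{n_i} \in \cM$. I would then split along the two clauses of Definition \ref{def:norm}. If $f_i$ is unary ($n_i = 1$), super-additivity gives $N_0 = \norm{m} = \norm{f_i(m_1)} > \norm{m_1}$, contradicting $\norm{m_1} \geq N_0$. If $n_i > 1$, super-additivity together with minimality gives $N_0 = \norm{m} \geq \sum_{j=1}^{n_i} \norm{m_j} \geq n_i N_0 \geq 2 N_0$, which is impossible since $N_0 \geq 1$. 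Either case contradicts the assumption that $m$ is reducible, so every element of $\cM_{\text{min}}$ is irreducible.

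The step I expect to be the crux is the $n_i > 1$ case, and specifically the use of $N_0 \geq 1$ — that is, the fact that the norm takes values in $\mathbb{N}$ rather than $\mathbb{N}_0$. This is exactly the point flagged in the remark following Definition \ref{def:norm}: were elements of norm zero permitted, a minimal-norm element could be reducible through a map of arity greater than one whose remaining factors all have norm zero, and the conclusion would fail. Everything else is a routine application of well-ordering and the super-additive inequalities, and I do not anticipate any obstacle beyond keeping the two arity cases cleanly separated.
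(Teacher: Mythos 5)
Your proposal is correct and follows essentially the same route as the paper: contradiction, a case split on arity, and the positivity of the norm to rule out a reducible minimal element. The only cosmetic difference is in the $n_i>1$ case, where you bound every factor below by $N_0$ to get $N_0\ge n_iN_0$, whereas the paper derives $\norm{m}>\norm{m_j}$ directly from the super-additive inequality; both hinge on the same fact that $\norm{\cdot}$ takes values in $\mathbb{N}$ rather than $\mathbb{N}_0$.
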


\begin{proof}

Clearly we have that $\cM_{\text{min}}$ is non-empty since $\cM$ is non-empty and $\cM_{\text{min}}$ is taken to be the subset of $\cM$ whose elements have minimal norm. 

To prove that all elements of $\cM_{\text{min}}$ are irreducible, proceed by contradiction. Assume that there exists some $m \in \cM_{\text{min}}$ such that $m$ is reducible. Therefore $m \in \text{Img}(f_i)$ for some $i \in \{1, \hdots, k \}$, so there exists $m_1, \hdots, m_{n_i}$ such that $m = f_i(m_1, \hdots, m_{n_i})$. If $n_i = 1$ (that is, $f_i$ is a unary map) then $m = f_i(m_1)$, and so $\norm{m} > \norm{m_1}$. This contradicts the fact that $m \in \cM_{\text{min}}$. If $n_i > 1$, then $\norm{m} \geq \sum_{j = 1}^{n_i} \norm{m_j}$, and since $\text{Img}(\norm{\cdot}) \subseteq \mathbb{N}$, we must have that $\norm{m} > \norm{m_j}$ for each $j \in \{ 1, \hdots, n_i \}$. This again contradicts the fact that $m \in \cM_{\text{min}}$. Thus we conclude that every element of $\cM_{\text{min}}$ is irreducible.
\end{proof}

Now let $\bm{n} = (n_1, \hdots, n_k)$ and consider an arbitrary unique factorisation \mbox{$\bm{n}$-magma} \linebreak \mbox{($\cM$, $f_1$, $\hdots$, $f_k$)}. We have that for all reducible elements $m \in \cM^+$, there exists a unique $i \in \{ 1, \hdots, k \}$ and unique $m_1, \hdots, m_{n_i} \in \cM$ such that
\begin{equation*}
    m = f_i(m_1, \hdots, m_{n_i}).
\end{equation*}

We can recursively define a function $\pi$ as
\begin{equation} \label{eq:pi}
    \pi(m) =
    \begin{cases}
        \left[ \pi(m_1), \hdots, \pi(m_{n_i}) \right], & \text{if $m \in \cM^+$ with $m = f_i(m_1, \hdots, m_{n_i})$}, \\
        m, & \text{if $m \in \cM^0$}.
    \end{cases}
\end{equation}
The bracketed expression $\left[ \pi(m_1), \hdots, \pi(m_{n_i}) \right]$ is considered an $n_i$-tuple. The square parentheses are used to avoid possible ambiguity arising from the use of round parentheses later.

We are interested in when the recursion \eqref{eq:pi} terminates. This motivates the following definition.

\begin{definition}[Finite decomposition \mbox{$\bm{n}$-magma}] \label{def:generalFiniteDecomposition} 
Let \mbox{($\cM$, $f_1$, $\hdots$, $f_k$)} be an \mbox{$\bm{n}$-magma}. If, for all elements $m \in \cM$, the recursive function \eqref{eq:pi} terminates then \mbox{($\cM$, $f_1$, $\hdots$, $f_k$)} will be called a \textbf{finite decomposition \mbox{$\bm{n}$-magma}}. If \mbox{($\cM$, $f_1$, $\hdots$, $f_k$)} is a finite decomposition \mbox{$\bm{n}$-magma}, then $\pi(m)$ will be called the \textbf{decomposition} of $m$.
\end{definition}

If the unique factorisation \mbox{$\bm{n}$-magma} has a norm, then the recursive function \eqref{eq:pi} will always terminate, as given by the following proposition.  

\begin{proposition} \label{prop:normedIFF}
Let \mbox{($\cM$, $f_1$, $\hdots$, $f_k$)} be a unique factorisation \mbox{$\bm{n}$-magma}. Then it is a normed \mbox{$\bm{n}$-magma} if and only if it is a finite decomposition \mbox{$\bm{n}$-magma}.
\end{proposition}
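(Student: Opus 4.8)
The plan is to prove the two implications separately, since this is a biconditional. The forward direction (normed implies finite decomposition) I expect to be routine, while the reverse direction (finite decomposition implies normed) is where the real content lies.

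For the forward direction, suppose $(\cM, f_1, \hdots, f_k)$ carries a norm $\norm{\cdot}$. The key observation is that along every edge of a factorisation the norm strictly decreases. Indeed, if $m \in \cM^+$ then by unique factorisation there is a unique $i$ and unique $m_1, \hdots, m_{n_i}$ with $m = f_i(m_1, \hdots, m_{n_i})$; in the unary case Definition \ref{def:norm}(1) gives $\norm{m} > \norm{m_1}$ directly, and in the case $n_i > 1$ Definition \ref{def:norm}(2) gives $\norm{m} \geq \sum_{j} \norm{m_j}$, so because the norm takes values in $\mathbb{N}$ (hence each $\norm{m_j} \geq 1$) we again obtain $\norm{m} > \norm{m_j}$ for every $j$. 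I would then show that \eqref{eq:pi} terminates by strong induction on $\norm{m}$, which is legitimate since $(\mathbb{N}, <)$ is well-ordered: if $m \in \cM^0$ the recursion halts immediately, and if $m \in \cM^+$ each child $m_j$ has strictly smaller norm, so by the induction hypothesis each $\pi(m_j)$ terminates, whence $\pi(m) = [\pi(m_1), \hdots, \pi(m_{n_i})]$ is a finite tuple of finite objects and terminates as well. (One could also invoke Proposition \ref{prop:magmaMin} to identify the base case as the minimal-norm irreducibles, but the well-ordering argument makes this unnecessary.)

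For the reverse direction, assume $(\cM, f_1, \hdots, f_k)$ is a finite decomposition $\bm{n}$-magma and construct a norm explicitly. Because the magma has unique factorisation, every reducible $m$ determines a unique $i$ and unique arguments $m_1, \hdots, m_{n_i}$, so I can define $\norm{\cdot}: \cM \to \mathbb{N}$ recursively, mirroring \eqref{eq:pi}, by setting $\norm{m} = 1$ when $m \in \cM^0$ and $\norm{m} = 1 + \sum_{j=1}^{n_i} \norm{m_j}$ when $m \in \cM^+$ with $m = f_i(m_1, \hdots, m_{n_i})$. Since the decomposition \eqref{eq:pi} terminates for every $m$, this recursion is well-founded and returns a finite value; intuitively $\norm{m}$ is simply the total number of nodes in the decomposition tree $\pi(m)$. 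An easy induction shows $\norm{m} \geq 1$, so the map lands in $\mathbb{N}$ with no zero-norm elements, as required. Verifying super-additivity is then immediate from the defining recursion: for a unary map $\norm{f_i(m)} = 1 + \norm{m} > \norm{m}$, giving Definition \ref{def:norm}(1), and for $n_i > 1$ we get $\norm{f_i(m_1, \hdots, m_{n_i})} = 1 + \sum_j \norm{m_j} \geq \sum_j \norm{m_j}$, giving Definition \ref{def:norm}(2). Hence $\norm{\cdot}$ is a norm and the magma is normed.

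The one place that requires care, and which I regard as the main obstacle, is choosing the right candidate norm in the reverse direction. The tempting choice of counting only the \emph{leaves} (irreducibles) of $\pi(m)$ fails: a unary factorisation $m = f_i(m_1)$ leaves the leaf-count unchanged, so it would give $\norm{f_i(m_1)} = \norm{m_1}$ and violate the strict inequality demanded of unary maps. Counting \emph{all} nodes repairs this precisely, since each application of a map contributes an extra $+1$, yielding strictness in the unary case while preserving the weaker additive bound when $n_i > 1$. Once this choice is in hand, both the well-definedness (from finite decomposition together with unique factorisation) and the super-additive inequalities follow mechanically.
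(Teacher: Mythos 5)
Your proof is correct and follows the same overall structure as the paper's: the forward direction uses the fact that the norm takes values in $\mathbb{N}$ to get strict decrease of the norm along every branch of the factorisation, and then well-ordering of $\mathbb{N}$ to terminate the recursion; the reverse direction constructs a candidate norm by recursion over the (terminating) decomposition. Your forward direction is, if anything, slightly cleaner than the paper's, since strong induction on $\norm{m}$ makes the appeal to Proposition \ref{prop:magmaMin} unnecessary, exactly as you observe.

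The noteworthy point is in the reverse direction. The paper defines its candidate norm by $\norm{m} = 1$ on irreducibles and $\norm{m} = \sum_{j=1}^{n_i} \norm{m_j}$ on reducibles — precisely the ``count the leaves of $\pi(m)$'' choice that you single out as the tempting but wrong one. For a unary map this gives $\norm{f_i(m_1)} = \norm{m_1}$, which violates the strict inequality required by condition (1) of Definition \ref{def:norm}, so the paper's asserted verification that this map ``satisfies the conditions of Definition \ref{def:norm}'' does not go through whenever the magma has a unary map (as all the $(1,1)$- and $(1,2)$-magmas in the paper do). Your choice $\norm{m} = 1 + \sum_j \norm{m_j}$ — counting all nodes of the decomposition tree rather than only its leaves — repairs this: it yields $\norm{f_i(m)} = 1 + \norm{m} > \norm{m}$ in the unary case while still satisfying the weak super-additivity bound when $n_i > 1$. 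So your proof is not merely correct; the one place where you deviate from the paper is a genuine fix of a defect in the published argument.
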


\begin{proof}

\textit{Forward:} Since \mbox{($\cM$, $f_1$, $\hdots$, $f_k$)} is normed, there exists a function $\norm{\cdot}: \cM \rightarrow \mathbb{N}$ which satisfies Definition \ref{def:norm}. 
Now taking any $m \in \cM$, either $m \in \cM^0$ and so the recursion terminates immediately or there exists a unique $i \in \{ 1, \hdots, k \}$ and unique $m_1, \hdots, m_{n_i} \in \cM$ such that $m = f_i(m_1, \hdots, m_{n_i})$. 
In this case, $\norm{m} \geq \sum_{j = 1}^{n_i} \norm{m_j}$, and so $\norm{m} > \norm{m_j}$ for each $j \in \{ 1, \hdots, n_i \}$ because $\norm{\cdot}$ takes values in $\mathbb{N}$. 
This recursive procedure continues until a factor is in $\cM^0$ at which point it terminates from \eqref{eq:pi}, or until a factor has minimal norm. 
In this case Proposition \ref{prop:magmaMin} states that this factor must be in $\cM^0$ and thus the recursion terminates. 
This must occur in a finite number of steps since $\text{Img}(\norm{\cdot}) \subseteq \mathbb{N}$ and, since $\mathbb{N}$ is a well ordered set,  any subset of $\mathbb{N}$ has a least element. 
Thus \mbox{($\cM$, $f_1$, $\hdots$, $f_k$)} is a finite decomposition \mbox{$\bm{n}$-magma}.

\textit{Reverse:}  Since \mbox{($\cM$, $f_1$, $\hdots$, $f_k$)} is a finite decomposition \mbox{$\bm{n}$-magma}, the recursion \eqref{eq:pi} must terminate for all $m \in \cM$. We can define a norm $\norm{\cdot}: \cM \rightarrow \mathbb{N}$ on \mbox{($\cM$, $f_1$, $\hdots$, $f_k$)} as follows. For each $m \in \cM^0$, define $\norm{m} = 1$. For each $m \in \cM^+$, we know that $m = f_i(m_1, \hdots, m_{n_i})$ for unique $i \in \{1, \hdots, k \}$ and unique $m_1, \hdots, m_{n_i} \in \cM$. Defining $\norm{m} = \sum_{j = 1}^{n_i} \norm{m_j}$ for such $m$, we have that $\norm{\cdot}$ is well-defined since $\pi(m)$ terminates and thus contains a finite number of occurrences of elements of $\cM^0$ (for which we have already defined the value of $\norm{\cdot}$). Further, we have that $\norm{\cdot}$ is a norm since it satisfies the conditions of Definition \ref{def:norm}.
\end{proof}

Note that the above proposition holds even if an \mbox{$\bm{n}$-magma} is \emph{not} a unique factorisation \mbox{$\bm{n}$-magma}. 
We chose to prove it only for the case of a unique factorisation \mbox{$\bm{n}$-magma} as this is the only result we require. 
Making this assumption also simplifies the proof considerably.

We now prove the following result, which gives us a combinatorial way to characterise when an \mbox{$\bm{n}$-magma} is free.

\begin{theorem} \label{thm:uniqFactNormedThenFree}
Let $\bm{n} = (n_1, \hdots, n_k)$. If \mbox{($\cM$, $f_1$, $\hdots$, $f_k$)} is a unique factorisation normed \mbox{$\bm{n}$-magma} with non-empty finite set of irreducibles, then \mbox{($\cM$, $f_1$, $\hdots$, $f_k$)} is a free \mbox{$\bm{n}$-magma} generated by the irreducible elements.
\end{theorem}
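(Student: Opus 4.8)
The plan is to verify the universal mapping principle of Definition \ref{def:freenmagma} directly, taking the set of generators to be the set of irreducibles: set $Y = \cM^0$ and let $i \colon \cM^0 \hookrightarrow \cM$ be the inclusion, which is legitimate since $\cM^0$ is non-empty by Proposition \ref{prop:magmaMin}. Given an arbitrary $\bm{n}$-magma $(\cN, f_1', \hdots, f_k')$ and an arbitrary map $\varphi \colon \cM^0 \to \cN$, I must produce a unique $\bm{n}$-magma morphism $\theta \colon \cM \to \cN$ with $\theta \circ i = \varphi$. The two defining constraints — that $\theta$ restrict to $\varphi$ on generators and that $\theta$ intertwine the magma operations — leave no freedom, so I define $\theta$ by the forced recursion
\begin{equation*}
    \theta(m) =
    \begin{cases}
        f_i'\bigl( \theta(m_1), \hdots, \theta(m_{n_i}) \bigr), & m \in \cM^+ \text{ with } m = f_i(m_1, \hdots, m_{n_i}), \\
        \varphi(m), & m \in \cM^0,
    \end{cases}
\end{equation*}
which mirrors the decomposition \eqref{eq:pi}.

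The first task is to check that $\theta$ is well defined, and this is where the hypotheses do their work. Unique factorisation (Definition \ref{def:uniqFact}) guarantees that each reducible $m$ determines the index $i$ and the arguments $m_1, \hdots, m_{n_i}$ uniquely, so the first branch is unambiguous. To see that the recursion bottoms out, I argue by strong induction on $\norm{m}$. At minimal norm every element is irreducible by Proposition \ref{prop:magmaMin}, so the base case is covered by the second branch. For the inductive step, if $m \in \cM^0$ then $\theta(m) = \varphi(m)$ is defined outright; and if $m = f_i(m_1, \hdots, m_{n_i}) \in \cM^+$ then super-additivity of the norm forces $\norm{m_j} < \norm{m}$ for every $j$ (immediate for a unary map, and for $n_i > 1$ a consequence of $\norm{m} \ge \sum_j \norm{m_j}$ together with $\norm{\cdot} \ge 1$), so each $\theta(m_j)$ is already defined by the inductive hypothesis and hence so is $\theta(m)$. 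Equivalently, Proposition \ref{prop:normedIFF} tells us the decomposition $\pi(m)$ terminates, so the recursion halts after finitely many steps.

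It remains to verify that $\theta$ is a morphism and that it is unique. The morphism identity $\theta(f_i(m_1,\hdots,m_{n_i})) = f_i'(\theta(m_1),\hdots,\theta(m_{n_i}))$ is immediate from the first branch, because unique factorisation ensures that $f_i(m_1,\hdots,m_{n_i})$ admits no competing decomposition. For uniqueness, suppose $\theta'$ is any morphism with $\theta' \circ i = \varphi$; then $\theta'$ agrees with $\varphi$, hence with $\theta$, on $\cM^0$, and the morphism condition forces $\theta'$ to satisfy the very same recursion, so a second strong induction on the norm yields $\theta' = \theta$ on all of $\cM$.

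The main obstacle is the well-definedness step, and the point to stress is that the two hypotheses cooperate there: unique factorisation removes the ambiguity in \emph{how} to peel off a single layer, while the norm (through Propositions \ref{prop:magmaMin} and \ref{prop:normedIFF}) guarantees that the peeling terminates at the irreducibles rather than descending forever. Finiteness of $\cM^0$ plays no essential role in establishing freeness itself; it is recorded in the hypotheses because it is what makes the later explicit universal bijections possible.
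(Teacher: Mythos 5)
Your proof is correct and follows essentially the same route as the paper: take the irreducibles as generators, define $\theta$ by the forced recursion $\theta(m) = f_i'(\theta(m_1),\hdots,\theta(m_{n_i}))$ on reducibles and $\theta = \varphi$ on irreducibles, then read off the morphism property, the commutation $\varphi = \theta \circ i$, and uniqueness. Your treatment is in fact more explicit than the paper's on the one delicate point — well-definedness of the recursion via strong induction on the norm, using Proposition \ref{prop:magmaMin} and super-additivity — which the paper leaves implicit, and your closing observation that finiteness of $\cM^0$ is not needed for freeness is accurate.
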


\begin{proof}
 
Let \mbox{($\cM$, $f_1$, $\hdots$, $f_k$)} be a unique factorisation normed \mbox{$\bm{n}$-magma} with non-empty finite set of irreducibles, $\cM^0$. Take  any set $Y$ such that $\card{Y} = \card{\cM^0}$. 
Let \mbox{($\cN$, $f_1'$, $\hdots$, $f_k'$)} be an arbitrary \mbox{$\bm{n}$-magma} and let $\varphi: Y \rightarrow \cN$ be any map. 
We are required to show that there exists a map $i: Y \rightarrow \cM^0$ such that there exists a unique \mbox{$\bm{n}$-magma} morphism $\theta: \cM \rightarrow \cN$ with the property that $\varphi = \theta \circ i$.
We take $i: Y \rightarrow \cM^0$ to be any bijection from $Y$ to $\cM^0 \subset \cM$ (such a bijection exists since $\varphi$ was chosen so that $\card{Y} = \card{\cM^0}$). We define $\theta: \cM \rightarrow \cN$ as follows:
\begin{enumerate}[label=(\roman*)]
    \item For all $m \in \cM^0$, we have $m = i(y)$ for some $y \in Y$. For such $m$, define \begin{equation} \label{eq:free1}
        \theta(m) = \varphi(y).
    \end{equation}
    \item For all $m \in \cM^+$, where $\cM^+ = \cM \backslash \cM^0$, we recursively define the image under $\theta$ as follows: if $m = f_i(m_1, \hdots, m_{n_i})$, then define
    \begin{equation} \label{eq:free2}
        \theta(m) = f_i'(\theta(m_1), \hdots, \theta(m_{n_i})).
    \end{equation}
\end{enumerate}
From \eqref{eq:free2}, we have that $\theta$ is an \mbox{$\bm{n}$-magma} morphism, while \eqref{eq:free1} ensures that \mbox{$\varphi = \theta \circ i$}. We have that $\theta$ is unique (given the choice of the maps $i$ and $\varphi$). This is because the value of $\theta$ on the set of irreducibles $\cM^0$ along with the recursive definition of $\theta$ for all other elements of $\cM$ uniquely specifies the value of $\theta$ for all elements of $\cM$. This also ensures that $\theta$ is the only map which satisfies both \eqref{eq:free2} and $\varphi = \theta \circ i$.
\end{proof}

Theorem \ref{thm:uniqFactNormedThenFree} shows that the following is sufficient to show that a set $\cM$ along with $k$ maps $f_1, \hdots, f_k$, where $f_i: \cM^{n_i} \rightarrow \cM$ for each $i \in \{ 1, \hdots, k \}$, is a free \mbox{$(n_1, \hdots, n_k)$-magma}:
\begin{enumerate}[label=(\roman*)]
    \item Show that all maps $f_i$ are injective.
    \item Show that the images of the maps are disjoint, that is, for all $i, j \in \{ 1, \hdots, k \}$ such that $i \neq j$, we have $\text{Img}(f_i) \cap \text{Img}(f_j) = \emptyset$.
    \item Show that there exists a map $\norm{\cdot}: \cM \rightarrow \mathbb{N}$ which satisfies Definition \ref{def:norm}.
    \item Determine the set of generators (usually by first determining the range of the maps $\cM^+ = \bigcup_{i = 1}^k \cM_i^+$ and then taking its complement, $\cM^0 = \cM \setminus \cM^+$).
\end{enumerate}

Note, \emph{any} norm will suffice to  prove a particular  $\bm{n}$-magma is  free (using Theorem \ref{thm:uniqFactNormedThenFree}).

The next result is well-known for free structures. 
It states that there exists an isomorphism between any pair of \mbox{$\bm{n}$-magma}s for the same $\bm{n}$. 
Moreover, it states that this isomorphism is unique up to the choice of bijection between the generators. 
We will use this result later to define universal bijections between our combinatorial families.

\begin{proposition} \label{prop:freenMagmaIsomorphism}
Let $Y$ be a set and let \mbox{($\cM$, $f_1$, $\hdots$, $f_k$)} and ($\cN$, $f_1'$, $\hdots$, $f_k'$) be free \mbox{$\bm{n}$-magma}s satisfying free \mbox{$\bm{n}$-magma} universal mapping diagrams as follows:
\begin{equation} \label{eq:commutingDiagrams}
    \begin{tikzcd}
        \cM \arrow[r, "\theta"] & \cM' \\
        Y \arrow[u, "i" left] \arrow[ru, "f" below]
    \end{tikzcd}
    \qquad
    \begin{tikzcd}
        \cN \arrow[r, "\phi"] & \cN' \\
        Y \arrow[u, "j" left] \arrow[ru, "g" below]
    \end{tikzcd}
\end{equation}
Then there exists a unique \mbox{$\bm{n}$-magma} isomorphism $\Gamma: \cM \rightarrow \cN$ such that $\Gamma \circ i = j$ and $\Gamma^{-1} \circ j = i$.
\end{proposition}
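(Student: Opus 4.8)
The plan is to invoke the free universal mapping property (Definition \ref{def:freenmagma}) of each magma against the other, obtain a pair of mutually inverse morphisms, and then use the uniqueness clause of that property to force their two composites to be identities. This is the standard argument that a free object is determined up to a unique compatible isomorphism, specialised to the present category of $\bm{n}$-magmas.

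First I would apply the universal property of $(\cM, f_1, \hdots, f_k)$, taking the arbitrary target $\bm{n}$-magma in Definition \ref{def:freenmagma} to be $(\cN, f_1', \hdots, f_k')$ and the map $Y \rightarrow \cN$ to be $j$. This yields a unique $\bm{n}$-magma morphism $\Gamma: \cM \rightarrow \cN$ with $\Gamma \circ i = j$. Symmetrically, applying the universal property of $(\cN, f_1', \hdots, f_k')$ with target $(\cM, f_1, \hdots, f_k)$ and map $i: Y \rightarrow \cM$ produces a unique morphism $\Delta: \cN \rightarrow \cM$ with $\Delta \circ j = i$. The candidate isomorphism is $\Gamma$, and the claim will be that $\Gamma^{-1} = \Delta$.

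Next I would show $\Delta \circ \Gamma = \mathrm{id}_{\cM}$ and $\Gamma \circ \Delta = \mathrm{id}_{\cN}$. The composite $\Delta \circ \Gamma: \cM \rightarrow \cM$ is again an $\bm{n}$-magma morphism (composition respects each $f_i$ by checking the defining equation of Definition \ref{def:generalMagma} coordinatewise), and it satisfies $(\Delta \circ \Gamma) \circ i = \Delta \circ (\Gamma \circ i) = \Delta \circ j = i$. But $\mathrm{id}_{\cM}$ is also an $\bm{n}$-magma morphism satisfying $\mathrm{id}_{\cM} \circ i = i$. Invoking the uniqueness half of the universal property of $\cM$ — now with target $\cM$ itself and map $i$ — there is exactly one morphism $\cM \rightarrow \cM$ compatible with $i$, forcing $\Delta \circ \Gamma = \mathrm{id}_{\cM}$. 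The identical argument with the roles of $\cM$ and $\cN$ exchanged gives $\Gamma \circ \Delta = \mathrm{id}_{\cN}$. Hence $\Gamma$ is a bijection with $\Gamma^{-1} = \Delta$, so it is an $\bm{n}$-magma isomorphism, and the required relations $\Gamma \circ i = j$ and $\Gamma^{-1} \circ j = \Delta \circ j = i$ both hold.

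Finally, uniqueness of $\Gamma$ is immediate from the uniqueness already built into the first application: any isomorphism $\cM \rightarrow \cN$ satisfying $\Gamma \circ i = j$ is in particular a morphism compatible with $i$ and $j$, and the universal property of $\cM$ admits only one such morphism. The only routine bookkeeping is the pair of facts that composites of $\bm{n}$-magma morphisms are again $\bm{n}$-magma morphisms and that $\mathrm{id}_{\cM}, \mathrm{id}_{\cN}$ are morphisms, both of which follow directly from Definition \ref{def:generalMagma}. I expect the one genuinely delicate point to be correctly instantiating the universal property in each of the four applications — choosing the right target and the right map each time — and in particular recognising that the \emph{self-application} (target equal to source, map equal to $i$) is exactly what pins each composite down to the identity.
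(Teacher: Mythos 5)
Your proposal is correct and follows essentially the same route as the paper's proof: apply the universal property of each free $\bm{n}$-magma against the other to obtain $\Gamma$ and $\Delta$ (the paper's $\theta$ and $\phi$), then use the uniqueness clause via the self-application to conclude the composites are identities. You actually spell out the step the paper elides — that $j = \Gamma \circ \Delta \circ j$ forces $\Gamma \circ \Delta = \mathrm{id}_{\cN}$ only because both it and the identity are morphisms compatible with $j$ and the universal property admits exactly one such — which is a welcome clarification rather than a divergence.
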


We repeat the standard proof since it is a constructive proof and hence  provides the basis for the universal bijection algorithm stated later.

\begin{proof}

Since \mbox{($\cM$, $f_1$, $\hdots$, $f_k$)} is a free \mbox{$\bm{n}$-magma}, the left diagram of \eqref{eq:commutingDiagrams} commutes for all $i: Y \rightarrow \cM$ and all $f: Y \rightarrow \cM'$. Taking $\cM' = \cN$ and $f = j$ gives
\begin{equation} \label{eq:jCommutes}
    j = \theta \circ i.
\end{equation}
Similarly, since ($\cN$, $f_1'$, $\hdots$, $f_k'$) is a free \mbox{$\bm{n}$-magma}, the right diagram of \eqref{eq:commutingDiagrams} commutes for all $j: Y \rightarrow \cN$ and all $g: Y \rightarrow \cN'$. Taking $\cN' = \cM$ and $j = i$ gives 
\begin{equation} \label{eq:iCommutes}
    i = \phi \circ j.
\end{equation}
Together, \eqref{eq:jCommutes} and \eqref{eq:iCommutes} give $j = \theta \circ \phi \circ j$ and $i = \phi \circ \theta \circ i$. Therefore $\theta \circ \phi = \text{id}_{\cM}$ and $\phi \circ \theta = \text{id}_{\cN}$, and thus $\Gamma = \theta$ and $\Gamma^{-1} = \phi$ are isomorphisms.
\end{proof}

Suppose that \mbox{($\cM$, $f_1$, $\hdots$, $f_k$)} and ($\cN$, $f_1'$, $\hdots$, $f_k'$) are free \mbox{$\bm{n}$-magma}s with one generator, for the same $\bm{n}$. Since $\card{Y} = 1$, we have that $i$ and $j$ are unique. From this it follows that the \mbox{$\bm{n}$-magma} isomorphism $\Gamma: \cM \rightarrow \cN$ from Proposition \ref{prop:freenMagmaIsomorphism} is unique.

Proposition \ref{prop:freenMagmaIsomorphism} proves to be very useful since all of the  the combinatorial families considered in the appendix  are free \mbox{$\bm{n}$-magma}s generated by a single element for some $\bm{n}$. 
Suppose that we have two combinatorial families which are both free \mbox{$\bm{n}$-magma}s generated by a single element for the same $\bm{n}$. 
Then this unique isomorphism gives us a one-to-one map between the objects of the two families. 
This map preserves the recursive structure of the objects. 
We will also show that this unique isomorphism preserves the norm of the objects it maps. 
Thus we have a size-preserving recursive bijection between the two combinatorial families. 
This will be used to define the universal bijections.
  
 \section{Combinatorial structures} 

We apply the above results to several well known combinatorial structures counted by Fibonacci, Motzkin, Schr\"oder and Fuss-Catalan numbers.


We begin by defining  these combinatorial families   which we will  generalise to those counted by the \mbox{``$p$-analogue''} of the sequences. 
This was done in \cite{BRAK} for the Catalan numbers by defining the $p$-Catalan numbers, where $p \in \mathbb{N} = \{ 1, 2, 3, \hdots \}$, by $C_0(p) = p$ and
\begin{equation} \label{eq:pCatalan}
    C_n(p) = p^{n + 1} \frac{1}{n + 1} \binom{2n}{n}, \qquad n \geq 1.
\end{equation}

A natural interpretation of these $p$-analogue sequences (in most instances) is as a variation of the combinatorial family where we allow some part of the object to be coloured in any of $p$ colours. 
In particular, we will see that it is the part which corresponds to the generator of the family. 
The $p$-analogue sequences   arise naturally when using certain set constructions which give the original sequences. 
The well known cases correspond to $p=1$.

\begin{definition} \label{def:pSequences}
Let $p \in \mathbb{N}$, and define the following sequences:
\begin{enumerate}[label=(\roman*)]
    \item Define the \textbf{$\bm{p}$-Fibonacci numbers} $F_n(p)$ by the recurrence relation
    \begin{equation} \label{eq:pFibonacciRecurrence}
        F_n(p) = F_{n - 1}(p) + F_{n - 2}(p), \qquad n \geq 2,
    \end{equation}
    with $F_0(p) = 0$ and $F_1(p) = p$.
    \item Define the \textbf{$\bm{p}$-Motzkin numbers} $M_n(p)$ by the recurrence relation
    \begin{equation} \label{eq:pMotzkinRecurrence}
        M_n(p) = M_{n - 1}(p) + \sum_{k = 0}^{n - 2} M_k(p) \, M_{n - k - 2}(p), \qquad n \geq 2,
    \end{equation}
    with $M_0(p) = M_1(p) = p$.
    \item Define the (little) \textbf{$\bm{p}$-Schr\"oder numbers} $S_n(p)$ by the recurrence relation
    \begin{equation} \label{eq:pSchroderRecurrence}
        S_n(p) = S_{n - 1}(p) + \sum_{k = 0}^{n - 1} S_k(p) \, S_{n - k - 1}(p), \qquad n \geq 1,
    \end{equation}
    with $S_0(p) = p$.
    \item Define the \textbf{order 3 $\bm{p}$-Fuss-Catalan numbers} (from here onwards, $p$-Fuss-Catalan numbers) $T_n(p)$ by the recurrence relation
    \begin{equation} \label{eq:pTernaryRecurrence}
        T_n(p) = \sum_{i = 0}^{n - 1} \sum_{j = 0}^{n - 1 - i} T_i(p) \, T_j(p) \, T_{n - 1 - i -j}(p), \qquad n \geq 1,
    \end{equation}
    with $T_0(p) = p$. 
\end{enumerate}
\end{definition}

We will refer to the order 3 Fuss-Catalan numbers simply as the \textit{Fuss-Catalan numbers}. 
We do not consider any other order of Fuss-Catalan number, although one could easily extend these ideas to higher orders in an obvious way. 
See \cite{AVAL20084660} for further discussion of the general Fuss-Catalan numbers.

For each of these four $p$-sequences we state propositions giving the algebraic equation satisfied by their generating functions and an expression for the counting sequences.  
The former is derived in the standard way from the above defining recurrence relations and the latter are derived using the Lagrange inversion formula. 
Since these are standard computations we do not provide any details.

\begin{proposition}
The generating function $F(x) = \sum_{n \geq 0} F_n(p) \, x^n$ for the $p$-Fibonacci numbers satisfies the algebraic equation
\begin{equation*}
    F(x) = p x + x F(x) + x^2 F(x).
\end{equation*}
and
\begin{equation*}
    F_n(p) = p \sum_{k = 0}^{\left\lfloor \frac{n - 1}{2} \right\rfloor} \binom{n - k - 1}{k}.
\end{equation*}
\end{proposition}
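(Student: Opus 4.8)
The plan is to prove the two assertions separately: first establish the algebraic equation directly from the defining recurrence, and then solve it and extract coefficients to obtain the closed form.

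For the functional equation, I would start from $F(x)=\sum_{n\ge 0}F_n(p)\,x^n$ and multiply the recurrence $F_n(p)=F_{n-1}(p)+F_{n-2}(p)$, valid for $n\ge 2$, by $x^n$, summing over all $n\ge 2$. The left-hand side becomes $F(x)-F_0(p)-F_1(p)\,x=F(x)-px$ using the initial data $F_0(p)=0$ and $F_1(p)=p$. Reindexing the two shifted sums on the right gives $\sum_{n\ge 2}F_{n-1}(p)\,x^n=xF(x)$ (again using $F_0(p)=0$) and $\sum_{n\ge 2}F_{n-2}(p)\,x^n=x^2F(x)$. Collecting terms yields $F(x)-px=xF(x)+x^2F(x)$, which rearranges to the stated equation $F(x)=px+xF(x)+x^2F(x)$.

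For the closed form, I would solve this (linear) equation for $F(x)$, obtaining the rational generating function $F(x)=px/(1-x-x^2)$. Since $1-x-x^2=1-(x+x^2)$, I would expand as a formal geometric series $1/(1-x-x^2)=\sum_{j\ge 0}(x+x^2)^j$ and apply the binomial theorem, $(x+x^2)^j=\sum_{k=0}^{j}\binom{j}{k}x^{j+k}$. Multiplying by $px$ and extracting the coefficient of $x^n$ requires $j+k+1=n$, i.e.\ $j=n-1-k$; the constraint $0\le k\le j$ then forces $0\le k\le\lfloor(n-1)/2\rfloor$, so that $F_n(p)=p\sum_{k=0}^{\lfloor(n-1)/2\rfloor}\binom{n-1-k}{k}$, as claimed.

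Every step here is routine, so there is no genuine obstacle; the only point requiring care is the coefficient extraction, namely correctly converting the double sum over $(j,k)$ into the single sum by solving $j+k+1=n$ and translating the inequality $k\le j$ into the floor bound $k\le\lfloor(n-1)/2\rfloor$. I would also note that all manipulations take place in the ring of formal power series, so the geometric expansion is legitimate without any analytic convergence argument, and that, because the equation is linear in $F(x)$, the full machinery of Lagrange inversion is unnecessary in this case: direct coefficient extraction from the rational function suffices.
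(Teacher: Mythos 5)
Your proposal is correct. The paper itself gives no proof of this proposition: it states that the algebraic equations follow "in the standard way" from the defining recurrences and that the closed forms for the counting sequences "are derived using the Lagrange inversion formula," and then omits all details. Your derivation of the functional equation is exactly the standard argument the paper alludes to. For the closed form, however, you take a different (and for this case more natural) route: since the equation is linear in $F(x)$, you solve it to get the rational function $F(x)=px/(1-x-x^2)$ and extract coefficients directly via the geometric series and the binomial theorem, rather than invoking Lagrange inversion. Your observation that Lagrange inversion is unnecessary here is well taken --- that tool is really needed for the genuinely nonlinear cases (Motzkin, Schr\"oder, Fuss--Catalan), whereas the Fibonacci equation is degree one in $F$. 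The coefficient extraction is handled correctly: the substitution $j=n-1-k$ and the translation of $k\le j$ into $k\le\lfloor(n-1)/2\rfloor$ both check out, and the result agrees with the recurrence on small cases ($F_1(p)=p$, $F_2(p)=p$, $F_3(p)=2p$).
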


\begin{proposition}
The generating function $M(x) = \sum_{n \geq 0} M_n(p) \, x^n$ for the $p$-Motzkin numbers satisfies the algebraic equation
\begin{equation*}
    M(x) = p + x M(x) + x^2 M(x)^2.
\end{equation*}
and
\begin{equation*}
    M_n(p) = 
    \sum_{k = 0}^{\left\lfloor \frac{n}{2} \right\rfloor} \binom{n}{2k} C_k(p),
\end{equation*}
where   $C_k(p)$ is the $k^\text{th}$ $p$-Catalan number \eqref{eq:pCatalan}.
\end{proposition}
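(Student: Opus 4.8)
The plan is to treat the two assertions separately, obtaining the algebraic equation directly from the recurrence \eqref{eq:pMotzkinRecurrence} and then extracting the closed form for $M_n(p)$ by Lagrange inversion. For the algebraic equation, I would multiply \eqref{eq:pMotzkinRecurrence} by $x^n$ and sum over $n \geq 2$. The left-hand side contributes $M(x) - M_0(p) - M_1(p)x = M(x) - p - px$. The term $\sum_{n\geq 2} M_{n-1}(p)x^n$ re-indexes to $x\bigl(M(x) - p\bigr)$, and the convolution $\sum_{n\geq 2}\bigl(\sum_{k=0}^{n-2}M_k(p)M_{n-k-2}(p)\bigr)x^n$ re-indexes (set $m=n-2$) to $x^2 M(x)^2$. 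Collecting these and cancelling the $px$ terms yields $M(x) = p + xM(x) + x^2M(x)^2$, as claimed. This step is entirely routine.

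For the closed form, the key is to put the algebraic equation into a shape to which the Lagrange inversion formula applies. Setting $v = xM(x)$ and multiplying the algebraic equation by $x$ turns it into $v = x\,(p + v + v^2)$, which is exactly of the form $v = x\,\phi(v)$ with $\phi(v) = p + v + v^2$ and $\phi(0) = p \neq 0$. Since $M(x) = v/x$, I would write $M_n(p) = [x^n]M(x) = [x^{n+1}]v$ and apply the Lagrange inversion formula to obtain
\begin{equation*}
M_n(p) = \frac{1}{n+1}\,[v^n]\,(p + v + v^2)^{n+1}.
\end{equation*}

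The remaining work, and the only place where care is needed, is the coefficient extraction. Expanding $(p+v+v^2)^{n+1}$ by the multinomial theorem, a term $\binom{n+1}{a,b,c}p^a v^{b+2c}$ contributes to $[v^n]$ precisely when $b+2c = n$ and $a+b+c = n+1$; subtracting these forces $a = c+1$, so writing $c = k$ gives $a = k+1$, $b = n-2k$, with $0 \leq k \leq \lfloor n/2\rfloor$. This yields
\begin{equation*}
M_n(p) = \sum_{k=0}^{\lfloor n/2\rfloor} \frac{n!}{(k+1)!\,k!\,(n-2k)!}\,p^{k+1}.
\end{equation*}
The final step is to recognise this summand as $\binom{n}{2k}C_k(p)$: using \eqref{eq:pCatalan}, $C_k(p) = p^{k+1}\frac{1}{k+1}\binom{2k}{k}$, and the identity $\binom{n}{2k}\frac{1}{k+1}\binom{2k}{k} = \frac{n!}{(k+1)!\,k!\,(n-2k)!}$ (which follows from $\frac{1}{k!\,k!\,(k+1)} = \frac{1}{k!\,(k+1)!}$) completes the match. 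I expect the main obstacle to be bookkeeping: getting the exponent $n+1$ on $\phi$ correct (it comes from extracting $[x^{n+1}]v$ rather than $[x^n]v$) and then carrying the multinomial indices through so that the power of $p$ and the factorials line up with the $p$-Catalan form. As an alternative that sidesteps the multinomial bookkeeping, one could instead verify that $\frac{1}{1-x}\,C\!\bigl(x^2/(1-x)^2\bigr)$ solves the algebraic equation, where $C(z) = \sum_{k\geq 0} C_k(p)z^k$ satisfies $C(z) = p + zC(z)^2$, and then read off the coefficients using $\sum_{n\geq 0} \binom{n}{2k}x^n = x^{2k}/(1-x)^{2k+1}$.
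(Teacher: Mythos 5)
Your proof is correct and follows exactly the route the paper indicates (the paper omits the details, stating only that the algebraic equation is derived in the standard way from the recurrence \eqref{eq:pMotzkinRecurrence} and the closed form via Lagrange inversion). Your substitution $v = xM(x)$, the extraction $M_n(p) = [x^{n+1}]v = \frac{1}{n+1}[v^n](p+v+v^2)^{n+1}$, and the multinomial bookkeeping forcing $a = k+1$, $b = n-2k$, $c = k$ all check out against \eqref{eq:pCatalan}.
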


\begin{proposition}
The generating function $S(x) = \sum_{n \geq 0} S_n(p) \, x^n$ for the $p$-Schr\"oder numbers satisfies the algebraic equation
\begin{equation*}
    S(x) = p + x S(x) + x S(x)^2.
\end{equation*}
and
\begin{equation*}
    S_n(p) = 
    \sum_{k = 0}^{n} \binom{2n - k}{k} C_{n - k}(p),
\end{equation*}
where   $C_k(p)$ is the $k^\text{th}$ $p$-Catalan number \eqref{eq:pCatalan}.
\end{proposition}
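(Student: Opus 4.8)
The plan is to treat the two assertions separately: first the functional equation, which follows by the standard ``multiply by $x^n$ and sum'' manipulation, and then the closed form, for which the key is to relate $S(x)$ to the $p$-Catalan generating function.

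For the algebraic equation I would start from the defining recurrence \eqref{eq:pSchroderRecurrence}, multiply both sides by $x^n$, and sum over $n \geq 1$. The left-hand side becomes $S(x) - S_0(p) = S(x) - p$. The first term on the right, $\sum_{n\geq1} S_{n-1}(p)\,x^n$, reindexes to $x\,S(x)$. The convolution term $\sum_{n\geq1}\big(\sum_{k=0}^{n-1} S_k(p)\,S_{n-1-k}(p)\big)x^n$ is, after factoring out one power of $x$, exactly $x$ times the full Cauchy product $S(x)^2$, since the inner sum is $[x^{n-1}]S(x)^2$ and $n-1$ ranges over all of $\mathbb{N}_0$. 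Collecting terms gives $S(x)-p = xS(x) + xS(x)^2$, i.e.\ $S(x) = p + xS(x) + xS(x)^2$. This is entirely routine.

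For the closed form I would work from the equation just obtained, written as $S = p + xS(1+S)$, together with the fact that the $p$-Catalan generating function $C(y) = \sum_{j\geq0} C_j(p)\,y^j$ satisfies $C = p + yC^2$ (the Catalan analogue from \cite{BRAK}, provable by the same Lagrange-inversion computation applied to \eqref{eq:pCatalan}). The heart of the argument is the substitution
\[
    S(x) = \frac{1}{1-x}\,C\!\left(\frac{x}{(1-x)^2}\right).
\]
To justify it I would substitute this expression into $p + xS + xS^2$, clear denominators by multiplying through by $(1-x)^2$, and use the Catalan relation in the form $x\,C^2 = (C-p)(1-x)^2$ (valid after the substitution $y = x/(1-x)^2$) to collapse the right-hand side to $C(1-x) = (1-x)^2 S$; dividing back shows the expression indeed solves $S = p + xS + xS^2$. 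Since the Schr\"oder equation together with $S(0)=p$ has a unique formal-power-series solution, this pins down $S(x)$.

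Finally I would extract coefficients. Expanding the composition gives $S(x) = \sum_{j\geq0} C_j(p)\,\dfrac{x^j}{(1-x)^{2j+1}}$, and the negative binomial series $\dfrac{x^j}{(1-x)^{2j+1}} = \sum_{n\geq j}\binom{n+j}{2j}x^n$ yields $S_n(p) = \sum_{j=0}^{n}\binom{n+j}{2j}C_j(p)$. Reindexing by $k = n-j$ and using $\binom{n+j}{2j} = \binom{2n-k}{2n-2k} = \binom{2n-k}{k}$ converts this into the stated form $S_n(p) = \sum_{k=0}^{n}\binom{2n-k}{k}C_{n-k}(p)$. The only genuinely nonroutine step is discovering (and then verifying) the Catalan substitution; everything downstream is bookkeeping. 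An alternative that avoids guessing the substitution is to set $S = p + V$ with $V(0)=0$, observe that $V = x(V+p)(V+p+1)$, and apply Lagrange inversion directly to obtain $S_n(p) = \tfrac{1}{n}[t^{n-1}]\big((t+p)(t+p+1)\big)^n$, though matching this to the binomial form requires more algebra.
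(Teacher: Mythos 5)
Your proof is correct. Note that the paper itself gives no details here: it states only that the functional equation follows ``in the standard way'' from the recurrence and that the counting formula is ``derived using the Lagrange inversion formula.'' Your derivation of the functional equation is exactly that standard computation, and it checks out. For the closed form, however, you take a genuinely different route from the one the paper indicates: instead of applying Lagrange inversion to the Schr\"oder equation directly, you verify the substitution $S(x) = \tfrac{1}{1-x}\,C\bigl(x/(1-x)^2\bigr)$ against the functional equation (using $xC^2 = (C-p)(1-x)^2$, which does collapse the right side to $C(1-x)$ as you claim), invoke uniqueness of the formal-power-series solution with $S(0)=p$, and then extract coefficients via the negative binomial series; the reindexing $\binom{n+j}{2j} = \binom{2n-k}{k}$ with $k=n-j$ is also correct. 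Your approach has the advantage that the term $C_{n-k}(p)$ in the stated formula appears for free from the coefficient extraction, whereas the direct Lagrange-inversion route you sketch at the end — $S_n(p) = \tfrac{1}{n}[t^{n-1}]\bigl((t+p)(t+p+1)\bigr)^n$ — yields a closed form that still needs nontrivial binomial rearrangement to match the stated sum. The only mild dependency worth flagging is that your argument presupposes the $p$-Catalan relation $C = p + yC^2$ and its compatibility with the explicit formula \eqref{eq:pCatalan}; that is itself a Lagrange-inversion fact from \cite{BRAK}, so Lagrange inversion has not been eliminated, merely relocated.
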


\begin{proposition}
The generating function $T(x) = \sum_{n \geq 0} T_n(p) \, x^n$ for the $p$-Fuss-Catalan numbers satisfies the algebraic equation
\begin{equation*}
    T(x) = p + x T(x)^3.
\end{equation*}
and 
\begin{equation*}
    T_n(p) = \frac{1}{3n + 1} \binom{3n + 1}{n} p^{2n + 1}.
\end{equation*}
\end{proposition}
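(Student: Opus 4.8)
The plan is to prove the two assertions in turn, first the functional equation and then the closed form, deriving the latter from the former by Lagrange inversion. For the functional equation, I would read the defining recurrence \eqref{eq:pTernaryRecurrence} as a statement about coefficients of a product of generating functions. The double sum $\sum_{i=0}^{n-1}\sum_{j=0}^{n-1-i} T_i(p)\,T_j(p)\,T_{n-1-i-j}(p)$ ranges precisely over all triples $(i,j,\ell)$ of non-negative integers with $i+j+\ell = n-1$, and is therefore exactly the coefficient of $x^{n-1}$ in $T(x)^3$. Hence $T_n(p) = [x^{n-1}]\,T(x)^3$ for every $n \geq 1$. Multiplying by $x^n$, summing over $n \geq 1$, and adding the $n=0$ term $T_0(p)=p$ then yields $T(x) = p + x\,T(x)^3$, as required.

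For the closed form I would apply Lagrange inversion, but the immediate obstacle is that $T(0) = T_0(p) = p \neq 0$, so $T$ is not a power series with vanishing constant term and the inversion formula cannot be applied to it directly. To circumvent this I would introduce the shifted series $U(x) = T(x) - p$, which satisfies $U(0) = 0$, and rewrite the functional equation as $U = x\,(U+p)^3$. This is now exactly of the form $U = x\,\phi(U)$ with $\phi(U) = (U+p)^3$ and $\phi(0) = p^3 \neq 0$, so Lagrange inversion applies and gives, for $n \geq 1$,
\[ [x^n]\,U(x) = \frac{1}{n}\,[U^{n-1}]\,(U+p)^{3n}. \]

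The final step is a routine coefficient extraction: expanding $(U+p)^{3n}$ by the binomial theorem and reading off the coefficient of $U^{n-1}$ gives $[x^n]\,U = \frac{1}{n}\binom{3n}{n-1}\,p^{2n+1}$, and since $T_n(p) = [x^n]\,U$ for $n \geq 1$ (with the same formula reproducing $T_0 = p$ when $n=0$), it remains only to reconcile this with the stated expression. This requires the binomial identity $\frac{1}{n}\binom{3n}{n-1} = \frac{1}{3n+1}\binom{3n+1}{n}$, which follows immediately by rewriting both sides as $\frac{(3n)!}{n!\,(2n+1)!}$. I expect no genuine difficulty in the calculation; the only points requiring care are the shift $U = T - p$, without which Lagrange inversion cannot be invoked, and the matching of the two equivalent binomial forms at the end.
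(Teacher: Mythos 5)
Your proposal is correct and is exactly the argument the paper has in mind: the paper explicitly states that the functional equation follows "in the standard way" from the recurrence \eqref{eq:pTernaryRecurrence} and that the closed form is obtained by Lagrange inversion, omitting the details. You have filled in that outline correctly, including the necessary shift $U = T - p$ before inverting and the binomial identity $\tfrac{1}{n}\binom{3n}{n-1} = \tfrac{1}{3n+1}\binom{3n+1}{n}$ reconciling the two forms.
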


\section{Fibonacci normed (1,1)-magmas} 
\label{section:fib11magmas}
 
In this section, we discuss \mbox{(1,1)-magmas} and show if an appropriate norm is defined  they are related to combinatorial families which are enumerated by the Fibonacci numbers. 
We first define the Cartesian \mbox{(1,1)-magma} and show its relationship with the $p$-Fibonacci numbers. 
We then define  a universal bijection between any two Fibonacci normed \mbox{(1,1)-magmas}.

 \subsection{Cartesian \mbox{(1,1)-magma}} \label{section:standard11magma}

We   define arguably the simplest  \mbox{(1,1)-magma}, which we call the Cartesian \mbox{(1,1)-magma}. 

\begin{definition}[Cartesian \mbox{(1,1)-magma}] \label{def:standard11magma}
\begin{subequations}
    Let $X$ be a non-empty finite set. Define the sequence $\cW_n(X)$ of sets of words in the alphabet $\{ u_1, u_2 \} \cup X$ as follows:
    \begin{align}
        \cW_1(X) & = X, \\
        \cW_2(X) & = \{ u_1 w : \, w \in \cW_1(X) \}, \\
        \cW_n(X) & = \{ u_1 w : \, w \in \cW_{n - 1}(X) \} \cup \{ u_2 w : \, w \in \cW_{n - 2}(X) \}, \qquad n \geq 3, \label{eq:cart11magma}
    \end{align}
\end{subequations}
where $u_i w$ is the concatenation of the symbol $u_i$ with the word $w$, for $i = 1, 2$.

Let $\cW_X = \bigcup_{i \geq 1} \cW_i(X)$ and define two unary maps,
\begin{align*}
    & \mu_1: \cW_X \rightarrow \cW_X, \\
    & \mu_2: \cW_X \rightarrow \cW_X,
\end{align*} 
as follows:
\begin{align*}
    \mu_1(w) & = u_1 w, \\
    \mu_2(w) & = u_2 w.
\end{align*}
The triple \mbox{$(\cW_X, \mu_1, \mu_2)$} will be called the \textbf{Cartesian \mbox{(1,1)-magma} generated by $\bm{X}$}.
\end{definition}

If $X = \{ \epsilon \}$, the sequence of sets $\cW_n(X)$ defining the base set $\cW_X$ of the Cartesian \mbox{(1,1)-magma} begins as follows:
\begin{align*}
    \cW_1(X) & = \{ \epsilon \}, \\
    \cW_2(X) & = \{ u_1 \epsilon \}, \\
    \cW_3(X) & = \{ u_1 u_1 \epsilon, \  u_2 \epsilon \}, \\
    \cW_4(X) & = \{ u_1 u_1 u_1 \epsilon, \  u_1 u_2 \epsilon, \ u_2 u_1 \epsilon \}, \\
    \cW_5(X) & = \{ u_1 u_1 u_1 u_1 \epsilon, \  u_1 u_1 u_2 \epsilon, \ u_1 u_2 u_1 \epsilon, \ u_2 u_1 u_1 \epsilon, \ u_2 u_2 \epsilon \}, \\
    & \ \, \vdots
\end{align*}

Having defined the Cartesian \mbox{(1,1)-magma}, it is possible to  prove that it is free directly from Definition \ref{def:freenmagma} without introducing any norm. However we will take the shorter route by showing there exists a norm, hence along with the other conditions required  by Theorem \ref{thm:uniqFactNormedThenFree}, we will prove it is free.
 
\begin{theorem}
The Cartesian \mbox{(1,1)-magma} of Definition \ref{def:standard11magma}  is a free \mbox{(1,1)-magma}.
\end{theorem}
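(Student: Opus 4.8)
The plan is to invoke Theorem \ref{thm:uniqFactNormedThenFree}, so that it suffices to verify the four checklist conditions (i)--(iv) stated immediately after that theorem: injectivity of each map, disjointness of the images, existence of a norm, and a non-empty finite set of irreducibles. The single structural fact underlying all four is the prefix description of the base set: unwinding the recursion in Definition \ref{def:standard11magma}, every element of $\cW_X$ is a word of the form $u_{i_1} u_{i_2} \cdots u_{i_m} x$ with each $i_j \in \{1, 2\}$ and a unique terminal letter $x \in X$. In particular, each word either begins with $u_1$, begins with $u_2$, or consists of a single letter of $X$, and these three cases are mutually exclusive and exhaustive. I would establish this prefix description first, since it feeds every subsequent step.

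For (i), injectivity of $\mu_1$ and $\mu_2$ is just left-cancellation of concatenation: $u_i w = u_i w'$ forces $w = w'$. For (ii), the image $\text{Img}(\mu_1)$ is exactly the set of words beginning with $u_1$ and $\text{Img}(\mu_2)$ the set of words beginning with $u_2$; since $u_1 \neq u_2$, these are disjoint.

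For (iii), I would define the norm by assigning weight $1$ to each letter of $X$ and weight $i$ to each symbol $u_i$, so that $\norm{u_{i_1} \cdots u_{i_m} x} = 1 + \sum_{j} i_j$. This lands in $\mathbb{N}$ because every word contains its terminal $X$-letter, and it is well-defined precisely because the terminal letter and the list of prepended symbols are uniquely determined by the word. Prepending $u_1$ raises the norm by $1$ and prepending $u_2$ by $2$, so $\norm{\mu_1(w)} = \norm{w} + 1 > \norm{w}$ and $\norm{\mu_2(w)} = \norm{w} + 2 > \norm{w}$, which are exactly the super-additive conditions of Definition \ref{def:norm} for unary maps. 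For (iv), the set of reducibles $\cW_X^+ = \text{Img}(\mu_1) \cup \text{Img}(\mu_2)$ is, by the prefix description, the set of words beginning with some $u_i$, so the irreducibles are the words with no prepended symbol, namely $\cW_X^0 = X = \cW_1(X)$, which is non-empty and finite by the hypothesis on $X$. With (i)--(iv) in hand, Theorem \ref{thm:uniqFactNormedThenFree} gives that the Cartesian (1,1)-magma is free, generated by $X$.

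Because every step reduces to an elementary property of words, I do not anticipate a genuine obstacle here; the one point that warrants care is the well-definedness claim underlying (iii) and (iv) — that each word admits a \emph{unique} decomposition into a string of prepended symbols followed by a single terminal $X$-letter — which is exactly what the prefix description in the first paragraph secures once and for all.
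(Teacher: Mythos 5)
Your proposal is correct and follows essentially the same route as the paper: verify injectivity of $\mu_1$ and $\mu_2$, disjointness of their images, the existence of a norm, and that the irreducibles are exactly $X$, then invoke Theorem \ref{thm:uniqFactNormedThenFree}. Your explicit weight-based norm $\norm{u_{i_1}\cdots u_{i_m}x} = 1 + \sum_j i_j$ coincides with the paper's norm $\norm{w} = n$ for $w \in \cW_n(X)$, and your prefix-description lemma makes explicit the well-definedness that the paper leaves implicit, but this is a presentational refinement rather than a different argument.
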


We will show that \mbox{($\mathcal{W}_X$, $\mu_1$, $\mu_2$)} is a unique factorisation normed \mbox{(1,1)-magma} with set of irreducibles equal to $X$. Then, by Theorem \ref{thm:uniqFactNormedThenFree}, we will have that \mbox{($\mathcal{W}_X$, $\mu_1$, $\mu_2$)} is a free \mbox{(1,1)-magma} generated by $X$.

\begin{proof}
Suppose that $w_1, w_2 \in \cW_X$ are such that $\mu_1(w_1) = \mu_1(w_2)$. We have $\mu_1(w_1) = u_1 w_1$ and $\mu_1(w_2) = u_1 w_2$ and hence $u_1 w_1 = u_1 w_2$  
thus have $w_1 = w_2$. Similarly we have that $\mu_2$ is injective since $\mu_2(w_1) = \mu_2(w_2)$ for $w_1, w_2 \in \cW_X$ implies $u_2 w_1 = u_2 w_2$ and hence $w_1 = w_2$. Thus we have that both $\mu_1$ and $\mu_2$ are injective.
 To show $\text{Img}(\mu_1) \cap \text{Img}(\mu_2) = \emptyset$, proceed by contradiction. Suppose that $$\text{Img}(\mu_1) \cap \text{Img}(\mu_2) \neq \emptyset$$ and take $w \in \text{Img}(\mu_1) \cap \text{Img}(\mu_2)$. Since $w \in \text{Img}(\mu_1)$, we have $w = \mu_1(w_1) = u_1 w_1$ for some $w_1 \in \cW_X$. Similarly, since $w \in \text{Img}(\mu_2)$, we must have $w = \mu_2(w_2) = u_2 w_2$ for some $w_2 \in \cW_X$. Therefore $u_1 w_1 = u_2 w_2$ and hence $u_1 = u_2$, thus giving a contradiction.

Thus the maps $\mu_1$ and $\mu_2$ are injective and $\text{Img}(\mu_1) \cap \text{Img}(\mu_2) = \emptyset$, so \mbox{($\mathcal{W}_X$, $\mu_1$, $\mu_2$)} is a unique factorisation \mbox{(1,1)-magma}.

The set of irreducibles is $X$ since this is the complement of $\text{Img}(\mu_1) \cup \text{Img}(\mu_2)$.

Take $w \in \cW_X$, supposing that $w \in \cW_n(X)$ and hence that $\norm{w} = n$. 
We have from \eqref{eq:cart11magma} that $\mu_1(w) \in \cW_{n + 1}(X)$ and $\mu_2(w) \in \cW_{n + 2}(X)$. Therefore $\norm{\mu_1(w)} = n + 1$ and $\norm{\mu_2(w)} = n + 2$. Thus $\norm{\mu_1(w)} > \norm{w}$ and $\norm{\mu_2(w)} > \norm{w}$ and so by Definition \ref{def:norm}, $\norm{\cdot}$ is a norm.
Therefore \mbox{($\mathcal{W}_X$, $\mu_1$, $\mu_2$)} is a normed \mbox{(1,1)-magma}.
\end{proof}


\begin{proposition} \label{prop:fib11magma}
Let ($\mathcal{W}_{X}$, $\mu_1$, $\mu_2$) be the Cartesian \mbox{(1,1)-magma} generated by the set $X$, where $\card{X} = p$, and define the map $\norm{\cdot}_{F}: \mathcal{W}_{X} \rightarrow{ \mathbb{N}}$ by $\norm{m}_F = n$ when $m \in \mathcal{W}_n(X)$. If
\begin{equation*}
    N_n = \{ m \in \mathcal{W}_X: \, \norm{m}_F = n \}, \quad n \geq 1,
\end{equation*}
then
\begin{equation*}
    \card{N_n} = F_n(p), \qquad n \geq 1,
\end{equation*}
where $F_n(p)$ is the $n$th $p$-Fibonacci number of Definition \ref{def:pSequences}.
\end{proposition}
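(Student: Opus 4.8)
The plan is to show that the cardinalities $a_n := \card{N_n} = \card{\cW_n(X)}$ satisfy exactly the defining recurrence of the $p$-Fibonacci numbers, and then to conclude by induction. First I would check that $\norm{\cdot}_F$ is well defined, i.e.\ that the sets $\cW_n(X)$ are pairwise disjoint, so that indeed $N_n = \cW_n(X)$. Every $w \in \cW_X$ is a word $s_1 \cdots s_\ell\, x$ with each $s_j \in \{u_1, u_2\}$ and $x \in X$; assigning weight $1$ to each $u_1$ and weight $2$ to each $u_2$, a straightforward induction on $n$ using \eqref{eq:cart11magma} shows that every $w \in \cW_n(X)$ has total weight $1 + \sum_{j=1}^{\ell} \mathrm{wt}(s_j) = n$ (the leading $1$ accounting for the terminal symbol $x$). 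Since this weight depends only on $w$, no word can lie in two different $\cW_n(X)$, which gives the required disjointness and hence $\card{N_n} = a_n$.

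Next I would establish the recurrence for $a_n$. For the base cases, $a_1 = \card{\cW_1(X)} = \card{X} = p = F_1(p)$, and since $\mu_1$ is injective (shown in the preceding theorem), $a_2 = \card{\{u_1 w : w \in \cW_1(X)\}} = \card{\cW_1(X)} = p$, which equals $F_2(p) = F_1(p) + F_0(p) = p + 0$. For $n \geq 3$, the defining equation \eqref{eq:cart11magma} expresses $\cW_n(X)$ as the union $\mu_1(\cW_{n-1}(X)) \cup \mu_2(\cW_{n-2}(X))$; this union is disjoint because the first set consists of words beginning with $u_1$ and the second of words beginning with $u_2$ (equivalently, $\mathrm{Img}(\mu_1) \cap \mathrm{Img}(\mu_2) = \emptyset$, again from the preceding theorem), and both maps are injective. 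Therefore $a_n = a_{n-1} + a_{n-2}$.

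Finally I would compare the two sequences directly. Both $\{a_n\}_{n \geq 1}$ and $\{F_n(p)\}_{n \geq 1}$ agree at $n = 1$ and $n = 2$ and satisfy the same second-order recurrence $t_n = t_{n-1} + t_{n-2}$ for $n \geq 3$, so strong induction yields $a_n = F_n(p)$ for all $n \geq 1$, as claimed. The only point requiring care is the treatment of the low-index terms: because $\cW_2(X)$ has no $u_2$-part, the Fibonacci recurrence for the $a_n$ begins only at $n = 3$, which matches the fact that $F_0(p) = 0$; consequently the two base cases must be verified by hand rather than read off from the recurrence. Beyond this the argument is routine, the substantive observation being simply that the set recurrence \eqref{eq:cart11magma} is a disjoint, injective-image union and so transfers verbatim to cardinalities.
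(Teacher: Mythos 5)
Your proof is correct and follows essentially the same route as the paper: identify $N_n$ with $\cW_n(X)$, verify the two base cases $\card{N_1}=\card{N_2}=p$, and read the $p$-Fibonacci recurrence off the set recurrence \eqref{eq:cart11magma} for $n\geq 3$. The only difference is that you make explicit two points the paper leaves implicit — the pairwise disjointness of the $\cW_n(X)$ (so that $\norm{\cdot}_F$ is well defined) and the disjointness/injectivity needed to pass from the union to a sum of cardinalities — which is a welcome tightening rather than a new argument.
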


\begin{proof}
First, note that $N_n = \cW_n(X)$ since $\norm{m}_F = n$ if and only if $m \in \cW_n(X)$. Now, we have $\cW_1(X) = X$ so $\card{N_1} = \card{\cW_1(X)} = \card{X} = p$, and $\cW_2(X) = \{ u_1 w : \ w \in \cW_1(X) \}$ so
\begin{equation*}
    \card{N_2} = \card {\cW_2(X)} = \card{ \{ u_1 w : \, w \in \cW_1(X) \} } = \card{\cW_1(X)} = p.
\end{equation*}
For $n \geq 3$, \eqref{eq:cart11magma} gives
\begin{align*}
    \card{N_n} & = \card{ \{ u_1 w : \, w \in \cW_{n - 1}(X) \} } + \card{ \{ u_2 w : \, w \in \cW_{n - 2}(X) \} } \\
    & = \card{\cW_{n - 1}(X)} + \card{\cW_{n - 2}(X)} \\
    & = \card{N_{n - 1}} + \card{N_{n - 2}}
\end{align*}
This is exactly the $p$-Fibonacci recurrence \eqref{eq:pFibonacciRecurrence}.
\end{proof}

\begin{corollary}
Let ($\mathcal{W}_{\epsilon}$, $\mu_1$, $\mu_2$) be the Cartesian \mbox{(1,1)-magma} generated by the single element $\epsilon$. If $N_n$ and $\norm{\cdot}_{F}: \mathcal{W}_{\epsilon} \rightarrow{ \mathbb{N}}$ are as defined in Proposition \ref{prop:fib11magma}, then
\begin{equation*}
    \lvert N_n \rvert = F_n, \qquad n \geq 1,
\end{equation*}
where $F_n$ is the $n$th Fibonacci number.
\end{corollary}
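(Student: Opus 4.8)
The plan is to obtain this corollary as the immediate specialisation of Proposition \ref{prop:fib11magma} to the single-generator case. First I would observe that $\mathcal{W}_\epsilon$ is precisely the Cartesian \mbox{(1,1)-magma} generated by the set $X = \{ \epsilon \}$, for which $\card{X} = 1$. Setting $p = 1$ in Proposition \ref{prop:fib11magma} therefore gives $\card{N_n} = F_n(1)$ for all $n \geq 1$, with $N_n$ and $\norm{\cdot}_F$ exactly the objects defined there. No further construction is needed; the whole content of the proof reduces to identifying the value $F_n(1)$.

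It then remains only to show $F_n(1) = F_n$, the ordinary $n$th Fibonacci number. I would do this by comparing defining recurrences. By Definition \ref{def:pSequences}(i), the $p$-Fibonacci numbers satisfy the recurrence \eqref{eq:pFibonacciRecurrence} with $F_0(p) = 0$ and $F_1(p) = p$, so substituting $p = 1$ yields $F_0(1) = 0$, $F_1(1) = 1$, and $F_n(1) = F_{n-1}(1) + F_{n-2}(1)$ for $n \geq 2$. These are exactly the initial conditions and the recurrence defining the usual Fibonacci sequence in the convention $F_0 = 0$, $F_1 = 1$. A routine induction on $n$, using that two sequences agreeing on the first two terms and satisfying the same second-order linear recurrence must coincide, gives $F_n(1) = F_n$ for every $n$, and hence $\card{N_n} = F_n$ for $n \geq 1$.

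There is essentially no obstacle here: the statement is a direct restatement of Proposition \ref{prop:fib11magma} at $p = 1$, and the only point deserving an explicit remark is the verification that the $p$-Fibonacci recurrence collapses, at $p = 1$, to the standard Fibonacci recurrence under the chosen initial-value convention. I would keep the write-up correspondingly brief.
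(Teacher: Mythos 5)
Your proposal is correct and is exactly the argument the paper intends: the corollary is stated as an immediate specialisation of Proposition \ref{prop:fib11magma} to the one-generator case $p = \card{X} = 1$, combined with the observation that Definition \ref{def:pSequences}(i) at $p=1$ reduces to the standard Fibonacci recurrence with $F_0 = 0$, $F_1 = 1$. No gaps; the brief write-up you describe is appropriate.
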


Take the Cartesian \mbox{(1,1)-magma} generated by $\{ \epsilon \}$, ($\mathcal{W}_{\epsilon}$, $\mu_1$, $\mu_2$) and  note that the norm \mbox{$\norm{\cdot}_F: \mathcal{W}_{\epsilon} \rightarrow \mathbb{N}$} from above could equivalently be defined by requiring that 
\begin{subequations} \label{eq:fibNorm0}
    \begin{align}
            \norm{\epsilon}_F & = 1, \\
            \norm{\mu_1(w)}_F & = \norm{w}_F + 1, \\
            \norm{\mu_2(w)}_F & = \norm{w}_F + 2,
    \end{align}
\end{subequations}
for all $w \in \mathcal{W}_{\epsilon}$. 


This motivates the following definition. We seek to characterise when a \mbox{(1,1)-magma} is associated with the Fibonacci numbers. As we have noted, we can define many norms on the same (1,1)-magma. Defining a norm $\norm{\cdot}_F$ which satisfies \eqref{eq:fibNorm0} gives us the Fibonacci numbers. Thus we define a Fibonacci normed \mbox{(1,1)-magma} to be a free (1,1)-magma with a single generator \textit{along with} a particular norm function which satisfies \eqref{eq:fibNorm0} for the relevant \mbox{(1,1)-magma} generator and unary maps.

\begin{definition}[Fibonacci normed \mbox{(1,1)-magma}] \label{def:fibonacci11magma}
Let \mbox{($\cM$, $f_1$, $f_2$)} be a unique factorisation normed \mbox{(1,1)-magma} with only one irreducible element, $\epsilon$. Let $\norm{\cdot}_F: \cM \rightarrow \mathbb{N}$ be a norm satisfying
\begin{subequations} \label{eq:fibNorm}
    \begin{align}
        \norm{\epsilon}_F & = 1, \\
        \norm{f_1(m)}_F & = \norm{m}_F + 1, \\
        \norm{f_2(m)}_F & = \norm{m}_F + 2,
    \end{align}
\end{subequations}
for all $m \in \cM$. Then \mbox{($\cM$, $f_1$, $f_2$)} with the norm $\norm{\cdot}_F$ is called a \textbf{Fibonacci normed \mbox{(1,1)-magma}}.
\end{definition}

In the remainder of this section, we reference a number of combinatorial structures which are counted by the Fibonacci numbers. Further details of these are provided in Appendix \ref{appendix:Fibonacci}. We take the convention that the two unary maps are called $f$ and $g$. We make it clear that we are using the maps specific to a certain family by placing a subscript on each of the maps. This subscript contains the number assigned to that family in the appendix. We choose to call the unique generator in each family $\epsilon$, and make it clear which family it comes from via the subscript. We name the maps in such a way that our \mbox{(1,1)-magma} is \mbox{\mbox{($\cM$, $f$, $g$)}}, and hence
\begin{equation}
    \norm{f(m)} = \norm{m} + 1, \qquad \norm{g(m)} = \norm{m} + 2,
\end{equation}
for all $m \in \cM$.

We have seen that any Fibonacci normed \mbox{(1,1)-magma} is such that the number of elements of the base set with norm $n$ is given by the $n$th Fibonacci number. We now present a simple example of a Fibonacci normed \mbox{(1,1)-magma}, taking the family of Fibonacci tilings \fibFamRef{fibFamily:tilings}. This family arises by considering the number of ways to tile a $1 \times n$ board using $1 \times 1$ squares and $1 \times 2$ dominoes. The number of ways to tile such a board is given by $F_n$. We can define the Fibonacci tiling \mbox{(1,1)-magma} \fibMag{fibFamily:tilings} as follows:
\begin{itemize}
    \item Take the base set \fibBaseRef{fibFamily:tilings} to be the set of all tilings of a $1 \times n$ board using $1 \times 1$ squares and $1 \times 2$ dominoes, for all $n \in \mathbb{N}_0$. Note that we consider the trivial empty tiling to be the only way to tile a $1 \times 0$ board (i.e. an empty board).
    \item Define one unary map to take a tiling of a $1 \times n$ board and add a single $1 \times 1$ square to the right to give a tiling of a $1 \times (n + 1)$ board. Call this map $f_{\ref{fibFamily:tilings}}$. Schematically:
    \begin{align*} \def\scl{0.5}
        f_{\ref{fibFamily:tilings}} \left(
        \begin{tikzpicture}[baseline={([yshift=-.5ex]current bounding box.center)}, scale = \scl]
            \dummyNodes[0.5][-0.2][0.5][1.2]
            \draw[fill = \colOne, opacity = \opac] (0,0) rectangle (6,1);
            \node at (3,0.5) {\footnotesize $t$};
        \end{tikzpicture}
        \right)
        =
        \begin{tikzpicture}[baseline={([yshift=-.5ex]current bounding box.center)}, scale = \scl]
            \dummyNodes[0.5][-0.2][0.5][1.2]
            \draw[fill = \colOne, opacity = \opac] (0,0) rectangle (6,1);
            \node at (3,0.5) {\footnotesize $t$};
            \draw (6,0) \onetile \finaledge;
        \end{tikzpicture}
    \end{align*}
    \item Define the other unary map to take a tiling of a $1 \times n$ board and add a $1 \times 2$ domino to the right to give a tiling of a $1 \times (n + 2)$ board. Call this map $g_{\ref{fibFamily:tilings}}$. Schematically:
    \begin{align*} \def\scl{0.5}
        g_{\ref{fibFamily:tilings}} \left(
        \begin{tikzpicture}[baseline={([yshift=-.5ex]current bounding box.center)}, scale = \scl]
            \dummyNodes[0.5][-0.2][0.5][1.2]
            \draw[fill = \colOne, opacity = \opac] (0,0) rectangle (6,1);
            \node at (3,0.5) {\footnotesize $t$};
        \end{tikzpicture}
        \right)
        =
        \begin{tikzpicture}[baseline={([yshift=-.5ex]current bounding box.center)}, scale = \scl]
            \dummyNodes[0.5][-0.2][0.5][1.2]
            \draw[fill = \colOne, opacity = \opac] (0,0) rectangle (6,1);
            \node at (3,0.5) {\footnotesize $t$};
            \draw (6,0) \twotiles \finaledge;
        \end{tikzpicture}
    \end{align*}
    \item The only generator is the trivial empty tiling of an empty board: $\epsilon_{\ref{fibFamily:tilings}} = \emptyset$. This is the only element in the base set which is not in the image of one of the two maps.
\end{itemize}

It is simple to see that \fibMag{fibFamily:tilings} is a unique factorisation (1,1)-magma. This is because we can form any tiling of a $1 \times n$ board by applying a unique sequence of compositions of the two maps to the generator. Take for example the following tiling:
\begin{align*}
    \begin{tikzpicture}[baseline={([yshift=-.5ex]current bounding box.center)}, scale = 0.5]
        \draw (0,0) \onetile \twotiles \onetile \finaledge;
    \end{tikzpicture}
\end{align*}
We see that this can be constructed as follows:
\begin{align*}
    f_{\ref{fibFamily:tilings}} \left( g_{\ref{fibFamily:tilings}} \left( f_{\ref{fibFamily:tilings}} \left(
    \epsilon
    \right) \right) \right)
    = 
    f_{\ref{fibFamily:tilings}} \left( g_{\ref{fibFamily:tilings}} \left( f_{\ref{fibFamily:tilings}} \left( 
    \emptyset
    \right) \right) \right)
    =
    f_{\ref{fibFamily:tilings}} \left( g_{\ref{fibFamily:tilings}} \left( \ 
    \begin{tikzpicture}[baseline={([yshift=-.5ex]current bounding box.center)}, scale = 0.5]
        \draw (0,0) \onetile \finaledge;
        \dummyNodes[0.5][0.2][0.5][0.8]
    \end{tikzpicture}
    \ \right) \right)
    =
    f_{\ref{fibFamily:tilings}} \left(
    \begin{tikzpicture}[baseline={([yshift=-.5ex]current bounding box.center)}, scale = 0.5]
        \draw (0,0) \onetile \twotiles \finaledge;
        \dummyNodes[0.2][0.2][2.8][0.8]
    \end{tikzpicture}
    \right)
    =
    \begin{tikzpicture}[baseline={([yshift=-.5ex]current bounding box.center)}, scale = 0.5]
        \draw (0,0) \onetile \twotiles \onetile \finaledge;
    \end{tikzpicture}
\end{align*}

We define the norm $\norm{\cdot}_F$ of a tiling of a $1 \times n$ board to be $n + 1$. Thus we see that the norm satisfies \eqref{eq:fibNorm} of Definition \ref{def:fibonacci11magma}. That is,
\begin{align*}
    \norm{\epsilon_{\ref{fibFamily:tilings}}}_F & = 1, \\
    \norm{f_{\ref{fibFamily:tilings}}(t)}_F & = \norm{t}_F + 1, \\
    \norm{g_{\ref{fibFamily:tilings}}(t)}_F & = \norm{t}_F + 2,
\end{align*}
for all $t \in \mathFibBaseRef{fibFamily:tilings}$.

Therefore the \mbox{(1,1)-magma} \fibMag{fibFamily:tilings} along with the norm just defined is indeed a Fibonacci normed \mbox{(1,1)-magma}.

Some Fibonacci families are such that there is no natural interpretation of the generator $\epsilon$. This is usually the case when there are $F_{n + 2}$ objects with traditional size parameter $n$. This is because the generator then corresponds to an object with traditional size parameter equal to -1. In such cases, we define the \mbox{(1,1)-magma} \mbox{($\cM$, $f$, $g$)} by the following procedure. We simply take $\epsilon$ to be an arbitrary symbol denoting the generator and define the two maps $f$ and $g$ as follows:
\begin{enumerate}[label=(\roman*)]
    \item Define $f(\epsilon)$.
    \item Define $g(\epsilon)$.
    \item For all $m \in \cM \backslash \{ \epsilon \}$, define $f(m)$.
    \item For all $m \in \cM \backslash \{ \epsilon \}$, define $g(m)$.
\end{enumerate}
This process completely specifies the two maps and the base set, so we have a well-defined free \mbox{(1,1)-magma}. For examples of when this procedure is required, see the following Fibonacci families in Appendix \ref{appendix:Fibonacci}:
\begin{itemize}
    \item \fibFamRef{fibFamily:binarySeqs}: Binary sequences with no consecutive 1's,
    \item \fibFamRef{fibFamily:subsets}: Subsets with no consecutive integers.
\end{itemize}

\subsection{Free \mbox{(1,1)-magma} isomorphisms and a universal bijection }

We have seen that any Fibonacci normed \mbox{(1,1)-magma} is such that the number of elements of the base set with norm $n$ is given by the $n$th Fibonacci number. Using the result of Proposition \ref{prop:freenMagmaIsomorphism}, there exists a unique \mbox{(1,1)-magma} isomorphism between any two Fibonacci normed \mbox{(1,1)-magmas}. As we will see, this isomorphism preserves the norms of the objects in the case that both \mbox{(1,1)-magmas} are Fibonacci normed \mbox{(1,1)-magmas}. Thus this gives a size-preserving bijection between the two Fibonacci families. We therefore make explicit just how this isomorphism is defined so that we are able to make use of it for specific families.

\begin{definition}[Universal bijection] \label{def:11univBij}
Let \mbox{($\cM$, $f$, $g$)} and \mbox{($\cN$, $f'$, $g'$)} be free \mbox{(1,1)-magmas} with generating sets $X_{\cM}$ and $X_{\cN}$ respectively, with $\card{X_{\cM}} = \card{X_{\cN}}$. Let $\sigma: X_{\cM} \rightarrow X_{\cN}$ be any bijection, and define the map $\Upsilon: \cM \rightarrow \cN$ as follows: for all $m \in \cM \setminus X_{\cM}$,
\begin{enumerate}[label=(\roman*)]
    \item Decompose $m$ into an expression in terms of generators $\epsilon_i \in X_{\cM}$ and the unary maps $f$ and $g$.
    \item In the decomposition of $m$, replace every occurrence of $\epsilon_i$ with $\sigma(\epsilon_i)$, every occurence of $f$ with $f'$ and every occurrence of $g$ with $g'$. Call this expression $\upsilon(m)$.
    \item Define $\Upsilon(m)$ to be $\upsilon(m)$, that is, evaluate all maps in $\upsilon(m)$ to give an element of $\cN$.
\end{enumerate}
\end{definition}

This leads to the following proposition. The proposition follows immediately from the fact that $\Upsilon$ is equal to the map $\Gamma$ from Proposition \ref{prop:freenMagmaIsomorphism}.

\begin{proposition} \label{cor:11univBij}
Let $\Upsilon: \cM \rightarrow \cN$ be the map of Definition \ref{def:11univBij}. Then $\Upsilon$ is a free \mbox{(1,1)-magma} isomorphism.
\end{proposition}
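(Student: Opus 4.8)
The plan is to show that $\Upsilon$ coincides with the canonical isomorphism $\Gamma$ produced by Proposition \ref{prop:freenMagmaIsomorphism}; once this identification is made the conclusion is immediate, since $\Gamma$ is already known to be a (1,1)-magma isomorphism. To set this up, let $Y$ be an index set with a bijection $i: Y \to X_{\cM}$ onto the generators of $\cM$, and let $j = \sigma \circ i: Y \to X_{\cN} \subseteq \cN$. Because $\cM$ is free, Definition \ref{def:freenmagma} supplies a \emph{unique} (1,1)-magma morphism $\theta: \cM \to \cN$ with $\theta \circ i = j$, and this $\theta$ is exactly the map $\Gamma$ of Proposition \ref{prop:freenMagmaIsomorphism} (taken with $\cM' = \cN$ and the lower map equal to $j$), which that proposition shows to be an isomorphism. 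Thus it suffices to verify two things about $\Upsilon$: that it is a (1,1)-magma morphism, and that $\Upsilon \circ i = j$. Note first that $\Upsilon$ is well-defined: step (i) of Definition \ref{def:11univBij} uses the decomposition $\pi$ of \eqref{eq:pi}, which is unique by unique factorisation (Definition \ref{def:uniqFact}) and terminates by the finite decomposition property (Proposition \ref{prop:normedIFF}), so the substituted expression $\upsilon(m)$ and its evaluation are unambiguous.

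For the generator condition, observe that for $\epsilon \in X_{\cM}$ the decomposition is trivial, $\pi(\epsilon) = \epsilon$, so applying the recipe of Definition \ref{def:11univBij} literally gives $\Upsilon(\epsilon) = \sigma(\epsilon)$; hence for each $y \in Y$ we have $\Upsilon(i(y)) = \sigma(i(y)) = j(y)$, that is, $\Upsilon \circ i = j$. For the morphism property, I would argue by examining the effect of the recipe on $f(m)$ and $g(m)$. By unique factorisation, $f(m)$ is reducible with outer map $f$ and unique factor $m$, so its decomposition is obtained by prepending the symbol $f$ to the decomposition of $m$; the substitution step then replaces this outer $f$ by $f'$ and, recursively, produces $\upsilon(m)$ as its argument, so that evaluation yields $\Upsilon(f(m)) = f'(\Upsilon(m))$. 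The identical argument with $g, g'$ gives $\Upsilon(g(m)) = g'(\Upsilon(m))$. Therefore $\Upsilon$ is a (1,1)-magma morphism.

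Combining these, $\Upsilon$ is a (1,1)-magma morphism satisfying $\Upsilon \circ i = j$, so by the uniqueness clause of Definition \ref{def:freenmagma} it must equal $\theta = \Gamma$. Since $\Gamma$ is an isomorphism by Proposition \ref{prop:freenMagmaIsomorphism}, we conclude that $\Upsilon$ is a free (1,1)-magma isomorphism.

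I expect the only delicate point to be the morphism verification in the second paragraph: one must phrase the decompose--substitute--evaluate procedure precisely enough to see that it commutes with $f$ and $g$. The cleanest formulation is to regard $\upsilon$ as acting on the decomposition expression (a rooted tree whose internal nodes are labelled $f$ or $g$ and whose leaves are generators), relabelling the leaves by $\sigma$ and the internal labels by $f \mapsto f'$ and $g \mapsto g'$, and then to note that the outer application of $f$ (resp.\ $g$) corresponds precisely to the root of the tree; everything else is bookkeeping resting on the unique factorisation hypothesis.
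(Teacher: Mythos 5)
Your proposal is correct and follows essentially the same route as the paper, which simply asserts that the proposition ``follows immediately from the fact that $\Upsilon$ is equal to the map $\Gamma$ from Proposition \ref{prop:freenMagmaIsomorphism}.'' You have merely filled in the verification (that $\Upsilon$ is a morphism with $\Upsilon \circ i = j$, hence equals $\Gamma$ by the uniqueness clause of freeness) that the paper leaves implicit.
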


Schematically, we can write $\Upsilon$ as follows:
\begin{equation}
    m \qquad \overset{\text{decompose}}{\xrightarrow{\hspace*{1.5cm}}} \qquad \underset{\substack{\epsilon_i \rightarrow \, \sigma(\epsilon_i), \ f \rightarrow \, f', \ g \rightarrow \, g'}}{\text{substitute}} \qquad \overset{\text{evaluate}}{\xrightarrow{\hspace*{1.5cm}}} \qquad n.
\end{equation}

Since $\Upsilon$ is an isomorphism between the free \mbox{(1,1)-magmas} \mbox{($\cM$, $f$, $g$)} and \mbox{($\cN$, $f'$, $g'$)}, we have that $\Upsilon$ defines a bijection between the base sets $\cM$ and $\cN$. Further to this, it gives us that this bijection is recursive: if $m = f(m_0)$, then
\begin{equation*}
    \Upsilon(m) = f'(\Upsilon(m_0)),
\end{equation*}
and if $m = g(m_0)$, then
\begin{equation*}
    \Upsilon(m) = g'(\Upsilon(m_0)).
\end{equation*}

It is also important to note that if the free \mbox{(1,1)-magmas} have norms satisfying certain conditions, then the norm is preserved under the map $\Upsilon$. Suppose that \mbox{($\cM$, $f$, $g$)} and \mbox{($\cN$, $f'$, $g'$)} have norms \mbox{$\norm{\cdot}_{\cM}: \cM \rightarrow \mathbb{N}$} and \mbox{$\norm{\cdot}_{\cN}: \cN \rightarrow \mathbb{N}$} respectively. If:
\begin{enumerate}[label=(\roman*)]
    \item \label{cond:fib1} $\norm{m}_{\cM} = \norm{\sigma(m)}_{\cN}$ for all $m \in X_{\cM}$,
    \item \label{cond:fib2} $\norm{f(m)}_{\cM} = \norm{m}_{\cM} + \kappa_1$ for all $m \in \cM$ and $\norm{f'(n)}_{\cN} = \norm{n}_{\cN} + \kappa_1$ for all $n \in \cN$, where $\kappa_1 \in \mathbb{N}$, and
    \item \label{cond:fib3} $\norm{g(m)}_{\cM} = \norm{m}_{\cM} + \kappa_2$ for all $m \in \cM$ and $\norm{g'(n)}_{\cN} = \norm{n}_{\cN} + \kappa_2$ for all $n \in \cN$, where $\kappa_2 \in \mathbb{N}$,
\end{enumerate}
then we have
\begin{equation*}
    \norm{m}_{\cM} = \norm{\Upsilon(m)}_{\cN}, \qquad m \in \cM.
\end{equation*}
This result follows immediately by considering the decomposed expressions for $m$ and $\Upsilon(m)$.

Combinatorially we are primarily interested in bijections between structures of the same ``size'' and thus we are interested in bijections which preserve the norm. This is the case for Fibonacci normed \mbox{(1,1)-magmas} which are invariant under the map $\Upsilon$. 

We now present a number of examples illustrating the universal bijection of Definition \ref{def:11univBij}. We will consider the following Fibonacci normed \mbox{(1,1)-magmas}:
\begin{itemize}
    \item Fibonacci tilings \fibMag{fibFamily:tilings},
    \item Reflections through two plates of glass \fibMag{fibFamily:glassReflections},
    \item Binary sequences with no consecutive 1's \fibMag{fibFamily:binarySeqs},
    \item Compositions with no 1's \fibMag{fibFamily:compositionsNo1s}.
\end{itemize}
See Appendix \ref{appendix:Fibonacci} for the definitions and details of each of these \mbox{(1,1)-magmas}.

We begin by demonstrating the bijection from the family of Fibonacci tilings to the family of reflections through two plates of glass. First, take a Fibonacci tiling, and decompose it down into its factorised form:
\begin{equation*}    
    \begin{tikzpicture}[baseline={([yshift=-.5ex]current bounding box.center)}, scale = 0.5]
        \draw (0,0) \onetile \twotiles \onetile \finaledge;
    \end{tikzpicture}
    =
    f_{\ref{fibFamily:tilings}} \left( \
    \begin{tikzpicture}[baseline={([yshift=-.5ex]current bounding box.center)}, scale = 0.5]
        \draw (0,0) \onetile \twotiles \finaledge;
    \end{tikzpicture}
    \ \right)
    =
    f_{\ref{fibFamily:tilings}} \left( g_{\ref{fibFamily:tilings}} \left( \ 
    \begin{tikzpicture}[baseline={([yshift=-.5ex]current bounding box.center)}, scale = 0.5]
        \draw (0,0) \onetile \finaledge;
    \end{tikzpicture}
    \ \right) \right)
    =
    f_{\ref{fibFamily:tilings}} \left( g_{\ref{fibFamily:tilings}} \left( f_{\ref{fibFamily:tilings}} \left( 
    \emptyset
    \right) \right) \right)
    =
    f_{\ref{fibFamily:tilings}} \left( g_{\ref{fibFamily:tilings}} \left( f_{\ref{fibFamily:tilings}} \left( 
    \epsilon_{\ref{fibFamily:tilings}}
    \right) \right) \right)
\end{equation*}
Next, we replace every occurrence of the generator $\epsilon_{\ref{fibFamily:tilings}}$ with the generator of the reflections through two plates of glass, $\epsilon_{\ref{fibFamily:glassReflections}}$. We also replace each map $f_{\ref{fibFamily:tilings}}$ with $f_{\ref{fibFamily:glassReflections}}$ and each map $g_{\ref{fibFamily:tilings}}$ with $g_{\ref{fibFamily:glassReflections}}$. After making these substitutions, evaluate the resulting expression to obtain an element of the second family:
\def\scl{0.3}
\begin{equation*}
    f_{\ref{fibFamily:glassReflections}} \left( g_{\ref{fibFamily:glassReflections}} \left( f_{\ref{fibFamily:glassReflections}} \left( 
    \epsilon_{\ref{fibFamily:glassReflections}}
    \right) \right) \right) 
    = f_{\ref{fibFamily:glassReflections}} \left( g_{\ref{fibFamily:glassReflections}} \left(
        \begin{tikzpicture}[baseline={([yshift=-.5ex]current bounding box.center)}, scale = \scl]
        \plates[3]
        \dummyNodes[1][-3.2][1][1.2]
        \draw[red, ->] (0.5,1) -- (2.5,-3);
    \end{tikzpicture}
    \right) \right)
    =
    f_{\ref{fibFamily:glassReflections}} \left(
    \begin{tikzpicture}[baseline={([yshift=-.5ex]current bounding box.center)}, scale = \scl]
        \plates[4]
        \dummyNodes[1][-3.2][1][1.2]
        \draw[red] (1,0) \glassDn \glassDn \glassUp \glassDn;
        \draw[red, ->] (0.5,1) -- (1,0) (3,-2) -- (3.5,-3);
    \end{tikzpicture}
    \right)
   = \begin{tikzpicture}[baseline={([yshift=-.5ex]current bounding box.center)}, scale = \scl]
        \plates[5]
        \dummyNodes[1][-3.2][1][1.2]
        \draw[red] (1,0) \glassDn \glassDn \glassUp \glassDn \glassUp \glassUp;
        \draw[red, ->] (0.5,1) -- (1,0) (4,0) -- (4.5,1);
    \end{tikzpicture}
\end{equation*}
Thus the universal bijection maps
\begin{equation*}
    \begin{tikzpicture}[baseline={([yshift=-.5ex]current bounding box.center)}, scale = 0.5]
        \draw (0,0) \onetile \twotiles \onetile \finaledge;
    \end{tikzpicture}
    \quad \mapsto \quad
    \begin{tikzpicture}[baseline={([yshift=-.5ex]current bounding box.center)}, scale = \scl]
        \plates[5]
        \dummyNodes[1][-3.2][1][1.2]
        \draw[red] (1,0) \glassDn \glassDn \glassUp \glassDn \glassUp \glassUp;
        \draw[red, ->] (0.5,1) -- (1,0) (4,0) -- (4.5,1);
    \end{tikzpicture}
\end{equation*}
If instead we were seeking a bijection between Fibonacci tilings and binary sequences with no consecutive 1's, then we would simply replace all parts of the factorised expression for the Fibonacci tiling with the parts corresponding to binary sequences with no consecutive 1's, as follows:
\begin{equation*}
    f_{\ref{fibFamily:binarySeqs}} \left( g_{\ref{fibFamily:binarySeqs}} \left( f_{\ref{fibFamily:binarySeqs}} \left( 
        \epsilon_{\ref{fibFamily:binarySeqs}}
    \right) \right) \right)
    =
    f_{\ref{fibFamily:binarySeqs}} \left( g_{\ref{fibFamily:binarySeqs}} \left(
        \emptyset
    \right) \right)
    =
    f_{\ref{fibFamily:binarySeqs}} \left(
    01
    \right)
    =
    010.
\end{equation*}

Similarly for the Fibonacci family of compositions containing no 1's:
\begin{equation*}
    f_{\ref{fibFamily:compositionsNo1s}} \left( g_{\ref{fibFamily:compositionsNo1s}} \left( f_{\ref{fibFamily:compositionsNo1s}} \left( 
        \epsilon_{\ref{fibFamily:compositionsNo1s}}
    \right) \right) \right) 
    =
    f_{\ref{fibFamily:compositionsNo1s}} \left( g_{\ref{fibFamily:compositionsNo1s}} \left( f_{\ref{fibFamily:compositionsNo1s}} \left(
    2
    \right) \right) \right)
    =
    f_{\ref{fibFamily:compositionsNo1s}} \left( g_{\ref{fibFamily:compositionsNo1s}} \left( 
    3
    \right) \right)
    =
    f_{\ref{fibFamily:compositionsNo1s}} \left(
    3 + 2
    \right)
    =
    3 + 3.
\end{equation*}

So we see that the universal bijection gives each of the following bijections:
\begin{equation*}
    \begin{tikzpicture}
        \node[anchor = south east] at (0,0) (1) { \begin{tikzpicture}[baseline={([yshift=-4ex]current bounding box.center)}, scale = 0.5]
                \draw (0,0) \onetile \twotiles \onetile \finaledge;
            \end{tikzpicture} };
        \node[anchor = north east] at (0,-1) (2) {010};
        \node[anchor = south west] at (3,0) (3) {\begin{tikzpicture}[baseline={([yshift=-.5ex]current bounding box.center)}, scale = \scl]
            \plates[5]
            \dummyNodes[1][-3.2][1][1.2]
            \draw[red] (1,0) \glassDn \glassDn \glassUp \glassDn \glassUp \glassUp;
            \draw[red, ->] (0.5,1) -- (1,0) (4,0) -- (4.5,1);
        \end{tikzpicture}};
        \node[anchor = north west] at (3,-1) (4) {3 + 3};
        \path[->]
            (-0.5,0.2) edge (-0.5,-0.8)
            (-0.5,-0.8) edge (-0.5,0.2)
            (3.5,0.2) edge (3.5,-0.8)
            (3.5,-0.8) edge (3.5,0.2)
            (0.5,0.75) edge (2.5,0.75)
            (2.5,0.75) edge (0.5,0.75)
            (0.5,-1.25) edge (2.5,-1.25)
            (2.5,-1.25) edge (0.5,-1.25)
            (0,0.2) edge (3,-0.8)
            (3,-0.8) edge (0,0.2)
            (0,-0.8) edge (3,0.2)
            (3,0.2) edge (0,-0.8);
    \end{tikzpicture}
\end{equation*}

This demonstrates how useful this universal bijection is. Rather than simply obtaining bijections between families one by one, we see that each time we determine the \mbox{(1,1)-magma} structure of a Fibonacci family, it immediately gives us a bijection to each other Fibonacci family whose \mbox{(1,1)-magma} structure is known. As a result of this, we are able to very quickly build up a large number of bijections.

 \section{Motzkin and Schr\"oder normed (1,2)-magmas} \label{section:motzkinSchroder}

In this section we consider \mbox{(1,2)-magmas} and show how these relate to Motzkin numbers and Schr\"oder numbers.
We adopt the convention that the binary map is always written as an in-fix operator. 
We begin  by constructing an example of a free \mbox{(1,2)-magma}, which we call the Cartesian \mbox{(1,2)-magma}. 

 \subsection{Cartesian \mbox{(1,2)-magma}} \label{sec:cartesian12magma}

First, we introduce and discuss some notation that will be used throughout this section. 
We will use square parentheses when writing $n$-tuples to avoid possible ambiguity arising from the use of round parentheses later. 
Thus we take the $n$-ary Cartesian product to be the following:
\begin{equation*}
    X_1 \times \cdots \times X_n = \left\{ [x_1, \hdots, x_n]: \, x_i \in X_i, \ i \in \{ 1, \hdots, n \} \right\}.
\end{equation*}
and the notation $[X]$ to mean the set
\begin{equation*}
    [X] = \{ [x]: \, x \in X \}.
\end{equation*}
This gives us a set notation for unary maps which we will use to define an explicit \mbox{(1,2)-magma}.


\begin{definition}[Cartesian \mbox{(1,2)-magma}] 
\label{def:cartesian12magma}
Let $X$ be a non-empty finite set. Define the sequence $\cW_n(X)$ of sets of nested 1- and 2-tuples by
\begin{subequations} \label{eq:cart12magmaAll}
    \begin{align}
        \cW_1(X) & = X, \\
        \cW_2(X) & = \left[ \cW_1(X) \right], \\
        \cW_n(X) & = \left[ \cW_{n-1}(X) \right] \union \bigcup_{k = 1}^{n - 2} \left( \cW_k(X) \times \cW_{n - k - 1}(X) \right), \qquad n \geq 3. \label{eq:cart12magma}
    \end{align}
\end{subequations}
Let $\cW_X = \bigcup_{n \geq 1} \cW_n(X)$ and $\cW_X^+ = \cW_X \backslash X$. Define the unary map $\mu: \cW_X \rightarrow \cW_X$ by
\begin{equation}
    \mu(w) = [w], \qquad w \in \cW_X,
\end{equation}
and the binary map $\diamond: \cW_X \times \cW_X \rightarrow \cW_X$ by
\begin{equation}
    w_1 \diamond w_2 = [w_1, w_2], \qquad w_1, w_2 \in \cW_X.
\end{equation}
The triple \mbox{($\cW_X$, $\mu$, $\diamond$)} is called the \textbf{Cartesian \mbox{(1,2)-magma} generated by $\mathbf{X}$}.
\end{definition}

If $X = \{ \epsilon \}$, the sequence of sets $\cW_n(X)$ defining the base set $\cW_X$ of the Cartesian \mbox{(1,2)-magma} begins as follows:
\vspace{-1ex}
\begin{align*}
    \cW_1(X) & = \{ \epsilon \}, \\
    \cW_2(X) & = \{ [ \epsilon ] \}, \\
    \cW_3(X) & = \{ [ [ \epsilon ] ], \quad  [ \epsilon, \epsilon ] \}, \\
    \cW_4(X) & = \{ [ [ [ \epsilon ] ] ], \quad [ [ \epsilon, \epsilon ] ], \quad [ \epsilon, [ \epsilon ] ], \quad [ [ \epsilon ], \epsilon ] \}, \\
    & \ \, \vdots
\end{align*} 


We now prove that the Cartesian \mbox{(1,2)-magma} is free.

\begin{theorem} \label{thm:free12magma}
The Cartesian \mbox{(1,2)-magma} \mbox{($\cW_X$, $\mu$, $\diamond$)} is a free \mbox{(1,2)-magma}.
\end{theorem}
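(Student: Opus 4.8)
The plan is to invoke Theorem \ref{thm:uniqFactNormedThenFree}: it suffices to verify the four conditions listed after that theorem, namely that $\mu$ and $\diamond$ are injective, that $\text{Img}(\mu) \cap \text{Img}(\diamond) = \emptyset$, that a norm exists, and that the set of irreducibles $\cW_X \setminus (\text{Img}(\mu) \cup \text{Img}(\diamond))$ is non-empty and finite. The first two conditions are immediate from the tuple encoding. First I would observe that $\mu(w_1) = \mu(w_2)$ reads $[w_1] = [w_2]$, and equality of $1$-tuples forces $w_1 = w_2$; likewise $w_1 \diamond w_2 = w_1' \diamond w_2'$ reads $[w_1, w_2] = [w_1', w_2']$, and equality of ordered pairs forces $w_1 = w_1'$ and $w_2 = w_2'$. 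For disjointness of the images, note that $\text{Img}(\mu)$ consists entirely of $1$-tuples while $\text{Img}(\diamond)$ consists entirely of $2$-tuples, and a $1$-tuple is never equal to a $2$-tuple; hence the images are disjoint. This already shows that $(\cW_X, \mu, \diamond)$ is a unique factorisation $(1,2)$-magma.

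Next I would identify the irreducibles and exhibit the norm. Since every element of $\cW_X^+ = \cW_X \setminus X$ is by construction either a $1$-tuple $[w]$ (hence in $\text{Img}(\mu)$) or a $2$-tuple $[w_1, w_2]$ (hence in $\text{Img}(\diamond)$), we have $\text{Img}(\mu) \cup \text{Img}(\diamond) = \cW_X^+$, so the set of irreducibles is exactly $X$, which is non-empty and finite by hypothesis. For the norm I would set $\norm{w}_M = n$ whenever $w \in \cW_n(X)$ and check the super-additive conditions of Definition \ref{def:norm}: from \eqref{eq:cart12magmaAll}, $w \in \cW_{n-1}(X)$ gives $\mu(w) = [w] \in \cW_n(X)$, so $\norm{\mu(w)}_M = \norm{w}_M + 1 > \norm{w}_M$; and $w_1 \in \cW_k(X)$, $w_2 \in \cW_{n-k-1}(X)$ give $w_1 \diamond w_2 = [w_1, w_2] \in \cW_n(X)$, so $\norm{w_1 \diamond w_2}_M = \norm{w_1}_M + \norm{w_2}_M + 1 \geq \norm{w_1}_M + \norm{w_2}_M$.

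The one point that genuinely needs an argument, and the step I expect to be the main obstacle, is the well-definedness of this norm, i.e. that the sequence $\{ \cW_n(X) \}_{n \geq 1}$ partitions $\cW_X$, so that each $w$ lies in a unique $\cW_n(X)$. I would prove this by strong induction on $n$, using that the generators in $X$ are atomic symbols (not tuples) and that a nested tuple determines both its arity ($1$ or $2$) and its entries. The inductive claim is that $w \in \cW_m(X) \cap \cW_n(X)$ implies $m = n$: an atomic generator can lie only in $\cW_1(X)$; a $1$-tuple $[w']$ lies in $\cW_n(X)$ iff $w' \in \cW_{n-1}(X)$, whose size is determined by the inductive hypothesis; and a $2$-tuple $[w_1, w_2]$ lies in $\cW_n(X)$ iff $w_1 \in \cW_k(X)$ and $w_2 \in \cW_{n-k-1}(X)$ for the sizes of $w_1$ and $w_2$, which are unique by induction, forcing $n = \norm{w_1}_M + \norm{w_2}_M + 1$. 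With this grading established, all four hypotheses of Theorem \ref{thm:uniqFactNormedThenFree} hold, and we conclude that $(\cW_X, \mu, \diamond)$ is free, generated by $X$. This is the direct analogue of the Cartesian $(1,1)$-magma argument, with word-length replaced by the tuple grading.
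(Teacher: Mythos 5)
Your proposal is correct and follows essentially the same route as the paper: verify injectivity of $\mu$ and $\diamond$ from tuple equality, disjointness of images from tuple arity, identify $X$ as the irreducibles, define the norm by $\norm{w}=n$ for $w\in\cW_n(X)$, and invoke Theorem \ref{thm:uniqFactNormedThenFree}. The only addition is your explicit inductive check that the sets $\cW_n(X)$ partition $\cW_X$ so the norm is well defined --- a detail the paper leaves implicit but which does not change the argument.
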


We will show that \mbox{($\cW_X$, $\mu$, $\diamond$)} is a unique factorisation normed \mbox{(1,2)-magma} with set of irreducibles equal to $X$. Then, by Theorem \ref{thm:uniqFactNormedThenFree}, we will have that \mbox{($\cW_X$, $\mu$, $\diamond$)} is a free \mbox{(1,2)-magma} generated by $X$.

\begin{proof}
Suppose that $w, w' \in \cW_X$ are such that $\mu(w) = \mu(w')$. Then $[w] = [w']$ and hence $w = w'$. Thus $\mu$ is injective. Now suppose $w_1, w_2, w_1', w_2' \in \cW_X$ are such that $w_1 \diamond w_2 = w_1' \diamond w_2'$. Then $[w_1, w_2] = [w_1', w_2']$ and hence $w_1 = w_1', w_2 = w_2'$ and so $\diamond$ is injective. Clearly we have $\text{Img}(\mu) \cap \text{Img}(\diamond) = \emptyset$ since $\text{Img}(\mu)$ contains only 1-tuples and $\text{Img}(\diamond)$ contains only 2-tuples. Thus \mbox{($\cW_X$, $\mu$, $\diamond$)} is a unique factorisation \mbox{(1,2)-magma}.

The set of irreducibles is $X$ since this is the complement of $\text{Img}(\mu) \cup \text{Img}(\diamond)$.


For each $w \in \cW_X$, define $\norm{w} = n$ if $w \in \cW_n(X)$. Now take $w \in \cW_n(X)$. Since $\mu(w) = [w] \in [\cW_n(X)]$, we have $\mu(w) \in \cW_{n + 1}(X)$ from \eqref{eq:cart12magmaAll}. Therefore 
\begin{equation*}
    \norm{\mu(w)} = n + 1 > \norm{w}.
\end{equation*}
Now consider $w_1 \in \cW_{n_1}(X)$ and $w_2 \in \cW_{n_2}(X)$. We have $w_1 \diamond w_2 = [w_1, w_2]$ and hence $w_1 \diamond w_2 \in \cW_{n_1 + n_2 + 1}(X)$ from \eqref{eq:cart12magmaAll}. Therefore
\begin{equation*}
    \norm{w_1 \diamond w_2} = n_1 + n_2 + 1 > \norm{w_1} + \norm{w_2}.
\end{equation*}
Therefore $\norm{\cdot}: \cW_X \rightarrow \mathbb{N}$ is a norm.
\end{proof}

\subsection{Motzkin normed \mbox{(1,2)-magmas}} \label{subsection:motzkin12}

For the purpose of proving Theorem \ref{thm:free12magma} we were required to demonstrate that there exists a norm on the (1,2)-magma \mbox{($\cW_X$, $\mu$, $\diamond$)}. While we could have chosen any function which satisfies Definition \ref{def:norm}, this particular norm was chosen since it gives rise to the Motzkin numbers. 
In Section \ref{sec_schroder} we will define a different norm  on the \emph{same} base set which will give rise to the Schr\"oder numbers. 

\begin{proposition} \label{prop:motzin12magma}
Let \mbox{($\cW_X$, $\mu$, $\diamond$)} be the Cartesian \mbox{(1,2)-magma} generated by the set $X$, where $\card{X} = p$. Define the map $\norm{\cdot}_M:  \cW_X \rightarrow \mathbb{N}$ by $\norm{m}_M = n$ when $m \in \cW_n(X)$, where $\cW_n(X)$ is as defined in Definition \ref{def:cartesian12magma} for $n \in \mathbb{N}$. If
\begin{equation*}
    N_n = \{ m \in \cW_{X}: \, \norm{m}_M = n \}, \qquad n \geq 1,
\end{equation*}
then
\begin{equation*}
    \vert N_n \vert = M_{n - 1}(p), \qquad n \geq 1,
\end{equation*}
where $M_n(p)$ are the $p$-Motzkin numbers from Definition \ref{def:pSequences}.
\end{proposition}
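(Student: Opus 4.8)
The plan is to mirror the argument used for the Fibonacci case in Proposition~\ref{prop:fib11magma}: first reduce the counting problem to computing $\card{\cW_n(X)}$, then extract a recurrence for these cardinalities directly from the defining equations \eqref{eq:cart12magmaAll}, and finally match that recurrence against the $p$-Motzkin recurrence \eqref{eq:pMotzkinRecurrence} by an index shift.

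First I would observe that, because $\norm{m}_M = n$ holds precisely when $m \in \cW_n(X)$, we have $N_n = \cW_n(X)$ and hence $\card{N_n} = \card{\cW_n(X)}$; write $a_n \coloneqq \card{\cW_n(X)}$. The base cases are immediate: $a_1 = \card{X} = p$, and since the bracket map $w \mapsto [w]$ is injective, $a_2 = \card{[\cW_1(X)]} = \card{\cW_1(X)} = p$. These agree with $M_0(p) = M_1(p) = p$.

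Next, for $n \geq 3$ I would read off a recurrence from \eqref{eq:cart12magma}. The crucial point is that the union there is \emph{disjoint}, so the cardinalities add. The term $[\cW_{n-1}(X)]$ consists only of $1$-tuples while each product $\cW_k(X) \times \cW_{n-k-1}(X)$ consists only of $2$-tuples, giving disjointness of the unary part from the binary part; and injectivity of $\diamond$ together with the (already established) pairwise disjointness of the sets $\cW_j(X)$ --- which is exactly what makes $\norm{\cdot}_M$ well defined in Theorem~\ref{thm:free12magma} --- shows that the products for distinct $k$ are mutually disjoint, since a $2$-tuple $[w_1, w_2]$ determines $w_1$ and hence the unique index $k$ with $w_1 \in \cW_k(X)$. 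Using also that $\card{\cW_k(X) \times \cW_{n-k-1}(X)} = a_k\, a_{n-k-1}$, this yields
\[
    a_n = a_{n-1} + \sum_{k=1}^{n-2} a_k\, a_{n-k-1}, \qquad n \geq 3.
\]

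Finally I would set $b_m \coloneqq a_{m+1}$ (so the claim becomes $b_m = M_m(p)$) and reindex: substituting $a_n = b_{n-1}$ and putting $j = k-1$ turns the display above into $b_m = b_{m-1} + \sum_{j=0}^{m-2} b_j\, b_{m-j-2}$ for $m \geq 2$, which is identical to the $p$-Motzkin recurrence \eqref{eq:pMotzkinRecurrence}; together with $b_0 = b_1 = p$ matching $M_0(p) = M_1(p)$, a straightforward induction gives $b_m = M_m(p)$, i.e.\ $\card{N_n} = M_{n-1}(p)$. I expect the only real obstacle to be the disjointness bookkeeping for the union in \eqref{eq:cart12magma} --- in particular arguing cleanly that the Cartesian products for different $k$ do not overlap --- whereas the index shift aligning the two recurrences is routine.
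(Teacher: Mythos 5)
Your proposal is correct and follows essentially the same route as the paper: identify $N_n$ with $\cW_n(X)$, read the recurrence $\card{N_n} = \card{N_{n-1}} + \sum_{k=1}^{n-2}\card{N_k}\,\card{N_{n-k-1}}$ off \eqref{eq:cart12magma}, and match it to the $p$-Motzkin recurrence after the shift $n \mapsto n-1$. The only difference is that you make explicit the disjointness of the union and the reindexing, both of which the paper leaves implicit.
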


\begin{proof}
Since $\norm{m}_M = n$ if and only if $m \in \cW_n(X)$, we have $N_n = \cW_n(X)$. We have $N_1 = \cW_1(X) = X$ so $\card{N_1} = \card{X} = p$, and $N_2 = \cW_2(X) = [X]$ so $\card{N_2} = \card{[X]} = p$. Now, for $n \geq 3$, \eqref{eq:cart12magma} gives
\begin{align*}
    \card{N_n} & = \card{\left[ \cW_{n-1}(X) \right] \union \bigcup_{k = 1}^{n - 2} \left( \cW_k(X) \times \cW_{n - k - 1}(X) \right)} \\
    & = \card{N_{n - 1}} + \sum_{k = 1}^{n - 2} \card{N_k} \cdot \card{N_{n - k - 1}} 
\end{align*}
This is equivalent to the $p$-Motzkin recurrence \eqref{eq:pMotzkinRecurrence}.
%
\end{proof}

\begin{corollary}
Let \mbox{($\cW_{\epsilon}$, $\mu$, $\diamond$)} be the Cartesian \mbox{(1,2)-magma} generated by the single element, $\epsilon$. If $N_n$ and $\norm{\cdot}_M: \cW_{\epsilon} \rightarrow \mathbb{N}$ are as defined in Proposition \ref{prop:motzin12magma}, then
\begin{equation*}
    \vert N_n \vert = M_{n - 1}, \qquad n \geq 1,
\end{equation*}
where $M_n$ is the Motzkin number from Definition \ref{def:pSequences}.
\end{corollary}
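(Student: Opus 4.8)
The plan is to deduce this corollary directly from Proposition \ref{prop:motzin12magma} by specialising to the case of a single generator. First I would set $X = \{ \epsilon \}$, so that $\card{X} = 1$, and observe that the Cartesian \mbox{(1,2)-magma} $(\cW_{\epsilon}, \mu, \diamond)$ together with the norm $\norm{\cdot}_M$ is precisely the object to which Proposition \ref{prop:motzin12magma} applies with $p = 1$. The proposition then immediately gives $\card{N_n} = M_{n-1}(1)$ for all $n \geq 1$.

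The only remaining step is to confirm that the $1$-Motzkin numbers $M_n(1)$ coincide with the ordinary Motzkin numbers $M_n$. I would do this by comparing the defining data in Definition \ref{def:pSequences}: setting $p = 1$ gives the initial conditions $M_0(1) = M_1(1) = 1$ and reduces the recurrence \eqref{eq:pMotzkinRecurrence} to $M_n(1) = M_{n-1}(1) + \sum_{k=0}^{n-2} M_k(1)\, M_{n-k-2}(1)$, which is exactly the classical Motzkin recurrence with the classical initial values $M_0 = M_1 = 1$. A one-line induction on $n$ then yields $M_n(1) = M_n$ for all $n \geq 0$.

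Substituting $M_{n-1}(1) = M_{n-1}$ into the conclusion of Proposition \ref{prop:motzin12magma} gives $\card{N_n} = M_{n-1}$ for all $n \geq 1$, as required. There is no genuine obstacle here: the entire content of the argument is the observation that the $p$-analogue of Definition \ref{def:pSequences} collapses to the classical Motzkin sequence at $p = 1$, so the work is purely a matter of matching initial conditions and recurrences.
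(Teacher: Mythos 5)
Your proposal is correct and is exactly the argument the paper intends: the corollary is stated without a separate proof precisely because it is the $p=1$ (i.e.\ $X=\{\epsilon\}$) specialisation of Proposition \ref{prop:motzin12magma}, combined with the observation that $M_n(1)$ satisfies the classical Motzkin recurrence and initial conditions. No gap.
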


The norm function $\norm{\cdot}_M: \cW_{\epsilon} \rightarrow \mathbb{N}$ from the above corollary could equivalently be defined recursively as follows: Let $\norm{\cdot}_M: \cW_{\epsilon} \rightarrow \mathbb{N}$ be such that
\begin{subequations} \label{eq:motNorm0}
    \begin{align}
        \norm{\epsilon}_M & = 1, \\
        \norm{\mu(w)}_M & = \norm{w}_M + 1, \\
        \norm{w \diamond w'}_M & = \norm{w}_M + \norm{w'}_M + 1,
    \end{align}
\end{subequations}
for all $w, w' \in \cW_{\epsilon}$. 

We see that a free \mbox{(1,2)-magma} generated by a single element is partitioned by the norm function into sets given by the Motzkin numbers only when the norm satisfies \eqref{eq:motNorm0}. Thus we define a Motzkin normed \mbox{(1,2)-magma} to be a free \mbox{(1,2)-magma} with a single generator \textit{along with} a norm function which satisfies \eqref{eq:motNorm0} for the relevant \mbox{(1,2)-magma}.

\begin{definition}[Motzkin normed \mbox{(1,2)-magma}] \label{def:motzkin12magma}
Let \mbox{($\cM$, $f$, $\star$)} be a unique factorisation normed \mbox{(1,2)-magma} with one irreducible element, $\epsilon$. Let \mbox{$\norm{\cdot}_M: \cM \rightarrow \mathbb{N}$} be a norm satisfying
\begin{subequations} \label{eq:motNorm}
    \begin{align}
        \norm{\epsilon}_M & = 1, \\
        \norm{f(m)}_M & = \norm{m}_M + 1, \\
        \norm{m \star m'}_M & = \norm{m}_M + \norm{m'}_M + 1
    \end{align}
\end{subequations}
for all $m, m' \in \cM$. Then \mbox{($\cM$, $f$, $\star$)} with the norm $\norm{\cdot}_M: \cM \rightarrow \mathbb{N}$ is called a \textbf{Motzkin normed \mbox{(1,2)-magma}}.
\end{definition}

If \mbox{($\cM$, $f$, $\star$)} is a Motzkin normed \mbox{(1,2)-magma} with unique generator $\epsilon$, then the base set $\cM$ begins (sorting by norm) by evaluating the following expressions:
\begin{center}
    \begin{tabular}{l l}
        Norm 1: & $\epsilon$. \\
        Norm 2: & $f(\epsilon)$. \\
        Norm 3: & $f(f(\epsilon))$, \quad $\epsilon \star \epsilon$. \\
        Norm 4: & $f(f(f(\epsilon)))$, \quad $f(\epsilon \star \epsilon)$, \quad $\epsilon \star f(\epsilon)$, \quad $f(\epsilon) \star \epsilon$. \\
        Norm 5: & $f(f(f(f(\epsilon))))$, \quad $f(f(\epsilon \star \epsilon))$, \quad $f(\epsilon \star f(\epsilon))$, \quad $f(f(\epsilon) \star \epsilon)$, \\
        &
        $\epsilon \star f(f(\epsilon))$, \quad $\epsilon \star (\epsilon \star \epsilon)$, \quad $f(\epsilon) \star f(\epsilon)$, \quad $f(f(\epsilon)) \star \epsilon$, \\
        &
        $(\epsilon \star \epsilon) \star \epsilon$.
    \end{tabular}
\end{center}

We see that the Motzkin norm $\norm{\cdot}_M: \cM \rightarrow \mathbb{N}$ can be informally stated as follows, where $m \in \cM$:
\begin{equation*}
    \norm{m}_M = (\text{number of $\epsilon$'s}) + (\text{number of $f$'s}) + (\text{number of $\star$'s}).
\end{equation*}

For each Motzkin family the norm is usually a simple function of the conventional size parameter for that family. This can be seen in Appendix \ref{appendix:Motzkin}, where the details of a number of Motzkin normed \mbox{(1,2)-magmas} can be found.

We will consider a number of combinatorial families which are listed in Appendix \ref{appendix:Motzkin}. 
We adopt the following convention for any Motzkin normed \mbox{(1,2)-magma}: (i)   the unary map is denoted $f$ and (ii) the binary map is denoted $\star$ and written using  in-fix notation. 
We make it clear that we are using the maps specific to a certain family by referencing that family in the subscript of the map which is done via the number assigned to that family in the appendix.

We now discuss another example of a Motzkin normed \mbox{(1,2)-magma}, the Motzkin paths \motFamRef{motzkinFamily:motzkinPaths}. 
A Motzkin path of length $n$ is a path from $(0,0)$ to $(n,0)$ using steps $U = (1,1)$, $D = (1,-1)$ and $H = (1,0)$ which remains above the line $y = 0$. 
The number of such paths of length $n$ is given by the $n$th Motzkin number $M_n$. 
For the corresponding Motzkin normed \mbox{(1,2)-magma} \motMag{motzkinFamily:motzkinPaths}, the base set \motBaseRef{motzkinFamily:motzkinPaths} is given by all Motzkin paths from $(0,0)$ to $(n,0)$, for all $n \in \mathbb{N}_0$. The empty path, $n=0$, is taken to be a single vertex.

The generator of this family is  the empty path, 
\def\scl{0.3}
\begin{equation*}
    \epsilon_{\ref{motzkinFamily:motzkinPaths}} = 
    \begin{tikzpicture}[baseline={([yshift=-.5ex]current bounding box.center)}, scale = \scl]
        \fill (0,0) circle (6pt);
    \end{tikzpicture}\,,
\end{equation*}
and the two maps are defined schematically as follows:
\begin{align*}
    f_{\ref{motzkinFamily:motzkinPaths}} \left(
        \begin{tikzpicture}[baseline={([yshift=-2ex]current bounding box.center)}, scale = \scl]
            \draw[fill = \colOne, opacity = \opac] (6,0) -- (0,0) to [bend left = 90, looseness = 1.5] (6,0);
            \node at (3,1.3) {\footnotesize $p$};
        \end{tikzpicture}
    \right)
    & =
    \begin{tikzpicture}[baseline={([yshift=-2ex]current bounding box.center)}, scale = \scl]
        \draw[fill = orange!90!black, opacity = \opac] (6,0) -- (0,0) to [bend left = 90, looseness = 1.5] (6,0);
        \node at (3,1.3) {\footnotesize $p$};
        \draw[thick] (6,0) -- (8,0);
    \end{tikzpicture} \\
    \begin{tikzpicture}[baseline=1.5ex, scale = \scl]
        \draw[fill = \colOne, opacity = \opac] (6,0) -- (0,0) to [bend left = 90, looseness = 1.5] (6,0);
        \node[anchor = mid] at (3,1.3) {\footnotesize $p_1$};
    \end{tikzpicture}
    \ \star_{\ref{motzkinFamily:motzkinPaths}} \ 
    \begin{tikzpicture}[baseline=1.5ex, scale = \scl]
        \draw[fill = \colTwo, opacity = \opac] (6,0) -- (0,0) to [bend left = 90, looseness = 1.5] (6,0);
        \node[anchor = mid] at (3,1.3) {\footnotesize $p_2$};
    \end{tikzpicture}
    & =
    \begin{tikzpicture}[baseline=1.5ex, scale = \scl]
        \draw[fill = \colOne, opacity = \opac] (6,0) -- (0,0) to [bend left = 90, looseness = 1.5] (6,0);
        \node[anchor = mid] at (3,1.3) {\footnotesize $p_1$};
        \draw[thick] (6,0) -- (7.5,1.5);
        \draw[fill = \colTwo, opacity = \opac] (13.5,1.5) -- (7.5,1.5) to [bend left = 90, looseness = 1.5] (13.5,1.5);
        \node at (10.5,2.8) {\footnotesize $p_2$};
        \draw[thick] (13.5,1.5) -- (15,0);
    \end{tikzpicture}
\end{align*}

Thus the unary map   adds a single horizontal step after the path and the binary map concatenates the two paths while adding a pair of up and down steps as shown. 
These are the usual right factorisations of a Motzkin path corresponding to the recursive structure of all Motzkin paths, but now interpreted as maps. 
All Motzkin paths can be constructed using sequences of compositions of these two maps applied to the generator. Therefore this is a unique factorisation \mbox{(1,2)-magma}.

The norm $\norm{m}_M$ of any Motzkin path $m$ is defined as the length of the path + 1. We can immediately see that this norm satisfies \eqref{eq:motNorm}:
\begin{align*}
    \norm{\epsilon}_M & = 1, \\
    \norm{f_{\ref{motzkinFamily:motzkinPaths}}(p_1)}_M & = \norm{p_1}_M + 1, \\
    \norm{p_1 \star_{\ref{motzkinFamily:motzkinPaths}} p_2}_M & = \norm{p_1}_M + \norm{p_2}_M + 1,
\end{align*}
for all Motzkin paths $p_1, p_2 \in \mathMotBaseRef{motzkinFamily:motzkinPaths}$.

Thus we see that \motMag{motzkinFamily:motzkinPaths} is a Motzkin normed (1,2)-magma.

\subsection{Schr\"oder normed \mbox{(1,2)-magmas}}
\label{sec_schroder}
In this  section, we show how the Schr\"oder numbers are related to   free \mbox{(1,2)-magmas} which we do by defining  a different norm on the Cartesian \mbox{(1,2)-magma}.
In order to clearly define the norm in terms of the Cartesian base set, we provide an alternative method for constructing the Cartesian \mbox{(1,2)-magma}.  Note, the resulting  set is the same set as that defined by \eqref{eq:cart12magmaAll} but constructed differently.

Let $X$ be a non-empty finite set, and define the sequence $\cY_n(X)$ of sets of nested 1- and 2-tuples by 
\begin{subequations} \label{eq:cartesian12magmaConstruction}
    \begin{align}
        \cY_1(X) & = X, \\
        \cY_n(X) & = \left[ \cY_{n-1}(X) \right] \union \bigcup_{k = 1}^{n - 1} \left( \cY_k(X) \times \cY_{n - k}(X) \right), \qquad n \geq 2. \label{eq:cartesian12magmaConstruction2}
    \end{align}
\end{subequations}

If $X = \{ \epsilon \}$, then the sequence of sets $\cY_n(X)$ begins as follows:
\begin{align*}
    \cY_1(X) & = \{ \epsilon \}, \\
    \cY_2(X) & = \{ [ \epsilon ], \quad [\epsilon, \epsilon] \}, \\
    \cY_3(X) & = \{ [[ \epsilon ]], \quad [[\epsilon, \epsilon]], \quad [\epsilon, [\epsilon]], \quad [\epsilon, [\epsilon, \epsilon]], \quad [[\epsilon], \epsilon], \quad [[\epsilon, \epsilon], \epsilon] \}, \\
    & \ \, \vdots
\end{align*}

Letting $\cY_X = \bigcup_{n \geq 1} \cY_n(x)$, we have that \mbox{($\cY_X$, $\mu$, $\diamond$)} is equal to the Cartesian \mbox{(1,2)-magma} \mbox{($\cW_X$, $\mu$, $\diamond$)} of Definition \ref{def:cartesian12magma}. This follows from the fact that $\cY_X$ and $\cW_X$ both contain all nested 1- and 2-tuples containing elements of the set $X$. Thus we have the following proposition.

\begin{proposition}
    Let $\cY_X = \bigcup_{n \geq 1} \cY_n(x)$. Define the unary map $\mu: \cW_X \rightarrow \cW_X$ by
    \begin{equation*}
        \mu(w) = [w], \qquad w \in \cW_X,
    \end{equation*}
    and the binary map $\diamond: \cW_X \times \cW_X \rightarrow \cW_X$ by
    \begin{equation*}
        w_1 \diamond w_2 = [w_1, w_2], \qquad w_1, w_2 \in \cW_X.
    \end{equation*}
    Then \mbox{($\cY_X$, $\mu$, $\diamond$)} is the Cartesian \mbox{(1,2)-magma} of Definition \ref{def:cartesian12magma}.
\end{proposition}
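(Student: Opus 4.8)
The plan is to reduce everything to the set-level identity $\cY_X = \cW_X$. Once that is established the proposition is immediate: the unary map $\mu$ and the binary map $\diamond$ are given by the \emph{same} formulas $\mu(w) = [w]$ and $w_1 \diamond w_2 = [w_1, w_2]$ in both constructions, so equal base sets force the two triples $(\cY_X, \mu, \diamond)$ and $(\cW_X, \mu, \diamond)$ to be literally the same $(1,2)$-magma. Thus the entire content of the proposition is the claim that the two recursively stratified families $\{\cW_n(X)\}$ and $\{\cY_n(X)\}$ have the same total union.

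I would prove $\cY_X = \cW_X$ by double inclusion, each direction by strong induction on the relevant stratification index. For $\cW_X \subseteq \cY_X$, first record that $\cY_X$ contains the generators $X = \cY_1(X)$ and is closed under both maps: if $w \in \cY_a(X)$ then $\mu(w) = [w] \in \cY_{a+1}(X)$ by \eqref{eq:cartesian12magmaConstruction2}, and if $w_1 \in \cY_a(X)$, $w_2 \in \cY_b(X)$ then $w_1 \diamond w_2 = [w_1, w_2] \in \cY_a(X) \times \cY_b(X) \subseteq \cY_{a+b}(X)$ (taking $n = a+b$, $k = a$ in the recursion, which is legal since $b \ge 1$). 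Then I would show $\cW_n(X) \subseteq \cY_X$ by strong induction on $n$: the base case is $\cW_1(X) = X \subseteq \cY_X$, and for $n \ge 3$ every element of $\cW_n(X)$ is either $[w']$ with $w' \in \cW_{n-1}(X)$ or $[w_1, w_2]$ with $w_1 \in \cW_k(X)$, $w_2 \in \cW_{n-k-1}(X)$; in both cases the inductive hypothesis places the factors in $\cY_X$ and closure returns the assembled element to $\cY_X$. The reverse inclusion $\cY_X \subseteq \cW_X$ is entirely symmetric, using instead that $\cW_X$ contains $X$ and is closed under the maps --- here $\mu$ again shifts the index by $+1$, while $w_1 \diamond w_2$ with $w_1 \in \cW_a(X)$, $w_2 \in \cW_b(X)$ lands in $\cW_{a+b+1}(X)$ via \eqref{eq:cart12magma}.

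The one point requiring genuine care --- and the only real obstacle --- is that the two stratifications encode \emph{different} norms (the Motzkin norm for $\cW$ and the Schr\"oder norm for $\cY$), so the levels do not align: an element sitting at $\cW$-level $n$ generally appears at a different $\cY$-level. This is exactly why the induction must be phrased as membership in the \emph{whole} union $\cY_X$ (respectively $\cW_X$) rather than level-by-level, and why the closure computations must track the correct index arithmetic --- the $\diamond$-product costing $a+b+1$ in the $\cW$-recursion but $a+b$ in the $\cY$-recursion. Once the bookkeeping of these index shifts is done correctly, both inductions close routinely and the set identity, hence the proposition, follows. A slicker but equivalent packaging would be to observe that both $\cW_X$ and $\cY_X$ equal the smallest set containing $X$ and closed under $\mu$ and $\diamond$, i.e.\ the set of all finite nested $1$- and $2$-tuples over $X$; but the double induction above makes the argument self-contained.
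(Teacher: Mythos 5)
Your proof is correct and follows essentially the same route as the paper: the paper justifies this proposition with the single observation that $\cW_X$ and $\cY_X$ both consist of all nested $1$- and $2$-tuples over $X$ (i.e.\ both are the closure of $X$ under $\mu$ and $\diamond$), which is precisely your ``slicker packaging'' remark, and your double-inclusion induction with the $a+b+1$ versus $a+b$ index bookkeeping is the rigorous elaboration the paper omits. No gaps; your identification of the misaligned stratifications as the reason the induction must target the whole unions rather than individual levels is exactly the right point of care.
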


The link between the Cartesian \mbox{(1,2)-magma} and the Schr\"oder numbers is given by the following proposition.

\begin{proposition} \label{prop:schroder12magma}
Let \mbox{($\cW_X$, $\mu$, $\diamond$)} be the Cartesian \mbox{(1,2)-magma} generated by the set $X$, where $\card{X} = p$. Define the map $\norm{\cdot}_S:  \cW_{X} \rightarrow \mathbb{N}$ by $\norm{m}_S = n$ when $m \in \cY_n(X)$, where $\cY_n(X)$ is as defined in \eqref{eq:cartesian12magmaConstruction}. If
\begin{equation*}
    N_n = \{ m \in \cW_X: \, \norm{m}_S = n \}, \qquad n \geq 1,
\end{equation*}
then
\begin{equation*}
    \card{N_n} = S_{n - 1}(p), \qquad n \geq 1,
\end{equation*}
where $S_n(p)$ are the $p$-Schr\"oder numbers from Definition \ref{def:pSequences}.
\end{proposition}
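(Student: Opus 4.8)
The plan is to follow the pattern of Proposition \ref{prop:motzin12magma}: read a counting recurrence off the construction \eqref{eq:cartesian12magmaConstruction} of the $\cY_n(X)$ and match it, after an index shift, to the $p$-Schr\"oder recurrence \eqref{eq:pSchroderRecurrence}. Since $\norm{m}_S = n$ exactly when $m \in \cY_n(X)$, we have $N_n = \cY_n(X)$ (recall $\cY_X = \cW_X$ by the preceding proposition), so it suffices to count $\card{\cY_n(X)}$.

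The one substantive point, which I would settle first, is that $\norm{\cdot}_S$ is genuinely well-defined, i.e.\ that the $\cY_n(X)$ are pairwise disjoint and exhaust $\cW_X$. Because \mbox{($\cW_X$, $\mu$, $\diamond$)} is a unique factorisation \mbox{(1,2)-magma} (proof of Theorem \ref{thm:free12magma}), every $m \in \cW_X$ decomposes uniquely as a generator, as $\mu(w) = [w]$, or as $w_1 \diamond w_2 = [w_1, w_2]$. I would use this to define a recursive integer $\nu(m)$ by $\nu(\epsilon) = 1$ for $\epsilon \in X$, $\nu([w]) = \nu(w) + 1$, and $\nu([w_1,w_2]) = \nu(w_1) + \nu(w_2)$; note the \emph{absence} of the $+1$ in the binary clause, in contrast to the Motzkin norm \eqref{eq:motNorm0}. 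This $\nu$ is well-defined and $\mathbb{N}$-valued by unique factorisation, and a routine double induction on $n$ (using \eqref{eq:cartesian12magmaConstruction2} in one direction and the unique decomposition in the other) shows $\cY_n(X) = \{ m \in \cW_X : \nu(m) = n \}$. In particular the $\cY_n(X)$ partition $\cW_X$ and $\norm{\cdot}_S = \nu$.

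With disjointness in hand, the count is mechanical. The base case is $\card{N_1} = \card{\cY_1(X)} = \card{X} = p = S_0(p)$. For $n \geq 2$ the union in \eqref{eq:cartesian12magmaConstruction2} is disjoint: the set $[\cY_{n-1}(X)]$ consists of $1$-tuples while each $\cY_k(X) \times \cY_{n-k}(X)$ consists of $2$-tuples, and the products for distinct $k$ are disjoint by injectivity of $\diamond$ together with the partition property just proved. Hence
\begin{equation*}
    \card{N_n} = \card{N_{n-1}} + \sum_{k=1}^{n-1} \card{N_k} \cdot \card{N_{n-k}}.
\end{equation*}

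Finally I would verify this agrees with \eqref{eq:pSchroderRecurrence}. Substituting the inductive hypothesis $\card{N_j} = S_{j-1}(p)$ and writing $m = n-1 \geq 1$ turns the recurrence into
\begin{equation*}
    \card{N_n} = S_{m-1}(p) + \sum_{k=1}^{m} S_{k-1}(p) \, S_{m-k}(p),
\end{equation*}
and reindexing the sum by $j = k-1$ recovers exactly $S_m(p) = S_{m-1}(p) + \sum_{j=0}^{m-1} S_j(p)\,S_{m-j-1}(p)$, which is \eqref{eq:pSchroderRecurrence}. Thus $\card{N_n} = S_{n-1}(p)$, completing the induction. The main obstacle is the well-definedness step of the second paragraph, since it is precisely what licenses treating the defining union \eqref{eq:cartesian12magmaConstruction2} as disjoint; once that is in place, both the cardinality recurrence and the index shift are routine.
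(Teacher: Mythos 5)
Your proof is correct and follows essentially the same route as the paper's: identify $N_n$ with $\cY_n(X)$, read the cardinality recurrence off \eqref{eq:cartesian12magmaConstruction2}, and match it to \eqref{eq:pSchroderRecurrence} after the index shift. The only difference is that you explicitly justify the disjointness of the union (and hence the well-definedness of $\norm{\cdot}_S$) via unique factorisation, a point the paper's proof passes over silently, and you write out the reindexing that the paper compresses into ``this is equivalent to the $p$-Schr\"oder recurrence.''
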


\begin{proof}
Since $\norm{m}_S = n$ if and only if $m \in \cY_n(X)$, we have $N_n = \cY_n(X)$. Thus we have $\card{N_1} = \card{\cY_1(X)} = \card{X} = p$. Now, for $n \geq 2$, \eqref{eq:cartesian12magmaConstruction2} gives
\begin{align*}
    \card{N_n} & = \card{\cY_{n - 1}(X) \union \bigcup_{k = 1}^{n - 1} \cY_k(X) \times \cY_{n - k}(X)} \\
    & = \card{N_{n - 1}} + \sum_{k = 1}^{n - 1} \card{N_k} \cdot \card{N_{n - k}}
\end{align*}
This is equivalent to the $p$-Schr\"oder recurrence \eqref{eq:pSchroderRecurrence}.
\end{proof}

\begin{corollary}
Let \mbox{($\cW_{\epsilon}$, $\mu$, $\diamond$)} be the Cartesian \mbox{(1,2)-magma} generated by the single element, $\epsilon$. If $N_n$ and $\norm{\cdot}_S:  \cW_{\epsilon} \rightarrow \mathbb{N}$ are as defined in Proposition \ref{prop:schroder12magma}, then
\begin{equation*}
    \vert N_n \vert = S_{n - 1}, \qquad n \geq 1,
\end{equation*}
where $S_n$ is the Schr\"oder number from Definition \ref{def:pSequences}.
\end{corollary}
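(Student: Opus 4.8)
The plan is to obtain this corollary as the immediate $p = 1$ specialisation of Proposition \ref{prop:schroder12magma}. First I would observe that the Cartesian \mbox{(1,2)-magma} generated by the single element $\epsilon$ is precisely the Cartesian \mbox{(1,2)-magma} generated by a set $X$ in the case $X = \{ \epsilon \}$, so that $\card{X} = p = 1$. With this identification, the sequence of sets $\cY_n(X)$ appearing in \eqref{eq:cartesian12magmaConstruction}, and hence the norm $\norm{\cdot}_S$ and the sets $N_n$, are exactly those of Proposition \ref{prop:schroder12magma} with $p = 1$.

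Next I would apply Proposition \ref{prop:schroder12magma} directly. That proposition gives
\begin{equation*}
    \card{N_n} = S_{n - 1}(p), \qquad n \geq 1,
\end{equation*}
for the \mbox{(1,2)-magma} generated by an arbitrary finite set $X$ with $\card{X} = p$. Substituting $p = 1$ yields $\card{N_n} = S_{n - 1}(1)$ for all $n \geq 1$.

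It then only remains to identify $S_{n}(1)$ with the ordinary Schr\"oder number $S_{n}$. For this I would return to Definition \ref{def:pSequences}: setting $p = 1$ in the $p$-Schr\"oder data gives the initial value $S_0(1) = 1$ together with the recurrence $S_n(1) = S_{n - 1}(1) + \sum_{k = 0}^{n - 1} S_k(1)\, S_{n - k - 1}(1)$ for $n \geq 1$, which is exactly the defining recurrence and initial value of the (little) Schr\"oder numbers $S_n$. Hence $S_n(1) = S_n$ for all $n \geq 0$, and combining this with $\card{N_n} = S_{n - 1}(1)$ gives $\card{N_n} = S_{n - 1}$, as required. There is no genuine obstacle here: the entire content has already been established in Proposition \ref{prop:schroder12magma}, and the corollary is a routine specialisation, exactly parallel to the corollaries following Propositions \ref{prop:fib11magma} and \ref{prop:motzin12magma}; the only point warranting an explicit sentence is the verification that the $p = 1$ recurrence collapses to the standard Schr\"oder recurrence.
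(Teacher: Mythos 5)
Your proposal is correct and matches the paper's intent exactly: the paper states this corollary without proof as the immediate $p = 1$ specialisation of Proposition \ref{prop:schroder12magma}, which is precisely the argument you give, including the routine check that $S_n(1)$ satisfies the standard little Schr\"oder recurrence with $S_0(1) = 1$.
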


Note  the norm function $\norm{\cdot}_S: \cW_{\epsilon} \rightarrow \mathbb{N}$ from the above corollary could equivalently be defined recursively as follows: Let $\norm{\cdot}_S: \cW_{\epsilon} \rightarrow \mathbb{N}$ be such that
\begin{subequations} \label{eq:schNorm0}
    \begin{align}
        \norm{\epsilon}_S & = 1, \\
        \norm{\mu(w)}_S & = \norm{w}_S + 1, \\
        \norm{w \diamond w'}_S & = \norm{w}_S + \norm{w'}_S,
    \end{align}
\end{subequations}
for all $w, w' \in \cW_{\epsilon}$.

This motivates the following definition.

\begin{definition}[Schr\"oder normed \mbox{(1,2)-magma}] \label{def:schroder12magma}
Let \mbox{($\cM$, $f$, $\star$)} be a unique factorisation normed \mbox{(1,2)-magma} with only one irreducible element, $\epsilon$. Let $\norm{\cdot}_S: \cM \rightarrow \mathbb{N}$ be a norm satisfying
\begin{subequations} \label{eq:schNorm}
    \begin{align}
        \norm{\epsilon}_S & = 1, \\
        \norm{f(m)}_S & = \norm{m}_S + 1, \\
        \norm{m \star m'}_S & = \norm{m}_S + \norm{m'}_S
    \end{align}
\end{subequations}
for all $m, m' \in \cM$. Then \mbox{($\cM$, $f$, $\star$)} with the norm $\norm{\cdot}_S: \cM \rightarrow \mathbb{N}$ is called a \textbf{Schr\"oder normed \mbox{(1,2)-magma}}.
\end{definition}

If \mbox{($\cM$, $f$, $\star$)} is a Schr\"oder normed \mbox{(1,2)-magma} with unique generator $\epsilon$, then the base set $\cM$ begins (sorting by norm) by evaluating the following expressions:
\begin{center}
    \begin{tabular}{l l}
        Norm 1: & $\epsilon$. \\
        Norm 2: & $f(\epsilon)$, \quad $\epsilon \star \epsilon$. \\
        Norm 3: & $f(f(\epsilon))$, \quad $f(\epsilon \star \epsilon)$, \quad $\epsilon \star f(\epsilon)$, \quad $\epsilon \star (\epsilon \star \epsilon)$, \\
        & $f(\epsilon) \star \epsilon$, \quad $(\epsilon \star \epsilon) \star \epsilon$.
    \end{tabular}
\end{center}

Informally, the Schr\"oder norm $\norm{\cdot}_S: \cM \rightarrow \mathbb{N}$ can be defined as follows, where $m \in \cM$:
\begin{equation*}
    \norm{m}_S = (\text{number of $f$'s}) + (\text{number of $\star$'s}) + 1.
\end{equation*}

For each Schr\"oder family the norm is usually a simple function of the conventional size parameter for that family. The details of a number of Schr\"oder normed \mbox{(1,2)-magmas} can be found in Appendix \ref{appendix:Schroder}.  

In the remainder of this section, we will reference a number of Schr\"oder families. Further details of these families  are provided in Appendix \ref{appendix:Schroder}. 
We call the unary map $f$ and the binary map $\star$  which will be written in in-fix form. 
We make it clear that we are using the maps specific to a certain family by referencing that family in the subscript of the maps which we do  via the number assigned to that family in the appendix.

As an example of a Schr\"oder normed \mbox{(1,2)-magma}, consider the Schr\"oder family of semi-standard Young Tableaux (SSYT) of shape $n \times 2$, \schFamRef{schroderFamily:SSYT} as defined in Appendix \ref{appendix:Schroder}. 
The $n$th Schr\"oder number $S_n$ is equal to the number of such tableaux.

We construct the Schr\"oder normed \mbox{(1,2)-magma} of SSYT of shape $n \times 2$, \schMag{schroderFamily:SSYT}, as follows:
\begin{itemize}
    \item Take the base set \schBaseRef{schroderFamily:SSYT} to be the set of all semi-standard Young Tableaux of shape $n \times 2$, for every $n \in \mathbb{N}_0$. We take the trivial empty tableau $\emptyset$ to be the only SSYT of shape $0 \times 2$.
    \item Define the unary map $f_{\ref{schroderFamily:SSYT}}$ as follows:
    \begin{equation*}
        f_{\ref{schroderFamily:SSYT}}
        \left(
            \resizebox{\SSYTscl}{!}{\addvbuffer[5pt 3pt]{\begin{tabular}{| c | c |}
            \hline
            \rowcolor{\colOne!\opacTen} \vdots & \vdots \\
            \hline
            \rowcolor{\colOne!50} $a$ & $b$ \\
            \hline
        \end{tabular}}}
        \right) =
        \resizebox{\SSYTscl}{!}{\addvbuffer[3pt 3pt]{\begin{tabular}{| c | c |}
            \hline
            \rowcolor{\colOne!\opacTen} \vdots & \vdots \\
            \hline
            \rowcolor{\colOne!\opacTen} $a$ & $b$ \\
            \hline
            \rowcolor{white} $b + 1$ & $b + 1$ \\
            \hline
        \end{tabular}}}
    \end{equation*}
    with the convention that the empty tableau is considered to have all entries equal to 0. 
    Thus we have
    \begin{equation*}
        f_{\ref{schroderFamily:SSYT}}(\emptyset) =
        \resizebox{\SSYTscl}{!}{\begin{tabular}{| c | c |}
            \hline
            1 & 1 \\
            \hline
        \end{tabular}}
    \end{equation*}
    \item Define the binary map $\star_{\ref{schroderFamily:SSYT}}$ as follows:
    \begin{equation*}
        \resizebox{\SSYTscl}{!}{
        \begin{tabular}{| c | c |}
            \hline
            \rowcolor{\colOne!\opacTen} $a$ & $b$ \\
            \hline
            \rowcolor{\colOne!\opacTen} \vdots & \vdots \\
            \hline
            \rowcolor{\colOne!\opacTen} $c$ & $d$ \\
            \hline
        \end{tabular}
        }
        \ \star_{\ref{schroderFamily:SSYT}} \
        \resizebox{\SSYTscl}{!}{\begin{tabular}{| c | c |}
            \hline
            \rowcolor{\colTwo!\opacTen} $s$ & $t$ \\
            \hline
            \rowcolor{\colTwo!\opacTen} $u$ & $v$ \\
            \hline
            \rowcolor{\colTwo!\opacTen} \vdots & \vdots \\
            \hline
            \rowcolor{\colTwo!\opacTen} $y$ & $z$ \\
            \hline
        \end{tabular}}
        =
        \resizebox{\SSYTscl}{!}{\addvbuffer[3pt 3pt]{\begin{tabular}{| c | c |}
            \hline
            \rowcolor{\colOne!\opacTen} $a$ & $b$ \\
            \hline
            \rowcolor{\colOne!\opacTen} \vdots & \vdots \\
            \hline
            \rowcolor{\colOne!\opacTen} $c$ & $d$ \\
            \hline
            \cellcolor{white} $d + 1$ & \cellcolor{\colTwo!\opacTen} $t + d + 1$ \\
            \hline
            \rowcolor{\colTwo!\opacTen} $s + d + 1$ & $v + d + 1$ \\
            \hline
            \rowcolor{\colTwo!\opacTen} $u + d + 1$ & \vdots \\
            \hline
            \rowcolor{\colTwo!\opacTen} \vdots & $z + d + 1$ \\
            \hline
            \cellcolor{\colTwo!\opacTen} $y + d + 1$ & \cellcolor{white}$z + d + 2$ \\
            \hline
        \end{tabular}}}
    \end{equation*}
    Again, note that when applying this to the empty tableau, we consider any entries of the empty tableau to be equal to 0. 
\end{itemize}

Note that the empty SSYT $\epsilon$ is the only element in the base set which is not in the image of one of the two maps. Thus this is the only generator, so we can define $\epsilon_{\ref{schroderFamily:SSYT}} = \emptyset$. 
Then we can see that the base set \schBaseRef{schroderFamily:SSYT} is generated by $\epsilon_{\ref{schroderFamily:SSYT}}$ via the two maps $f_{\ref{schroderFamily:SSYT}}$ and $\star_{\ref{schroderFamily:SSYT}}$. 

Any SSYT of shape $n \times 2$ factorises uniquely in terms of the generator $\epsilon_{\ref{schroderFamily:SSYT}}$ and the two maps $f_{\ref{schroderFamily:SSYT}}$ and $\star_{\ref{schroderFamily:SSYT}}$.
For example, consider the following SSYT, which we can decompose as follows:
\begin{align*}
    \resizebox{\SSYTscl}{!}{\addvbuffer[3pt 1pt]{\begin{tabular}{| c | c |}
        \hline
        1 & 1 \\
        \hline
        2 & 3 \\
        \hline
        4 & 5 \\
        \hline
        6 & 6 \\
        \hline
    \end{tabular}}}
   & =
    f_{\ref{schroderFamily:SSYT}} \!\left(
        \resizebox{\SSYTscl}{!}{\addvbuffer[3pt 1pt]{\begin{tabular}{| c | c |}
            \hline
            1 & 1 \\
            \hline
            2 & 3 \\
            \hline
            4 & 5 \\
            \hline
        \end{tabular}}}
    \right)
    =
    f_{\ref{schroderFamily:SSYT}} \!\left( 
        \resizebox{\SSYTscl}{!}{\addvbuffer[3pt 1pt]{\begin{tabular}{| c | c |}
            \hline
            1 & 1 \\
            \hline
            2 & 3 \\
            \hline
        \end{tabular}}}
        \star_{\ref{schroderFamily:SSYT}}
        \emptyset
    \right)
    = 
    f_{\ref{schroderFamily:SSYT}} \!\left(\!
        \left(
            \resizebox{\SSYTscl}{!}{\addvbuffer[3pt 1pt]{\begin{tabular}{| c | c |}
                \hline
                1 & 1 \\
                \hline
            \end{tabular}}}
            \star_{\ref{schroderFamily:SSYT}}
        \emptyset
        \right)
        \star_{\ref{schroderFamily:SSYT}}
        \emptyset
    \right)
    =
      f_{\ref{schroderFamily:SSYT}} \left(
        \left(
            f_{\ref{schroderFamily:SSYT}} \left(
                \emptyset
            \right)
            \star_{\ref{schroderFamily:SSYT}}
        \emptyset
        \right)
        \star_{\ref{schroderFamily:SSYT}}
        \emptyset
    \right)
\end{align*}

We define the norm $\norm{\cdot}_S$ of a tableau of shape $n \times 2$ to be $n + 1$. With this definition we obtain:
\begin{align*}
    \norm{\epsilon}_S & = 1, \\
    \norm{f_{\ref{schroderFamily:SSYT}}(t)}_S & = \norm{t}_S + 1, \quad t \in \mathSchBaseRef{schroderFamily:SSYT}, \\
    \norm{t_1 \star_{\ref{schroderFamily:SSYT}} t_2}_S & = \norm{t_1}_S + \norm{t_2}_S, \quad t_1, t_2 \in \mathSchBaseRef{schroderFamily:SSYT}.
\end{align*}

Thus \schMag{schroderFamily:SSYT} is a unique factorisation normed \mbox{(1,2)-magma} with a finite non-empty set of irreducibles (and hence a free \mbox{(1,2)-magma}). The norm $\norm{\cdot}_S: \mathSchBaseRef{schroderFamily:SSYT} \rightarrow \mathbb{N}$ satisfies \eqref{eq:schNorm}. Therefore this is a Schr\"oder normed \mbox{(1,2)-magma}.

\subsection{Free \mbox{(1,2)-magma} isomorphisms and a universal bijection}
\label{sebsec:12unibij}

We now apply Proposition \ref{prop:freenMagmaIsomorphism} which states that there exists a unique \mbox{(1,2)-magma} isomorphism between any free \mbox{(1,2)-magmas} generated by sets of the same size. We demonstrate how this defines a universal bijection between any pair of Motzkin families or any pair of Schr\"oder families.

\begin{definition}[Universal bijection] \label{def:12univBij}
Suppose that \mbox{($\cM$, $f$, $\star$)} and \mbox{($\cN$, $g$, $\ltimes$)} are free \mbox{(1,2)-magmas} with generating sets $X_{\cM}$ and $X_{\cN}$ respectively, with $\card{X_{\cM}} = \card{X_{\cN}}$. Let $\sigma: X_{\cM} \rightarrow X_{\cN}$ be any bijection, and define the map $\Upsilon: \cM \rightarrow \cN$ as follows:\\
\noindent For all $m \in \cM \setminus X_{\cM}$,
\begin{enumerate}[label=(\roman*)]
    \item Decompose $m$ into an expression in terms of generators $\epsilon_i \in X_{\cM}$ and the maps $f$ and $\star$.
    \item In the decomposition of $m$, replace every occurrence of $\epsilon_i$ with $\sigma(\epsilon_i)$, every occurence of $f$ with $g$ and every occurrence of $\star$ with $\ltimes$. Call this expression $\upsilon(m)$.
    \item Define $\Upsilon(m)$ to be $\upsilon(m)$, that is, evaluate all maps in $\upsilon(m)$ to give an element of $\cN$.
\end{enumerate}
\end{definition}

This leads to the following proposition, which follows from the fact that $\Upsilon$ is exactly the map $\Gamma$ from Proposition \ref{prop:freenMagmaIsomorphism}. 

\begin{proposition} \label{prop:12univBij}
Let $\Upsilon: \cM \rightarrow \cN$ be the map of Definition \ref{def:12univBij}. Then $\Upsilon$ is a free \mbox{(1,2)-magma} isomorphism.
\end{proposition}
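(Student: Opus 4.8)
The plan is to identify $\Upsilon$ with the unique isomorphism $\Gamma$ supplied by Proposition \ref{prop:freenMagmaIsomorphism}, rather than verify the isomorphism axioms from scratch. Because $(\cM, f, \star)$ is a free \mbox{(1,2)-magma} with generating set $X_{\cM}$, every $m \in \cM$ admits a unique decomposition in terms of the generators $\epsilon_i \in X_{\cM}$ and the maps $f$ and $\star$; this is exactly the unique factorisation guaranteed by freeness, and the recursion terminates by the finite decomposition property (Proposition \ref{prop:normedIFF}). Hence $\upsilon(m)$, and therefore $\Upsilon(m)$, is well-defined.

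First I would verify that $\Upsilon$ is a \mbox{(1,2)-magma} morphism by checking the two defining conditions of Definition \ref{def:generalMagma}. If $m = f(m_0)$, then the decomposition of $m$ is obtained from that of $m_0$ by prepending an application of $f$; after the substitution step this becomes an application of $g$ to $\upsilon(m_0)$, so evaluating gives $\Upsilon(f(m_0)) = g(\Upsilon(m_0))$. Likewise, if $m = m_1 \star m_2$, the decomposition of $m$ is the $\star$ of the decompositions of $m_1$ and $m_2$, which after substitution and evaluation yields $\Upsilon(m_1 \star m_2) = \Upsilon(m_1) \ltimes \Upsilon(m_2)$. Thus $\Upsilon$ respects both the unary and the binary map.

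Next I would record that $\Upsilon$ agrees with $\sigma$ on the generators: for $\epsilon_i \in X_{\cM}$ the decomposition is trivial and the substitution step sends $\epsilon_i$ to $\sigma(\epsilon_i)$, so $\Upsilon(\epsilon_i) = \sigma(\epsilon_i)$. Setting $Y$ to be any set with $\card{Y} = \card{X_{\cM}} = \card{X_{\cN}}$, and taking $i: Y \rightarrow \cM$ and $j: Y \rightarrow \cN$ to be the bijections onto the two generating sets chosen so that $j = \sigma \circ i$, the two facts above say precisely that $\Upsilon$ is a morphism with $\Upsilon \circ i = j$. By the universal mapping principle (Definition \ref{def:freenmagma}) applied to the free magma $\cM$, there is a unique such morphism, and by Proposition \ref{prop:freenMagmaIsomorphism} this morphism is the isomorphism $\Gamma$. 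Therefore $\Upsilon = \Gamma$ and $\Upsilon$ is a free \mbox{(1,2)-magma} isomorphism.

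The only real subtlety, and the step I would treat most carefully, is the verification that the decompose--substitute--evaluate procedure is compatible with the two operations, i.e.\ that $\Upsilon$ is genuinely a morphism. This rests on the uniqueness of the factorisation of each element: since the outermost constructor of $m$ (either a single $f$ or a single $\star$ with uniquely determined arguments) is well-defined, the recursive identities above hold without ambiguity. Once the morphism property and the behaviour on generators are in hand, the conclusion is immediate from the uniqueness clause of the universal mapping property, exactly as in the \mbox{(1,1)} case of Proposition \ref{cor:11univBij}.
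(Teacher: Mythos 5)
Your proposal is correct and follows the same route as the paper, which simply asserts that $\Upsilon$ coincides with the map $\Gamma$ of Proposition \ref{prop:freenMagmaIsomorphism}; you supply the details (well-definedness via unique factorisation, the morphism identities for $f$ and $\star$, agreement with $\sigma$ on generators, and the uniqueness clause of the universal mapping property) that the paper leaves implicit.
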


Schematically, we can write $\Upsilon$ as follows:
\begin{equation}
    m \qquad \overset{\text{decompose}}{\xrightarrow{\hspace*{1.5cm}}} \qquad \underset{\substack{\epsilon_i \rightarrow \, \sigma(\epsilon_i), \ f \rightarrow \, g, \ \star \rightarrow \, \ltimes}}{\text{substitute}} \qquad \overset{\text{evaluate}}{\xrightarrow{\hspace*{1.5cm}}} \qquad n.
\end{equation}

Since $\Upsilon$ is an isomorphism between the free \mbox{(1,2)-magmas} \mbox{($\cM$, $f$, $\star$)} and \mbox{($\cN$, $g$, $\ltimes$)}, we have that $\Upsilon$ defines a bijection between the base sets $\cM$ and $\cN$. 
Furthermore, it gives us that this bijection is recursive: if $m = f(m_0)$, then
\begin{equation*}
    \Upsilon(m) = g(\Upsilon(m_0)),
\end{equation*}
and if $m = m_1 \star m_2$, then
\begin{equation*}
    \Upsilon(m) = \Upsilon(m_1) \ltimes \Upsilon(m_2).
\end{equation*}

Note, $\Upsilon$ preserves the norm when the two \mbox{(1,2)-magmas} are equipped with suitable norms. Suppose that \mbox{($\cM$, $f$, $\star$)} and \mbox{($\cN$, $g$, $\ltimes$)} are free \mbox{(1,2)-magmas} with the same number of generators and that they have respective norms \mbox{$\norm{\cdot}_{\cM}: \cM \rightarrow \mathbb{N}$} and \mbox{$\norm{\cdot}_{\cN}: \cN \rightarrow \mathbb{N}$}. If the following conditions are satisfied:
\begin{enumerate} [label=(\roman*)]
    \item \label{cond:mot1} $\norm{m}_{\cM} = \norm{\sigma(m)}_{\cN}$ for all $m \in X_{\cM}$,
    \item \label{cond:mot2} for $\kappa_1 \in \mathbb{N}$, $\norm{f(m)}_{\cM} = \norm{m}_{\cM} + \kappa_1$ for all $m \in \cM$, and $\norm{g(n)}_{\cN} = \norm{n}_{\cN} + \kappa_1$ for all $n \in \cN$, and
    \item \label{cond:mot3} for $\kappa_2 \in \mathbb{N}_0$, $\norm{m_1 \star m_2}_{\cM} = \norm{m_1}_{\cM} + \norm{m_2}_{\cM} + \kappa_2$ for all $m_1, m_2 \in \cM$, and \\ 
    $\norm{n_1 \ltimes n_2}_{\cN} = \norm{n_1}_{\cN} + \norm{n_2}_{\cN} + \kappa_2$ for all $n_1, n_2 \in \cN$,
\end{enumerate}
then we have
\begin{equation*}
    \norm{m}_{\cM} = \norm{\Upsilon(m)}_{\cN}, \qquad m \in \cM.
\end{equation*}
This follows by considering the decomposed expressions for $m$ and $\Upsilon(m)$, assuming that \ref{cond:mot1}-\ref{cond:mot3} hold.

Both the Motzkin norm, \eqref{eq:motNorm} and the Schr\"oder norm \eqref{eq:schNorm} satisfy (i), (ii) and  (iii) above and thus both norms are invariant under $\Upsilon$.  


\subsubsection{Universal bijections for Motzkin families}

In this section we consider  a number of examples illustrating this universal bijection between free \mbox{(1,2)-magmas}.

Using a simple example, we demonstrate how the universal bijections work for Motzkin normed \mbox{(1,2)-magmas}. We consider the following Motzkin normed \mbox{(1,2)-magmas}:
\begin{itemize}
    \item Motzkin paths \motMag{motzkinFamily:motzkinPaths},
    \item Non-intersecting chords \motMag{motzkinFamily:chords},
    \item Unary-binary trees \motMag{motzkinFamily:motzkinTrees}.
\end{itemize}
See Appendix \ref{appendix:Schroder} for the definitions of these families, as well as details of the relevant (1,2)-magmas and norms.

Take a Motzkin path and decompose it into its factorised form: \def \scl {0.5}
\begin{equation*}
    \begin{tikzpicture}[baseline={([yshift=-.5ex]current bounding box.center)}, scale = \scl]
        \grd[4][1]
        \draw (0,0) \motUp \motAc \motDn \motAc;
    \end{tikzpicture}
    =
    f_{\ref{motzkinFamily:motzkinPaths}} \left(
    \begin{tikzpicture}[baseline={([yshift=-.5ex]current bounding box.center)}, scale = \scl]
        \grd[3][1]
        \draw (0,0) \motUp \motAc \motDn;
        \dummyNodes[0][0][3][1]
    \end{tikzpicture}
    \right)
    = 
    f_{\ref{motzkinFamily:motzkinPaths}} \left(
        \begin{tikzpicture}[baseline={([yshift=-.5ex]current bounding box.center)}, scale = \scl]
            \fill (0,0) \smlnd;
        \end{tikzpicture}
        \ \star_{\ref{motzkinFamily:motzkinPaths}}
        \begin{tikzpicture}[baseline={([yshift=-.5ex]current bounding box.center)}, scale = \scl]
            \grd[1][1]
            \draw (0,0) \motAc;
            \dummyNodes[0][0][1][1]
        \end{tikzpicture}
    \right)
    = 
    f_{\ref{motzkinFamily:motzkinPaths}} \left(
        \begin{tikzpicture}[baseline={([yshift=-.5ex]current bounding box.center)}, scale = \scl]
            \fill (0,0) \smlnd;
        \end{tikzpicture}
        \star_{\ref{motzkinFamily:motzkinPaths}}
        f_{\ref{motzkinFamily:motzkinPaths}} \left(
            \begin{tikzpicture}[baseline={([yshift=-.5ex]current bounding box.center)}, scale = \scl]
                \fill (0,0) \smlnd;
            \end{tikzpicture}
        \right)
\right)
    =
    f_{\ref{motzkinFamily:motzkinPaths}} \left(
        \epsilon_{\ref{motzkinFamily:motzkinPaths}}
        \star_{\ref{motzkinFamily:motzkinPaths}}
        f_{\ref{motzkinFamily:motzkinPaths}} \left(
            \epsilon_{\ref{motzkinFamily:motzkinPaths}}
        \right)
    \right)
\end{equation*}

Now substitute generators and maps then evaluate to obtain an object from the Motzkin family of non-intersecting chords: \def \scl {0.04}
\begin{equation*} 
    f_{\ref{motzkinFamily:chords}} \left(
        \epsilon_{\ref{motzkinFamily:chords}}
        \star_{\ref{motzkinFamily:chords}}
        f_{\ref{motzkinFamily:chords}} \left(
            \epsilon_{\ref{motzkinFamily:chords}}
        \right)
    \right)
    = 
    f_{\ref{motzkinFamily:chords}} \left(
        \begin{tikzpicture}[baseline={([yshift=-.5ex]current bounding box.center)}, scale = \scl]
                \node at (90:10) {\V};
                \draw (0,0) circle [radius = 10];
                \dummyNodesPolar[90:12][270:12]
            \end{tikzpicture}
        \star_{\ref{motzkinFamily:chords}}
        f_{\ref{motzkinFamily:chords}} \left(
            \begin{tikzpicture}[baseline={([yshift=-.5ex]current bounding box.center)}, scale = \scl]
                \node at (90:10) {\V};
                \draw (0,0) circle [radius = 10];
                \dummyNodesPolar[90:12][270:12]
            \end{tikzpicture}
        \right)
    \right)
    = 
    f_{\ref{motzkinFamily:chords}} \left(
        \begin{tikzpicture}[baseline={([yshift=-.5ex]current bounding box.center)}, scale = \scl]
            \node at (90:10) {\V};
            \draw (0,0) circle [radius = 10];
            \dummyNodesPolar[90:12][270:12]
        \end{tikzpicture}
        \star_{\ref{motzkinFamily:chords}}
        \begin{tikzpicture}[baseline={([yshift=-.5ex]current bounding box.center)}, scale = \scl]
            \node at (90:10) {\V};
            \draw (0,0) circle [radius = 10];
            \draw[fill] (270:10) \smlnd;
            \dummyNodesPolar[90:12][270:12]
        \end{tikzpicture}
    \right)
    = 
    f_{\ref{motzkinFamily:chords}} \left(
        \begin{tikzpicture}[baseline={([yshift=-.5ex]current bounding box.center)}, scale = \scl]
            \node at (90:10) {\V};
            \draw (0,0) circle [radius = 10];
            \draw[fill] (30:10) \smlnd;
            \draw[fill] (150:10) \smlnd;
            \draw[fill] (270:10) \smlnd;
            \dummyNodesPolar[90:12][270:12]
            \draw (150:10) to [bend right = 30] (30:10);
        \end{tikzpicture}
    \right) 
    = 
    \begin{tikzpicture}[baseline={([yshift=-.5ex]current bounding box.center)}, scale = \scl]
        \node at (90:10) {\V};
        \draw (0,0) circle [radius = 10];
        \draw[fill] (45:10) \smlnd;
        \draw[fill] (135:10) \smlnd;
        \draw[fill] (-45:10) \smlnd;
        \draw[fill] (-135:10) \smlnd;
        \dummyNodesPolar[90:12][270:12]
        \draw (-45:10) to (135:10);
    \end{tikzpicture}
\end{equation*}

So the universal bijection maps
\begin{equation*}
    \begin{tikzpicture}[baseline={([yshift=-.5ex]current bounding box.center)}, scale = 0.6]
        \grd[4][1]
        \draw (0,0) \motUp \motAc \motDn \motAc;
    \end{tikzpicture}
    \quad \mapsto \quad
    \begin{tikzpicture}[baseline={([yshift=-.5ex]current bounding box.center)}, scale = 0.05]
        \def\scl{0.05}
        \node at (90:10) {\V};
        \draw (0,0) circle [radius = 10];
        \draw[fill] (45:10) \smlnd;
        \draw[fill] (135:10) \smlnd;
        \draw[fill] (-45:10) \smlnd;
        \draw[fill] (-135:10) \smlnd;
        \dummyNodesPolar[90:12][270:12]
        \draw (-45:10) to (135:10);
    \end{tikzpicture}
\end{equation*}

\def\scl{0.5}
If we were instead seeking a bijection from Motzkin paths to Motzkin unary-binary trees, then we would simply replace the maps and generators in the factorised expression for the Motzkin path with the maps and generators from the \mbox{(1,2)-magma} corresponding to Motzkin unary-binary trees  as follows:
\begin{equation*} 
    f_{\ref{motzkinFamily:motzkinTrees}} \left(
        \epsilon_{\ref{motzkinFamily:motzkinTrees}}
        \star_{\ref{motzkinFamily:motzkinTrees}}
        f_{\ref{motzkinFamily:motzkinTrees}} \left(
            \epsilon_{\ref{motzkinFamily:motzkinTrees}}
        \right)
    \right)
    = 
    f_{\ref{motzkinFamily:motzkinTrees}} \left(
        \begin{tikzpicture}[baseline={([yshift=-.5ex]current bounding box.center)}, scale = \scl]
            \fill (0,0) \smlnd;
        \end{tikzpicture}
        \star_{\ref{motzkinFamily:motzkinTrees}}
        f_{\ref{motzkinFamily:motzkinTrees}} \left(
            \begin{tikzpicture}[baseline={([yshift=-.5ex]current bounding box.center)}, scale = \scl]
                \fill (0,0) \smlnd;
            \end{tikzpicture}
        \right)
    \right)
    =
    f_{\ref{motzkinFamily:motzkinTrees}} \left(
        \begin{tikzpicture}[baseline={([yshift=-.5ex]current bounding box.center)}, scale = \scl]
            \fill (0,0) \smlnd;
        \end{tikzpicture}
        \star_{\ref{motzkinFamily:motzkinTrees}}\!
        \begin{tikzpicture}[baseline={([yshift=-.9ex]current bounding box.center)}, scale = \scl]
            \draw (0,0) \un;
            \fill (0,0) \smlunnd;
            \node (0,0) {\smlroot};
        \end{tikzpicture}\!
    \right)
    = 
    f_{\ref{motzkinFamily:motzkinTrees}} \left(
        \begin{tikzpicture}[baseline={([yshift=-.5ex]current bounding box.center)}, scale = \scl]
            \draw (0,0) \bin (1,-1) \un;
            \fill (0,0) \smlbinnd (1,-1) \smlunnd;
            \node (0,0) {\smlroot};
        \end{tikzpicture}
    \, \right) 
    = 
    \begin{tikzpicture}[baseline={([yshift=-.5ex]current bounding box.center)}, scale = \scl]
        \draw (0,0) \un (0,-1) \bin (1,-2) \un;
        \fill (0,0) \smlunnd (0,-1) \smlbinnd (1,-2) \smlunnd;
        \node (0,0) {\smlroot};
    \end{tikzpicture}
\end{equation*}

Thus we see that the universal bijection gives
\begin{equation*}
    \begin{tikzpicture}[baseline={([yshift=-.5ex]current bounding box.center)}, scale = 0.6]
        \grd[4][1]
        \draw (0,0) \motUp \motAc \motDn \motAc;
    \end{tikzpicture}
    \quad \mapsto \quad
    \begin{tikzpicture}[baseline={([yshift=-.5ex]current bounding box.center)}, scale = 0.5]
        \draw (0,0) \un (0,-1) \bin (1,-2) \un;
        \fill (0,0) \smlunnd (0,-1) \smlbinnd (1,-2) \smlunnd;
        \node (0,0) {\smlroot};
    \end{tikzpicture}
\end{equation*}
and also
\begin{equation*}
    \begin{tikzpicture}[baseline={([yshift=-.5ex]current bounding box.center)}, scale = 0.05]
        \def \scl {0.05}
        \node at (90:10) {\V};
        \draw (0,0) circle [radius = 10];
        \draw[fill] (45:10) \smlnd;
        \draw[fill] (135:10) \smlnd;
        \draw[fill] (-45:10) \smlnd;
        \draw[fill] (-135:10) \smlnd;
        \dummyNodesPolar[90:12][270:12]
        \draw (-45:10) to (135:10);
    \end{tikzpicture}
    \quad \longleftrightarrow \quad
    \begin{tikzpicture}[baseline={([yshift=-.5ex]current bounding box.center)}, scale = 0.5]
        \draw (0,0) \un (0,-1) \bin (1,-2) \un;
        \fill (0,0) \smlunnd (0,-1) \smlbinnd (1,-2) \smlunnd;
        \node (0,0) {\smlroot};
    \end{tikzpicture}
\end{equation*}

\subsubsection{Universal bijections for Schr\"oder families}

Consider the following Schr\"oder normed \mbox{(1,2)-magmas}:
\begin{itemize}
    \item Rectangulations \schMag{schroderFamily:rectangulations},
    \item Semi-standard Young tableaux of shape $n \times 2$ \schMag{schroderFamily:SSYT},
    \item Unary-binary trees \schMag{schroderFamily:schroderTrees}.
\end{itemize}
See Appendix \ref{appendix:Schroder} for the definitions of these families and details of the relevant (1,2)-magmas and norms.

Take a rectangulation and factorise it:
\begin{equation*}
    \begin{tikzpicture}[baseline={([yshift=-.5ex]current bounding box.center)}, scale = \scl]
        \draw (0,0) rectangle (4,4);
        \draw[thick] (0,2) -- (4,2) (0,3) -- (4,3) (1,0) -- (1,2);
        \fill (1,1) \smlnd (2,2) \smlnd (3,3) \smlnd;
    \end{tikzpicture}
    = 
    \begin{tikzpicture}[baseline={([yshift=-.5ex]current bounding box.center)}, scale = \scl]
        \draw (0,0) rectangle (2,2);
        \draw[thick] (1,0) -- (1,2);
        \fill (1,1) \smlnd;
    \end{tikzpicture}
    \ \star_{\ref{schroderFamily:rectangulations}} \
    \begin{tikzpicture}[baseline={([yshift=-.5ex]current bounding box.center)}, scale = \scl]
        \draw (0,0) rectangle (2,2);
        \draw[thick] (0,1) -- (2,1);
        \fill (1,1) \smlnd;
    \end{tikzpicture}
    =
    f_{\ref{schroderFamily:rectangulations}} \left(\!
        \begin{tikzpicture}[baseline={([yshift=-.5ex]current bounding box.center)}, scale = \scl]
            \draw (0,0) rectangle (1,1);
            \dummyNodes[0][0][1][1]
        \end{tikzpicture}\!
    \right)
    \star_{\ref{schroderFamily:rectangulations}}
    \left(\!
        \begin{tikzpicture}[baseline={([yshift=-.5ex]current bounding box.center)}, scale = \scl]
            \draw (0,0) rectangle (1,1);
            \dummyNodes[0][0][1][1]
        \end{tikzpicture}
        \!\star_{\ref{schroderFamily:rectangulations}}\!
        \begin{tikzpicture}[baseline={([yshift=-.5ex]current bounding box.center)}, scale = \scl]
            \draw (0,0) rectangle (1,1);
            \dummyNodes[0][0][1][1]
        \end{tikzpicture}\!
    \right)
    =
    f_{\ref{schroderFamily:rectangulations}} \left(
        \epsilon_{\ref{schroderFamily:rectangulations}}
    \right)
    \star_{\ref{schroderFamily:rectangulations}}
    \left(
        \epsilon_{\ref{schroderFamily:rectangulations}}
        \star_{\ref{schroderFamily:rectangulations}}
        \epsilon_{\ref{schroderFamily:rectangulations}}
    \right)
\end{equation*}

Now to obtain the image in the family of semi-standard Young tableaux of shape $2 \times n$ via the universal bijection we replace the generators and the maps with the respective generators and maps from the Schr\"oder normed \mbox{(1,2)-magma} of SSYT of shape $2 \times n$:
\begin{equation*}
    f_{\ref{schroderFamily:SSYT}} \left(
        \epsilon_{\ref{schroderFamily:SSYT}}
    \right)
    \star_{\ref{schroderFamily:SSYT}}
    \left(
        \epsilon_{\ref{schroderFamily:SSYT}}
        \star_{\ref{schroderFamily:SSYT}}
        \epsilon_{\ref{schroderFamily:SSYT}}
    \right)
    =
    f_{\ref{schroderFamily:SSYT}} \left(
        \emptyset
    \right)
    \star_{\ref{schroderFamily:SSYT}}
    \left(
        \emptyset
        \star_{\ref{schroderFamily:SSYT}}
        \emptyset
    \right)
    =
    \resizebox{\SSYTscl}{!}{ \begin{tabular}{| c | c |}
        \hline
        1 & 1 \\
        \hline
    \end{tabular}}
    \ \star_{\ref{schroderFamily:SSYT}} \ 
    \resizebox{\SSYTscl}{!}{\begin{tabular}{| c | c |}
        \hline
        1 & 2 \\
        \hline
    \end{tabular}}
    =
    \resizebox{\SSYTscl}{!}{\begin{tabular}{| c | c |}
        \hline
        1 & 1 \\
        \hline
        2 & 4 \\
        \hline 
        3 & 5 \\
        \hline
    \end{tabular}}
\end{equation*}

Finally, to obtain a bijection to Schr\"oder unary-binary trees, replace the maps and generators in the factorised expression for the rectangulation as follows:
\begin{equation*}
    f_{\ref{schroderFamily:schroderTrees}} \left(
        \epsilon_{\ref{schroderFamily:schroderTrees}}
    \right)
    \star_{\ref{schroderFamily:schroderTrees}}
    \left(
        \epsilon_{\ref{schroderFamily:schroderTrees}}
        \star_{\ref{schroderFamily:schroderTrees}}
        \epsilon_{\ref{schroderFamily:schroderTrees}}
    \right)
    =
    f_{\ref{schroderFamily:schroderTrees}} \left(
        \begin{tikzpicture}[baseline={([yshift=-.5ex]current bounding box.center)}, scale = \scl]
            \fill (0,0) \smlnd;
        \end{tikzpicture}
    \right)
    \star_{\ref{schroderFamily:schroderTrees}}
    \left(
        \begin{tikzpicture}[baseline={([yshift=-.5ex]current bounding box.center)}, scale = \scl]
            \fill (0,0) \smlnd;
        \end{tikzpicture}
        \star_{\ref{schroderFamily:schroderTrees}}
        \begin{tikzpicture}[baseline={([yshift=-.5ex]current bounding box.center)}, scale = \scl]
            \fill (0,0) \smlnd;
        \end{tikzpicture}
    \right)
    \ =
    \begin{tikzpicture}[baseline={([yshift=-.5ex]current bounding box.center)}, scale = \scl]
        \draw (0,0) \un;
        \fill (0,0) \smlunnd;
        \node (0,0) {\smlroot};
    \end{tikzpicture}
    \ \star_{\ref{schroderFamily:schroderTrees}} \ 
    \begin{tikzpicture}[baseline={([yshift=-.5ex]current bounding box.center)}, scale = \scl]
        \draw (0,0) \bin;
        \fill (0,0) \smlbinnd;
        \node (0,0) {\smlroot};
    \end{tikzpicture}
    \ = \ 
    \begin{tikzpicture}[baseline={([yshift=-.5ex]current bounding box.center)}, scale = \scl]
        \draw (0,0) \bin (-1,-1) \un (1,-1) \bin;
        \fill (0,0) \smlbinnd (-1,-1) \smlunnd (1,-1) \smlbinnd;
        \node (0,0) {\smlroot};
    \end{tikzpicture}
\end{equation*}

Thus we see that we have the following bijections:
\begin{equation*}
    \begin{tikzpicture}
        \node[anchor = south] at (0,0) (1) {         
            \begin{tikzpicture}[baseline={([yshift=-.5ex]current bounding box.center)}, scale = \scl]
                \draw (0,0) rectangle (4,4);
                \draw[thick] (0,2) -- (4,2) (0,3) -- (4,3) (1,0) -- (1,2);
                \fill (1,1) \smlnd (2,2) \smlnd (3,3) \smlnd;
            \end{tikzpicture} 
        };
        \node[anchor = north east] at (-1,-1) (2) {
            \resizebox{\SSYTscl}{!}{\begin{tabular}{| c | c |}
                \hline
                1 & 1 \\
                \hline
                2 & 4 \\
                \hline 
                3 & 5 \\
                \hline
            \end{tabular}}
        };
        \node[anchor = north west] at (1,-1) (3) {
            {\begin{tikzpicture}[baseline={([yshift=-.5ex]current bounding box.center)}, scale = \scl]
                \draw (0,0) \bin (-1,-1) \un (1,-1) \bin;
                \fill (0,0) \smlbinnd (-1,-1) \smlunnd (1,-1) \smlbinnd;
                \fill (0,-0.75*0.12/\scl) -- (1*0.12/\scl,0.75*0.12/\scl) -- (-1*0.12/\scl,0.75*0.12/\scl) -- (0,-0.75*0.12/\scl);
            \end{tikzpicture}}
        };
        \path[->]
            (-1,0) edge (-1.6,-1)
            (-1.6,-1) edge (-1,0)
            (1,0) edge (1.6,-1)
            (1.6,-1) edge (1,0)
            (-0.9,-1.75) edge (0.9,-1.75)
            (0.9,-1.75) edge (-0.9,-1.75);
    \end{tikzpicture}
\end{equation*}

\section{Fuss-Catalan normed (3)-magmas} \label{section:fussCatalan}

In this section we consider (3)-magmas and discuss how these relate to the order 3 Fuss-Catalan sequence. 
Recall that a (3)-magma \mbox{($\cM$, $t$)} is a set $\cM$ along with a ternary map $t: \cM \times \cM \times \cM \rightarrow \cM$.  
We begin this section by constructing an example of a free (3)-magma, which we call the Cartesian (3)-magma. 

\subsection{Cartesian (3)-magma} \label{3magmaDefinitions}

The Cartesian (3)-magma  is one of the simplest (3)-magmas and is constructed using Cartesian products. 
As in section \ref{section:motzkinSchroder}, we use square parentheses to represent a 3-tuple, so we denote  the ternary Cartesian product as follows:
\begin{equation*}
    X_1 \times X_2 \times X_3 = \left\{ [x_1, x_2, x_3]: \, x_1 \in X_1, \, x_2 \in X_2, \, x_3 \in X_3 \right\}.
\end{equation*}

\begin{definition}[Cartesian (3)-magma] \label{def:cartesian3magma}
Let $X$ be a non-empty finite set. Define the sequence $\cW_n(X)$ of sets of nested 3-tuples by
\begin{subequations}
    \begin{align}
        \cW_1(X) & = X, \\
        \cW_{2n + 1}(X) & = \bigcup_{i = 0}^{n - 1} \ \bigcup_{j = 0}^{n - i - 1} \ \left( \cW_{2i + 1}(X) \times \cW_{2j + 1}(X) \times \cW_{2(n - i - j - 1) + 1}(X) \right), \qquad n \geq 1. \label{eq:cart3}
    \end{align}
\end{subequations}
Let $\cW_X = \bigcup_{n \geq 0} \cW_{2n + 1}(X)$ and $\cW_X^+ = \cW_X \backslash X$. Define $\tau: \cW_X \times \cW_X \times \cW_X \rightarrow \cW_X$ by
\begin{equation*}
    \tau(w_1, w_2, w_3) = [w_1, w_2, w_3], \qquad w_1, w_2, w_3 \in \cW_X.
\end{equation*}
The pair \mbox{($\cW_X$, $\tau$)} is called the \textbf{Cartesian (3)-magma generated by $\bm{X}$}.
\end{definition}

If $X = \{ \epsilon \}$, the sets $\cW_n(X)$ in the above definition begin:
\begin{align*}
    \cW_1(X) = \{ & \epsilon \}, \\
    \cW_3(X) = \{ & [ \epsilon, \epsilon, \epsilon ] \}, \\
    \cW_5(X) = \{ & [ \epsilon, \epsilon, [ \epsilon, \epsilon, \epsilon ] ], \quad [ \epsilon, [ \epsilon, \epsilon, \epsilon ], \epsilon ], \quad [ [ \epsilon, \epsilon, \epsilon ], \epsilon, \epsilon ] \}, \\
    \cW_7(X) = \{ & [ \epsilon, \epsilon, [ \epsilon, \epsilon, [ \epsilon, \epsilon, \epsilon ] ] ], \quad [ \epsilon, \epsilon, [ \epsilon, [ \epsilon, \epsilon, \epsilon ], \epsilon ] ], \quad [ \epsilon, \epsilon, [ [ \epsilon, \epsilon, \epsilon ], \epsilon, \epsilon ] ] \\
    & [ \epsilon, [ \epsilon, \epsilon, \epsilon ], [ \epsilon, \epsilon, \epsilon ], \quad [ \epsilon, [ \epsilon, \epsilon, [ \epsilon, \epsilon, \epsilon ] ], \epsilon ], \quad [ \epsilon, [ \epsilon, [ \epsilon, \epsilon, \epsilon ], \epsilon ], \epsilon ], \\
    & [ \epsilon, [ [ \epsilon, \epsilon, \epsilon ], \epsilon, \epsilon ], \epsilon ], \quad [ [ \epsilon, \epsilon, \epsilon ], \epsilon, [ \epsilon, \epsilon, \epsilon ] ], \quad [ [ \epsilon, \epsilon, \epsilon ], [ \epsilon, \epsilon, \epsilon ], \epsilon ] \quad \\
    & [ [ \epsilon, \epsilon, [ \epsilon, \epsilon, \epsilon ] ], \epsilon, \epsilon ], \quad [ [ \epsilon, [ \epsilon, \epsilon, \epsilon ], \epsilon ], \epsilon, \epsilon ], \quad [ [ [ \epsilon, \epsilon, \epsilon ], \epsilon, \epsilon ], \epsilon, \epsilon ] \\
    \vdots \quad &
\end{align*} 

We now prove that the Cartesian \mbox{(3)-magma} is free.

\begin{theorem} \label{thm:free3magma}
The Cartesian (3)-magma \mbox{($\cW_X$, $\tau$)} is a free (3)-magma.
\end{theorem}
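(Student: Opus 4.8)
The plan is to follow exactly the route used for the Cartesian (1,1)- and (1,2)-magmas: verify the four conditions which, by Theorem \ref{thm:uniqFactNormedThenFree}, suffice for freeness, namely injectivity of the maps, disjointness of their images, existence of a norm, and identification of the set of irreducibles. Since a (3)-magma carries only the single map $\tau$, the disjointness condition (ii) of Definition \ref{def:uniqFact} is vacuous, so the real work reduces to three checks.

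First I would establish that $\tau$ is injective. If $\tau(w_1, w_2, w_3) = \tau(w_1', w_2', w_3')$ then $[w_1, w_2, w_3] = [w_1', w_2', w_3']$, and equality of $3$-tuples forces $w_1 = w_1'$, $w_2 = w_2'$ and $w_3 = w_3'$; hence $\tau$ is injective and $(\cW_X, \tau)$ is a unique factorisation (3)-magma. Next I would identify the irreducibles. The image $\text{Img}(\tau)$ consists precisely of the nested $3$-tuples $[w_1, w_2, w_3]$ with $w_i \in \cW_X$, and by the recursion \eqref{eq:cart3} these are exactly $\bigcup_{n \geq 1} \cW_{2n+1}(X) = \cW_X \setminus \cW_1(X)$. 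Since $\cW_1(X) = X$ contains no $3$-tuples, the set of irreducibles is $\cW_X \setminus \text{Img}(\tau) = X$, which is non-empty and finite by hypothesis on $X$.

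The heart of the argument is the norm. I would define $\norm{w} = 2m + 1$ whenever $w \in \cW_{2m+1}(X)$, in keeping with the index-based norms used for the earlier Cartesian magmas. The point to check is super-additivity of the ternary map: taking $w_1 \in \cW_{2i+1}(X)$, $w_2 \in \cW_{2j+1}(X)$ and $w_3 \in \cW_{2k+1}(X)$, the recursion \eqref{eq:cart3} places $[w_1, w_2, w_3]$ in $\cW_{2(i + j + k + 1) + 1}(X)$, whence
\[
    \norm{\tau(w_1, w_2, w_3)} = 2(i + j + k + 1) + 1 = (2i + 1) + (2j + 1) + (2k + 1) = \norm{w_1} + \norm{w_2} + \norm{w_3},
\]
so condition (2) of Definition \ref{def:norm} holds, in fact with equality. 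This index bookkeeping — verifying that three odd indices combine under \eqref{eq:cart3} into the correct odd index of the image — is the only place demanding any care, and is the step I would expect to be the main (though modest) obstacle; once it is settled, $\norm{\cdot}$ is a genuine norm and $(\cW_X, \tau)$ is a normed (3)-magma.

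With injectivity, the vacuous disjointness, the norm, and the finite non-empty set of irreducibles $X$ all in hand, Theorem \ref{thm:uniqFactNormedThenFree} applies directly and yields that $(\cW_X, \tau)$ is a free (3)-magma generated by $X$.
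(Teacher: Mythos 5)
Your proposal is correct and follows essentially the same route as the paper: injectivity of $\tau$ from equality of $3$-tuples, identification of the irreducibles as $X$, and the index-based norm $\norm{w} = n$ for $w \in \cW_n(X)$ shown to be exactly additive under $\tau$, after which Theorem \ref{thm:uniqFactNormedThenFree} gives freeness. Your explicit odd-index bookkeeping $2(i+j+k+1)+1 = (2i+1)+(2j+1)+(2k+1)$ just spells out the step the paper states in one line.
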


We will show that \mbox{($\cW_X$, $\tau$)} is a unique factorisation normed (3)-magma with set of irreducibles $X$. Then, by Theorem \ref{thm:uniqFactNormedThenFree}, we will have that \mbox{($\cW_X$, $\tau$)} is a free (3)-magma generated by $X$.

\begin{proof}
Suppose that \mbox{$\tau(w_1, w_2, w_3) = \tau(w_1', w_2', w_3')$} for $w_i, w_i' \in \cW_X$, $i = 1, 2, 3$. Therefore 
$[w_1, w_2, w_3] = [w_1', w_2', w_3']$
and hence $(w_1, w_2, w_3) = (w_1', w_2', w_3')$. Thus we have that $\tau$ is injective and so \mbox{($\cW_X$, $\tau$)} is a unique factorisation (3)-magma.
The set of irreducibles is given by $X$ since this is the complement of $\text{Img}(\tau)$.
We have $\norm{\tau(w_1, w_2, w_3)} = \norm{w_1} + \norm{w_2} + \norm{w_3}$ as $[w_1, w_2, w_3] \in \cW_{n_1 + n_2 + n_3}(X)$ if $w_i \in \cW_{n_i}(X)$, for $i \in \{ 1, 2, 3 \}$. Thus \mbox{($\cW_X$, $\tau$)} is a normed \mbox{(3)-magma}.
\end{proof}



Note, the norm we have defined in the above proof differs slightly  to the traditional size parameter for combinatorial families counted by this sequence as it is more natural for us to have only objects with odd norm. 
This ensures that the norm is strictly additive with respect to the ternary map. 
If we instead wanted our base set to be partitioned into `norms' given by the natural numbers, we would require our norm to be sub-additive with respect to the ternary map ie.\ the map would then not be a norm.  

We now show how the norm used in the proof of Theorem \ref{thm:free3magma} partitions the Cartesian (3)-magma into sets whose size is equal to the $p$-Fuss-Catalan numbers.

\begin{proposition} \label{prop:sequence3magma}
Let \mbox{($\cW_X$, $\tau$)} be the Cartesian (3)-magma generated by the set $X$, where $\card{X} = p$, and define the map $\norm{\cdot}_T: \cW_X \rightarrow \mathbb{N}$ by $\norm{m}_T = n$ when $m \in \cW_n(X)$. If
\begin{equation*}
    N_n = \{ m \in \cW_X: \, \norm{m}_T = n \}, \qquad n \geq 1,
\end{equation*}
then
\begin{equation*}
    \card{N_{2n + 1}} = T_n(p), \qquad n \geq 0.
\end{equation*}
\end{proposition}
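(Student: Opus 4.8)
The plan is to mirror the three analogous cardinality computations already carried out for the Fibonacci, Motzkin and Schr\"oder magmas (Propositions \ref{prop:fib11magma}, \ref{prop:motzin12magma} and \ref{prop:schroder12magma}): reduce the claim to a direct induction on $n$ that matches the set-level recurrence \eqref{eq:cart3} against the numerical recurrence \eqref{eq:pTernaryRecurrence} defining the $p$-Fuss-Catalan numbers. First I would observe that, since $\norm{m}_T = n$ holds precisely when $m \in \cW_n(X)$, we have $N_n = \cW_n(X)$, so it suffices to count $\cW_{2n + 1}(X)$. For the base case $n = 0$ I would note that $\cW_1(X) = X$, whence $\card{N_1} = \card{X} = p = T_0(p)$.

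For the inductive step, fix $n \geq 1$ and apply \eqref{eq:cart3} to $\cW_{2n + 1}(X)$. The key point is that the displayed union is \emph{disjoint}: any $3$-tuple $[w_1, w_2, w_3]$ lying in the $(i,j)$-summand has components whose norms are exactly $2i + 1$, $2j + 1$ and $2(n - i - j - 1) + 1$, and because the sets $\cW_m(X)$ are pairwise disjoint (each element has a single well-defined norm), the index pair $(i,j)$ is recovered from the component norms. Hence the cardinality of the union is the sum of the cardinalities of the summands, and since the cardinality of a Cartesian product is the product of the cardinalities of its factors,
\begin{equation*}
    \card{N_{2n + 1}} = \sum_{i = 0}^{n - 1} \sum_{j = 0}^{n - i - 1} \card{\cW_{2i + 1}(X)} \cdot \card{\cW_{2j + 1}(X)} \cdot \card{\cW_{2(n - i - j - 1) + 1}(X)}.
\end{equation*}
Rewriting each factor as $\card{N_{2i + 1}}$ and so on, then invoking the induction hypothesis $\card{N_{2k + 1}} = T_k(p)$ for all $k < n$ (valid since $i$, $j$ and $n - i - j - 1$ all lie in $\{0, \hdots, n - 1\}$) turns the right-hand side into $\sum_{i = 0}^{n - 1} \sum_{j = 0}^{n - i - 1} T_i(p)\, T_j(p)\, T_{n - i - j - 1}(p)$, which is exactly \eqref{eq:pTernaryRecurrence} after noting that $n - i - j - 1 = n - 1 - i - j$.

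The only genuine subtlety — and the step I would flag as the main obstacle, though it is a minor one — is justifying the disjointness of the union, so that the cardinalities add rather than merely giving an upper bound. This rests on the pairwise disjointness of the sets $\cW_m(X)$, that is, on the norm being single-valued; this is precisely the well-definedness already granted when $\norm{\cdot}_T$ was introduced in the proof of Theorem \ref{thm:free3magma}, so no new argument is required beyond pointing to it. Everything else is the routine ``cardinality of a disjoint union of Cartesian products'' bookkeeping, and the proof terminates once the two recurrences are seen to coincide.
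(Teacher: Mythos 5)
Your proposal is correct and follows essentially the same route as the paper's own proof: identify $N_n$ with $\cW_n(X)$, check the base case $\card{N_1}=p$, and match the cardinality of the union in \eqref{eq:cart3} against the recurrence \eqref{eq:pTernaryRecurrence}. The only difference is that you explicitly justify the disjointness of the union (via the well-definedness of the norm), a point the paper leaves implicit.
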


\begin{proof}
Begin by noting that $\norm{m}_T = n$ if and only if $m \in \cW_n(X)$, and hence we have $N_n = \cW_n(X)$. Therefore $\card{N_1} = \card{\cW_1(X)} = \card{X} = p$. For $n \geq 1$, \eqref{eq:cart3} gives
\begin{align*}
    \card{N_{2n + 1}} & = \card{\bigcup_{i = 0}^{n - 1} \ \bigcup_{j = 0}^{n - i - 1} \ \left( \cW_{2i + 1}(X) \times \cW_{2j + 1}(X) \times \cW_{2(n - i - j - 1) + 1}(X) \right)} \\
    & = \sum_{i = 0}^{n - 1} \sum_{j = 0}^{n - i - 1} \card{N_{2i + 1}} \cdot \card{N_{2j + 1}} \cdot \card{N_{2(n - i - j - 1) + 1}}
\end{align*}
This is equivalent to the $p$-Fuss-Catalan recurrence \eqref{def:pSequences}.
\end{proof}

\begin{corollary}
Let ($\cW_{\epsilon}$, $\tau$) be the Cartesian (3)-magma generated by the single element $\epsilon$. If $N_n$ and $\norm{\cdot}_M: \cW_{\epsilon} \rightarrow \mathbb{N}$ are as defined in Proposition \ref{prop:sequence3magma}, then
\begin{equation*}
    \card{N_{2n + 1}} = T_n, \qquad n \geq 0,
\end{equation*}
with $T_n$ as defined in Definition \ref{eq:pTernaryRecurrence}.
\end{corollary}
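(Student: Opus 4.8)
The plan is to obtain this statement as an immediate specialization of Proposition \ref{prop:sequence3magma}. That proposition establishes $\card{N_{2n+1}} = T_n(p)$ for the Cartesian (3)-magma generated by any non-empty finite set $X$ with $\card{X} = p$, and the corollary is simply the instance $X = \{ \epsilon \}$, a singleton, so that $p = \card{X} = 1$. The sets $N_n$ and the norm are inherited verbatim from the proposition once this choice of $X$ is fixed.

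First I would invoke Proposition \ref{prop:sequence3magma} directly with $X = \{ \epsilon \}$ to conclude $\card{N_{2n+1}} = T_n(1)$ for all $n \geq 0$. The only remaining task is to identify $T_n(1)$ with the ordinary Fuss-Catalan number $T_n$. For this I would appeal to the defining recurrence \eqref{eq:pTernaryRecurrence}: setting $p = 1$ gives the initial value $T_0(1) = 1$ and leaves the summation recurrence otherwise unchanged, which is precisely the recurrence satisfied by the ordinary Fuss-Catalan numbers. An easy induction on $n$ then gives $T_n(1) = T_n$, and the claim follows.

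There is no substantial obstacle here, since all of the combinatorial content was already discharged in establishing Proposition \ref{prop:sequence3magma}. The only point requiring a line of care is checking that the $p$-analogue reduces correctly at $p = 1$, namely that the base case $T_0(1) = 1$ matches the conventional normalisation of the Fuss-Catalan sequence; once that agreement is noted, the two recurrences coincide term by term and the proof is complete.
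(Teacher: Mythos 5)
Your proposal is correct and matches the paper's (implicit) argument exactly: the corollary is the $p=1$ specialisation of Proposition \ref{prop:sequence3magma}, with the identification $T_n(1)=T_n$ following from the recurrence \eqref{eq:pTernaryRecurrence} and the base case $T_0(1)=1$. The paper omits the proof precisely because it is this immediate specialisation, so there is nothing further to add.
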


Note,   the norm function $\norm{\cdot}_T: \cW_{\epsilon} \rightarrow \mathbb{N}$ from the above corollary could equivalently be defined as follows: Let $\norm{\cdot}_T: \cW_{\epsilon} \rightarrow \mathbb{N}$ be such that
\begin{subequations} \label{eq:fcNorm0}
    \begin{align*}
        \norm{\epsilon}_T & = 1, \\
        \norm{\tau(w_1, w_2, w_3)}_T & = \norm{w_1}_T + \norm{w_2}_T + \norm{w_3}_T, \quad w_1, w_2, w_3 \in \cW_{\epsilon}.
    \end{align*}
\end{subequations}

We have seen that a free \mbox{(3)-magma} generated by a single element with a norm function satisfying \eqref{eq:fcNorm0} is partitioned into sets given by the Fuss-Catalan numbers. Thus we are now ready to characterise when a \mbox{(3)-magma} is associated with the Fuss-Catalan numbers.

\begin{definition}[Fuss-Catalan normed (3)-magma] \label{def:ternary3magma}
Let \mbox{($\cM$, $t$)} be a unique factorisation normed \mbox{(3)-magma} with one irreducible element, $\epsilon$. Let \mbox{$\norm{\cdot}_T: \cM \rightarrow \mathbb{N}$} be a norm which satisfies $\norm{\epsilon}_T = 1$ and
\begin{equation} \label{eq:fcNorm}
    \norm{t(m_1, m_2, m_3)}_T = \norm{m_1}_T + \norm{m_2}_T + \norm{m_3}_T,
\end{equation}
for all $m_1, m_2, m_3 \in \cM$. Then \mbox{($\cM$, $t$)} with the norm $\norm{\cdot}_T: \cM \rightarrow \mathbb{N}$ is called a \textbf{Fuss-Catalan normed (3)-magma}.
\end{definition}

Let ($\cM$, $t$) be a Fuss-Catalan normed (3)-magma. Then the base set $\cM$ begins by evaluating the following expressions:
\begin{center}
    \begin{tabular}{l l}
        Norm 1: & $\epsilon$. \\
        Norm 3: & $t(\epsilon, \, \epsilon, \, \epsilon)$. \\
        Norm 5: & $t(\epsilon, \, \epsilon, \, t(\epsilon, \, \epsilon, \, \epsilon))$, \ $t(\epsilon, \, t(\epsilon, \, \epsilon, \, \epsilon), \, \epsilon)$, \ $t(t(\epsilon, \, \epsilon, \, \epsilon), \, \epsilon, \, \epsilon)$. \\
        Norm 7: & $t(\epsilon, \, t(\epsilon, \, \epsilon, \, \epsilon), \, t(\epsilon, \, \epsilon, \, \epsilon))$, \ $t(t(\epsilon, \, \epsilon, \, \epsilon), \, \epsilon, \, t(\epsilon, \, \epsilon, \, \epsilon))$, \ $t(t(\epsilon, \, \epsilon, \, \epsilon), \, t(\epsilon, \, \epsilon, \, \epsilon), \, \epsilon)$, \\
        & $t(\epsilon, \, \epsilon, \, t(\epsilon, \, \epsilon, \, t(\epsilon, \, \epsilon, \, \epsilon)))$, \ $t(\epsilon, \, \epsilon, \, t(\epsilon, \, t(\epsilon, \, \epsilon, \, \epsilon), \, \epsilon))$, \ $t(\epsilon, \, \epsilon, \, t(t(\epsilon, \, \epsilon, \, \epsilon), \, \epsilon, \, \epsilon))$, \\
        & $t(\epsilon, \, t(\epsilon, \, \epsilon, \, t(\epsilon, \, \epsilon, \, \epsilon)), \, \epsilon)$, \ $t(\epsilon, \, t(\epsilon, \, t(\epsilon, \, \epsilon, \, \epsilon), \, \epsilon), \, \epsilon)$, \ $t(\epsilon, \, t(t(\epsilon, \, \epsilon, \, \epsilon), \, \epsilon, \, \epsilon), \, \epsilon)$, \\
        & $t(t(\epsilon, \, \epsilon, \, t(\epsilon, \, \epsilon, \, \epsilon)), \, \epsilon, \, \epsilon)$, \ $t(t(\epsilon, \, t(\epsilon, \, \epsilon, \, \epsilon), \, \epsilon), \, \epsilon, \, \epsilon)$, \ $t(t(t(\epsilon, \, \epsilon, \, \epsilon), \, \epsilon, \, \epsilon), \, \epsilon, \, \epsilon)$. \\
    \end{tabular}
\end{center}

\def\scl{0.15}
Another example of a Fuss-Catalan normed (3)-magma is the Fuss-Catalan family of ternary trees \fcFamRef{fcFamily:ternaryTrees}. 
The generator is the tree containing no edges which we represent by a single node:
\begin{equation*}
    \epsilon_{\ref{fcFamily:ternaryTrees}} =
    \begin{tikzpicture}[baseline={([yshift=-.5ex]current bounding box.center)}, scale = \scl]
        \fill (0,0) \smlnd;
    \end{tikzpicture}
\end{equation*}
The base set is the set of all ternary trees and the ternary map is defined as follows:
\begin{equation*}
    t_{\ref{fcFamily:ternaryTrees}} \left( 
    \raisebox{-20pt}{
    \begin{tikzpicture}[baseline={([yshift=-5ex]current bounding box.center)}, scale = \scl]
        \draw[fill = \colOne, opacity = \opac] (0,0) -- (-5,-10) -- (5,-10) -- (0,0);
        \node at (0,-2.5) {\smlroot};
        \node at (0,-6) {\footnotesize $t_1$};
    \end{tikzpicture}
    , 
    \begin{tikzpicture}[baseline={([yshift=-5ex]current bounding box.center)}, scale = \scl]
        \draw[fill = \colTwo, opacity = \opac] (0,0) -- (-5,-10) -- (5,-10) -- (0,0);
        \node at (0,-2.5) {\smlroot};
        \node at (0,-6) {\footnotesize $t_2$};
    \end{tikzpicture}
    ,
    \begin{tikzpicture}[baseline={([yshift=-5ex]current bounding box.center)}, scale = \scl]
        \draw[fill = \colThree, opacity = \opac] (0,0) -- (-5,-10) -- (5,-10) -- (0,0);
        \node at (0,-2.5) {\smlroot};
        \node at (0,-6) {\footnotesize $t_3$};
    \end{tikzpicture}}
    \right)
    =
    \begin{tikzpicture}[baseline={([yshift=-1ex]current bounding box.center)}, scale = \scl]
        \draw[fill = \colOne, opacity = \opac] (-11,-5) -- (-16,-15) -- (-6,-15) -- (-11,-5);
        \draw[fill = \colTwo, opacity = \opac] (0,-5) -- (-5,-15) -- (5,-15) -- (0,-5);
        \draw[fill = \colThree, opacity = \opac] (11,-5) -- (16,-15) -- (6,-15) -- (11,-5);
        \draw (0,0) -- (0,-5) (0,0) -- (-11,-5) (0,0) -- (11,-5);
        \node at (0,0) {\root};
        \fill (-11,-5) \smlnd (0,-5) \smlnd (11,-5) \smlnd;
        \node at (-11,-11) {\footnotesize $t_1$};
        \node at (0,-11) {\footnotesize $t_2$};
        \node at (11,-11) {\footnotesize $t_3$};
        \dummyNodes[0][0.2][0][-15.2]
    \end{tikzpicture}
\end{equation*}

The norm of any ternary tree is defined to be the number of leaves in the tree. Notice that this norm is additive as the number of leaves in a ternary tree is equal to the sum of the number of leaves in each of its three factors.

Sorting by norm, the base set  of the ternary tree Fuss-Catalan normed (3)-magma begins as follows:
\begin{center}
    \def\scl{0.6}
    \begin{tabular}{| l l |}
    \hline
    \textit{Norm 1:} &
    $\begin{tikzpicture}[baseline={([yshift=-.5ex]current bounding box.center)}, scale = \scl]
        \fill (0,0) \smlnd;
        \dummyNodes[0][0.2][0][-0.2]
    \end{tikzpicture} 
    = \epsilon$ \\
    \hline
    \textit{Norm 3:} & 
    $\begin{tikzpicture}[baseline={([yshift=-.5ex]current bounding box.center)}, scale = \scl]
        \node at (0,0) {\smlroot};
        \draw (0,0) -- (-1,-1) (0,0) -- (0,-1) (0,0) -- (1,-1);
        \fill (-1,-1) \smlnd  (0,-1) \smlnd (1,-1) \smlnd;
        \dummyNodes[0][0.2][0][-1.2]
    \end{tikzpicture} 
    = t(\epsilon, \epsilon, \epsilon)$ \\
    \hline
    \textit{Norm 5:} &
    $\begin{tikzpicture}[baseline={([yshift=-.5ex]current bounding box.center)}, scale = \scl]
        \node at (0,0) {\smlroot};
        \draw (0,0) -- (-1,-1) (0,0) -- (0,-1) (0,0) -- (1,-1);
        \fill (-1,-1) \smlnd  (0,-1) \smlnd (1,-1) \smlnd;
        \draw (-1,-1) -- (-2,-2) (-1,-1) -- (-1,-2) (-1,-1) -- (0,-2);
        \fill (-2,-2) \smlnd  (-1,-2) \smlnd (0,-2) \smlnd;
        \dummyNodes[0][0.2][0][-2.2]
    \end{tikzpicture} 
    = t(t(\epsilon, \epsilon, \epsilon), \epsilon, \epsilon)$,
    \quad
    $\begin{tikzpicture}[baseline={([yshift=-.5ex]current bounding box.center)}, scale = \scl]
        \node at (0,0) {\smlroot};
        \draw (0,0) -- (-1,-1) (0,0) -- (0,-1) (0,0) -- (1,-1);
        \fill (-1,-1) \smlnd  (0,-1) \smlnd (1,-1) \smlnd;
        \draw (0,-1) -- (-1,-2) (0,-1) -- (0,-2) (0,-1) -- (1,-2);
        \fill (-1,-2) \smlnd  (0,-2) \smlnd (1,-2) \smlnd;
        \dummyNodes[0][0.2][0][-2.2]
    \end{tikzpicture} 
    = t(\epsilon, t(\epsilon, \epsilon, \epsilon), \epsilon)$, \\
    &
    $\begin{tikzpicture}[baseline={([yshift=-.5ex]current bounding box.center)}, scale = \scl]
        \node at (0,0) {\smlroot};
        \draw (0,0) -- (-1,-1) (0,0) -- (0,-1) (0,0) -- (1,-1);
        \fill (-1,-1) \smlnd  (0,-1) \smlnd (1,-1) \smlnd;
        \draw (1,-1) -- (0,-2) (1,-1) -- (1,-2) (1,-1) -- (2,-2);
        \fill (0,-2) \smlnd  (1,-2) \smlnd (2,-2) \smlnd;
        \dummyNodes[0][0.2][0][-2.2]
    \end{tikzpicture}  
    = t(\epsilon, \epsilon, t(\epsilon, \epsilon, \epsilon))$ \\
    \hline
    \end{tabular}
\end{center}

Clearly, this is a unique factorisation \mbox{(3)-magma} with only one irreducible element. This is immediate from the definition of the ternary map, as we see that any ternary tree has unique left, middle and right subtrees. Noting that there exists a norm, we have from Theorem \ref{thm:uniqFactNormedThenFree} that this is a free \mbox{(3)-magma}. Since the norm satisfies \eqref{eq:fcNorm}, we have that this is indeed a Fuss-Catalan normed \mbox{(3)-magma}.

\subsection{Free (3)-magma isomorphisms and a universal bijection}

 Proposition \ref{prop:freenMagmaIsomorphism} tells us that there is a unique \mbox{(3)-magma} ismorphism between any two free \mbox{(3)-magmas} with the same number of generators. This allows us to define a universal bijection between any two Fuss-Catalan normed \mbox{(3)-magmas}.

\begin{definition}[Universal bijection] \label{def:3univBij}
Let ($\cM$, $t$) and ($\cN$, $t'$) be free (3)-magmas with generating sets $X_{\cM}$ and $X_{\cN}$ respectively, with $\card{X_{\cM}} = \card{X_{\cN}}$. Let $\sigma: X_{\cM} \rightarrow X_{\cN}$ be any bijection, and define the map $\Upsilon: \cM \rightarrow \cN$ as follows. For all $m \in \cM \setminus X_{\cM}$,
\begin{enumerate}[label=(\roman*)]
    \item Decompose $m$ into an expression in terms of generators $\epsilon_i \in X_{\cM}$ and the map $t$.
    \item In the decomposition of $m$ replace every occurrence of $\epsilon_i$ with $\sigma(\epsilon_i)$ and every occurrence of $t$ with $t'$. Call this expression $\upsilon(m)$.
    \item Define $\Upsilon(m)$ to be $\upsilon(m)$, that is, evaluate all maps in $\upsilon(m)$ to give an element of $\cN$.
\end{enumerate}
\end{definition}

From the above definition, we immediately get the following proposition. As noted in previous sections, this comes from the fact that the map $\Upsilon$ of Definition \ref{def:3univBij} is equal to the map $\Gamma$ of Proposition \ref{prop:freenMagmaIsomorphism}.

\begin{proposition} \label{prop:3univBij}
Let $\Upsilon: \cM \rightarrow \cN$ be the map of Definition \ref{def:3univBij}. Then $\Upsilon$ is a free (3)-magma isomorphism.
\end{proposition}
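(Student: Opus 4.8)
The plan is to prove the statement exactly as the parenthetical remark preceding it suggests: show that the explicitly-defined map $\Upsilon$ coincides with the canonical isomorphism $\Gamma$ supplied by Proposition \ref{prop:freenMagmaIsomorphism}, so that the isomorphism property is inherited for free. The only substantive work is checking that $\Upsilon$ is well-defined and that its recursive behaviour matches the morphism constructed in Theorem \ref{thm:uniqFactNormedThenFree}.

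First I would settle well-definedness. Both $(\cM, t)$ and $(\cN, t')$ are free $(3)$-magmas, hence (by the reasoning behind Theorem \ref{thm:free3magma} together with Theorem \ref{thm:uniqFactNormedThenFree}) they are unique factorisation normed $(3)$-magmas with the finite decomposition property. Consequently every $m \in \cM$ possesses a unique decomposition $\pi(m)$: a finite nested expression in the generators $\epsilon_i \in X_{\cM}$ and the ternary map $t$. Uniqueness guarantees that step (i) of Definition \ref{def:3univBij} produces an unambiguous expression, the substitution in step (ii) is determined, and finiteness of $\pi(m)$ guarantees that the evaluation in step (iii) terminates. Thus $\Upsilon: \cM \rightarrow \cN$ is a genuine map.

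Next I would realise $\Upsilon$ as the morphism $\theta$ underlying Proposition \ref{prop:freenMagmaIsomorphism}. Choose an index set $Y$ with $\card{Y} = \card{X_{\cM}} = \card{X_{\cN}}$ together with bijections $i: Y \rightarrow X_{\cM}$ and $j: Y \rightarrow X_{\cN}$ arranged so that $j \circ i^{-1} = \sigma$. Proposition \ref{prop:freenMagmaIsomorphism} then yields a unique $(3)$-magma isomorphism $\Gamma: \cM \rightarrow \cN$ with $\Gamma \circ i = j$, which is precisely the morphism $\theta$ of Theorem \ref{thm:uniqFactNormedThenFree}: it sends each generator $\epsilon_i$ to $\sigma(\epsilon_i)$ and satisfies the recursion
\begin{equation*}
    \Gamma( t(m_1, m_2, m_3) ) = t'( \Gamma(m_1), \Gamma(m_2), \Gamma(m_3) ).
\end{equation*}

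Finally I would verify $\Upsilon = \Gamma$ by induction on the norm $\norm{m}_T$ (equivalently, on the depth of $\pi(m)$). On generators both maps return $\sigma(\epsilon_i)$. For a reducible $m = t(m_1, m_2, m_3)$, the substitute-and-evaluate recipe of Definition \ref{def:3univBij} gives $\Upsilon(m) = t'(\Upsilon(m_1), \Upsilon(m_2), \Upsilon(m_3))$; since the norm is additive over $t$ each $m_\ell$ has strictly smaller norm, so the inductive hypothesis applies and this equals $t'(\Gamma(m_1), \Gamma(m_2), \Gamma(m_3)) = \Gamma(m)$. Hence $\Upsilon = \Gamma$, and as $\Gamma$ is a $(3)$-magma isomorphism, so is $\Upsilon$. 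The only point requiring care is the well-definedness in the first paragraph, which rests entirely on unique factorisation and termination of the decomposition; once that is in hand the identification with $\Gamma$ is routine.
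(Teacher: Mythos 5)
Your proposal is correct and follows essentially the same route as the paper, which simply asserts that $\Upsilon$ coincides with the isomorphism $\Gamma$ of Proposition \ref{prop:freenMagmaIsomorphism} and leaves the verification implicit. You merely fill in the details the paper omits (well-definedness via unique factorisation and termination of the decomposition, and the induction on norm identifying $\Upsilon$ with $\Gamma$), so no further comparison is needed.
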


Schematically, we can write $\Upsilon$ as follows:
\begin{equation}
    m \qquad \overset{\text{decompose}}{\xrightarrow{\hspace*{1.5cm}}} \qquad \underset{\substack{\epsilon_i \rightarrow \, \sigma(\epsilon_i), \ t \rightarrow \, t'}}{\text{substitute}} \qquad \overset{\text{evaluate}}{\xrightarrow{\hspace*{1.5cm}}} \qquad n.
\end{equation}
\vspace{-.5ex}
$\Upsilon$ is an isomorphism between the free (3)-magmas ($\cM$, $t$) and ($\cN$, $t'$), and therefore defines a bijection between the base sets $\cM$ and $\cN$. It also gives us that this bijection is recursive:
\begin{equation*}
    \Upsilon \left( t(m_1, m_2, m_3) \right) = t' \left( \Upsilon(m_1), \Upsilon(m_2), \Upsilon(m_3) \right).
\end{equation*}
\vspace{-.5ex}
It is also important to note that $\Upsilon$ is a norm-preserving map, subject to some conditions on the norms defined on the two (3)-magmas. 
Suppose that \mbox{($\cM$, $t$)} and \mbox{($\cN$, $t'$)} are free (3)-magmas with generating sets of the same cardinality which have norms \mbox{$\norm{\cdot}_{\cM}: \cM \rightarrow \mathbb{N}$} and \mbox{$\norm{\cdot}_{\cN}: \cN \rightarrow \mathbb{N}$}. 
If the norms $\norm{\cdot}_{\cM}$ and $\norm{\cdot}_{\cN}$ are such that:
\begin{enumerate}[label=(\roman*)]
    \item \label{cond:fc1} $\norm{m}_{\cM} = \norm{\sigma(m)}_{\cN}$ for all $m \in X_{\cM}$, and
    \item \label{cond:fc2} for all \mbox{$m_1, m_2, m_3 \in \cM$},  \mbox{$\norm{t(m_1, m_2, m_3)}_{\cM} = \sum\limits_{i = 1}^3 \norm{m_i}_{\cM} + \kappa$}, and for \\ all $n_1, n_2, n_3 \in \cN$, \mbox{$\norm{t'(n_1, n_2, n_3)}_{\cM} = \sum\limits_{i = 1}^3 \norm{n_i}_{\cM} + \kappa$},  for some $\kappa \in \mathbb{N}_0$,
\end{enumerate}
then we have
\begin{equation*}
    \norm{m}_{\cM} = \norm{\Upsilon(m)}_{\cN}, \qquad m \in \cM.
\end{equation*}
This can be seen by considering the decomposed expressions for $m$ and $\Upsilon(m)$.

This tells us that if \mbox{($\cM$, $t$)} and \mbox{($\cN$, $t'$)} are both Fuss-Catalan normed (3)-magmas, then the norm is preserved under the universal bijection $\Upsilon$.


We now give some examples illustrating this universal bijection between free (3)-magmas generated by a single element. 
As in previous sections, each map and appearance of a generator references a family from Appendix \ref{appendix:FC} via the subscript. 
We consider the following Fuss-Catalan normed (3)-magmas:
\begin{itemize}
    \item Ternary trees \fcMag{fcFamily:ternaryTrees},
    \item Quadrillages \fcMag{fcFamily:quadrillages},
    \item Non-crossing partitions \fcMag{fcFamily:partitions}.
\end{itemize}

Consider the following ternary tree, which we can factorise as shown:
\def\scl{0.4}
\begin{align*} 
    \begin{tikzpicture}[baseline={([yshift=-.5ex]current bounding box.center)}, scale = \scl]
        \draw (0,0) -- (-1,-1) (0,0) -- (0,-1) (0,0) -- (1,-1);
        \fill (-1,-1) \smlnd (0,-1) \smlnd (1,-1) \smlnd;
        \draw (0,-1) -- (-1,-2) (0,-1) -- (0,-2) (0,-1) -- (1,-2);
        \fill (-1,-2) \smlnd (0,-2) \smlnd (1,-2) \smlnd;
        \draw (1,-2) -- (0,-3) (1,-2) -- (1,-3) (1,-2) -- (2,-3);
        \fill (0,-3) \smlnd (1,-3) \smlnd (2,-3) \smlnd;
        \node (0,0) {\smlroot};
    \end{tikzpicture}
    = 
    t_{\ref{fcFamily:ternaryTrees}} \left(
        \begin{tikzpicture}[baseline={([yshift=-.5ex]current bounding box.center)}, scale = \scl]
            \fill (0,0) \smlnd;
        \end{tikzpicture}\spacecomma
        \begin{tikzpicture}[baseline={([yshift=-.5ex]current bounding box.center)}, scale = \scl]
            \draw (0,0) -- (-1,-1) (0,0) -- (0,-1) (0,0) -- (1,-1);
            \fill (-1,-1) \smlnd (0,-1) \smlnd (1,-1) \smlnd;
            \draw (1,-1) -- (0,-2) (1,-1) -- (1,-2) (1,-1) -- (2,-2);
            \fill (0,-2) \smlnd (1,-2) \smlnd (2,-2) \smlnd;
            \node (0,0) {\smlroot};
        \end{tikzpicture}\spacecomma
        \begin{tikzpicture}[baseline={([yshift=-.5ex]current bounding box.center)}, scale = \scl]
            \fill (0,0) \nd;
        \end{tikzpicture}
    \right)
    & =
    t_{\ref{fcFamily:ternaryTrees}} \left(
        \begin{tikzpicture}[baseline={([yshift=-.5ex]current bounding box.center)}, scale = \scl]
            \fill (0,0) \smlnd;
        \end{tikzpicture}\spacecomma
        t_{\ref{fcFamily:ternaryTrees}} \left(
           \begin{tikzpicture}[baseline={([yshift=-.5ex]current bounding box.center)}, scale = \scl]
                \fill (0,0) \smlnd;
            \end{tikzpicture}\spacecomma
                \begin{tikzpicture}[baseline={([yshift=-.5ex]current bounding box.center)}, scale = \scl]
                \fill (0,0) \smlnd;
            \end{tikzpicture} \, ,
            \begin{tikzpicture}[baseline={([yshift=-1.5ex]current bounding box.center)}, scale = \scl]
                \draw (0,0) -- (-1,-1) (0,0) -- (0,-1) (0,0) -- (1,-1);
                \fill (-1,-1) \smlnd (0,-1) \smlnd (1,-1) \smlnd;
                \node (0,0) {\smlroot};
            \end{tikzpicture}
        \right),
        \begin{tikzpicture}[baseline={([yshift=-.5ex]current bounding box.center)}, scale = \scl]
            \fill (0,0) \smlnd;
        \end{tikzpicture}
    \right)
    =
    t_{\ref{fcFamily:ternaryTrees}} \left(
        \begin{tikzpicture}[baseline={([yshift=-.5ex]current bounding box.center)}, scale = \scl]
            \fill (0,0) \smlnd;
        \end{tikzpicture} \,,
        t_{\ref{fcFamily:ternaryTrees}} \left(
            \begin{tikzpicture}[baseline={([yshift=-.5ex]current bounding box.center)}, scale = \scl]
                \fill (0,0) \smlnd;
            \end{tikzpicture} \,,
            \begin{tikzpicture}[baseline={([yshift=-.5ex]current bounding box.center)}, scale = \scl]
                \fill (0,0) \smlnd;
            \end{tikzpicture} \, ,
            t_{\ref{fcFamily:ternaryTrees}} \left(
                \begin{tikzpicture}[baseline={([yshift=-.5ex]current bounding box.center)}, scale = \scl]
                    \fill (0,0) \smlnd;
                \end{tikzpicture} \, ,
                \begin{tikzpicture}[baseline={([yshift=-.5ex]current bounding box.center)}, scale = \scl]
                    \fill (0,0) \smlnd;
                \end{tikzpicture} \, ,
                \begin{tikzpicture}[baseline={([yshift=-.5ex]current bounding box.center)}, scale = \scl]
                    \fill (0,0) \smlnd;
                \end{tikzpicture}
            \right)
        \right),
        \begin{tikzpicture}[baseline={([yshift=-.5ex]current bounding box.center)}, scale = \scl]
            \fill (0,0) \smlnd;
        \end{tikzpicture}
    \right) \\
    & =
    t_{\ref{fcFamily:ternaryTrees}} \left(
        \epsilon_{\ref{fcFamily:ternaryTrees}},
        t_{\ref{fcFamily:ternaryTrees}} \left(
            \epsilon_{\ref{fcFamily:ternaryTrees}},
            \epsilon_{\ref{fcFamily:ternaryTrees}},
            t_{\ref{fcFamily:ternaryTrees}} \left(
                \epsilon_{\ref{fcFamily:ternaryTrees}},
                \epsilon_{\ref{fcFamily:ternaryTrees}},
                \epsilon_{\ref{fcFamily:ternaryTrees}}
            \right)
        \right),
        \epsilon_{\ref{fcFamily:ternaryTrees}}
    \right)
\end{align*}

We wish to obtain the image of this ternary tree under the universal bijection to the Fuss-Catalan normed (3)-magma of quadrillages. 
Thus we replace the generators and the ternary map in this factorised expression with the generators and the ternary map of the quadrillage (3)-magma:
\begin{gather*}
    t_{\ref{fcFamily:quadrillages}} \left(
        \epsilon_{\ref{fcFamily:quadrillages}},
        t_{\ref{fcFamily:quadrillages}} \left(
            \epsilon_{\ref{fcFamily:quadrillages}},
            \epsilon_{\ref{fcFamily:quadrillages}},
            t_{\ref{fcFamily:quadrillages}} \left(
                \epsilon_{\ref{fcFamily:quadrillages}},
                \epsilon_{\ref{fcFamily:quadrillages}},
                \epsilon_{\ref{fcFamily:quadrillages}}
            \right)
        \right),
        \epsilon_{\ref{fcFamily:quadrillages}}
    \right)
    = 
    \def\numPoints{2}
    \def\scl{0.02}
    t_{\ref{fcFamily:quadrillages}} \left(
        \begin{tikzpicture}[baseline={([yshift=-1ex]current bounding box.center)}, scale = \scl]
            \polygonSetup
        \end{tikzpicture},
        t_{\ref{fcFamily:quadrillages}} \left(
            \begin{tikzpicture}[baseline={([yshift=-1ex]current bounding box.center)}, scale = \scl]
                \polygonSetup
            \end{tikzpicture},
            \begin{tikzpicture}[baseline={([yshift=-1ex]current bounding box.center)}, scale = \scl]
                \polygonSetup
            \end{tikzpicture},
            t_{\ref{fcFamily:quadrillages}} \left(
                \begin{tikzpicture}[baseline={([yshift=-1ex]current bounding box.center)}, scale = \scl]
                    \polygonSetup
                \end{tikzpicture},
                \begin{tikzpicture}[baseline={([yshift=-1ex]current bounding box.center)}, scale = \scl]
                    \polygonSetup
                \end{tikzpicture},
                \begin{tikzpicture}[baseline={([yshift=-1ex]current bounding box.center)}, scale = \scl]
                    \polygonSetup
                \end{tikzpicture}
            \right)
        \right),
        \begin{tikzpicture}[baseline={([yshift=-1ex]current bounding box.center)}, scale = \scl]
            \polygonSetup
        \end{tikzpicture}
    \right) \\
    = \def\numPoints{2} \def\scl{0.02}
    t_{\ref{fcFamily:quadrillages}} \left(
        \begin{tikzpicture}[baseline={([yshift=-1ex]current bounding box.center)}, scale = \scl]
            \polygonSetup
        \end{tikzpicture},
        t_{\ref{fcFamily:quadrillages}} \left(\!
            \begin{tikzpicture}[baseline={([yshift=-1ex]current bounding box.center)}, scale = \scl]
                \polygonSetup
            \end{tikzpicture},\!
            \begin{tikzpicture}[baseline={([yshift=-1ex]current bounding box.center)}, scale = \scl]
                \polygonSetup
            \end{tikzpicture},\! \def\scl{0.06}
            \begin{tikzpicture}[baseline={([yshift=-2.2ex]current bounding box.center)}, scale = \scl]
                \def\numPoints{4}
                \polygonSetup
            \end{tikzpicture}
        \right), \def\scl{0.04}
        \begin{tikzpicture}[baseline={([yshift=-1ex]current bounding box.center)}, scale = \scl]
            \def\numPoints{2}
            \polygonSetup
        \end{tikzpicture}
    \right) 
    = \def\numPoints{2} \def\scl{0.02}
    t_{\ref{fcFamily:quadrillages}} \left(\!\!\!
        \raisebox{-7pt}{
        \begin{tikzpicture}[baseline={([yshift=-1ex]current bounding box.center)}, scale = \scl]
            \polygonSetup
        \end{tikzpicture},\!\!\!\!\! \def\numPoints{6} \def\scl{0.08}
        \begin{tikzpicture}[baseline={([yshift=-3ex]current bounding box.center)}, scale = \scl]
            \polygonSetup
            \dissectionArc[1][4]
        \end{tikzpicture},\!\!\!\!\! \def\numPoints{2} \def\scl{0.02}
        \begin{tikzpicture}[baseline={([yshift=-1ex]current bounding box.center)}, scale = \scl]
            \polygonSetup
        \end{tikzpicture}}
    \right)
    =
    \def\numPoints{8} \def\scl{0.12}
    \begin{tikzpicture}[baseline={([yshift=-1ex]current bounding box.center)}, scale = \scl]
        \polygonSetup
        \dissectionArc[2][5]
        \dissectionArc[2][7]
    \end{tikzpicture}
\end{gather*}

So the universal bijection maps
\begin{equation*}
    \begin{tikzpicture}[baseline={([yshift=-.5ex]current bounding box.center)}, scale = 0.5]
        \def\scl{0.5}
        \draw (0,0) -- (-1,-1) (0,0) -- (0,-1) (0,0) -- (1,-1);
        \fill (-1,-1) \smlnd (0,-1) \smlnd (1,-1) \smlnd;
        \draw (0,-1) -- (-1,-2) (0,-1) -- (0,-2) (0,-1) -- (1,-2);
        \fill (-1,-2) \smlnd (0,-2) \smlnd (1,-2) \smlnd;
        \draw (1,-2) -- (0,-3) (1,-2) -- (1,-3) (1,-2) -- (2,-3);
        \fill (0,-3) \smlnd (1,-3) \smlnd (2,-3) \smlnd;
        \node (0,0) {\smlroot};
    \end{tikzpicture}
    \quad \mapsto \quad
    \begin{tikzpicture}[baseline={([yshift=-1ex]current bounding box.center)}, scale = 0.15]
        \def\numPoints{8}
        \polygonSetup
        \dissectionArc[2][5]
        \dissectionArc[2][7]
    \end{tikzpicture}
\end{equation*}

Bijecting the above ternary tree to a non-crossing partition with blocks of even size, we simply replace all occurrences of the generator and the ternary map in the factorised expression for the ternary tree with the generator and the ternary map from the Fuss-Catalan normed (3)-magma of non-crossing partitions with blocks of even size: \def \scl {0.03}
\begin{gather*}
    t_{\ref{fcFamily:partitions}} \left(
        \epsilon_{\ref{fcFamily:partitions}},
        t_{\ref{fcFamily:partitions}} \left(
            \epsilon_{\ref{fcFamily:partitions}},
            \epsilon_{\ref{fcFamily:partitions}},
            t_{\ref{fcFamily:partitions}} \left(
                \epsilon_{\ref{fcFamily:partitions}},
                \epsilon_{\ref{fcFamily:partitions}},
                \epsilon_{\ref{fcFamily:partitions}}
            \right)
        \right),
        \epsilon_{\ref{fcFamily:partitions}}
    \right)
    =
    t_{\ref{fcFamily:partitions}} \left(
        \begin{tikzpicture}[baseline={([yshift=-1ex]current bounding box.center)}, scale = \scl]
            \draw (0,0) circle [radius = 5];
            \dummyNodes[-4][-4][4][4]
        \end{tikzpicture},
        t_{\ref{fcFamily:partitions}} \left(
            \begin{tikzpicture}[baseline={([yshift=-1ex]current bounding box.center)}, scale = \scl]
                \draw (0,0) circle [radius = 5];
                \dummyNodes[-4][-4][4][4]
            \end{tikzpicture},
            \begin{tikzpicture}[baseline={([yshift=-1ex]current bounding box.center)}, scale = \scl]
                \draw (0,0) circle [radius = 5];
                \dummyNodes[-4][-4][4][4]
            \end{tikzpicture},
            t_{\ref{fcFamily:partitions}} \left(
                \begin{tikzpicture}[baseline={([yshift=-1ex]current bounding box.center)}, scale = \scl]
                    \draw (0,0) circle [radius = 5];
                    \dummyNodes[-4][-4][4][4]
                \end{tikzpicture},
                \begin{tikzpicture}[baseline={([yshift=-1ex]current bounding box.center)}, scale = \scl]
                    \draw (0,0) circle [radius = 5];
                    \dummyNodes[-4][-4][4][4]
                \end{tikzpicture},
                \begin{tikzpicture}[baseline={([yshift=-1ex]current bounding box.center)}, scale = \scl]
                    \draw (0,0) circle [radius = 5];
                    \dummyNodes[-4][-4][4][4]
                \end{tikzpicture}
            \right)
        \right),
        \begin{tikzpicture}[baseline={([yshift=-1ex]current bounding box.center)}, scale = \scl]
            \draw (0,0) circle [radius = 5];
            \dummyNodes[-4][-4][4][4]
        \end{tikzpicture}
    \right) \\
    =
    t_{\ref{fcFamily:partitions}} \left(\!\!
        \begin{tikzpicture}[baseline={([yshift=-1ex]current bounding box.center)}, scale = \scl]
            \draw (0,0) circle [radius = 5];
            \dummyNodes[-4][-4][4][4]
        \end{tikzpicture}\!,
        t_{\ref{fcFamily:partitions}} \left(\!
            \begin{tikzpicture}[baseline={([yshift=-1ex]current bounding box.center)}, scale = \scl]
                \draw (0,0) circle [radius = 5];
                \dummyNodes[-4][-4][4][4]
            \end{tikzpicture},\!
            \begin{tikzpicture}[baseline={([yshift=-1ex]current bounding box.center)}, scale = \scl]
                \draw (0,0) circle [radius = 5];
                \dummyNodes[-4][-4][4][4]
            \end{tikzpicture},\! \def\scl{0.06}
            \begin{tikzpicture}[baseline={([yshift=-1ex]current bounding box.center)}, scale = \scl]
                \def\numPoints{2}
                \partitionSetup
                \draw (1) to (2);
            \end{tikzpicture}\!
        \right)\!, \def\scl{0.03}\!
        \begin{tikzpicture}[baseline={([yshift=-1ex]current bounding box.center)}, scale = \scl]
            \draw (0,0) circle [radius = 5];
            \dummyNodes[-4][-4][4][4]
        \end{tikzpicture}\!
    \right)
    = t_{\ref{fcFamily:partitions}} \left(\!
        \begin{tikzpicture}[baseline={([yshift=-1ex]current bounding box.center)}, scale = \scl]
            \draw (0,0) circle [radius = 5];
            \dummyNodes[-4][-4][4][4]
        \end{tikzpicture},\! \def\scl{0.07}
        \begin{tikzpicture}[baseline={([yshift=-1.5ex]current bounding box.center)}, scale = \scl]
            \def\numPoints{4}
            \partitionSetup
            \partitionDrawArc[1][2]
            \partitionDrawArc[3][4]
        \end{tikzpicture}\!,\! \def\scl{0.03}
        \begin{tikzpicture}[baseline={([yshift=-1ex]current bounding box.center)}, scale = \scl]
            \draw (0,0) circle [radius = 5];
            \dummyNodes[-4][-4][4][4]
        \end{tikzpicture}\!
    \right)
    = \def\scl{0.1}
    \begin{tikzpicture}[baseline={([yshift=-.5ex]current bounding box.center)}, scale = \scl]
        \def\numPoints{6}
        \partitionSetup
        \partitionDrawArc[6][1]
        \partitionDrawArc[2][3]
        \partitionDrawArc[4][5]
    \end{tikzpicture}
\end{gather*}

Thus we see that
\begin{equation*} \def\scl{0.5}
    \begin{tikzpicture}[baseline={([yshift=-.5ex]current bounding box.center)}, scale = \scl]
        \draw (0,0) -- (-1,-1) (0,0) -- (0,-1) (0,0) -- (1,-1);
        \fill (-1,-1) \smlnd (0,-1) \smlnd (1,-1) \smlnd;
        \draw (0,-1) -- (-1,-2) (0,-1) -- (0,-2) (0,-1) -- (1,-2);
        \fill (-1,-2) \smlnd (0,-2) \smlnd (1,-2) \smlnd;
        \draw (1,-2) -- (0,-3) (1,-2) -- (1,-3) (1,-2) -- (2,-3);
        \fill (0,-3) \smlnd (1,-3) \smlnd (2,-3) \smlnd;
        \node (0,0) {\smlroot};
    \end{tikzpicture}
    \quad \mapsto \quad
    \begin{tikzpicture}[baseline={([yshift=-.5ex]current bounding box.center)}, scale = 0.15]
        \def\numPoints{6}
        \partitionSetupOne
        \partitionDrawArc[6][1]
        \partitionDrawArc[2][3]
        \partitionDrawArc[4][5]
    \end{tikzpicture}
\end{equation*}
under the universal bijection.

\section{Embedded bijections}
In order to use the universal bijection, we are first required to factorise an object. 
This procedure is simple in some families, but for others this procedure can be difficult or slow to do ``by hand''. 
For families where the factorisation process is not straightforward, it may be the case that there exists a simple bijection to a different family which is easy to factorise. 

For families which can be represented geometrically (in contrast to say `pure' sequence families) in certain cases we can give a geometric representation of the product structure. This representation will then additionally provide the factorisation required to apply a universal bijection. To this end we use an idea from category theory where the existence of a (binary) product object is defined  via a product diagram (and a universal mapping principle). Thus if some categorical object $A$ is a product of two other objects $B$ and $C$, the diagram
\begin{center}
   \begin{tikzpicture}
    \path(0,0) node(B)   {$B$} (2,0) node(A)   {$A$}  (4,0) node(C)   {$C$};
    \draw[->] (A) -- (B);
    \draw[->] (A) -- (C);
\end{tikzpicture} 
\end{center}
is used as part of the categorical definition\footnote{We don't provide the full categorical definition as this is not required here - only the diagram.}. 
This idea was used in \cite{BRAK} for Catalan objects which resulted in the embedding of complete binary trees into other Catalan objects. 
Here we consider Motzkin, Schr\"oder and Fuss-Catalan structures which give rise to unary-binary tree  and ternary tree embeddings.
The generalisation to other positive algebraic structures is clear.


\subsection{Embedded bijections for Motzkin and Schr\"oder geometric structures}
\label{sec:MotSchEmbedded}

We can use the (1,2)-magma structure of any Motzkin (respectively Schr\"oder) family to define an embedding of some other Motzkin (Schr\"oder) object inside any object in that family. This occurs in such a way that the recursive structure of these families is respected. 

In Section \ref{sebsec:12unibij}, we showed how unary-binary trees correspond to both Motzkin and Schr\"oder normed \mbox{(1,2)-magmas}. 
Assuming that unary-binary trees correspond to the free \mbox{(1,2)-magma} ($\cM$, $f$, $\star$) with the single generator $\epsilon$, we label each leaf and each internal node of a unary-binary tree as follows:
\def\scl{0.4}
\begin{equation*}
    \begin{tikzpicture}[baseline={([yshift=-2ex]current bounding box.base)}, scale = \scl]
        \fill (0,0) \smlnd;
        \draw (0,0) \un;
    \end{tikzpicture}
    \ \mapsto
    \begin{tikzpicture}[baseline={([yshift=-2ex]current bounding box.base)}, scale = \scl]
        \fill (0,0) \smlnd;
        \draw (0,0) \un;
        \node at ([shift={(-0.8,-0.05)}]0,0) {$f($};
        \node at ([shift={(0.6,-0.05)}]0,0) {$)$};
    \end{tikzpicture}
    ,
    \qquad \qquad
    \begin{tikzpicture}[baseline={([yshift=-2ex]current bounding box.base)}, scale = \scl]
        \fill (0,0) \smlnd;
        \draw (0,0) \bin;
    \end{tikzpicture}
    \, \mapsto
    \begin{tikzpicture}[baseline={([yshift=-2ex]current bounding box.base)}, scale = \scl]
        \fill (0,0) \smlnd;
        \draw (0,0) \bin;
        \node at ([shift={(-0.8,0.1)}]0,0) {$($};
        \node at ([shift={(0.8,0.1)}]0,0) {$)$};
        \node at ([shift={(0,-0.7)}]0,0) {$\star$};
    \end{tikzpicture} \ .
\end{equation*}
The labels given to an internal node are determined by whether that node has out-degree 1 or 2. 
After labelling in this way, counter-clockwise traversal of the tree gives its factorisation.
For example, consider the following unary-binary tree which has been labelled:
\def\scl{0.8}
\begin{center}
    \begin{tikzpicture}[scale = \scl]
        \draw (0,0) \widebin;
        \draw (-1.5,-1) \bin (1.5,-1) \un;
        \draw (-0.5,-2) \un (1.5,-2) \bin;
        \fill (0,0) \smlwidebinnd (-1.5,-1) \smlbinnd (1.5,-1) \smlunnd (-0.5,-2) \smlunnd (1.5,-2) \smlbinnd;
        \node at (0,0) (root) {\smlroot};
        \binlabel[0][0]
        \binlabel[-1.5][-1]
        \unlabel[1.5][-1]
        \unlabel[-0.5][-2]
        \binlabel[1.5][-2]
        \leaflabel[-2.5][-2]
        \leaflabel[-0.5][-3]
        \leaflabel[0.5][-3]
        \leaflabel[2.5][-3]
        \draw[blue, rounded corners = 7, ->]
            (-0.75,0) -- (-2.1,-0.75) -- (-3,-2.25) -- (-2.5,-2.75) -- (-2,-2) -- (-1.5,-1.5) -- (-1.1,-2.25) -- (-1,-3.25) -- (-0.5,-3.75) -- (0,-3.25) -- (0,-1.75) -- (-1,-1) -- (0,-0.5) -- (0.75,-1) -- (1,-2) -- (0,-3.25) -- (0.5,-3.75) -- (1,-3.25) -- (1.5,-2.5) -- (2,-3.25) -- (2.5,-3.75) -- (3,-3.25) -- (2,-2) -- (2,-0.75) -- (0.75,0);
    \end{tikzpicture}
\end{center}
By traversing this tree counter-clockwise we find that its factorisation is
\begin{equation*}
    \left( \left( \epsilon \star f \left( \epsilon \right) \right) \star f \left( \left( \epsilon \star \epsilon \right) \right) \right).
\end{equation*}

Since we are able to determine the factorisation of a unary-binary tree by simple tree traversal, we focus on embedding unary-binary trees into other geometric families (both Motzkin and Schr\"oder). 

In this section we assume that all free (1,2)-magmas have a single generator and are geometric (meaning they are defined pictorially in some way rather than as sequences). Given that a family is geometric, the details of its (1,2)-magma structure give us the following:
\begin{enumerate}[label=(\roman*)]
    \item We know which part of the geometry is associated with the generator. We will call this the \textbf{generator geometry}.
    \item From the definition of the unary map, we know which part of the geometry is added each time the unary map is applied. We will call this the \textbf{unary map geometry}.
    \item From the defintion of the binary map, we know which part of the geometry is added each time the binary map is applied. We will call this the \textbf{binary map geometry}.
\end{enumerate}

For example, we know that for unary-binary trees, the generators correspond to leaves, and thus the generator geometry is a leaf. From the definitions of the two maps,
\def\scl{0.12}
\begin{equation*}
    f \left(
        \begin{tikzpicture}[baseline={([yshift=-1ex]current bounding box.center)}, scale = \scl]
            \draw[fill = \colOne, opacity = \opac] (0,0) -- (-5,-10) -- (5,-10) -- (0,0);
            \node at (0,0) {\smlroot};
            \node at (0,-6) {\footnotesize $t$};
        \dummyNodes[0][1][0][-11]
        \end{tikzpicture}
    \right)
    =
    \begin{tikzpicture}[baseline={([yshift=-1ex]current bounding box.center)}, scale = \scl]
        \draw[fill = \colOne, opacity = \opac] (0,0) -- (-5,-10) -- (5,-10) -- (0,0);
        \draw (0,0) -- (0,3);
        \node at (0,3) {\smlroot};
        \draw[fill] (0,0)\smlnd;
        \node at (0,-6) {\footnotesize $t$};
        \node at (0,6) {\footnotesize $a$};
        \node at (1,1.5) {\footnotesize $e$};
    \end{tikzpicture}
    \qquad \qquad
    \begin{tikzpicture}[baseline={([yshift=-1ex]current bounding box.center)}, scale = \scl]
        \draw[fill = \colOne, opacity = \opac] (0,0) -- (-5,-10) -- (5,-10) -- (0,0);
        \node at (0,0) {\smlroot};
        \node at (0,-6) {\footnotesize $t_1$};
    \end{tikzpicture}
    \ \star \ 
    \begin{tikzpicture}[baseline={([yshift=-1ex]current bounding box.center)}, scale = \scl]
        \draw[fill = \colTwo, opacity = \opac] (0,0) -- (-5,-10) -- (5,-10) -- (0,0);
        \node at (0,0) {\smlroot};
        \node at (0,-6) {\footnotesize $t_2$};
    \end{tikzpicture}
    =
    \begin{tikzpicture}[baseline={([yshift=-1ex]current bounding box.center)}, scale = \scl]
        \draw[fill = \colOne, opacity = \opac] (0,0) -- (-5,-10) -- (5,-10) -- (0,0);
        \draw[fill = \colTwo, opacity = \opac] (12,0) -- (7,-10) -- (17,-10) -- (12,0);
        \draw (0,0) -- (6,5) -- (12,0);
        \node at (6,5) {\smlroot};
        \fill (0,0) \smlnd (12,0) \smlnd;
        \node at (0,-6) {\footnotesize $t_1$};
        \node at (12,-6) {\footnotesize $t_2$};
        \node at (6,8) {\footnotesize $b$};
        \node at (2,3.5) {\footnotesize $e_1$};
        \node at (10,3.5) {\footnotesize $e_2$};
    \end{tikzpicture}
\end{equation*}
we can see that the unary map geometry is the new root node $a$ and the edge $e$. The binary map geometry is the new root node $b$ and the two edges $e_1$ and $e_2$.

For any geometric family, it is clear that there may be many different ways of drawing the same objects, with each representation differing only slightly and these being trivially in bijection. 
In order to make it clear exactly which parts of an object's geometry correspond to the generator and which arise from the two maps, we will choose a canonical way of drawing an object in a given family. 
In particular, we choose a way of drawing our objects so that the points of any object can be partitioned into disjoint subsets. 
Thus for a (1,2)-magma we require the objects to be drawn such that there are  three disjoint subsets: one subset corresponding to the generator geometry, one to the unary map geometry and one to the binary map geometry. We call this canonical form the \textbf{(1,2)-magma form}. 

\def\scl{0.6}
Let ($\cM$, $f$, $\star$) be a free \mbox{(1,2)-magma} with a single generator. For $m \in \cM$, define the following sets:
\begin{itemize}
    \item Let $\mathbb{G}_{\cM}(m)$ be the set of all elements of generator geometry.
    \item Let $\mathbb{U}_{\cM}(m)$ be the set of all elements of unary map geometry.
    \item Let $\mathbb{B}_{\cM}(m)$ be the set of all elements of binary map geometry.
\end{itemize}
For example, if we choose to draw unary-binary trees as follows, with generator $\begin{tikzpicture}[baseline={([yshift=-.5ex]current bounding box.center)}, scale = \scl]
        \fill (0,0) \smlnd;
    \end{tikzpicture}$,
unary map geometry drawn in blue and binary map geometry drawn in orange, then it is clear that any unary-binary tree can be partitioned into these sets, as shown in the following example:
\begin{equation*}
    \begin{tikzpicture}[baseline={([yshift=-1ex]current bounding box.center)}, scale = \scl]
        \draw (0,0) \widebin;
        \draw (-1.5,-1) \bin;
        \draw (1.5,-1) \un;
        \draw (-0.5,-2) \un;
        \fill (0,0) \smlwidebinnd (-1.5,-1) \smlbinnd (1.5,-1) \smlunnd (-0.5,-2) \smlunnd;
        \node at (0,0) (root) {\smlroot};
        \ndlabel[-0.2][0.1][1]
        \ndlabel[-1.7][-1][2]
        \ndlabel[-2.7][-2][3]
        \ndlabel[-0.7][-2][4]
        \ndlabel[-0.7][-3][5]
        \ndlabel[1.4][-1.2][6]
        \ndlabel[1.4][-2.2][7]
    \end{tikzpicture}
    \qquad \leadsto \qquad
    \begin{tikzpicture}[baseline={([yshift=-1ex]current bounding box.center)}, scale = \scl]
        \node[anchor = mid] at (0,0) {};
        \draw[\colOne, ultra thick] (0,0) \widebin;
        \draw[\colOne, ultra thick] (-1.5,-1) \bin;
        \draw[\colTwo, ultra thick] (1.5,-1) \un;
        \draw[\colTwo, ultra thick] (-0.5,-2) \un;
        \fill[\colOne, ultra thick] (0,0) \smlnd (-1.5,-1) \smlnd (1.5,-2) \smlnd;
        \fill[\colTwo, ultra thick] (-0.5,-2) \smlnd (1.5,-1) \smlnd;
        \fill (-2.5,-2) \smlnd (-0.5,-3) \smlnd (1.5,-2) \smlnd;
    \end{tikzpicture}
\end{equation*}
This gives the partition
\def\scl{0.4}
\begin{equation*}
    \mathbb{G}_{\cM}(m) = \left\{\!\! 
        \begin{tikzpicture}[baseline={([yshift=-1ex]current bounding box.center)}, scale = \scl]
            \fill (0,0) \smlnd;
            \ndlabel[-0.2][0][3]
        \end{tikzpicture},
        \begin{tikzpicture}[baseline={([yshift=-1ex]current bounding box.center)}, scale = \scl]
            \fill (0,0) \smlnd;
            \ndlabel[-0.2][0][5]
        \end{tikzpicture},
        \begin{tikzpicture}[baseline={([yshift=-1ex]current bounding box.center)}, scale = \scl]
            \fill (0,0) \smlnd;
            \ndlabel[-0.2][0][7]
        \end{tikzpicture}
    \right\}, \quad 
    \mathbb{U}_{\cM}(m) = \left\{
        \raisebox{-5pt}{\!\!\!\!
        \begin{tikzpicture}[baseline={([yshift=-2ex]current bounding box.center)}, scale = \scl]
            \fill (0,0) \smlnd;
            \draw (0,0) \un;
            \ndlabel[-0.2][0][4]
        \end{tikzpicture},
        \begin{tikzpicture}[baseline={([yshift=-2ex]current bounding box.center)}, scale = \scl]
            \fill (0,0) \smlnd;
            \draw (0,0) \un;
            \ndlabel[-0.2][0][6]
        \end{tikzpicture}}
    \right\}, \quad
    \mathbb{B}_{\cM}(m)  = \left\{
        \raisebox{-5pt}{
        \begin{tikzpicture}[baseline={([yshift=-2.5ex]current bounding box.base)}, scale = \scl]
            \fill (0,0) \smlnd;
            \draw (0,0) \bin;
            \ndlabel[-0.2][0][1]
        \end{tikzpicture}\spacecomma
        \begin{tikzpicture}[baseline={([yshift=-2.5ex]current bounding box.base)}, scale = \scl]
            \fill (0,0) \smlnd;
            \draw (0,0) \bin;
            \ndlabel[-0.2][0][2]
        \end{tikzpicture}}
    \right\}.
\end{equation*}

For each piece of generator geometry we mark a point, called the \textbf{leaf}. 
For some families, the generator is the empty object. 
In these cases, we enforce that the generator is associated with some geometry by trivially modifying how the object is drawn. 
This ensures that we are able to mark this point. 
For each piece of unary map geometry and each piece of binary map geometry, we also mark a point. In both cases, this marked point is called the \textbf{root}. 

For example, for the family of Motzkin paths \motFamRef{motzkinFamily:motzkinPaths} the generator is the empty path so we choose to represent it by a single node, which is also our marked leaf:
\begin{equation*}
    \epsilon = 
    \begin{tikzpicture}[baseline={([yshift=-.5ex]current bounding box.center)}, scale = \scl]
        \fill (0,0) \nd;
    \end{tikzpicture}
\end{equation*}
We mark the root in the definition of the unary map, noting that this root is part of the unary map geometry:
\def\scl{0.3}
\begin{equation*}
    f \left(
        \begin{tikzpicture}[baseline=1.5ex, scale = \scl]
            \draw[fill = \colOne, opacity = \opac] (6,0) -- (0,0) to [bend left = 90, looseness = 1.5] (6,0);
            \node[anchor = mid] at (3,1.3) {\footnotesize $p$};
        \end{tikzpicture}
    \right)
    =
    \begin{tikzpicture}[baseline=1.5ex, scale = \scl]
        \draw[fill = orange!90!black, opacity = \opac] (6,0) -- (0,0) to [bend left = 90, looseness = 1.5] (6,0);
        \node[anchor = mid] at (3,1.3) {\footnotesize $p$};
        \draw[thick] (6,0) -- (8,0);
        \fill (8,0) \nd;
        \node at ([shift = {(1.5,0)}]8,0) {\footnotesize root};
    \end{tikzpicture}
\end{equation*}

Similarly, we mark the root in the definition of the binary map, again noting that this root is part of the binary map geometry:
\begin{equation*}
    \begin{tikzpicture}[baseline=1.5ex, scale = \scl]
        \draw[fill = \colOne, opacity = \opac] (6,0) -- (0,0) to [bend left = 90, looseness = 1.5] (6,0);
        \node[anchor = mid] at (3,1.3) {\footnotesize $p_1$};
    \end{tikzpicture}
    \ \star \ 
    \begin{tikzpicture}[baseline=1.5ex, scale = \scl]
        \draw[fill = \colTwo, opacity = \opac] (6,0) -- (0,0) to [bend left = 90, looseness = 1.5] (6,0);
        \node[anchor = mid] at (3,1.3) {\footnotesize $p_2$};
    \end{tikzpicture}
    =
    \begin{tikzpicture}[baseline=1.5ex, scale = \scl]
        \draw[fill = \colOne, opacity = \opac] (6,0) -- (0,0) to [bend left = 90, looseness = 1.5] (6,0);
        \node[anchor = mid] at (3,1.3) {\footnotesize $p_1$};
        \draw[thick] (6,0) -- (7.5,1.5);
        \draw[fill = \colTwo, opacity = \opac] (13.5,1.5) -- (7.5,1.5) to [bend left = 90, looseness = 1.5] (13.5,1.5);
        \node at (10.5,2.8) {\footnotesize $p_2$};
        \draw[thick] (13.5,1.5) -- (15,0);
        \fill(15,0) \nd;
        \node at ([shift = {(1.5,0)}]15,0) {\footnotesize root};
    \end{tikzpicture}
\end{equation*}

Now, for any object $m$ from a (1,2)-magma ($\cM$, $f$, $\star$) such that $m = f(m_0)$, let the \textbf{subroot} of $m$ be the root of $m_0$ if it exists (that is, if $m_0 \neq \epsilon$) and the leaf of $m_0$ if $m_0 = \epsilon$. If $m = m_1 \star m_2$, then define the following:
\begin{itemize}
    \item Let the \textbf{left subroot} of $m$ be the root of $m_1$ if $m_1 \neq \epsilon$ and the leaf of $m_1$ if $m_1 = \epsilon$.
    \item Let the \textbf{right subroot} of $m$ be the root of $m_2$ if $m_2 \neq \epsilon$ and the leaf of $m_2$ if $m_2 = \epsilon$.
\end{itemize}
For example, if we are considering the (1,2)-magma of unary-binary trees (considered as either a Motzkin family or a Schr\"oder family), then the tree
\def\scl{0.5}
\begin{equation*}
     \begin{tikzpicture}[baseline={([yshift=-.5ex]current bounding box.center)}, scale = \scl]
        \draw (0,0) \un (0,-1) \bin;
        \fill (0,0) \smlunnd (0,-1) \smlbinnd;
        \node at (0,0) {\smlroot};
        \ndlabel[-0.2][0][1]
        \ndlabel[-0.2][-1][2]
        \ndlabel[-1.2][-2][3]
        \ndlabel[0.8][-2][4]
    \end{tikzpicture}
    =
    f \left(
        \begin{tikzpicture}[baseline={([yshift=-.5ex]current bounding box.center)}, scale = \scl]
            \clip (-1.75,-1.5) rectangle (1.5,0.5);
            \draw (0,0) \bin;
            \fill (0,0) \smlbinnd;
            \node at (0,0) {\smlroot};
            \ndlabel[-0.2][0][2]
        \ndlabel[-1.2][-1][3]
        \ndlabel[0.8][-1][4]
        \end{tikzpicture}
    \right)
\end{equation*}
has subroot node 2, while for the tree
\begin{equation*}
    \begin{tikzpicture}[baseline={([yshift=-.5ex]current bounding box.center)}, scale = \scl]
        \draw (0,0) \bin (-1,-1) \un (-1,-2) \bin (1,-1) \un;
        \fill (0,0) \smlbinnd (-1,-1) \smlunnd (-1,-2) \smlbinnd (1,-1) \smlunnd;
        \node at (0,0) {\smlroot};
        \ndlabel[-0.2][0][1]
        \ndlabel[-1.2][-1][2]
        \ndlabel[-1.2][-2][3]
        \ndlabel[-2.2][-3][4]
        \ndlabel[-0.2][-3][5]
        \ndlabel[0.8][-1][6]
        \ndlabel[0.8][-2][7]
    \end{tikzpicture} \ \
    =
    \begin{tikzpicture}[baseline={([yshift=-.5ex]current bounding box.center)}, scale = \scl]
        \draw (-1,-1) \un (-1,-2) \bin;
        \fill (-1,-1) \smlunnd (-1,-2) \smlbinnd;
        \node at (-1,-1) {\smlroot};
        \ndlabel[-1.2][-1][2]
        \ndlabel[-1.2][-2][3]
        \ndlabel[-2.2][-3][4]
        \ndlabel[-0.2][-3][5]
    \end{tikzpicture}
    \star
    \begin{tikzpicture}[baseline={([yshift=-.5ex]current bounding box.center)}, scale = \scl]
        \draw (1,-1) \un;
        \fill (1,-1) \smlunnd;
        \node at (1,-1) {\smlroot};
        \ndlabel[0.8][-1][6]
        \ndlabel[0.8][-2][7]
    \end{tikzpicture}
\end{equation*}
the left subroot is node 2 and the right subroot is node 6.

We now define our embedded bijections via the following recursive procedure of embedding pairs $(P, \rightarrow)$ and triples $(\overset{L}{\leftarrow}, P, \overset{R}{\rightarrow})$ into some Motzkin or Schr\"oder object $m$. This is done as follows:
\begin{itemize}
    \item If $m = f(m_0)$, then attach $P$ to the root of the unary map $f$. The arrow $\rightarrow$ in the pair $(P, \rightarrow)$ points from $P$ to the subroot of $m$.
    \item If $m = m_1 \star m_2$, then attach $P$ to the root of the binary map $\star$. The left arrow $\overset{L}{\leftarrow}$ of the triple $(\overset{L}{\leftarrow}, P, \overset{R}{\rightarrow})$ points from $P$ to the left subroot and the right arrow $\overset{R}{\rightarrow}$ points from $P$ to the right subroot.
\end{itemize}

We can represent this embedding schematically by drawing the root (of either the unary or binary map) and the subroots in the definition of the maps. For example, for unary-binary trees we have the following:
\def\scl{0.15}
\begin{align*}
    f \left(
        \begin{tikzpicture}[baseline={([yshift=-1ex]current bounding box.center)}, scale = \scl]
            \draw[fill = \colOne, opacity = \opac] (0,0) -- (-5,-10) -- (5,-10) -- (0,0);
            \node at (0,0) {\smlroot};
            \node at (0,-6) {\footnotesize $t$};
        \dummyNodes[0][1][0][-11]
        \end{tikzpicture}
    \right)
    & =
    \begin{tikzpicture}[baseline={([yshift=-1ex]current bounding box.center)}, scale = \scl]
        \draw[fill = \colOne, opacity = \opac] (0,0) -- (-5,-10) -- (5,-10) -- (0,0);
        \draw (0,0) -- (0,3);
        \node at (0,3) {\smlroot};
        \draw[fill] (0,0)\smlnd;
        \node at (0,-6) {\footnotesize $t$};
        \node at (0,5) {\footnotesize $a$};
        \node at (1,1.5) {\footnotesize $e$};
    \end{tikzpicture}
    \ \leadsto \
    \begin{tikzpicture}[baseline={([yshift=-1ex]current bounding box.center)}, scale = \scl]
        \draw[fill = \colOne, opacity = \opac] (0,0) -- (-5,-10) -- (5,-10) -- (0,0);
        \P[0][4]
        \draw[fill] (0,0)\smlnd;
        \node at (0,-6) {\footnotesize $t$};
        \node at (5,0) {\footnotesize subroot};
        \path[blue, ->]
            (0,3) edge (0,0.7);
    \end{tikzpicture}
    \\
    \begin{tikzpicture}[baseline={([yshift=-1ex]current bounding box.center)}, scale = \scl]
        \draw[fill = \colOne, opacity = \opac] (0,0) -- (-5,-10) -- (5,-10) -- (0,0);
        \node at (0,0) {\smlroot};
        \node at (0,-6) {\footnotesize $t_1$};
    \end{tikzpicture}
    \ \star \ 
    \begin{tikzpicture}[baseline={([yshift=-1ex]current bounding box.center)}, scale = \scl]
        \draw[fill = \colTwo, opacity = \opac] (0,0) -- (-5,-10) -- (5,-10) -- (0,0);
        \node at (0,0) {\smlroot};
        \node at (0,-6) {\footnotesize $t_2$};
    \end{tikzpicture}
    & =
    \begin{tikzpicture}[baseline={([yshift=-1ex]current bounding box.center)}, scale = \scl]
        \draw[fill = \colOne, opacity = \opac] (0,0) -- (-5,-10) -- (5,-10) -- (0,0);
        \draw[fill = \colTwo, opacity = \opac] (12,0) -- (7,-10) -- (17,-10) -- (12,0);
        \draw (0,0) -- (6,5) -- (12,0);
        \node at (6,5) {\smlroot};
        \fill (0,0) \smlnd (12,0) \smlnd;
        \node at (0,-6) {\footnotesize $t_1$};
        \node at (12,-6) {\footnotesize $t_2$};
        \node at (6,7.3) {\footnotesize $b$};
        \node at (2,3.3) {\footnotesize $e_1$};
        \node at (10,3.3) {\footnotesize $e_2$};
    \end{tikzpicture}
    \ \leadsto \
    \begin{tikzpicture}[baseline={([yshift=-1ex]current bounding box.center)}, scale = \scl]
        \draw[fill = \colOne, opacity = \opac] (0,0) -- (-5,-10) -- (5,-10) -- (0,0);
        \draw[fill = \colTwo, opacity = \opac] (12,0) -- (7,-10) -- (17,-10) -- (12,0);
        \fill (0,0) \smlnd (12,0) \smlnd;
        \node at (0,-6) {\footnotesize $t_1$};
        \node at (12,-6) {\footnotesize $t_2$};
        \P[6][7]
        \node at (-5,0) {\footnotesize $\begin{aligned} \text{left} & \\[-.9em] \text{subroot} &\end{aligned}$};
        \node at (17,0) {\footnotesize $\begin{aligned} & \text{right} \\[-.8em] & \text{subroot}\end{aligned}$};
        \path[blue, ->]
            (5,6) edge node[style = {inner sep = 3pt}, left] {\scriptsize $L$} (0.5,0.5)
            (7,6) edge node[style = {inner sep = 3pt}, right] {\scriptsize $R$} (11.5,0.5);
    \end{tikzpicture}
\end{align*}

Repeating the embedding recursively gives, for example,
\def\scl{0.7}
\begin{equation*}
    \begin{tikzpicture}[baseline={([yshift=-1ex]current bounding box.center)}, scale = \scl]
        \draw (0,0) \widebin;
        \draw (-1.5,-1) \bin;
        \draw (1.5,-1) \un;
        \draw (-0.5,-2) \un;
        \draw (1.5,-2) \bin;
        \fill (0,0) \smlwidebinnd (-1.5,-1) \smlbinnd (1.5,-1) \smlunnd (-0.5,-2) \smlunnd (1.5,-2) \smlbinnd;
        \node[anchor = mid] at (0,0) (root) {\smlroot};
    \end{tikzpicture}
    \ \leadsto \quad
    \begin{tikzpicture}[baseline={([yshift=-1ex]current bounding box.center)}, scale = \scl]
        \node[anchor = mid] at (0,0) {};
        \fill (-2.5,-2) \smlnd (-0.5,-3) \smlnd (0.5,-3) \smlnd (2.5,-3) \smlnd;
        \P[0][0]
        \P[-1.5][-1]
        \P[1.5][-1]
        \P[-0.5][-2]
        \P[1.5][-2]
        \path[blue, ->]
            (-0.2,-0.1) edge node[style = {inner sep = 1pt}, above left] {\scriptsize $L$} (-1.3,-0.8)
            (0.2,-0.1) edge node[style = {inner sep = 1pt}, above right] {\scriptsize $R$} (1.3,-0.8)
            (-1.7,-1.1) edge node[style = {inner sep = 1pt}, above left] {\scriptsize $L$} (-2.4,-1.9)
            (-1.3,-1.1) edge node[style = {inner sep = 1pt}, above right] {\scriptsize $R$} (-0.65,-1.75)
            (1.5,-1.2) edge (1.5,-1.75)
            (-0.5,-2.2) edge (-0.5,-2.8)
            (1.3,-2.1) edge node[style = {inner sep = 1pt}, above left] {\scriptsize $L$} (0.6,-2.9)
            (1.7,-2.1) edge node[style = {inner sep = 1pt}, above right] {\scriptsize $R$} (2.4,-2.9);
    \end{tikzpicture}
\end{equation*}
It is clear that this embedded representation is trivially an alternative way of drawing a unary-binary tree. Thus by recursively embedding the pair \mbox{$(P, \rightarrow)$} and triple \mbox{$(\overset{L}{\leftarrow}, P, \overset{R}{\rightarrow})$} we have effectively embedded a unary-binary tree inside our object. Note that the unary-binary tree which is embedded is precisely the unary-binary tree which the object maps to under the universal bijection. Thus the object and the embedded unary-binary tree have the same factorisation. This gives us a simple way of decomposing the object, since we can simply traverse the embedded unary-binary tree as illustrated previously. 

We now illustrate how these embedded bijections work with a number of examples, working with both Motzkin normed (1,2)-magmas and Schr\"oder normed (1,2)-magmas.

\subsubsection{Motzkin embedded bijections}

\paragraph{\motFamRef{motzkinFamily:motzkinPaths}: Motzkin paths} \def \scl {0.3}

Consider the family of Motzkin paths. The generator is the empty path which we represent by a single node: $\epsilon = 
\begin{tikzpicture}[baseline={([yshift=-.6ex]current bounding box.center)}, scale = \scl]
    \fill (0,0) \nd;
\end{tikzpicture}$\,. The unary and binary maps are as follows, and we have labelled these diagrams with the roots and details of the embedding:
\begin{align*}
    f \left(
        \begin{tikzpicture}[baseline=1.5ex, scale = \scl]
            \draw[fill = \colOne, opacity = \opac] (6,0) -- (0,0) to [bend left = 90, looseness = 1.5] (6,0);
            \node[anchor = mid] at (3,1.3) {\footnotesize $p$};
        \end{tikzpicture}
    \right)
    & =
    \begin{tikzpicture}[baseline=1.5ex, scale = \scl]
        \draw[fill = orange!90!black, opacity = \opac] (6,0) -- (0,0) to [bend left = 90, looseness = 1.5] (6,0);
        \node[anchor = mid] at (3,1.3) {\footnotesize $p$};
        \draw[thick] (6,0) -- (8,0);
        \fill[blue] (6,0) \nd;
        \node at ([shift={(-1.7,-0.7)}]6,0) {\footnotesize subroot};
        \node at ([shift={(1.2,-0.7)}]8,0) {\footnotesize root};
        \draw[blue, ->] (7.75,0.25) to [bend right = 40] (6.25,0.25);
        \fill[white] (8,0) circle (10pt);
        \P[8][0]
    \end{tikzpicture} \\
    \begin{tikzpicture}[baseline=1.5ex, scale = \scl]
        \draw[fill = \colOne, opacity = \opac] (6,0) -- (0,0) to [bend left = 90, looseness = 1.5] (6,0);
        \node[anchor = mid] at (3,1.3) {\footnotesize $p_1$};
    \end{tikzpicture}
    \ \star \ 
    \begin{tikzpicture}[baseline=1.5ex, scale = \scl]
        \draw[fill = \colTwo, opacity = \opac] (6,0) -- (0,0) to [bend left = 90, looseness = 1.5] (6,0);
        \node[anchor = mid] at (3,1.3) {\footnotesize $p_2$};
    \end{tikzpicture}
    & =
    \begin{tikzpicture}[baseline=1.5ex, scale = \scl]
        \draw[fill = \colOne, opacity = \opac] (6,0) -- (0,0) to [bend left = 90, looseness = 1.5] (6,0);
        \node[anchor = mid] at (3,1.3) {\footnotesize $p_1$};
        \draw[thick] (6,0) -- (7.5,1.5);
        \draw[fill = \colTwo, opacity = \opac] (13.5,1.5) -- (7.5,1.5) to [bend left = 90, looseness = 1.5] (13.5,1.5);
        \node at (10.5,2.8) {\footnotesize $p_2$};
        \draw (13.5,1.5) -- (15,0);
        \fill[blue] (6,0) \nd (13.5,1.5) \nd;
        \node at ([shift={(-2,-0.6)}]6,0) {\footnotesize left subroot};
        \node at ([shift={(3.5,0.8)}]13.5,1.5) {\footnotesize right subroot};
        \node at ([shift={(1,-0.8)}]15,0) {\footnotesize root};
        \path[blue, ->] 
            (14.6,0)  edge [bend left = 10]  node[style = {inner sep = 1pt}, below]  {\scriptsize L} (6.3,-0.1)
            (15.1,0.3) edge [bend right = 40] node[style = {inner sep = 3pt}, right] {\scriptsize R} (13.85,1.4);
        \fill[white] (15,0.2) circle (11pt);
        \P[15][-0.2]
    \end{tikzpicture}
\end{align*}

From this, we see that we can embed a unary-binary tree inside a Motzkin path as illustrated in the following example:
\def\scl {0.65}
\begin{equation*}
    \begin{tikzpicture}[baseline=3ex, scale = \scl]
        \grd[6][2]
        \draw (0,0) \motUp \motDn \motUp \motAc \motDn \motAc;
        \node[anchor = mid] at (1,1) {};
    \end{tikzpicture}
    \ \leadsto \
    \begin{tikzpicture}[baseline=3ex, scale = \scl]
        \grd[6][2]
        \node[anchor = mid] at (1,1) {};
        \fill[blue] (0,0) \nd (1,1) \nd (3,1) \nd;
        \P[2][0]
        \P[4][1]
        \P[5][0]
        \P[6][0]
        \path[blue, ->]
            (5.7,0) edge (5.3,0)
            (5,0.3) edge node[style = {inner sep = 0pt}, above right] {\scriptsize R} (4.3,1)
            (4.7,0) edge node[style = {inner sep = 1.5pt}, below] {\scriptsize L} (2.2,0)
            (3.7,1) edge (3.2,1)
            (2,0.3) edge node[style = {inner sep = 0pt}, above right] {\scriptsize R} (1.2,1)
            (1.7,0) edge node[style = {inner sep = 1.5pt}, below] {\scriptsize L} (0.2,0);
    \end{tikzpicture}
    \ \leadsto \
    \def\scl{0.5}
    \begin{tikzpicture}[baseline={([yshift=-1ex]current bounding box.center)}, scale = \scl]
        \draw (0,0) \un;
        \draw (0,-1) \widebin;
        \draw (-1.5,-2) \bin;
        \draw (1.5,-2) \un;
        \fill (0,0) \smlunnd (0,-1) \smlwidebinnd (-1.5,-2) \smlbinnd (1.5,-2) \smlunnd;
        \node at (0,0) (root) {\smlroot};
    \end{tikzpicture}
\end{equation*}

This is an important example as it demonstrates why we must keep track of which is the left subroot and which is the right. If we had have omitted the labels on the arrows, then it would seem that we have embedded a different tree (namely the tree which is obtained by reflecting the embedded tree across a vertical line travelling through the root).

\paragraph{\motFamRef{motzkinFamily:chords}: Non-intersecting chords} \def \scl {0.08}

\def\numPoints {11}  
\def\numSpaces {12} 
Consider the Motzkin family of non-intersecting chords joining $n$ points on a circle. Note that on each diagram, we place a mark at the top of the circle to fix its orientation. The generator is represented by:
\begin{equation*}
    \epsilon =

\end{equation*}


\subsection{Embedded bijections for Fuss-Catalan  (3)-magmas}


In this section we consider how to embed ternary trees inside objects from other Fuss-Catalan families. 

Assume the ternary map in the ternary tree (3)-magma is $t$ and the unique generator is $\epsilon$. 
If we label each leaf and each internal node in a ternary tree as follows:
\def\scl{0.5}
\begin{equation*}
    \begin{tikzpicture}[baseline={([yshift=-3ex]current bounding box.base)}, scale = \scl]
        \fill (0,-1) \smlnd;
        \draw (0,0) \un;
    \end{tikzpicture}
    \quad  \mapsto \quad
    \begin{tikzpicture}[baseline={([yshift=-3ex]current bounding box.base)}, scale = \scl]
        \fill (0,-1) \smlnd;
        \draw (0,0) \un;
        \node at ([shift={(0,-0.5)}]0,-1) {$\epsilon$};
    \end{tikzpicture}
    \qquad \text{and}\qquad
    \begin{tikzpicture}[baseline={([yshift=-2.5ex]current bounding box.base)}, scale = \scl]
        \fill (0,0) \smlnd;
        \draw (0,0) \tern;
    \end{tikzpicture}
    \quad  \mapsto \quad 
    \begin{tikzpicture}[baseline={([yshift=-2.5ex]current bounding box.base)}, scale = \scl]
        \fill (0,0) \smlnd;
        \draw (0,0) \tern;
        \node at ([shift={(-0.8,0.1)}]0,0) {$t($};
        \node at ([shift={(0.6,0.1)}]0,0) {$)$};
        \node at ([shift={(-0.3,-0.7)}]0,0) {$,$};
        \node at ([shift={(0.3,-0.7)}]0,0) {$,$};
    \end{tikzpicture}
\end{equation*}
then a counter-clockwise traversal of the tree gives its factorisation, as shown in the following example:
\def\scl{1}
\begin{equation*}
    \begin{tikzpicture}[scale = \scl]
        \node at (0,0) {\smlroot};
        \draw (0,0) \widetern;
        \draw (0,-1) \widetern;
        \draw (1.5,-2) \widetern;
        \fill (0,0) \smlwideternnd (0,-1) \smlwideternnd (1.5,-2) \smlwideternnd;
        \ternlabel[0][0]
        \ternlabel[0][-1]
        \ternlabel[1.5][-2]
        \leaflabel[-1.5][-1]
        \leaflabel[1.5][-1]
        \leaflabel[-1.5][-2]
        \leaflabel[0][-2]
        \leaflabel[0][-3]
        \leaflabel[1.5][-3]
        \leaflabel[3][-3]
        \draw[blue, rounded corners = 10, ->]
            (-0.75,0) -- (-2,-1) -- (-1.5,-1.75) -- (-1,-0.75) -- (-0.5,-1.25) -- (-2,-1.9) -- (-1.5,-2.75) -- (-1,-2.25) -- (-0.5,-1.75) -- (0,-2.75) -- (0.5,-1.75) -- (1,-2.25) -- (-0.5,-2.9) -- (0,-3.75) -- (1,-2.5) -- (1.5,-3.75)  -- (2,-2.5) -- (3,-3.75) -- (3.5,-3.25) -- (2.25,-2) -- (0.5,-1.25) -- (1,-0.75) -- (1.5,-1.75) -- (2,-1) -- (0.75,0);
    \end{tikzpicture}
\end{equation*}

Traversing this tree, we can see that its factorisation is
$
    t \left( \epsilon, t \left( \epsilon, \epsilon, t \left( \epsilon, \epsilon, \epsilon \right) \right) , \epsilon \right).
$

From the definition of any geometric (3)-magma, we have the following information:
\begin{enumerate}[label=(\roman*)]
    \item We know which part of the geometry is associated with the generator. We will call this the \textbf{generator geometry}.
    \item From the definition of the ternary map, we know which part of the geometry is added each time the ternary map is applied. We will call this the \textbf{ternary map geometry}.
\end{enumerate}

Take the family of ternary trees \fcFamRef{fcFamily:ternaryTrees} for example. We can see that the generators correspond to leaves of the tree, while from the ternary map definition,
\begin{equation*}  \def\scl{0.15}
    t \left( 
    \raisebox{-20pt}{
    \begin{tikzpicture}[baseline={([yshift=-5ex]current bounding box.center)}, scale = \scl]
        \draw[fill = \colOne, opacity = \opac] (0,0) -- (-5,-10) -- (5,-10) -- (0,0);
        \node at (0,-2.5) {\smlroot};
        \node at (0,-6) {\footnotesize $t_1$};
    \end{tikzpicture}
    , 
    \begin{tikzpicture}[baseline={([yshift=-5ex]current bounding box.center)}, scale = \scl]
        \draw[fill = \colTwo, opacity = \opac] (0,0) -- (-5,-10) -- (5,-10) -- (0,0);
        \node at (0,-2.5) {\smlroot};
        \node at (0,-6) {\footnotesize $t_2$};
    \end{tikzpicture}
    ,
    \begin{tikzpicture}[baseline={([yshift=-5ex]current bounding box.center)}, scale = \scl]
        \draw[fill = \colThree, opacity = \opac] (0,0) -- (-5,-10) -- (5,-10) -- (0,0);
        \node at (0,-2.5) {\smlroot};
        \node at (0,-6) {\footnotesize $t_3$};
    \end{tikzpicture}}
    \right)
    =
    \begin{tikzpicture}[baseline={([yshift=-1ex]current bounding box.center)}, scale = \scl]
        \draw[fill = \colOne, opacity = \opac] (-11,-5) -- (-16,-15) -- (-6,-15) -- (-11,-5);
        \draw[fill = \colTwo, opacity = \opac] (0,-5) -- (-5,-15) -- (5,-15) -- (0,-5);
        \draw[fill = \colThree, opacity = \opac] (11,-5) -- (16,-15) -- (6,-15) -- (11,-5);
        \draw (0,0) -- (0,-5) (0,0) -- (-11,-5) (0,0) -- (11,-5);
        \node at (0,0) {\smlroot};
        \fill (-11,-5) \smlnd (0,-5) \smlnd (11,-5) \smlnd;
        \node at (-11,-11) {\footnotesize $t_1$};
        \node at (0,-11) {\footnotesize $t_2$};
        \node at (11,-11) {\footnotesize $t_3$};
        \dummyNodes[0][0.2][0][-15.2]
        \node at (0,2) {\footnotesize $r$};
        \node at (-7.2,-2) {\footnotesize $e_1$};
        \node at (-1.5,-3) {\footnotesize $e_2$};
        \node at (7,-2) {\footnotesize $e_3$};
    \end{tikzpicture}
\end{equation*}
we observe that the ternary map geometry is the new root $r$ and the three added edges $e_1$, $e_2$ and $e_3$.

For the Fuss-Catalan family of quadrillages \fcFamRef{fcFamily:quadrillages}, the generators correspond to sides of the polygon (with the exception of the single marked side for all polygons with more than one edge). From the ternary map ,
\begin{equation*} \def\numPoints{8} \def\scl{0.15}
    t \left(
    \raisebox{-16pt}{
    \begin{tikzpicture}[baseline={([yshift=-4.5ex]current bounding box.base)}, scale = \scl]
        \blankPolygonSetup
        \fill[\colOne, opacity = \opac] (8) -- (1) (-67.5:5) arc (-67.5:247.5:5);
        \node at (0,0) {\footnotesize $q_1$};
        \node at (-90:6) {\scriptsize $e_1$};
        \blankPolygonSetup
    \end{tikzpicture},
    \begin{tikzpicture}[baseline={([yshift=-4.5ex]current bounding box.base)}, scale = \scl]
        \blankPolygonSetup
        \fill[\colTwo, opacity = \opac] (8) -- (1) (-67.5:5) arc (-67.5:247.5:5);
        \node at (0,0) {\footnotesize $q_2$};
        \node at (-90:6) {\scriptsize $e_2$};
        \blankPolygonSetup
    \end{tikzpicture},
    \begin{tikzpicture}[baseline={([yshift=-4.5ex]current bounding box.base)}, scale = \scl]
        \blankPolygonSetup
        \fill[\colThree, opacity = \opac] (8) -- (1) (-67.5:5) arc (-67.5:247.5:5);
        \node at (0,0) {\footnotesize $q_3$};
        \node at (-90:6) {\scriptsize $e_3$};
        \blankPolygonSetup
    \end{tikzpicture}}
    \right)
    = \def\numPoints{4} \def\scl{0.15}
    \begin{tikzpicture}[baseline={([yshift=-1ex]current bounding box.center)}, scale = \scl]
        \polygonSetup
        \draw[dashed] ([shift={(-7.07,0)}]45:5) arc (45:315:5);
         \begin{scope}
            \clip ([shift={(-7.07,0)}]45:5) arc (45:315:5);;
            \fill[\colOne, opacity = \opac] (3) -- (4) -- (0,-7) -- (-12,-7) -- (-12,7) -- (0,7) -- (3);
        \end{scope}
        \node at (-7.07,0) {\footnotesize $q_1$};
        
        \draw[dashed] ([shift={(0,7.07)}]-45:5) arc (-45:225:5);
        \begin{scope}
            \clip ([shift={(0,7.07)}]-45:5) arc (-45:225:5);
            \fill[\colTwo, opacity = \opac] (2) -- (3) -- (-7,0) -- (-7,12) -- (7,12) -- (7,0) -- (2);
        \end{scope}
        \node at (0,7.07) {\footnotesize $q_2$};
        
        \draw[dashed] ([shift={(7.07,0)}]-135:5) arc (-135:135:5);
        \begin{scope}
            \clip ([shift={(7.07,0)}]-135:5) arc (-135:135:5);
            \fill[\colThree, opacity = \opac] (2) -- (1) -- (0,-7) -- (12,-7) -- (12,7) -- (0,7) -- (2);
        \end{scope}
        \node at (7.07,0) {\footnotesize $q_3$};
        
        \node at (210:2.5) {\scriptsize $e_1$};
        \node at (90:2.3) {\scriptsize $e_2$};
        \node at (-30:2.5) {\scriptsize $e_3$};
        \node at (-90:4.5) {\scriptsize $e$};
    \end{tikzpicture}
\end{equation*}
we can see that the ternary map geometry is just the new marked side $e$.

As in Section \ref{sec:MotSchEmbedded}, we wish to draw our objects in such a way that it is clear exactly which parts of an object's geometry correspond to the generator and which parts arise from the ternary map. We choose to draw our objects so that the points of the object can be partitioned into two disjoint subsets, with one containing all pieces of generator geometry and the other all pieces of ternary map geometry. We will say that families defined in such a way that their objects can be partitioned into these subsets are in \textbf{(3)-magma form}. 

For any free (3)-magma ($\cM$, $t$) with a single generator and every object $m \in \cM$, define the following sets:
\begin{itemize}
    \item Let $\mathbb{G}_{\cM}(m)$ to be the set of all pieces of generator geometry.
    \item Let $\mathbb{T}_{\cM}(m)$ to be the set of all pieces of ternary map geometry.
\end{itemize}
For example, drawing ternary trees with generator \def\scl{1}
$\begin{tikzpicture}[scale = \scl]
\fill (0,0) \smlnd;
\end{tikzpicture}$ \
and ternary tree geometry in orange, we can see how any ternary tree can be partitioned into these subsets. For example, for the ternary tree
\begin{equation*}
    \begin{tikzpicture}[baseline={([yshift=-.5ex]current bounding box.center)}, scale = \scl]
        \draw (0,0) \tern;
        \draw (0,-1) \tern;
        \fill (0,0) \smlternnd (0,-1) \smlternnd;
        \node at (0,0) {\smlroot};
        \ndlabel[0][0][1]
        \ndlabel[-1][-1][2]
        \ndlabel[0][-1][3]
        \ndlabel[1][-1][7]
        \ndlabel[-1][-2][4]
        \ndlabel[0][-2][5]
        \ndlabel[1][-2][6]
    \end{tikzpicture}
    \quad \leadsto \quad
    \begin{tikzpicture}[baseline={([yshift=-.5ex]current bounding box.center)}, scale = \scl]
        \draw[\colOne] (0,0) \tern;
        \draw[\colOne] (0,-1) \tern;
        \fill (0,0) \smlternnd (0,-1) \smlternnd;
        \fill[\colOne] (0,0) \smlnd (0,-1) \smlnd;
    \end{tikzpicture}
\end{equation*}
this partition is
\begin{equation*} \def \scl {0.5}
    \mathbb{G}_{\cM}(m) = \left\{\!\!
        \begin{tikzpicture}[baseline={([yshift=-1ex]current bounding box.center)}, scale = \scl]
            \fill (0,0) \smlnd;
            \ndlabel[-0.1][0][2]
        \end{tikzpicture},
        \begin{tikzpicture}[baseline={([yshift=-1ex]current bounding box.center)}, scale = \scl]
            \fill (0,0) \smlnd;
            \ndlabel[-0.1][0][4]
        \end{tikzpicture},
        \begin{tikzpicture}[baseline={([yshift=-1ex]current bounding box.center)}, scale = \scl]
            \fill (0,0) \smlnd;
            \ndlabel[-0.1][0][5]
        \end{tikzpicture},
        \begin{tikzpicture}[baseline={([yshift=-1ex]current bounding box.center)}, scale = \scl]
            \fill (0,0) \smlnd;
            \ndlabel[-0.1][0][6]
        \end{tikzpicture},
        \begin{tikzpicture}[baseline={([yshift=-1ex]current bounding box.center)}, scale = \scl]
            \fill (0,0) \smlnd;
            \ndlabel[-0.1][0][7]
        \end{tikzpicture}
    \right\}, \quad 
    \mathbb{T}_{\cM}(m) = \left\{ 
        \raisebox{-5pt}{
        \begin{tikzpicture}[baseline={([yshift=-2ex]current bounding box.center)}, scale = \scl]
            \fill (0,0) \smlnd;
            \draw (0,0) \tern;
            \ndlabel[-0.2][0][1]
        \end{tikzpicture}\spacecomma
        \begin{tikzpicture}[baseline={([yshift=-2ex]current bounding box.center)}, scale = \scl]
            \fill (0,0) \smlnd;
            \draw (0,0) \tern;
            \ndlabel[-0.2][0][3]
        \end{tikzpicture}}
    \right\}.
\end{equation*}

Now, for each piece of generator geometry we shall mark a point and call this point the \textbf{leaf}. In some cases, the generator is the empty object. In these instances, we enforce that the generator is associated with some geometry so that we are able to mark this point. We also mark a particular point in each piece of ternary map geometry, calling this point the \textbf{root}. For example, for the Fuss-Catalan family of quadrillages, the generator is a single edge, so we represent this by the following:
\vspace{-1ex}
\begin{equation*} \def\numPoints{2} \def\scl{0.08}
    \epsilon =
    \begin{tikzpicture}[baseline={([yshift=-.9ex]current bounding box.center)}, scale = \scl]
    \polygonSetup
    \fill[blue] (0,0) \nd;
\end{tikzpicture}
\end{equation*}
Notice that we have marked a point on this edge in blue. This point is the leaf. Similarly, we can mark the root in our definition of the ternary map, noting that this root is part of the ternary map geometry. This is shown in blue in the following figure:
\vspace{-1ex}
\begin{equation*} \def\numPoints{8} \def\scl{0.15}
    t \left(
    \raisebox{-16pt}{
    \begin{tikzpicture}[baseline={([yshift=-4.5ex]current bounding box.base)}, scale = \scl]
        \blankPolygonSetup
        \fill[\colOne, opacity = \opac] (8) -- (1) (-67.5:5) arc (-67.5:247.5:5);
        \node at (0,0) {\footnotesize $q_1$};
        \node at (-90:6) {\scriptsize $e_1$};
        \blankPolygonSetup
    \end{tikzpicture},
    \begin{tikzpicture}[baseline={([yshift=-4.5ex]current bounding box.base)}, scale = \scl]
        \blankPolygonSetup
        \fill[\colTwo, opacity = \opac] (8) -- (1) (-67.5:5) arc (-67.5:247.5:5);
        \node at (0,0) {\footnotesize $q_2$};
        \node at (-90:6) {\scriptsize $e_2$};
        \blankPolygonSetup
    \end{tikzpicture},
    \begin{tikzpicture}[baseline={([yshift=-4.5ex]current bounding box.base)}, scale = \scl]
        \blankPolygonSetup
        \fill[\colThree, opacity = \opac] (8) -- (1) (-67.5:5) arc (-67.5:247.5:5);
        \node at (0,0) {\footnotesize $q_3$};
        \node at (-90:6) {\scriptsize $e_3$};
        \blankPolygonSetup
    \end{tikzpicture}}
    \right)
    = \def\numPoints{4} \def\scl{0.15}
    \begin{tikzpicture}[baseline={([yshift=-1ex]current bounding box.center)}, scale = \scl]
        \polygonSetup
        \draw[dashed] ([shift={(-7.07,0)}]45:5) arc (45:315:5);
         \begin{scope}
            \clip ([shift={(-7.07,0)}]45:5) arc (45:315:5);;
            \fill[\colOne, opacity = \opac] (3) -- (4) -- (0,-7) -- (-12,-7) -- (-12,7) -- (0,7) -- (3);
        \end{scope}
        \node at (-7.07,0) {\footnotesize $q_1$};
        
        \draw[dashed] ([shift={(0,7.07)}]-45:5) arc (-45:225:5);
        \begin{scope}
            \clip ([shift={(0,7.07)}]-45:5) arc (-45:225:5);
            \fill[\colTwo, opacity = \opac] (2) -- (3) -- (-7,0) -- (-7,12) -- (7,12) -- (7,0) -- (2);
        \end{scope}
        \node at (0,7.07) {\footnotesize $q_2$};
        
        \draw[dashed] ([shift={(7.07,0)}]-135:5) arc (-135:135:5);
        \begin{scope}
            \clip ([shift={(7.07,0)}]-135:5) arc (-135:135:5);
            \fill[\colThree, opacity = \opac] (2) -- (1) -- (0,-7) -- (12,-7) -- (12,7) -- (0,7) -- (2);
        \end{scope}
        \node at (7.07,0) {\footnotesize $q_3$};
        
        \node at (210:2.5) {\scriptsize $e_1$};
        \node at (90:2.3) {\scriptsize $e_2$};
        \node at (-30:2.5) {\scriptsize $e_3$};
        \fill[blue] (-90:3.53553) \nd;
    \end{tikzpicture}
\end{equation*}

Now, for any object $m$ from a Fuss-Catalan family with (3)-magma ($\cM$, $t$) such that \mbox{$m = t(m_1, m_2, m_3)$}, define the following:
\begin{itemize}
    \item Let the \textbf{left subroot} be the root of $m_1$ if it exists (that is, if $m_1 \neq \epsilon$) and the leaf of $m_1$ if $m_1 = \epsilon$.
    \item Let the \textbf{middle subroot} be the root of $m_2$ if $m_2 \neq \epsilon$ and the leaf of $m_2$ if $m_2 = \epsilon$.
    \item Let the \textbf{right subroot} be the root of $m_3$ if $m_3 \neq \epsilon$ and the leaf of $m_3$ if $m_3 = \epsilon$.
\end{itemize}
For example, if we are considering the family of ternary trees and we have
\begin{equation*} \def\scl{0.8}
    m =
    \begin{tikzpicture}[baseline={([yshift=-.5ex]current bounding box.center)}, scale = \scl]
        \draw (0,0) -- (-1.5,-1) (0,0) -- (0,-1) ++(0,1) -- ++(2.5,-1);
        \draw (0,-1) \tern;
        \draw (2.5,-1) \tern;
        \fill (-1.5,-1) \smlnd (0,-1) \smlnd (2.5,-1) \smlnd (0,-1) \smlternnd (2.5,-1) \smlternnd;
        \node at (0,0) {\smlroot};
        \node at ([shift={(-0.3,0.1)}]0,0) {\scriptsize 1};
        \ndlabel[-1.5][-1][2]
        \ndlabel[0][-1][3]
        \ndlabel[-1][-2][4]
        \ndlabel[0][-2][5]
        \ndlabel[1][-2][6]
        \node at ([shift={(0.3,0)}]2.5,-1) {\scriptsize 7};
        \node at ([shift={(0.25,0)}]1.5,-2) {\scriptsize 8};
        \ndlabel[2.55][-2][9]
        \ndlabel[3.4][-2][10]
    \end{tikzpicture}
    = t \left(
    \begin{tikzpicture}[baseline={([yshift=-.5ex]current bounding box.center)}, scale = \scl]
        \ndlabel[0][0][2]
        \node at (0,0) {\smlroot};
    \end{tikzpicture},
    \begin{tikzpicture}[baseline={([yshift=-2ex]current bounding box.center)}, scale = \scl]
        \draw (0,0) \tern;
        \fill (0,0) \smlternnd;
        \node at (0,-0.15) {\smlroot};
        \ndlabel[0][0][3]
        \ndlabel[-1][-1][4]
        \ndlabel[0][-1][5]
        \ndlabel[1][-1][6]
    \end{tikzpicture},
    \begin{tikzpicture}[baseline={([yshift=-2ex]current bounding box.center)}, scale = \scl]
        \draw (0,0) \tern;
        \fill (0,0) \smlternnd;
        \node at (0,-0.15) {\smlroot};
        \ndlabel[0][0][7]
        \ndlabel[-1][-1][8]
        \ndlabel[0][-1][9]
        \ndlabel[1][-1][10]
    \end{tikzpicture}
    \right)
\end{equation*}
then the left subroot is node 2, the middle subroot is node 3 and the right subroot is node 7.

We can now define our embedded bijections via the following recursive procedure of embedding 4-tuples $(P, \overset{L}{\leftarrow}, \downarrow, \overset{R}{\rightarrow})$ inside the Fuss-Catalan object $m$. If $m = t(m_1, m_2, m_3)$, then attach $P$ to the root of the ternary map. The arrow $\overset{L}{\leftarrow}$ points from $P$ to the left subroot, the arrow $\downarrow$ points from $P$ to the middle subroot and the arrow $\overset{R}{\rightarrow}$ points from $P$ to the right subroot.

We can represent this embedding process schematically by drawing the roots, subroots and embedded arrows in the definition of the ternary map. For example, for ternary trees we have the following:
\def \scl {0.15}
\begin{equation*}
    t \left( 
    \raisebox{-20pt}{
    \begin{tikzpicture}[baseline={([yshift=-5ex]current bounding box.center)}, scale = \scl]
        \draw[fill = \colOne, opacity = \opac] (0,0) -- (-5,-10) -- (5,-10) -- (0,0);
        \node at (0,-2.5) {\smlroot};
        \node at (0,-6) {\footnotesize $t_1$};
    \end{tikzpicture}
    , 
    \begin{tikzpicture}[baseline={([yshift=-5ex]current bounding box.center)}, scale = \scl]
        \draw[fill = \colTwo, opacity = \opac] (0,0) -- (-5,-10) -- (5,-10) -- (0,0);
        \node at (0,-2.5) {\smlroot};
        \node at (0,-6) {\footnotesize $t_2$};
    \end{tikzpicture}
    ,
    \begin{tikzpicture}[baseline={([yshift=-5ex]current bounding box.center)}, scale = \scl]
        \draw[fill = \colThree, opacity = \opac] (0,0) -- (-5,-10) -- (5,-10) -- (0,0);
        \node at (0,-2.5) {\smlroot};
        \node at (0,-6) {\footnotesize $t_3$};
    \end{tikzpicture}}
    \right)
    =
    \begin{tikzpicture}[baseline={([yshift=-1ex]current bounding box.center)}, scale = \scl]
        \draw[fill = \colOne, opacity = \opac] (-11,-5) -- (-16,-15) -- (-6,-15) -- (-11,-5);
        \draw[fill = \colTwo, opacity = \opac] (0,-5) -- (-5,-15) -- (5,-15) -- (0,-5);
        \draw[fill = \colThree, opacity = \opac] (11,-5) -- (16,-15) -- (6,-15) -- (11,-5);
        \draw (0,0) -- (0,-5) (0,0) -- (-11,-5) (0,0) -- (11,-5);
        \fill (-11,-5) \smlnd (0,-5) \smlnd (11,-5) \smlnd;
        \node at (-11,-11) {\footnotesize $t_1$};
        \node at (0,-11) {\footnotesize $t_2$};
        \node at (11,-11) {\footnotesize $t_3$};
        \dummyNodes[0][0.2][0][-15.2]
        \node at (0,2.5) {\footnotesize root};
        \node at ([shift = {(-2.5,0)}]-11,-5) {\footnotesize LS};
        \node at ([shift = {(-2.5,1)}]0,-5) {\footnotesize MS};
        \node at ([shift = {(2.5,0)}]11,-5) {\footnotesize RS};
        \path[blue, ->]
            (-1,0) edge [bend right = 30] node[style = {inner sep = 1pt}, above left]  {\scriptsize L} (-11,-4) 
            (0.5,-1) edge [bend left = 30] (0.5,-4.5)
            (1,0) edge [bend left = 30] node[style = {inner sep = 1pt}, above right]  {\scriptsize R} (11,-4);
        \fill[white] (0,0) circle (15pt);
        \P[0][0]
    \end{tikzpicture} 
\end{equation*}
Here, LS, MS and RS denote the left subroot, middle subroot and right subroot respectively.

Repeating this embedding recursively gives, for example,
\def\scl{0.8}
\begin{equation*}
    \begin{tikzpicture}[baseline={([yshift=-.5ex]current bounding box.center)}, scale = \scl]
        \draw (0,0) -- (-1.5,-1) (0,0) -- (0,-1) ++(0,1) -- ++(2.5,-1);
        \draw (0,-1) \tern;
        \draw (2.5,-1) \tern;
        \fill (-1.5,-1) \nd (0,-1) \nd (2.5,-1) \nd (0,-1) \ternnd (2.5,-1) \ternnd;
        \node at (0,0) {\root};
        \node at ([shift={(-0.3,0.1)}]0,0) {\scriptsize 1};
        \ndlabel[-1.5][-1][2]
        \ndlabel[0][-1][3]
        \ndlabel[-1][-2][4]
        \ndlabel[0][-2][5]
        \ndlabel[1][-2][6]
        \node at ([shift={(0.3,0)}]2.5,-1) {\scriptsize 7};
        \node at ([shift={(0.25,0)}]1.5,-2) {\scriptsize 8};
        \ndlabel[2.5][-2][9]
        \ndlabel[3.5][-2][10]
    \end{tikzpicture}
    \quad \leadsto \quad
    \begin{tikzpicture}[baseline={([yshift=-.5ex]current bounding box.center)}, scale = \scl]
        \P[0][0]
        \P[0][-1]
        \P[2.5][-1]
        \fill[blue] (-2.5,-1) \nd (0,-1) \ternnd (2.5,-1) \ternnd;
        \path[blue, ->]
            (-0.2,-0.1) edge node[style = {inner sep = 1pt}, above left]  {\scriptsize L} (-2.4,-0.9)
            (0,-0.2) edge (0,-0.8)
            (0.2,-0.1) edge node[style = {inner sep = 1pt}, above right]  {\scriptsize R} (2.4,-0.9)
            (-0.2,-1.1) edge node[style = {inner sep = 1pt}, above left]  {\scriptsize L} (-0.9,-1.9)
            (0,-1.2) edge (0,-1.9)
            (0.2,-1.1) edge node[style = {inner sep = 1pt}, above right]  {\scriptsize R} (0.9,-1.9)
            (2.3,-1.1) edge node[style = {inner sep = 1pt}, above left]  {\scriptsize L} (1.6,-1.9)
            (2.5,-1.2) edge (2.5,-1.9)
            (2.7,-1.1) edge node[style = {inner sep = 1pt}, above right]  {\scriptsize R} (3.4,-1.9);
    \end{tikzpicture}
\end{equation*}
This clearly defines another tree which trivially differs from a ternary tree. We see that by recursively embedding 4-tuples $(P, \overset{L}{\leftarrow}, \downarrow, \overset{R}{\rightarrow})$ inside an object we are effectively embedding ternary trees. Moreover, we are embedding precisely the ternary tree which is in bijection with that object via the universal bijection. As a result we are then able to easily factorise the original object simply by factorising the embedded ternary tree.

We now demonstrate some explicit embedded bijections for different Fuss-Catalan families.

\paragraph{\fcFamRef{fcFamily:quadrillages}: Quadrillages}

The generator of this family is a single edge, drawn here with a marked point for the leaf:
\begin{equation*} \def\numPoints{2} \def\scl{0.08}
    \epsilon =
    \begin{tikzpicture}[baseline={([yshift=-.9ex]current bounding box.center)}, scale = \scl]
    \polygonSetup
    \fill[blue] (0,0) \nd;
\end{tikzpicture}
\end{equation*}
The ternary map is as follows, with details of the embedding drawn in:
\begin{equation*} \def \numPoints {8} \def\scl{0.1}
    t \left(
    \raisebox{-8pt}{
    \begin{tikzpicture}[baseline={([yshift=-2.5ex]current bounding box.base)}, scale = \scl]
        \blankPolygonSetup
        \fill[\colOne, opacity = \opac] (8) -- (1) (-67.5:5) arc (-67.5:247.5:5);
        \node at (0,0) {\footnotesize $q_1$};
        \blankPolygonSetup
    \end{tikzpicture},
    \begin{tikzpicture}[baseline={([yshift=-2.5ex]current bounding box.base)}, scale = \scl]
        \blankPolygonSetup
        \fill[\colTwo, opacity = \opac] (8) -- (1) (-67.5:5) arc (-67.5:247.5:5);
        \node at (0,0) {\footnotesize $q_2$};
        \blankPolygonSetup
    \end{tikzpicture},
    \begin{tikzpicture}[baseline={([yshift=-2.5ex]current bounding box.base)}, scale = \scl]
        \blankPolygonSetup
        \fill[\colThree, opacity = \opac] (8) -- (1) (-67.5:5) arc (-67.5:247.5:5);
        \node at (0,0) {\footnotesize $q_3$};
        \blankPolygonSetup
    \end{tikzpicture}}
    \right)
    = \def\numPoints{4} \def \scl {0.2}
    \begin{tikzpicture}[baseline={([yshift=-1ex]current bounding box.center)}, scale = \scl]
        \polygonSetup
        \draw[dashed] ([shift={(-7.07,0)}]45:5) arc (45:315:5);
         \begin{scope}
            \clip ([shift={(-7.07,0)}]45:5) arc (45:315:5);;
            \fill[\colOne, opacity = \opac] (3) -- (4) -- (0,-7) -- (-12,-7) -- (-12,7) -- (0,7) -- (3);
        \end{scope}
        \node at (-7.07,0) {\footnotesize $q_1$};
        
        \draw[dashed] ([shift={(0,7.07)}]-45:5) arc (-45:225:5);
        \begin{scope}
            \clip ([shift={(0,7.07)}]-45:5) arc (-45:225:5);
            \fill[\colTwo, opacity = \opac] (2) -- (3) -- (-7,0) -- (-7,12) -- (7,12) -- (7,0) -- (2);
        \end{scope}
        \node at (0,7.07) {\footnotesize $q_2$};
        
        \draw[dashed] ([shift={(7.07,0)}]-135:5) arc (-135:135:5);
        \begin{scope}
            \clip ([shift={(7.07,0)}]-135:5) arc (-135:135:5);
            \fill[\colThree, opacity = \opac] (2) -- (1) -- (0,-7) -- (12,-7) -- (12,7) -- (0,7) -- (2);
        \end{scope}
        \node at (7.07,0) {\footnotesize $q_3$};
        
        \fill[white] (-90:3.53553) circle (20pt);
        \polarP[-90][3.7]
        \node at (-90:5) {\scriptsize root};
        \fill[blue] (0:3.53553) \nd;
        \node at ([shift = {(0.5,-0.2)}]30:2) {\scriptsize RS};
        \fill[blue] (90:3.53553) \nd;
        \node at ([shift = {(-0.4,-0.6)}]60:4) {\scriptsize MS};
        \fill[blue] (180:3.53553) \nd;
        \node at ([shift = {(-0.5,-0.2)}]150:2) {\scriptsize LS};
        \path[blue, ->]
            (-100:2.9) edge  node[style = {inner sep = 0pt}, below left]  {\scriptsize L} (190:2.9)
            (-90:2.9) edge (90:2.8)
            (-80:2.9) edge node[style = {inner sep = 0pt}, below right]  {\scriptsize R} (-10:2.9);
    \end{tikzpicture}
\end{equation*}

We can see from this how we can embed a ternary tree inside a quadrillage. The following is an example of this embedding:
\def \numPoints {12}
\def \scl {0.3}
\begin{equation*}
    \begin{tikzpicture}[baseline={([yshift=-1ex]current bounding box.center)}, scale = \scl]
        \polygonSetup
        \dissectionArc[2][11]
        \dissectionArc[2][7]
        \dissectionArc[2][5]
        \dissectionArc[7][10]
    \end{tikzpicture}
    \quad \leadsto \quad
    \begin{tikzpicture}[baseline={([yshift=-1ex]current bounding box.center)}, scale = \scl]
        \polygonSetup
        \dissectionArc[2][11]
        \dissectionArc[2][7]
        \dissectionArc[2][5]
        \dissectionArc[7][10]
        \foreach \i in {1,...,11}
            {
            \fill[blue] (-90 + 30*\i:4.892963) \nd;
            }
        \fill[white] (-90:4.82963) circle (10pt);
        \polarP[-90][4.82963]
        \fill[white] (4:3.53553) circle (10pt);
        \polarP[0][3.53553]
        \fill[white] (-90:3.53553) circle (10pt);
        \polarP[-90][3.53553]
        \fill[white] (149:3.53553) circle (15pt);
        \polarP[150][3.53553]
        \fill[white] (1.12072,0.8) circle (10pt);
        \P[1.12072][0.647048]
        \path[blue, ->] 
            (-95:5) edge [bend left = 60] node[style = {inner sep = 1pt}, below left]  {\scriptsize L} (-115:5)
            (-90:4.4) edge  (-90:3.7)
            (-85:5) edge [bend right = 60] node[style = {inner sep = 1pt}, below right]  {\scriptsize R} (-65:5)
            (-95:3.3) edge node[style = {inner sep = 1pt}, above]  {\scriptsize L} (-145:4.7)
            (-92:2.9) edge (150:3.2)
            (-87:2.9) edge node[style = {inner sep = 0pt}, below right]  {\scriptsize R} (1,0.1)
            (155:3.7) edge node[style = {inner sep = 1pt}, above left]  {\scriptsize L} (178:4.6)
            (150:3.9) edge (150:4.7)
            (140:3.7) edge node[style = {inner sep = 0pt}, right]  {\scriptsize R} (122:4.6)
            (1,1.2) edge node[style = {inner sep = 0pt}, left]  {\scriptsize L} (90:4.5)
            (1.3,1.2) edge (60:4.5)
            (1.4,0.5) edge node[style = {inner sep = 1pt}, below]  {\scriptsize R} (0:3.2)
            (9:3.7) edge node[style = {inner sep = 1pt}, left]  {\scriptsize L} (28:4.6)
            (0:4) edge (0:4.5)
            (-5:3.6) edge node[style = {inner sep = 1pt}, right]  {\scriptsize R} (-28:4.6);
    \end{tikzpicture} 
    \quad \leadsto \quad
    \def \scl {0.6}
    \begin{tikzpicture}[baseline={([yshift=-.5ex]current bounding box.center)}, scale = \scl]
        \draw (0,0) \verywidetern;
        \draw (0,-1) \verywidetern;
        \draw (0,-2) \tern;
        \draw (2.5,-2) \tern;
        \draw (3.5,-3) \tern;
        \fill (0,0) \verywideternnd (0,-1) \verywideternnd (0,-2) \ternnd (2.5,-2) \ternnd (3.5,-3) \ternnd;
        \node at (0,0) {\root};
    \end{tikzpicture}
\end{equation*}

\paragraph{\fcFamRef{fcFamily:latticePaths}: Lattice paths} \def \scl {0.4}

We will show how to embed a ternary tree inside a lattice path of the type enumerated by the Fuss-Catalan numbers. These are paths from $(0,0)$ to $(n, 2n)$ consisting of $n$ East steps $(1,0)$ and $2n$ North steps $(0,1)$ that lie weakly below the line $y = 2x$. The generator for these paths is taken to be a single vertex: $\epsilon = \begin{tikzpicture}[baseline={([yshift=-.6ex]current bounding box.center)}, scale = \scl]
    \fill (0,0) \smlnd;
\end{tikzpicture}$\,. The ternary map is as follows, with the root, subroots and embedded arrows all labelled:
\begin{align*}
    t \left(
    \raisebox{-15pt}{\!\!
    \begin{tikzpicture}[baseline={([yshift=-3ex]current bounding box.center)}, scale = \scl]
        \clip (0,-1) rectangle (3.25,4.25);
        \draw[fill = \colOne, opacity = \opac] (2,4) -- (0,0) to [bend right = 90, looseness = 1.5] (2,4);
        \node at (1.9,1.75) {\footnotesize $p_1$};
    \end{tikzpicture}\!,
    \begin{tikzpicture}[baseline={([yshift=-3ex]current bounding box.center)}, scale = \scl]
        \clip (0,-1) rectangle (3.25,4.25);
        \draw[fill = \colTwo, opacity = \opac] (2,4) -- (0,0) to [bend right = 90, looseness = 1.5] (2,4);
        \node at (1.9,1.75) {\footnotesize $p_2$};
    \end{tikzpicture}\!,
    \begin{tikzpicture}[baseline={([yshift=-3ex]current bounding box.center)}, scale = \scl]
        \clip (0,-1) rectangle (3.25,4.25);
        \draw[fill = \colThree, opacity = \opac] (2,4) -- (0,0) to [bend right = 90, looseness = 1.5] (2,4);
        \node at (1.9,1.75) {\footnotesize $p_3$};
    \end{tikzpicture}}
    \right)
    \ = \
    \begin{tikzpicture}[baseline={([yshift=-.5ex]current bounding box.center)}, scale = \scl]
        \draw[dashed] (-1,-2) -- (8.5,17);
        \draw[fill = \colOne, opacity = \opac] (2,4) -- (0,0) to [bend right = 90, looseness = 1.5] (2,4);
        \node at (1.9,1.75) {\footnotesize $p_1$};
        \draw[thick] (2,4) -- (3.5,4);
        \draw[fill = \colTwo, opacity = \opac] (5.5,8) -- (3.5,4) to [bend right = 90, looseness = 1.5] (5.5,8);
        \node at (5.4,5.75) {\footnotesize $p_2$};
        \draw[thick] (5.5,8) -- (5.5,9.5);
        \draw[fill = \colThree, opacity = \opac] (7.5,13.5) -- (5.5,9.5) to [bend right = 90, looseness = 1.5] (7.5,13.5);
        \node at (7.4,11.25) {\footnotesize $p_3$};
        \draw[thick] (7.5,13.5) -- (7.5,15);
        \fill[white] (7.5,15) circle (10pt);
        \P[7.5][15]
        \node at ([shift = {(-0.5,1)}]7.5,15) {\footnotesize root};
        \node at ([shift = {(1.5,0)}]7.5,13.5) {\footnotesize RS};
        \node at ([shift = {(1.5,0)}]5.5,8) {\footnotesize MS};
        \node at ([shift = {(-1.5,0)}]2,4) {\footnotesize LS};
        \fill[blue] (2,4) \nd (5.5,8) \nd (7.5,13.5) \nd;
        \path[blue, ->]
            (7.1,15) edge [bend right = 50] node[style = {inner sep = 1pt}, above left]  {\scriptsize L} (1.9,4.3)
            (7.2,14.8) edge [bend right = 50] (5.3,8.2)
            (7.7,14.7) edge [bend left = 60] node[style = {inner sep = 1pt}, right]  {\scriptsize R} (7.7,13.7);
    \end{tikzpicture}
\end{align*}
The dotted line in the above schematic diagram is the line $y = 2x$.

We see that we can embed a ternary tree inside a lattice path of this type as shown in the following example:
\begin{equation*} \def \scl {0.7}
    \begin{tikzpicture}[baseline={([yshift=-.5ex]current bounding box.center)}, scale = \scl]
        \grd[4][8]
        \draw (0,0) \schAc \schUp \schAc \schUp \schUp \schUp \schAc \schUp \schAc \schUp \schUp \schUp;
    \end{tikzpicture}
    \quad \leadsto
    \begin{tikzpicture}[baseline={([yshift=-.5ex]current bounding box.center)}, scale = \scl]
        \grd[4][8]
        \P[4][8]
        \P[4][7]
        \P[2][4]
        \P[2][3]
        \fill[blue] (0,0) \nd (1,0) \nd (1,1) \nd (2,1) \nd (2,2) \nd (3,4) \nd (3,5) \nd (4,5) \nd (4,6) \nd;
        \path[blue, ->]
            (4.1,7.9) edge [bend left = 60] node[style = {inner sep = 1pt}, right]  {\scriptsize R} (4.2,7.2)
            (3.75,7.85) edge [bend right = 70] (2.85,4.1)
            (3.8,8) edge [bend right = 60] node[style = {inner sep = 0pt}, above left]  {\scriptsize L} (1.9,4.25)
            (4.1,6.9) edge [bend left = 60] node[style = {inner sep = 1pt}, right]  {\scriptsize R} (4.15,6.1)
            (3.8,6.7) edge [bend right = 50] (3.9,5.1)
            (3.8,7) edge [bend right = 30] node[style = {inner sep = 0pt}, above left]  {\scriptsize L} (3,5.15)
            (2.1,3.9) edge [bend left = 60] node[style = {inner sep = 1pt}, right]  {\scriptsize R} (2.2,3.2)
            (1.75,3.85) edge [bend right = 70] (0.85,0.1)
            (1.8,4) edge [bend right = 60] node[style = {inner sep = 0pt}, above left]  {\scriptsize L} (-0.1,0.15)
            (2.1,2.9) edge [bend left = 60] node[style = {inner sep = 1pt}, right]  {\scriptsize R} (2.15,2.1)
            (1.8,2.75) edge [bend right = 50] (1.9,1.15)
            (1.8,3) edge [bend right = 30] node[style = {inner sep = 1pt}, left]  {\scriptsize L} (1,1.15);
    \end{tikzpicture}
    \leadsto \quad 
    \def \scl {0.8}
    \begin{tikzpicture}[baseline={([yshift=-.5ex]current bounding box.center)}, scale = \scl]
        \draw (0,0) \widetern;
        \draw (-1.5,-1) \tern;
        \draw (1.5,-1) \tern;
        \draw (-0.5,-2) \tern;
        \fill (0,0) \wideternnd (-1.5,-1) \ternnd (1.5,-1) \ternnd (-0.5,-2) \ternnd;
        \node at (0,0) {\root};
    \end{tikzpicture}
\end{equation*}



\newpage

\section{Appendix}


In the following appendices, we list a number of families which are enumerated by each of the Fibonacci numbers (Section \ref{appendix:Fibonacci}), Motzkin numbers (Section \ref{appendix:Motzkin}), Schr\"oder numbers (Section \ref{appendix:Schroder}) and order 3 Fuss-Catalan numbers (Section \ref{appendix:FC}). For each family, we provide a reference and give a brief definition before detailing the following:
\begin{itemize}
    \item The generator of the relevant $\bm{n}$-magma.
    \item A definition of all of the relevant $\bm{n}$-magma maps. These are in most cases schematic diagrams.
    \item A definition of the norm, in terms of some natural parameter of the objects of that family.
    \item A list of some elements of the corresponding $\bm{n}$-magma. In each family, we list the elements with the smallest norms alongside their decomposition in terms of the generator and the $\bm{n}$-magma maps. This information provides concrete examples of how to apply the maps.
\end{itemize}

One can then use the information provided to obtain a bijection between any two families which are enumerated by the same integer sequence. This can be done using the relevant universal bijection (Definition \ref{def:11univBij} for Fibonacci families, Definition \ref{def:12univBij} for Motzkin or Schr\"oder families and Definition \ref{def:3univBij} for Fuss-Catalan families).

\pagebreak

\subsection{Fibonacci Families} \label{appendix:Fibonacci}
 
\addcontentsline{toca}{section}{Fibonacci Families}

We present a number of Fibonacci normed (1,1)-magmas for well-known families of objects enumerated by the Fibonacci numbers. We take the convention that each generator is called $\epsilon$ and the two unary maps are called $f$ and $g$. These are chosen in such a way that
\begin{equation*}
    \norm{f(m)} = \norm{m} + 1, \qquad \norm{g(m)} = \norm{m} + 2,
\end{equation*}
for all $m \in \cM$, where $\cM$ is the base set of the (1,1)-magma. Despite the same names being used for each family presented, it is clear that these are all different maps and that the generators are all different. The choice to use the same names was made for the sake of clarity and simplicity. At any point in this paper where these maps or generators are referenced, they appear with a subscript indicating which family they correspond to.

\addcontentsline{toca}{subsection}{\fibFamRef{fibFamily:tilings}: Fibonacci tilings}

\begin{fibFamily}
[Fibonacci tilings \cite{benjamin_quinn_2003,BenjaminQuinnSu}] \label{fibFamily:tilings}

\def\scl{0.45}

The Fibonacci number $F_{n + 1}$ is the number of ways to tile a $1 \times n$ board using $1 \times 1$ squares and $1 \times 2$ dominoes. We will represent a $1 \times 1$ square and a $1 \times 2$ domino by
\begin{equation*}
    \begin{tikzpicture}[baseline={([yshift=-.5ex]current bounding box.center)}, scale = \scl]
        \draw (0,0) \onetile \finaledge;
    \end{tikzpicture}
    \qquad \text{and} \qquad
    \begin{tikzpicture}[baseline={([yshift=-.5ex]current bounding box.center)}, scale = \scl]
        \draw (0,0) \twotiles \finaledge;
    \end{tikzpicture}
\end{equation*}
respectively.

\textit{Generator:} The empty tiling:
\begin{equation*}
    \epsilon = \emptyset.
\end{equation*}

\textit{Unary maps:} \vspace{-1ex}
\begin{align*}
    f \left(
    \begin{tikzpicture}[baseline={([yshift=-.5ex]current bounding box.center)}, scale = \scl]
        \dummyNodes[0.5][-0.1][0.5][1.1]
        \draw[fill = \colOne, opacity = \opac] (0,0) rectangle (6,1);
        \node at (3,0.5) {$t$};
    \end{tikzpicture}
    \right)
    & =
    \begin{tikzpicture}[baseline={([yshift=-.5ex]current bounding box.center)}, scale = \scl]
        \dummyNodes[0.5][-0.1][0.5][1.1]
        \draw[fill = \colOne, opacity = \opac] (0,0) rectangle (6,1);
        \node at (3,0.5) {$t$};
        \draw (6,0) \onetile \finaledge;
    \end{tikzpicture} \\
    g \left(
    \begin{tikzpicture}[baseline={([yshift=-.5ex]current bounding box.center)}, scale = \scl]
        \dummyNodes[0.5][-0.1][0.5][1.1]
        \draw[fill = \colOne, opacity = \opac] (0,0) rectangle (6,1);
        \node at (3,0.5) {$t$};
    \end{tikzpicture}
    \right)
    & =
    \begin{tikzpicture}[baseline={([yshift=-.5ex]current bounding box.center)}, scale = \scl]
        \dummyNodes[0.5][-0.1][0.5][1.1]
        \draw[fill = \colOne, opacity = \opac] (0,0) rectangle (6,1);
        \node at (3,0.5) {$t$};
        \draw (6,0) \twotiles \finaledge;
    \end{tikzpicture}
\end{align*}

\textit{Norm:} If $t$ is a tiling of a $1 \times n$ board, then $\norm{t} = n + 1$.

\textit{(1,1)-magma:} The (1,1)-magma begins (sorting by norm) as follows:
\def\scl{0.48}
\vspace{-1em}
\begin{center}
    \begin{tabular}{| l l |}
    \hline
    \textit{Norm 1:} & 
    $\emptyset = \epsilon$ \\
    \hline
    \textit{Norm 2:} & 
    $\begin{tikzpicture}[baseline={([yshift=-.5ex]current bounding box.center)}, scale = \scl]
        \dummyNodes[0.5][-0.1][0.5][1.1]
        \draw (0,0) \onetile \finaledge;
    \end{tikzpicture} 
    = f(\epsilon)$ \\
    \hline
    \textit{Norm 3:} & 
    $\begin{tikzpicture}[baseline={([yshift=-.5ex]current bounding box.center)}, scale = \scl]
        \dummyNodes[0.5][-0.1][0.5][1.1]
        \draw (0,0) \onetile \onetile \finaledge;
    \end{tikzpicture} 
    = f(f(\epsilon))$, 
    \quad
    $\begin{tikzpicture}[baseline={([yshift=-.5ex]current bounding box.center)}, scale = \scl]
        \dummyNodes[0.5][-0.1][0.5][1.1]
        \draw (0,0) \twotiles \finaledge;
    \end{tikzpicture} 
    = g(\epsilon)$ \\
    \hline
    \textit{Norm 4:} & 
    $\begin{tikzpicture}[baseline={([yshift=-.5ex]current bounding box.center)}, scale = \scl]
        \dummyNodes[0.5][-0.1][0.5][1.1]
        \draw (0,0) \onetile \onetile \onetile \finaledge;
    \end{tikzpicture} 
    = f(f(f(\epsilon)))$, 
    \quad
    $\begin{tikzpicture}[baseline={([yshift=-.5ex]current bounding box.center)}, scale = \scl]
        \dummyNodes[0.5][-0.1][0.5][1.1]
        \draw (0,0) \onetile \twotiles \finaledge;
    \end{tikzpicture} 
    = g(f(\epsilon))$, 
    \quad
    $\begin{tikzpicture}[baseline={([yshift=-.5ex]current bounding box.center)}, scale = \scl]
        \dummyNodes[0.5][-0.1][0.5][1.1]
        \draw (0,0) \twotiles \onetile \finaledge;
    \end{tikzpicture} 
    = f(g(\epsilon))$ \\
    \hline
    \textit{Norm 5:} & 
    $\begin{tikzpicture}[baseline={([yshift=-.5ex]current bounding box.center)}, scale = \scl]
        \dummyNodes[0.5][-0.1][0.5][1.1]
        \draw (0,0) \onetile \onetile \onetile \onetile \finaledge;
    \end{tikzpicture}
    = f(f(f(f(\epsilon))))$, 
    \quad
    $\begin{tikzpicture}[baseline={([yshift=-.5ex]current bounding box.center)}, scale = \scl]
        \dummyNodes[0.5][-0.1][0.5][1.1]
        \draw (0,0) \twotiles \onetile \onetile \finaledge;
    \end{tikzpicture} 
    = f(f(g(\epsilon)))$, \\
    & 
    $\begin{tikzpicture}[baseline={([yshift=-.5ex]current bounding box.center)}, scale = \scl]
        \dummyNodes[0.5][-0.1][0.5][1.1]
        \draw (0,0) \onetile \twotiles \onetile \finaledge;
    \end{tikzpicture}
    = f(g(f(\epsilon)))$,
    \ \
    $\begin{tikzpicture}[baseline={([yshift=-.5ex]current bounding box.center)}, scale = \scl]
        \dummyNodes[0.5][-0.1][0.5][1.1]
        \draw (0,0) \onetile \onetile \twotiles \finaledge;
    \end{tikzpicture} 
    = g(f(f(\epsilon)))$, 
    \ \
    $\begin{tikzpicture}[baseline={([yshift=-.5ex]current bounding box.center)}, scale = \scl]
        \dummyNodes[0.5][-0.1][0.5][1.1]
        \draw (0,0) \twotiles \twotiles \finaledge;
    \end{tikzpicture} 
    = g(g(\epsilon))$ \\
    \hline
    \end{tabular}
\end{center}
\end{fibFamily}

\addcontentsline{toca}{subsection}{\fibFamRef{fibFamily:pathGraphMatchings}: Path graph matchings}

\begin{fibFamily}
[Path graph matchings \cite{farrell1986occurrences}] \label{fibFamily:pathGraphMatchings} 

\def\scl{0.6}

$F_{n + 1}$ is the number of matchings in a path graph on $n$ vertices, $P_n$. This is a tree with two nodes of degree 1 and the other $n - 2$ nodes of degree 2.

\textit{Generator:} The empty matching on zero vertices:
\begin{equation*}
    \epsilon = \emptyset.
\end{equation*}

\textit{Unary maps:} Let
$g = 
\begin{tikzpicture}[baseline={([yshift=-.6ex]current bounding box.center)}, scale = \scl]
        \fill (0,0) \halfnd \pathStep \halfnd \pathStep \halfnd ++(2.5,0) \halfnd ;
        \node at (3.25,0) {$\cdots$};
    \end{tikzpicture}$ 
\ be a matching on a path graph on $n$ vertices. Define two unary maps as follows:
\begin{align*}
    f \left( \,
    \begin{tikzpicture}[baseline={([yshift=-.6ex]current bounding box.center)}, scale = \scl]
        \fill (0,0) \halfnd \pathStep \halfnd \pathStep \halfnd ++(2.5,0) \halfnd ;
        \node at (3.25,0) {$\cdots$};
    \end{tikzpicture}
    \, \right)
    \ & = \
    \begin{tikzpicture}[baseline={([yshift=-.6ex]current bounding box.center)}, scale = \scl]
        \fill (0,0) \halfnd \pathStep \halfnd \pathStep \halfnd ++(2.5,0) \halfnd ;
        \node at (3.25,0) {$\cdots$};
        \fill[red] (5.5,0) \halfnd;
    \end{tikzpicture} \\
    g \left( \,
    \begin{tikzpicture}[baseline={([yshift=-.6ex]current bounding box.center)}, scale = \scl]
        \fill (0,0) \halfnd \pathStep \halfnd \pathStep \halfnd \halfnd ++(2.5,0) \halfnd ;
        \node at (3.25,0) {$\cdots$};
    \end{tikzpicture}
    \, \right)
    \ & = \
    \begin{tikzpicture}[baseline={([yshift=-.6ex]current bounding box.center)}, scale = \scl]
        \fill (0,0) \halfnd \pathStep \halfnd \pathStep \halfnd \halfnd ++(2.5,0) \halfnd ;
        \node at (3.25,0) {$\cdots$};
        \fill[red] (5.5,0) \halfnd (6.55,0) \halfnd;
        \draw[red] (5.5,0) -- (6.5,0);
    \end{tikzpicture}
\end{align*}

\textit{Norm:} If $g$ is a matching in a path graph on $n$ vertices, then $ \norm{g} = n + 1$.

\textit{(1,1)-magma:} The (1,1) magma begins (sorting by norm) as follows:
\vspace{-.5em}
\begin{center}
    \begin{tabular}{| l l |}
    \hline
    \textit{Norm 1:} & 
    $\emptyset = \epsilon$ \\
    \hline
    \textit{Norm 2:} & 
    $\begin{tikzpicture}[baseline={([yshift=-.5ex]current bounding box.center)}, scale = \scl]
        \fill (0,0) \halfnd;
    \end{tikzpicture}
    \ = f(\epsilon)$ \\
    \hline
    \textit{Norm 3:} & 
    $\begin{tikzpicture}[baseline={([yshift=-.5ex]current bounding box.center)}, scale = \scl]
        \fill (0,0) \halfnd \pathStep \halfnd;
    \end{tikzpicture}
    \ = f(f(\epsilon)) $, 
    \quad
    $\begin{tikzpicture}[baseline={([yshift=-.5ex]current bounding box.center)}, scale = \scl]
        \fill (0,0) \halfnd \pathStep \halfnd;
        \draw (0,0) \pathEdge;
    \end{tikzpicture}
    \ = g(\epsilon)$ \\
    \hline
    \textit{Norm 4:} & 
    $\begin{tikzpicture}[baseline={([yshift=-.5ex]current bounding box.center)}, scale = \scl]
        \fill (0,0) \halfnd \pathStep \halfnd \pathStep \halfnd;
    \end{tikzpicture}
    \ = f(f(f(\epsilon)))$, 
    \quad
    $\begin{tikzpicture}[baseline={([yshift=-.5ex]current bounding box.center)}, scale = \scl]
        \fill (0,0) \halfnd \pathStep \halfnd \pathStep \halfnd;
        \draw (0,0) \pathEdge \pathNoEdge;
    \end{tikzpicture}
    \ = f(g(\epsilon))$, \\
    &
    $\begin{tikzpicture}[baseline={([yshift=-.5ex]current bounding box.center)}, scale = \scl]
        \fill (0,0) \halfnd \pathStep \halfnd \pathStep \halfnd;
        \draw (0,0) \pathNoEdge \pathEdge;
    \end{tikzpicture}
    \ = g(f(\epsilon))$ \\
    \hline
    \textit{Norm 5:} & 
    $\begin{tikzpicture}[baseline={([yshift=-.5ex]current bounding box.center)}, scale = \scl]
        \fill (0,0) \halfnd \pathStep \halfnd \pathStep \halfnd \pathStep \halfnd;
    \end{tikzpicture}
    \ = f(f(f(f(\epsilon))))$, 
    \quad
    $\begin{tikzpicture}[baseline={([yshift=-.5ex]current bounding box.center)}, scale = \scl]
        \fill (0,0) \halfnd \pathStep \halfnd \pathStep \halfnd \pathStep \halfnd;
        \draw (0,0) \pathEdge;
    \end{tikzpicture}
    \ = f(f(g(\epsilon)))$, \\
    &
    $\begin{tikzpicture}[baseline={([yshift=-.5ex]current bounding box.center)}, scale = \scl]
        \fill (0,0) \halfnd \pathStep \halfnd \pathStep \halfnd \pathStep \halfnd;
        \draw (0,0) \pathNoEdge \pathEdge;
    \end{tikzpicture}
    \ = f(g(f(\epsilon)))$, 
    \quad
    $\begin{tikzpicture}[baseline={([yshift=-.5ex]current bounding box.center)}, scale = \scl]
        \fill (0,0) \halfnd \pathStep \halfnd \pathStep \halfnd \pathStep \halfnd;
        \draw (0,0) \pathNoEdge \pathNoEdge \pathEdge;
    \end{tikzpicture}
    \ = g(f(f(\epsilon)))$, \\
    &
    $\begin{tikzpicture}[baseline={([yshift=-.5ex]current bounding box.center)}, scale = \scl]
        \fill (0,0) \halfnd \pathStep \halfnd \pathStep \halfnd \pathStep \halfnd;
        \draw (0,0) \pathEdge \pathNoEdge \pathEdge;
    \end{tikzpicture}
    \ = g(g(\epsilon))$ \\
    \hline
    \end{tabular}
\end{center}
\end{fibFamily}

\addcontentsline{toca}{subsection}{\fibFamRef{fibFamily:ladderGraphMatchings}: Perfect matchings in a ladder graph}
\begin{fibFamily}
[Perfect matchings in a ladder graph \cite{FibonacciOEIS}] \label{fibFamily:ladderGraphMatchings} 

\def\scl{0.6}

$F_{n + 1}$ is the number of perfect matchings in the ladder graph $L_n = P_2 \times P_n$.

\textit{Generator:} The empty matching in the ladder graph on zero vertices:
\begin{equation*}
    \epsilon = \emptyset.
\end{equation*}

\textit{Unary maps:}
\begin{align*}
    f \left(
    \begin{tikzpicture}[baseline={([yshift=-.5ex]current bounding box.center)}, scale = \scl]
        \dummyNodes[0.5][0.2][0.5][-3.2]
        \fill (0,0) \halfnd \ladderSS \halfnd \ladderNodes \ladderNodes ++(0,-0.75) \ladderNodes;
        \node at (0.5,-2.25) {$\cdots$};
    \end{tikzpicture}
    \right)
    \ = \
    \begin{tikzpicture}[baseline={([yshift=-.5ex]current bounding box.center)}, scale = \scl]
        \fill (0,0) \halfnd \ladderSS \halfnd \ladderNodes \ladderNodes ++(0,-0.75) \ladderNodes;
        \fill[red] (0,-3.75) \halfnd \ladderSS \halfnd;
        \draw[red] (0,-3.75) \hor;
        \node at (0.5,-2.25) {$\cdots$};
    \end{tikzpicture} \qquad \qquad \qquad
    g \left(
    \begin{tikzpicture}[baseline={([yshift=-.5ex]current bounding box.center)}, scale = \scl]
        \dummyNodes[0.5][0.2][0.5][-3.2]
        \fill (0,0) \halfnd \ladderSS \halfnd \ladderNodes \ladderNodes ++(0,-0.75) \ladderNodes;
        \node at (0.5,-2.25) {$\cdots$};
    \end{tikzpicture}
    \right)
    \ = \
    \begin{tikzpicture}[baseline={([yshift=-.5ex]current bounding box.center)}, scale = \scl]
        \fill (0,0) \halfnd \ladderSS \halfnd \ladderNodes \ladderNodes ++(0,-0.75) \ladderNodes;
        \fill[red] (0,-3.75) \halfnd \ladderSS \halfnd \ladderNodes;
        \draw[red] (0,-3.75) \ver;
        \node at (0.5,-2.25) {$\cdots$};
    \end{tikzpicture}
\end{align*}

\textit{Norm:} If $M$ is a perfect matching in $L_n$, then $\norm{M} = n + 1$.

\textit{(1,1)-magma:} The (1,1) magma begins (sorting by norm) as follows:
\vspace{-.5em}
\def\scl{0.5}
\begin{center}
    \begin{tabular}{| l l |}
    \hline
    \textit{Norm 1:} & 
    $\emptyset = \epsilon$ \\
    \hline
    \textit{Norm 2:} & 
    $\begin{tikzpicture}[baseline={([yshift=-.5ex]current bounding box.center)}, scale = \scl]
        \dummyNodes[0.5][0.2][0.5][-0.5]
        \fill (0,0) \halfnd \ladderSS \halfnd;
        \draw (0,0) \finalhor;
    \end{tikzpicture}
    \ = f(\epsilon)$ \\
    \hline
    \textit{Norm 3:} & 
    $\begin{tikzpicture}[baseline={([yshift=-.5ex]current bounding box.center)}, scale = \scl]
        \dummyNodes[0.5][0.2][0.5][-1.25]
        \fill (0,0) \halfnd \ladderSS \halfnd \ladderNodes;
        \draw (0,0) \hor \finalhor;
    \end{tikzpicture}
    \ = f(f(\epsilon)) $, 
    \quad
    $\begin{tikzpicture}[baseline={([yshift=-.5ex]current bounding box.center)}, scale = \scl]
        \dummyNodes[0.5][0.2][0.5][-1.25]
        \fill (0,0) \halfnd \ladderSS \halfnd \ladderNodes;
        \draw (0,0) \finalver;
    \end{tikzpicture}
    \ = g(\epsilon)$ \\
    \hline
    \textit{Norm 4:} & 
    $\begin{tikzpicture}[baseline={([yshift=-.5ex]current bounding box.center)}, scale = \scl]
        \dummyNodes[0.5][0.2][0.5][-2]
        \fill (0,0) \halfnd \ladderSS \halfnd \ladderNodes \ladderNodes;
        \draw (0,0) \hor \hor \finalhor;
    \end{tikzpicture}
    \ = f(f(f(\epsilon)))$, 
    \quad
    $\begin{tikzpicture}[baseline={([yshift=-.5ex]current bounding box.center)}, scale = \scl]
        \dummyNodes[0.5][0.2][0.5][-2]
        \fill (0,0) \halfnd \ladderSS \halfnd \ladderNodes \ladderNodes;
        \draw (0,0) \ver \finalhor;
    \end{tikzpicture}
    \ = f(g(\epsilon))$, 
    \quad
    $\begin{tikzpicture}[baseline={([yshift=-.5ex]current bounding box.center)}, scale = \scl]
        \dummyNodes[0.5][0.2][0.5][-2]
        \fill (0,0) \halfnd \ladderSS \halfnd \ladderNodes \ladderNodes;
        \draw (0,0) \hor \finalver;
    \end{tikzpicture}
    \ = g(f(\epsilon))$ \\
    \hline
    \textit{Norm 5:} & 
    $\begin{tikzpicture}[baseline={([yshift=-.5ex]current bounding box.center)}, scale = \scl]
        \dummyNodes[0.5][0.2][0.5][-3]
        \fill (0,0) \halfnd \ladderSS \halfnd \ladderNodes \ladderNodes \ladderNodes;
        \draw (0,0) \hor \hor \hor \finalhor;
    \end{tikzpicture}
    \ = f(f(f(f(\epsilon))))$, 
    \quad
    $\begin{tikzpicture}[baseline={([yshift=-.5ex]current bounding box.center)}, scale = \scl]
        \dummyNodes[0.5][0.2][0.5][-3]
        \fill (0,0) \halfnd \ladderSS \halfnd \ladderNodes \ladderNodes \ladderNodes;
        \draw (0,0) \ver \hor \finalhor;
    \end{tikzpicture}
    \ = f(f(g(\epsilon)))$, \\
    &
    $\begin{tikzpicture}[baseline={([yshift=-.5ex]current bounding box.center)}, scale = \scl]
        \dummyNodes[0.75][0.2][0.5][-3]
        \fill (0,0) \halfnd \ladderSS \halfnd \ladderNodes \ladderNodes \ladderNodes;
        \draw (0,0) \hor \ver \finalhor;
    \end{tikzpicture}
    \ = f(g(f(\epsilon)))$,
    \quad
    $\begin{tikzpicture}[baseline={([yshift=-.5ex]current bounding box.center)}, scale = \scl]
        \dummyNodes[0.75][0.2][0.5][-3]
        \fill (0,0) \halfnd \ladderSS \halfnd \ladderNodes \ladderNodes \ladderNodes;
        \draw (0,0) \hor \hor \finalver;
    \end{tikzpicture}
    \ = g(f(f(\epsilon)))$,
    \quad
    $\begin{tikzpicture}[baseline={([yshift=-.5ex]current bounding box.center)}, scale = \scl]
        \dummyNodes[0.75][0.2][0.5][-2.75]
        \fill (0,0) \halfnd \ladderSS \halfnd \ladderNodes \ladderNodes \ladderNodes;
        \draw (0,0) \ver \finalver;
    \end{tikzpicture}
    \ = g(g(\epsilon))$ \\
    \hline
    \end{tabular}
\end{center}
\end{fibFamily}

\addcontentsline{toca}{subsection}{\fibFamRef{fibFamily:compositionsNo1s}: Compositions with no 1's}
\begin{fibFamily}
[Compositions with no 1's \cite{Stanley:2011:ECV:2124415}] \label{fibFamily:compositionsNo1s}

$F_n$ is the number of compositions of $n + 1$ with no part equal to 1. A composition of an integer $n$ is a way of writing $n$ as the sum of a sequence of strictly positive integers. Two sequences that differ in the order of their terms define different compositions of their sum.

\textit{Generator:} The composition of 2 into one part:
\begin{equation*}
    \epsilon = 2.
\end{equation*}

\textit{Unary maps:} Let $\alpha_1 + \cdots + \alpha_k$ be a composition of $n$, where \mbox{$\alpha_i \in \mathbb{N} \backslash \{ 1 \}$} for each $i \in \{ 1, \hdots, k \}$. Then define two unary maps as follows:
\begin{align*}
    f(\alpha_1 + \cdots + \alpha_k) & = \alpha_1 + \cdots + (\alpha_k + 1), \\
    g(\alpha_1 + \cdots + \alpha_k) & = \alpha_1 + \cdots + \alpha_k + 2.
\end{align*}

\textit{Norm:} If $c$ is a composition of the integer $n$, then $\norm{c} = n - 1$.

\textit{(1,1)-magma:} The (1,1) magma begins (sorting by norm) as follows:
\vspace{-.5em}
\begin{center}
    \begin{tabular}{| l l |}
    \hline
    \textit{Norm 1:} & 
    $2 = \epsilon$ \\
    \hline
    \textit{Norm 2:} & 
    $3 = f(\epsilon)$ \\
    \hline
    \textit{Norm 3:} 
    & $4 = f(f(\epsilon)) $, \quad
    $2 + 2 = g(\epsilon)$ \\
    \hline
    \textit{Norm 4:} & 
    $5 = f(f(f(\epsilon)))$, \quad
    $2 + 3 = f(g(\epsilon))$, \quad
    $3 + 2 = g(f(\epsilon))$ \\
    \hline
    \textit{Norm 5:} & 
    $6 = f(f(f(f(\epsilon))))$, \quad
    $2 + 4 = f(f(g(\epsilon)))$, \quad
    $3 + 3 = f(g(f(\epsilon)))$, \\ &
    $4 + 2 = g(f(f(\epsilon)))$, \quad
    $2 + 2 + 2 = g(g(\epsilon))$ \\
    \hline
    \end{tabular}
\end{center}
\end{fibFamily}

\addcontentsline{toca}{subsection}{\fibFamRef{fibFamily:compositionsLeq2}: Compositions with no part greater than 2}
\begin{fibFamily}
[Compositions with no part greater than 2 \cite{moser2}] \label{fibFamily:compositionsLeq2}

$F_n$ is the number of compositions of $n - 1$ with no part greater than 2.

\textit{Generator:} The empty composition:
\begin{equation*}
    \epsilon = \emptyset.
\end{equation*}

\textit{Unary maps:} Let $c$ be a composition, and define two unary maps as follows:
\begin{align*}
    f(c) & = c + 1, \\
    g(c) & = c + 2.
\end{align*}

\textit{Norm:} If $c$ is a composition of the integer $n$, then $\norm{c} = n + 1$.

\textit{(1,1)-magma:} The (1,1) magma begins (sorting by norm) as follows:
\vspace{-.5em}
\begin{center}
    \begin{tabular}{| l l |}
    \hline
    \textit{Norm 1:} & 
    $\emptyset = \epsilon$ \\
    \hline
    \textit{Norm 2:} & 
    $1 = f(\epsilon)$ \\
    \hline
    \textit{Norm 3:} & 
    $1 + 1 = f(f(\epsilon)) $, \quad
    $2 = g(\epsilon)$ \\
    \hline
    \textit{Norm 4:} & 
    $1 + 1 + 1 = f(f(f(\epsilon)))$, \quad
    $2 + 1 = f(g(\epsilon))$, \\ &
    $1 + 2 = g(f(\epsilon))$ \\
    \hline
    \textit{Norm 5:} & 
    $1 + 1 + 1 + 1 = f(f(f(f(\epsilon))))$, \quad
    $2 + 1 + 1 = f(f(g(\epsilon)))$, \\ &
    $1 + 2 + 1 = f(g(f(\epsilon)))$, \quad 
    $1 + 1 + 2 = g(f(f(\epsilon)))$, \\ & 
    $2 + 2  = g(g(\epsilon))$ \\
    \hline
    \end{tabular}
\end{center}
\end{fibFamily}

\addcontentsline{toca}{subsection}{\fibFamRef{fibFamily:compositionsOdd}: Compositions using odd parts}
\begin{fibFamily}
[Compositions using odd parts \cite{Stanley:2011:ECV:2124415}] \label{fibFamily:compositionsOdd}

$F_n$ is the number of compositions of $n$ into odd parts.

\textit{Generator:} The composition of 1 into one part:
\begin{equation*}
    \epsilon = 1.
\end{equation*}

\textit{Unary maps:} Let $\alpha_1 + \hdots + \alpha_k$ be a composition, and define two unary maps as follows:
\begin{align*}
    f(\alpha_1 + \hdots + \alpha_k) & = \alpha_1 + \hdots + \alpha_k + 1, \\
    g(\alpha_1 + \hdots + \alpha_k) & = \alpha_1 + \hdots + (\alpha_k + 2).
\end{align*}

\textit{Norm:} If $c$ is a composition of the integer $n$, then $\norm{c} = n$.

\pagebreak
\textit{(1,1)-magma:} The (1,1) magma begins (sorting by norm) as follows:
\vspace{-.5em}
\begin{center}
    \begin{tabular}{| l l |}
    \hline
    \textit{Norm 1:} & 
    $1 = \epsilon$ \\
    \hline
    \textit{Norm 2:} & 
    $1 + 1 = f(\epsilon)$ \\
    \hline
    \textit{Norm 3:} & 
    $1 + 1 + 1 = f(f(\epsilon)) $, \quad
    $3 = g(\epsilon)$ \\
    \hline
    \textit{Norm 4:} & 
    $1 + 1 + 1 + 1 = f(f(f(\epsilon)))$, \quad
    $3 + 1 = f(g(\epsilon))$, \quad
    $1 + 3 = g(f(\epsilon))$ \\
    \hline
    \textit{Norm 5:} & 
    $1 + 1 + 1 + 1 + 1 = f(f(f(f(\epsilon))))$, \quad
     $3 + 1 + 1 = f(f(g(\epsilon)))$, \\ &
     $1 + 3 + 1 = f(g(f(\epsilon)))$, \quad
    $1 + 1 + 3 = g(f(f(\epsilon)))$, \quad
    $5  = g(g(\epsilon))$ \\
    \hline
    \end{tabular}
\end{center}
\end{fibFamily}

\addcontentsline{toca}{subsection}{\fibFamRef{fibFamily:binaryWordsOddRuns}: Binary words with odd run lengths}
\begin{fibFamily}
[Binary words with odd run lengths \cite{FibonacciOEIS}] \label{fibFamily:binaryWordsOddRuns}

$F_n$ is the number of binary words (words in the alphabet $\{ 0, 1 \}$) of length $n$ beginning with 0 and having all run lengths odd. A run is a subword containing only 0's or only 1's which is maximal (meaning that we cannot extend the subword and still have the property that it contains only 0's or only 1's).

\textit{Generator:} 
\begin{equation*}
    \epsilon = 0.
\end{equation*}

\textit{Unary maps:} Let $w$ be a binary word. Define two unary maps as follows:
\begin{align*}
    f(w) & = 
    \begin{cases}
        w0, & \text{if length of $w$ is even}, \\
        w1, & \text{if length of $w$ is odd}, \\
    \end{cases} \\
    g(w) & = 
    \begin{cases}
        w11, & \text{if length of $w$ is even}, \\
        w00, & \text{if length of $w$ is odd}. \\
    \end{cases}
\end{align*}

\textit{Norm:} If $w$ is a binary word of length $n$, then $\norm{w} = n$.

\textit{(1,1)-magma:} The (1,1) magma begins (sorting by norm) as follows:
\vspace{-.5em}
\begin{center}
    \begin{tabular}{| l l |}
    \hline
    \textit{Norm 1:} & 
    $0 = \epsilon$ \\
    \hline
    \textit{Norm 2:} & 
    $01 = f(\epsilon)$ \\
    \hline
    \textit{Norm 3:} & 
    $010 = f(f(\epsilon)) $, \quad
    $000 = g(\epsilon)$ \\
    \hline
    \textit{Norm 4:} & 
    $0101 = f(f(f(\epsilon)))$, \quad
    $0001 = f(g(\epsilon))$, \quad
    $0111 = g(f(\epsilon))$ \\
    \hline
    \textit{Norm 5:} & 
    $01010 = f(f(f(f(\epsilon))))$, \quad
    $00010 = f(f(g(\epsilon)))$, \quad
    $01110 = f(g(f(\epsilon)))$, \\ &
    $01000 = g(f(f(\epsilon)))$, \quad
    $00000  = g(g(\epsilon))$ \\
    \hline
    \end{tabular}
\end{center}
\end{fibFamily}

\addcontentsline{toca}{subsection}{\fibFamRef{fibFamily:permutations}: Permutations with $\vert p_k - k \vert \leq 1$}
\begin{fibFamily}
[Permutations with $\bm{\vert p_k - k \vert \leq 1}$ \cite{SIMION1985383}] \label{fibFamily:permutations}

$F_{n + 1}$ is the number of permutations $p_1 p_2 \cdots p_n$ of $\{ 1, \hdots, n \}$ such that
\begin{align*}
    \vert p_k - k \vert \leq 1, \qquad k = 1, \hdots, n.
\end{align*}

\textit{Generator:} The empty permutation ($n=0$):
\begin{equation*}
    \epsilon = \emptyset.
\end{equation*}

\textit{Unary maps:} Let $p_1 p_2 \cdots p_n$ be a permutation of $\{ 1, \hdots, n \}$. Define two unary maps as follows:
\begin{align*}
    f(p_1 p_2 \cdots p_n) & = p_1 p_2 \cdots p_n (n + 1), \\
    g(p_1 p_2 \cdots p_n) & = p_1 p_2 \cdots p_n (n + 2) (n + 1).
\end{align*}

\textit{Norm:} If $p$ is a permutation of $\{ 1, \hdots, n \}$, then $\norm{p} = n + 1$.

\textit{(1,1)-magma:} The (1,1) magma begins (sorting by norm) as follows:
\vspace{-.5em}
\begin{center}
    \begin{tabular}{| l l |}
    \hline
    \textit{Norm 1:} & 
    $\emptyset = \epsilon$ \\
    \hline
    \textit{Norm 2:} & 
    $1 = f(\epsilon)$ \\
    \hline
    \textit{Norm 3:} & 
    $12 = f(f(\epsilon)) $, \quad
    $21 = g(\epsilon)$ \\
    \hline
    \textit{Norm 4:} & 
    $123 = f(f(f(\epsilon)))$, \quad
    $213 = f(g(\epsilon))$, \quad
    $132 = g(f(\epsilon))$ \\
    \hline
    \textit{Norm 5:} & 
    $1234 = f(f(f(f(\epsilon))))$, \quad
    $2134 = f(f(g(\epsilon)))$, \quad
    $1324 = f(g(f(\epsilon)))$, \quad \\ & 
    $1243 = g(f(f(\epsilon)))$, \quad
    $2143 = g(g(\epsilon))$ \\
    \hline
    \end{tabular}
\end{center}
\end{fibFamily}

\addcontentsline{toca}{subsection}{\fibFamRef{fibFamily:binarySeqs}: Binary sequences with no consecutive 1's}
\begin{fibFamily}
[Binary sequences with no consecutive 1's \cite{nyblom2012enumerating}] \label{fibFamily:binarySeqs}

$F_{n + 2}$ is equal to the number of binary sequences (words in the alphabet $\{0,1\}$) of length $n$ that have no consecutive 1's.

\textit{Generator:} For this family there does not exist a natural representation for the generator $\epsilon$. This is due to the fact that norm $n$ objects correspond to words of length $n - 2$ and thus there is no natural way to describe a norm 1 object. For this reason, we describe the (1,1)-magma corresponding to this Fibonacci family in the manner described at the end of Section \ref{section:fib11magmas}. We define the generator simply to be $\epsilon$ (with no further meaning associated with it) and define the following objects:
\begin{align*}
    f(\epsilon) & = \emptyset \quad \text{(the empty word)}, \\
    g(\epsilon) & = 1.
\end{align*}
Having defined these objects along with the unary and binary maps which follow, we have thus completely specified the relevant (1,1)-magma.

\textit{Unary maps:} Let $w$ be a binary sequence of length $n$ containing no consecutive 1's. Then define two unary maps as follows:
\begin{align*}
    f(w) & = w 0, \\
    g(w) & = w 0 1.
\end{align*}

\textit{Norm:} If $w$ is a binary sequence of length $n$, then $\norm{w} = n + 2$.

\textit{(1,1)-magma:} The (1,1) magma begins (sorting by norm) as follows:
\vspace{-.5em}
\begin{center}
    \begin{tabular}{| l l |}
    \hline
    \textit{Norm 1:} & 
    $\epsilon$ \\
    \hline
    \textit{Norm 2:} & 
    $\emptyset = f(\epsilon)$ \\
    \hline
    \textit{Norm 3:} & 
    $0 = f(f(\epsilon))$, \quad
    $1 = g(\epsilon)$ \\
    \hline
    \textit{Norm 4:} & 
    $00 = f(f(f(\epsilon)))$, \quad
    $10 = f(g(\epsilon))$, \quad
    $01 = g(f(\epsilon))$ \\
    \hline
    \textit{Norm 5:} & 
    $000 = f(f(f(f(\epsilon))))$, \quad
    $010 = f(g(f(\epsilon)))$, \quad
    $100 = f(f(g(\epsilon)))$, \\
    & 
    $001 = g(f(f(\epsilon)))$, \quad
    $101 = g(g(\epsilon))$ \\
    \hline
    \end{tabular}
\end{center}
\end{fibFamily}

\addcontentsline{toca}{subsection}{\fibFamRef{fibFamily:glassReflections}: Reflections across two glass plates}
\begin{fibFamily}
[Reflections across two glass plates \cite{HOGGATT10016530370, moser}] \label{fibFamily:glassReflections} 

\def\scl{0.3}

$F_{n+2}$ is equal to the number of paths through two plates of glass with $n$ reflections (where reflections can occur at plate/plate or plate/air interfaces). These are represented schematically as in \cite{HOGGATT10016530370}.

\textit{Generator:} We represent the generator as follows:
\begin{equation*}
    \epsilon =
    \begin{tikzpicture}[baseline={([yshift=-.5ex]current bounding box.center)}, scale = \scl]
        \plates[3]
    \end{tikzpicture}
\end{equation*}

\textit{Unary maps:}
For the generator $\epsilon$,
\begin{equation*}
    f(\epsilon) = 
    \begin{tikzpicture}[baseline={([yshift=-.5ex]current bounding box.center)}, scale = \scl]
        \plates[5]
        \glassDummyNodes
        \draw[red, ->] (0.5,1) -- (4.5,-3);
    \end{tikzpicture}
    \qquad \qquad \qquad
    g(\epsilon) = 
    \begin{tikzpicture}[baseline={([yshift=-.5ex]current bounding box.center)}, scale = \scl]
        \plates[5]
        \glassDummyNodes
        \draw[red, ->] (0.5,1) -- (2.5,-1) -- (4.5,1);
    \end{tikzpicture}
\end{equation*}

For all elements of the base set other than $\epsilon$, the unary maps can be illustrated schematically as follows:
\begin{align*}
    f \left(
    \begin{tikzpicture}[baseline={([yshift=-.5ex]current bounding box.center)}, decoration = {snake, segment length = 2.3mm, amplitude = 0.3mm}, scale = \scl]
        \plates[5]
        \glassDummyNodes
        \path [decorate, draw = red] (1.5,0) -- (3.5,-2);
        \draw[red, ->] (0.5,1) -- (1.5,0) (3.5,-2) -- (4.5,-3);
    \end{tikzpicture}
    \right)
    \ = \
    \begin{tikzpicture}[baseline={([yshift=-.5ex]current bounding box.center)}, decoration = {snake, segment length = 2.3mm, amplitude = 0.3mm}, scale = \scl]
        \plates[7]
        \glassDummyNodes
        \path [decorate, draw = red] (1.5,0) -- (3.5,-2);
        \draw [red, ->] (0.5,1) -- (1.5,0) (3.5,-2) -- (6.5,1);
    \end{tikzpicture},
    & \qquad
    f \left(
    \begin{tikzpicture}[baseline={([yshift=-.5ex]current bounding box.center)}, decoration = {snake, segment length = 2.3mm, amplitude = 0.3mm}, scale = \scl]
        \plates[5]
        \glassDummyNodes
        \path [decorate, draw = red] (1.5,0) to [bend right = 30] (2.5,-1.5) to [bend right = 30] (3.5,0);
        \draw[red, ->] (0.5,1) -- (1.5,0) (3.5,0) -- (4.5,1);
    \end{tikzpicture}
    \right)
    \ = \
    \begin{tikzpicture}[baseline={([yshift=-.5ex]current bounding box.center)}, decoration = {snake, segment length = 2.3mm, amplitude = 0.3mm}, scale = \scl]
        \plates[7]
        \glassDummyNodes
        \path [decorate, draw = red] (1.5,0) to [bend right = 30] (2.5,-1.5) to [bend right = 30] (3.5,0);
        \draw[red, ->] (0.5,1) -- (1.5,0) (3.5,0) -- (6.5,-3);
    \end{tikzpicture}, \\
    g \left(
    \begin{tikzpicture}[baseline={([yshift=-.5ex]current bounding box.center)}, decoration = {snake, segment length = 2.3mm, amplitude = 0.3mm}, scale = \scl]
        \plates[5]
        \glassDummyNodes
        \path [decorate, draw = red] (1.5,0) -- (3.5,-2);
        \draw[red, ->] (0.5,1) -- (1.5,0) (3.5,-2) -- (4.5,-3);
    \end{tikzpicture}
    \right)
    \ = \
    \begin{tikzpicture}[baseline={([yshift=-.5ex]current bounding box.center)}, decoration = {snake, segment length = 2.3mm, amplitude = 0.3mm}, scale = \scl]
        \plates[7]
        \glassDummyNodes
        \path [decorate, draw = red] (1.5,0) -- (3.5,-2);
        \draw [red, ->] (0.5,1) -- (1.5,0) (3.5,-2) -- (4.5,-1) -- (6.5,-3);
    \end{tikzpicture}, 
    & \qquad 
    g \left(
    \begin{tikzpicture}[baseline={([yshift=-.5ex]current bounding box.center)}, decoration = {snake, segment length = 2.3mm, amplitude = 0.3mm}, scale = \scl]
        \plates[5]
        \glassDummyNodes
        \path [decorate, draw = red] (1.5,0) to [bend right = 30] (2.5,-1.5) to [bend right = 30] (3.5,0);
        \draw[red, ->] (0.5,1) -- (1.5,0) (3.5,0) -- (4.5,1);
    \end{tikzpicture}
    \right)
    \ = \
    \begin{tikzpicture}[baseline={([yshift=-.5ex]current bounding box.center)}, decoration = {snake, segment length = 2.3mm, amplitude = 0.3mm}, scale = \scl]
        \plates[7]
        \glassDummyNodes
        \path [decorate, draw = red] (1.5,0) to [bend right = 30] (2.5,-1.5) to [bend right = 30] (3.5,0);
        \draw[red, ->] (0.5,1) -- (1.5,0) (3.5,0) -- (4.5,-1) -- (6.5,1);
    \end{tikzpicture}. \\
\end{align*}
The map $f$ simply adds one reflection by taking the exiting ray and reflecting it as it leaves the bottom plate. The map $g$ adds two reflections by taking the exiting ray and reflecting it twice, with the second reflection occurring at the centre plate/plate interface.
Thus $f$ changes the direction the ray exits the plates whilst $g$ does not change the direction.

\textit{Norm:} If $p$ is a path with $n$ reflections, then $\norm{p} = n + 2$.

\textit{(1,1)-magma:} The (1,1) magma begins (sorting by norm) as follows: \def \scl {0.3}
\vspace{-.5em}
\begin{center}
    \begin{tabular}{| l l |}
    \hline
    \textit{Norm 1:} & 
    $\begin{tikzpicture}[baseline={([yshift=-.5ex]current bounding box.center)}, scale = \scl]
        \plates[3]
        \glassDummyNodes
    \end{tikzpicture}
    = \epsilon$ \\
    \hline
    \textit{Norm 2:} & 
    $\begin{tikzpicture}[baseline={([yshift=-.5ex]current bounding box.center)}, scale = \scl]
        \plates[3]
        \glassDummyNodes
        \draw[red, ->] (0.5,1) -- (2.5,-3);
    \end{tikzpicture}
    = f(\epsilon)$ \\
    \hline
    \textit{Norm 3:} & 
    $\begin{tikzpicture}[baseline={([yshift=-.5ex]current bounding box.center)}, scale = \scl]
        \plates[4]
        \glassDummyNodes
        \draw[red] (1,0) \glassDn \glassDn \glassUp \glassUp;
        \draw[red, ->] (0.5,1) -- (1,0) (3,0) -- (3.5,1);
    \end{tikzpicture}
    = f(f(\epsilon)) $, \quad
    $\begin{tikzpicture}[baseline={([yshift=-.5ex]current bounding box.center)}, scale = \scl]
        \plates[3]
        \glassDummyNodes
        \draw[red, ->] (0.5,1) -- (1.5,-1) -- (2.5,1);
    \end{tikzpicture}
    = g(\epsilon)$ \\
    \hline
    \textit{Norm 4:} & 
    $\begin{tikzpicture}[baseline={([yshift=-.5ex]current bounding box.center)}, scale = \scl]
        \plates[5]
        \glassDummyNodes
        \draw[red] (1,0) \glassDn \glassDn \glassUp \glassUp \glassDn \glassDn;
        \draw[red, ->] (0.5,1) -- (1,0) (4,-2) -- (4.5,-3);
    \end{tikzpicture}
    = f(f(f(\epsilon)))$, \quad
    $\begin{tikzpicture}[baseline={([yshift=-.5ex]current bounding box.center)}, scale = \scl]
        \plates[4]
        \glassDummyNodes
        \draw[red] (1,0) \glassDn \glassUp \glassDn \glassDn;
        \draw[red, ->] (0.5,1) -- (1,0) (3,-2) -- (3.5,-3);
    \end{tikzpicture}
    = f(g(\epsilon))$, \quad
    $\begin{tikzpicture}[baseline={([yshift=-.5ex]current bounding box.center)}, scale = \scl]
        \plates[4]
        \glassDummyNodes
        \draw[red] (1,0) \glassDn \glassDn \glassUp \glassDn;
        \draw[red, ->] (0.5,1) -- (1,0) (3,-2) -- (3.5,-3);
    \end{tikzpicture}
    = g(f(\epsilon))$ \\
    \hline
    \textit{Norm 5:} & 
    $\begin{tikzpicture}[baseline={([yshift=-.5ex]current bounding box.center)}, scale = \scl]
        \plates[6]
        \glassDummyNodes
        \draw[red] (1,0) \glassDn \glassDn \glassUp \glassUp \glassDn \glassDn \glassUp \glassUp;
        \draw[red, ->] (0.5,1) -- (1,0) (5,0) -- (5.5,1);
    \end{tikzpicture}
    = f(f(f(f(\epsilon))))$, \quad
    $\begin{tikzpicture}[baseline={([yshift=-.5ex]current bounding box.center)}, scale = \scl]
        \plates[5]
        \glassDummyNodes
        \draw[red] (1,0) \glassDn \glassDn \glassUp \glassUp \glassDn \glassUp;
        \draw[red, ->] (0.5,1) -- (1,0) (4,0) -- (4.5,1);
    \end{tikzpicture}
    = g(f(f(\epsilon)))$, \\
    &
    $\begin{tikzpicture}[baseline={([yshift=-.5ex]current bounding box.center)}, scale = \scl]
        \plates[5]
        \glassDummyNodes
        \draw[red] (1,0) \glassDn \glassDn \glassUp \glassDn \glassUp \glassUp;
        \draw[red, ->] (0.5,1) -- (1,0) (4,0) -- (4.5,1);
    \end{tikzpicture}
    = f(g(f(\epsilon)))$, \quad 
    $\begin{tikzpicture}[baseline={([yshift=-.5ex]current bounding box.center)}, scale = \scl]
        \plates[5]
        \glassDummyNodes
        \draw[red] (1,0) \glassDn \glassUp \glassDn \glassDn \glassUp \glassUp;
        \draw[red, ->] (0.5,1) -- (1,0) (4,0) -- (4.5,1);
    \end{tikzpicture} = f(f(g(\epsilon)))$, \\
    &
    $\begin{tikzpicture}[baseline={([yshift=-.5ex]current bounding box.center)}, scale = \scl]
        \plates[4]
        \glassDummyNodes
        \draw[red] (1,0) \glassDn \glassUp \glassDn \glassUp;
        \draw[red, ->] (0.5,1) -- (1,0) (3,0) -- (3.5,1);
    \end{tikzpicture}
    = g(g(\epsilon))$ \\
    \hline
    \end{tabular}
\end{center}
\end{fibFamily}

\addcontentsline{toca}{subsection}{\fibFamRef{fibFamily:subsets}: Subsets with no consecutive integers}
\begin{fibFamily}
[Subsets with no consecutive integers \cite{Stanley:2011:ECV:2124415}] \label{fibFamily:subsets}

$F_{n + 2}$ is equal to the number of subsets of $\{ 1, 2, \hdots, n \}$ that contain no consecutive integers. 

To distinguish between two equal subsets that arise as subsets of two different sized sets we consider  \fibFamRef{fibFamily:subsets} to be pairs $(n,S)$ where $S \subseteq \{ 1, 2, \hdots, n \}$. 
Thus $(n,S)$ and $(m,S)$ are only equal if $n=m$ (even though both subsets are the same).


\textit{Generator:} For this family there does not exist a natural representation for the generator $\epsilon$. This is due simply to the nature of the family, since norm $n$ objects correspond to subsets of $\{ 1, 2, \hdots, n - 2 \}$ and thus there is no natural way to describe a norm 1 object. For this reason, we describe the (1,1)-magma corresponding to this Fibonacci family in the manner described at the end of Section \ref{section:fib11magmas}. We define the generator simply to be $\epsilon$ (with no further meaning associated with it) and define the following objects:
\begin{align*}
    f(\epsilon) & = (0,\emptyset), \\
    g(\epsilon) & = (1,\{ 1 \}).
\end{align*}
Having defined these objects along with the unary and binary maps which follow, we have thus completely specified the relevant (1,1)-magma.

\textit{Unary maps:} Let $S$ be a subset of $\{ 1, 2, \hdots, n \}$ containing no consecutive integers. Define two unary maps as follows:
\begin{align*}
    f(n,S) & = (n+1,S)  \\
    g(n,S) & = (n+2,S \cup \{ n + 2 \}) .
\end{align*}

\textit{Norm:}   $\norm{(n,S)} = n + 2$.

\textit{(1,1)-magma:} The (1,1) magma begins (sorting by norm) as follows:
\vspace{-.5em}
\begin{center}
    \begin{tabular}{| l l |}
    \hline
    \textit{Norm 1:} & 
    $\epsilon$ \\
    \hline
    \textit{Norm 2:} & 
    $f(\epsilon) = (0,\emptyset)$ \\
    \hline
    \textit{Norm 3:} & 
    $f(f(\epsilon)) = (1,\emptyset)$, \quad $g(\epsilon) = (1,\{ 1 \})$ \\
    \hline
    \textit{Norm 4:} & 
    $f(f(f(\epsilon))) = (2,\emptyset)  $, \quad
    $f(g(\epsilon)) = (2,\{ 1 \})  $, \\
    &
    $g(f(\epsilon)) =(2, \{ 2 \})  $ \\
    \hline
    \textit{Norm 5:} & 
    $f(f(f(f(\epsilon)))) = (3,\emptyset)  $, \quad 
    $f(f(g(\epsilon))) = (3,\{ 1 \} )$, \\
    & 
    $f(g(f(\epsilon))) =(3, \{2 \} )$, \quad
    $g(f(f(\epsilon))) = (3,\{ 3 \})$, \\
    & 
    $g(g(\epsilon)) = (3,\{ 1, 3 \} )$ \\
    \hline
    \end{tabular}
\end{center}
\end{fibFamily}

    \pagebreak
    
\subsection{Motzkin Families} \label{appendix:Motzkin}

\addcontentsline{toca}{section}{Motzkin families}

We present a number of Motzkin normed (1,2)-magmas. For each \mbox{(1,2)-magma}, we take the convention that the generator is $\epsilon$, the unary map is $f$ and the binary map is $\star$ (and this is always written as an in-fix operator). Despite the same names being used for each \mbox{(1,2)-magma} presented, it is clear that these are all different maps and that the generators are all different.

\addcontentsline{toca}{subsection}{\motFamRef{motzkinFamily:motzkinPaths}: Motzkin paths}
\begin{motFamily} 
[Motzkin paths \cite{DONAGHEY1977291}] \label{motzkinFamily:motzkinPaths}

\def\scl {0.3}

$M_n$ is the number of paths from $(0,0)$ to $(n,0)$ using steps $U = (1,1)$, $D = (1,-1)$ and $H = (1,0)$ which remain above the line $y = 0$.

\textit{Generator:} The empty path which we represent by a single vertex:
\begin{equation*}
    \epsilon = 
    \begin{tikzpicture}[baseline={([yshift=-.5ex]current bounding box.center)}, scale = \scl]
        \fill (0,0) \smlnd;
    \end{tikzpicture}
\end{equation*}

\textit{Unary map:} 
\begin{equation*}
    f \left(
        \begin{tikzpicture}[baseline={([yshift=-2ex]current bounding box.center)}, scale = \scl]
            \draw[fill = \colOne, opacity = \opac] (6,0) -- (0,0) to [bend left = 90, looseness = 1.5] (6,0);
            \node at (3,1.3) {\footnotesize $p$};
        \end{tikzpicture}
    \right)
    =
    \begin{tikzpicture}[baseline={([yshift=-2ex]current bounding box.center)}, scale = \scl]
        \draw[fill = orange!90!black, opacity = \opac] (6,0) -- (0,0) to [bend left = 90, looseness = 1.5] (6,0);
        \node at (3,1.3) {\footnotesize $p$};
        \draw[thick] (6,0) -- (8,0);
    \end{tikzpicture}
\end{equation*}

\textit{Binary map:} 
\begin{equation*}
    \begin{tikzpicture}[baseline=1.5ex, scale = \scl]
        \draw[fill = \colOne, opacity = \opac] (6,0) -- (0,0) to [bend left = 90, looseness = 1.5] (6,0);
        \node[anchor = mid] at (3,1.3) {\footnotesize $p_1$};
    \end{tikzpicture}
    \ \star \ 
    \begin{tikzpicture}[baseline=1.5ex, scale = \scl]
        \draw[fill = \colTwo, opacity = \opac] (6,0) -- (0,0) to [bend left = 90, looseness = 1.5] (6,0);
        \node[anchor = mid] at (3,1.3) {\footnotesize $p_2$};
    \end{tikzpicture}
    =
    \begin{tikzpicture}[baseline=1.5ex, scale = \scl]
        \draw[fill = \colOne, opacity = \opac] (6,0) -- (0,0) to [bend left = 90, looseness = 1.5] (6,0);
        \node[anchor = mid] at (3,1.3) {\footnotesize $p_1$};
        \draw[thick] (6,0) -- (7.5,1.5);
        \draw[fill = \colTwo, opacity = \opac] (13.5,1.5) -- (7.5,1.5) to [bend left = 90, looseness = 1.5] (13.5,1.5);
        \node at (10.5,2.8) {\footnotesize $p_2$};
        \draw[thick] (13.5,1.5) -- (15,0);
    \end{tikzpicture}
\end{equation*}

\textit{Norm:} If $p$ is a Motzkin path from $(0,0)$ to $(n,0)$, then $\norm{p} = n + 1$.

\textit{(1,2)-magma:} The (1,2)-magma begins (sorting by norm) as follows:
\vspace{-.5em}
\begin{center}
    \def\scl {0.45}
    \begin{tabular}{| l l |}
    \hline
    \textit{Norm 1:} & 
    $\begin{tikzpicture}[baseline={([yshift=-.7ex]current bounding box.center)}, scale = \scl]
        \fill (0,0) \smlnd;
    \end{tikzpicture}
    = \epsilon$ \\
    \hline
    \textit{Norm 2:} & 
    $\begin{tikzpicture}[baseline={([yshift=-.5ex]current bounding box.center)}, scale = \scl]
        \grd[1][1]
        \draw[thick] (0,0) \motAc;
        \dummyNodes[1][-0.2][1][1.2]
    \end{tikzpicture} 
    = f(\epsilon)$ \\
    \hline
    \textit{Norm 3:} & 
    $\begin{tikzpicture}[baseline={([yshift=-.5ex]current bounding box.center)}, scale = \scl]
        \grd[2][1]
        \draw[thick] (0,0) \motAc \motAc;
        \dummyNodes[1][-0.2][1][1.2]
    \end{tikzpicture} 
    = f(f(\epsilon))$, 
    \quad
    $\begin{tikzpicture}[baseline={([yshift=-.5ex]current bounding box.center)}, scale = \scl]
        \grd[2][1]
        \draw[thick] (0,0) \motUp \motDn;
        \dummyNodes[1][-0.2][1][1.2]
    \end{tikzpicture}
    = \epsilon \star \epsilon$ \\
    \hline
    \textit{Norm 4:} & 
    $\begin{tikzpicture}[baseline={([yshift=-.5ex]current bounding box.center)}, scale = \scl]
        \grd[3][1]
        \draw[thick] (0,0) \motAc \motAc \motAc;
        \dummyNodes[1][-0.2][1][1.2]
    \end{tikzpicture} 
    = f(f(f(\epsilon)))$, 
    \quad
    $\begin{tikzpicture}[baseline={([yshift=-.5ex]current bounding box.center)}, scale = \scl]
        \grd[3][1]
        \draw[thick] (0,0) \motUp \motDn \motAc;
        \dummyNodes[1][-0.2][1][1.2]
    \end{tikzpicture} 
    = f(\epsilon \star \epsilon)$, 
    \quad
    $\begin{tikzpicture}[baseline={([yshift=-.5ex]current bounding box.center)}, scale = \scl]
        \grd[3][1]
        \draw[thick] (0,0) \motAc \motUp \motDn;
        \dummyNodes[1][-0.2][1][1.2]
    \end{tikzpicture} 
    = f(\epsilon) \star \epsilon$, \\
    &
    $\begin{tikzpicture}[baseline={([yshift=-.5ex]current bounding box.center)}, scale = \scl]
        \grd[3][1]
        \draw[thick] (0,0) \motUp \motAc \motDn;
        \dummyNodes[1][-0.2][1][1.2]
    \end{tikzpicture} 
    = \epsilon \star f(\epsilon)$ \\
    \hline
    \end{tabular}
\end{center}
\end{motFamily}

\pagebreak

\addcontentsline{toca}{subsection}{\motFamRef{motzkinFamily:chords}: Non-intersecting chords}
\begin{motFamily} 
[Non-intersecting chords \cite{motzkin1948}] \label{motzkinFamily:chords}

$M_n$ counts the number of ways of drawing any number of non-intersecting chords joining up to $n$ distinct points on a circle.  These $n$ points are called chord points.
Note that not all $n$ chord points must be incident with a chord. 
We mark, with a $\V{}$ , a unique point at the top of each circle. 
This fixes the orientation of the circle and distinguishes non-intersecting chord diagrams which only differ by a rotation.

\def\scl{0.02}
\def\numPoints {11}  
\def\numSpaces {12} 

\textit{Generator:} The trivial way of joining zero chord points on a circle:
\begin{equation*}
    \epsilon = \def\scl{0.02}

\end{center}
\end{motFamily}

\addcontentsline{toca}{subsection}{\motFamRef{motzkinFamily:bracketings}: Recursive set of bracketings}
\begin{motFamily} 
[Recursive set of bracketings \cite{MotzkinOEIS}] \label{motzkinFamily:bracketings}


$M_{n - 1}$ is the number of strings of length $2n$ from the following recursively defined set: $L$ contains the string $[ \, ]$ and, for any strings $a$ and $b$ in $L$, we also find $[ \, a \, ]$ and $[ \, ab \, ]$ in $L$.

\textit{Generator:}
\begin{equation*}
    \epsilon = [ \, ].
\end{equation*}

\textit{Unary map:} Let $a$ be a bracketing. Then define the unary map as
\begin{align*}
f(a) = [ \, a \, ].
\end{align*}

\textit{Binary map:} Let $a$ and $b$ be two bracketings. Then define the binary map as
\begin{align*}
a \star b = [ \, ab \, ].
\end{align*}

\textit{Norm:} If $a$ is a bracketing of length $2n$, then $\norm{a} = n$.

\textit{(1,2)-magma:} The (1,2)-magma begins (sorting by norm) as follows:
\vspace{-.5em}
\begin{center}
    \begin{tabular}{| l l |}
    \hline
    \textit{Norm 1:} & 
    $[ \, ]
    = \epsilon$ \\
    \hline
    \textit{Norm 2:} & 
    $[ \, [ \, ] \, ] 
    = f(\epsilon)$ \\
    \hline
    \textit{Norm 3:} & 
    $[ \, [ \, [ \, ] \, ] \, ] 
    = f(f(\epsilon))$, 
    \quad
    $[ \, [ \, ] \, [ \, ] \, ]  
    = \epsilon \star \epsilon$ \\
    \hline
    \textit{Norm 4:} & 
    $[ \, [ \, [ \, [ \, ] \, ] \, ] \, ] 
    = f(f(f(\epsilon)))$, 
    \quad
    $[ \, [ \, [ \, ] \, [ \, ] \, ] \, ]
    = f(\epsilon \star \epsilon)$, \\
    &
    $[ \, [ \, ] \, [ \, [ \, ] \, ] \, ] 
    = \epsilon \star f(\epsilon)$,
    \quad
    $[ \, [ \, [ \, ] \, ] \, [ \, ] \, ]
    = f(\epsilon) \star \epsilon$ \\
    \hline
    \end{tabular}
\end{center}
\end{motFamily}

\addcontentsline{toca}{subsection}{\motFamRef{motzkinFamily:dyckPathsEven}: Dyck paths with even valleys}
\begin{motFamily} 
[Dyck paths with even valleys \cite{MotzkinOEIS}] \label{motzkinFamily:dyckPathsEven}

\def\scl{0.3}

$M_n$ is the number of length $2n$ Dyck paths whose valleys all have even $x$-coordinates. A valley is a path vertex preceded by a down step and followed by an up step.

\textit{Generator:} The empty path which we represent by a single vertex:
\begin{equation*}
    \epsilon = 
    \begin{tikzpicture}[baseline={([yshift=-.5ex]current bounding box.center)}, scale = \scl]
        \fill (0,0) \smlnd;
    \end{tikzpicture}
\end{equation*}

\textit{Unary map:}
\begin{align*}
    f \left(
    \begin{tikzpicture}[baseline={([yshift=-.5ex]current bounding box.center)}, scale = \scl]
        \draw[fill = \colOne, opacity = \opac] (6,0) -- (0,0) to [bend left = 90, looseness = 1.5] (6,0);
        \node at (3,1.3) {\footnotesize $p$};
        \dummyNodes[3][-0.3][3][-0.3]
    \end{tikzpicture}
    \right) =
    \begin{tikzpicture}[baseline={([yshift=-.5ex]current bounding box.center)}, scale = \scl]
        \draw[fill = \colOne, opacity = \opac] (6,0) -- (0,0) to [bend left = 90, looseness = 1.5] (6,0);
        \node at (3,1.3) {\footnotesize $p$};
        \draw[thick] (6,0) -- (7.5,1.5) -- (9,0);
        \dummyNodes[3][-0.3][3][-0.3]
    \end{tikzpicture}
\end{align*}

\textit{Binary map:} 
\begin{align*}
    \begin{tikzpicture}[baseline=1.5ex, scale = \scl]
        \draw[fill = \colOne, opacity = \opac] (6,0) -- (0,0) to [bend left = 90, looseness = 1.5] (6,0);
        \node[anchor = mid] at (3,1.3) {\footnotesize $p_1$};
    \end{tikzpicture}
    \ \star \
    \begin{tikzpicture}[baseline=1.5ex, scale = \scl]
        \draw[fill = \colTwo, opacity = \opac] (6,0) -- (0,0) to [bend left = 90, looseness = 1.5] (6,0);
        \node[anchor = mid] at (3,1.3) {\footnotesize $p_2$};
    \end{tikzpicture}
    \ = \
    \begin{tikzpicture}[baseline=1.5ex, scale = \scl]
        \draw[fill = \colOne, opacity = \opac] (6,0) -- (0,0) to [bend left = 90, looseness = 1.5] (6,0);
        \node[anchor = mid] at (3,1.3) {\footnotesize $p_1$};
        \draw[thick] (6,0) -- (9,3);
        \draw[fill = \colTwo, opacity = \opac] (15,3) -- (9,3) to [bend left = 90, looseness = 1.5] (15,3);
        \node at (12,4.3) {\footnotesize $p_2$};
        \draw[thick] (15,3) -- (18,0);
        \fill[white] (7.5,1.5) circle (5pt) (16.5,1.5) circle (5pt);
    \end{tikzpicture}
\end{align*}

\textit{Norm:} If $p$ is a Dyck path from $(0,0)$ to $(2n,0)$, then $\norm{p} = n + 1$.

\vbox{
\textit{(1,2)-magma:} The (1,2)-magma begins (sorting by norm) as follows:
\vspace{-.5em}
\def\scl{0.4}
\begin{center}
\def\scl{0.45}
    \begin{tabular}{| l l |}
    \hline
    \textit{Norm 1:} & 
    $\begin{tikzpicture}[baseline={([yshift=-.5ex]current bounding box.center)}, scale = \scl]
        \fill (0,0) \smlnd;
    \end{tikzpicture}
    = \epsilon$ \\
    \hline
    \textit{Norm 2:} & 
    $\begin{tikzpicture}[baseline={([yshift=-.5ex]current bounding box.center)}, scale = \scl]
        \grd[2][1]
        \draw[thick] (0,0) \dyckUp \dyckDn;
        \dummyNodes[0][-0.2][0][1.2]
    \end{tikzpicture} 
    = f(\epsilon)$ \\
    \hline
    \textit{Norm 3:} & 
    $\begin{tikzpicture}[baseline={([yshift=-.5ex]current bounding box.center)}, scale = \scl]
        \grd[4][2]
        \draw[thick] (0,0) \dyckUp \dyckDn \dyckUp \dyckDn;
        \dummyNodes[0][-0.2][0][2.2]
    \end{tikzpicture}  
    = f(f(\epsilon))$, 
    \quad
    $\begin{tikzpicture}[baseline={([yshift=-.5ex]current bounding box.center)}, scale = \scl]
        \grd[4][2]
        \draw[thick] (0,0) \dyckUp \dyckUp \dyckDn \dyckDn;
        \dummyNodes[0][-0.2][0][2.2]
    \end{tikzpicture}  
    = \epsilon \star \epsilon$ \\
    \hline
    \textit{Norm 4:} & 
    $\begin{tikzpicture}[baseline={([yshift=-.5ex]current bounding box.center)}, scale = \scl]
        \grd[6][3]
        \draw[thick] (0,0) \dyckUp \dyckDn \dyckUp \dyckDn \dyckUp \dyckDn;
        \dummyNodes[0][-0.2][0][3.2]
    \end{tikzpicture}  
    = f(f(f(\epsilon)))$, 
    \quad
    $\begin{tikzpicture}[baseline={([yshift=-.5ex]current bounding box.center)}, scale = \scl]
        \grd[6][3]
        \draw[thick] (0,0) \dyckUp \dyckUp \dyckDn \dyckDn \dyckUp \dyckDn;
        \dummyNodes[0][-0.2][0][3.2]
    \end{tikzpicture}  
    = f(\epsilon \star \epsilon)$, \\
    &
    $\begin{tikzpicture}[baseline={([yshift=-.5ex]current bounding box.center)}, scale = \scl]
        \grd[6][3]
        \draw[thick] (0,0) \dyckUp \dyckUp \dyckUp \dyckDn \dyckDn \dyckDn;
        \dummyNodes[0][-0.2][0][3.2]
    \end{tikzpicture}  
    = \epsilon \star f(\epsilon)$, 
    \quad
    $\begin{tikzpicture}[baseline={([yshift=-.5ex]current bounding box.center)}, scale = \scl]
        \grd[6][3]
        \draw[thick] (0,0) \dyckUp \dyckDn \dyckUp \dyckUp \dyckDn \dyckDn;
        \dummyNodes[0][-0.2][0][3.2]
    \end{tikzpicture}  
    = f(\epsilon) \star \epsilon$ \\
    \hline
    \end{tabular}
\end{center}
}
\end{motFamily}

\addcontentsline{toca}{subsection}{\motFamRef{motzkinFamily:RNA}: RNA shapes}
\begin{motFamily} 
[RNA shapes \cite{lorenz2008asymptotics}] \label{motzkinFamily:RNA}

$M_n$ is the number of RNA shapes of size $2n + 2$. RNA shapes are Dyck words without ``directly nested'' motifs of the form $A \,[ \, [ \, B \, ] \, ] \, C$ for $A$, $B$ and $C$ Dyck words. A Dyck word is a word in the alphabet $\{ \, [ \, , \, ] \, \}$ with an equal number of left and right parentheses with the property that, reading from left to right, the number of right parentheses never exceeds the number of left parentheses.

\textit{Generator:}
\begin{equation*}
    \epsilon = [ \, ].
\end{equation*}

\textit{Unary map:} Let $w$ be an RNA shape. Then define the unary map as
\begin{align*}
    f(w) = w \, [ \, ].
\end{align*}

\textit{Binary map:} Let $w_1$ and $w_2$ be RNA shapes. Then define the binary map as
\begin{align*}
    w_1 \star w_2 = w_1' \, [ \, ] \, w_2 \, ],
\end{align*}
where $w_1 = w_1' \, ]$.

\textit{Norm:} If $w$ is an RNA shape of length $2n$, then $\norm{w} = n$.

\textit{(1,2)-magma:} The (1,2)-magma begins (sorting by norm) as follows:
\vspace{-.5em}
\begin{center}
    \begin{tabular}{| l l |}
    \hline
    \textit{Norm 1:} & 
    $[ \, ] = \epsilon$ \\
    \hline
    \textit{Norm 2:} & 
    $[ \, ] \, [ \, ] = f(\epsilon)$ \\
    \hline
    \textit{Norm 3:} & 
    $[ \, ] \, [ \, ] \, [ \, ] = f(f(\epsilon))$, \quad
    $[ \, [ \, ] \, [ \, ] \, ] = \epsilon \star \epsilon$ \\
    \hline
    \textit{Norm 4:} & 
    $[ \, ] \, [ \, ] \, [ \, ] \, [ \, ] = f(f(f(\epsilon)))$, \quad
    $[ \, [ \, ] \, [ \, ] \, ] \, [ \, ] = f(\epsilon \star \epsilon)$, \\ &
    $[ \, [ \, ] \, [ \, ] \, [ \, ] \, ] = \epsilon \star f(\epsilon)$, \quad
    $[ \, ] \, [ \, [ \, ] \, [ \, ] \, ] =  f(\epsilon) \star \epsilon$ \\
    \hline
    \end{tabular}
\end{center}
\end{motFamily}

    \pagebreak
    
\subsection{Schr\"oder Families} \label{appendix:Schroder}

\addcontentsline{toca}{section}{Schr\"oder families}

We present a number of Schr\"oder normed \mbox{(1,2)-magmas}. For each \mbox{(1,2)-magma}, we take the convention that the generator is $\epsilon$, the unary map is $f$ and the binary map is $\star$ (and that this is always written as an in-fix operator). Despite the same names being used for each Schr\"oder family presented, it is clear that these are all different maps and the generators are all different. 

\addcontentsline{toca}{subsection}{\schFamRef{schroderFamily:schroderPaths}: Schr\"oder paths}
\begin{schFamily}
[Schr\"oder paths \cite{BONIN199335}] \label{schroderFamily:schroderPaths}

\def \scl {0.3}

$S_n$ is the number of paths from $(0,0)$ to $(n,n)$ using only steps $(1,0)$, $(0,1)$ and $(1,1)$ which lie weakly below the line $y = x$. The diagonal $y = x$ is shown as a dashed line in the schematic diagram defining the binary map.

\textit{Generator:} The empty path which we represent by a single vertex:
\vspace{-.5ex}
\begin{equation*}
    \epsilon = 
    \begin{tikzpicture}[baseline={([yshift=-.5ex]current bounding box.center)}, scale = \scl]
        \fill (0,0) \smlnd;
    \end{tikzpicture}
\end{equation*}

\textit{Unary map:}
\vspace{-2em}
\begin{align*}
    f 
    \left(
    \begin{tikzpicture}[baseline=1.5ex, scale = \scl]
        \clip (0,-1) rectangle (4.75,4.25);
        \draw[fill = \colOne, opacity = \opac] (4,4) -- (0,0) to [bend right = 90, looseness = 1.5] (4,4);
        \node[anchor = mid] at (2.75,1.25) {\footnotesize $p$};
    \end{tikzpicture}
    \right)
    & =
    \begin{tikzpicture}[baseline=1.5ex, scale = \scl]
        \clip (-1,-1) rectangle (6.5,6.5);
        \draw[fill = \colOne, opacity = \opac] (4,4) -- (0,0) to [bend right = 90, looseness = 1.5] (4,4);
        \node[anchor = mid] at (2.75,1.25) {\footnotesize $p$};
        \draw[thick] (4,4) -- (5.5,5.5);
    \end{tikzpicture}
\end{align*}

\textit{Binary map:}
\vspace{-3ex}
\begin{align*}
    \begin{tikzpicture}[baseline={([yshift=-.5ex]current bounding box.center)}, scale = \scl]
        \clip (0,-1) rectangle (5.25,4.25);
        \draw[fill = \colOne, opacity = \opac] (4,4) -- (0,0) to [bend right = 90, looseness = 1.5] (4,4);
        \node at (2.75,1.25) {\footnotesize $p_1$};
    \end{tikzpicture}
    \ \star \
    \begin{tikzpicture}[baseline={([yshift=-.5ex]current bounding box.center)}, scale = \scl]
        \clip (0,-1) rectangle (4.75,4.25);
        \draw[fill = \colTwo, opacity = \opac] (4,4) -- (0,0) to [bend right = 90, looseness = 1.5] (4,4);
        \node at (2.75,1.25) {\footnotesize $p_2$};
    \end{tikzpicture}
    \ = \
    \begin{tikzpicture}[baseline={([yshift=-.5ex]current bounding box.center)}, scale = \scl]
        \clip (-1,-1) rectangle (10.5,10);
        \draw[dashed] (-1,-1) -- (10.5,10.5);
        \draw[fill = \colOne, opacity = \opac] (4,4) -- (0,0) to [bend right = 90, looseness = 1.5] (4,4);
        \node at (2.75,1.25) {\footnotesize $p_1$};
        \draw[thick] (4,4) -- (5.5,4);
        \draw[fill = \colTwo, opacity = \opac] (9.5,8) -- (5.5,4) to [bend right = 90, looseness = 1.5] (9.5,8);
        \node at (8.25,5.25) {\footnotesize $p_2$};
        \draw[thick] (9.5,8) -- (9.5,9.5);
    \end{tikzpicture}
\end{align*}

\textit{Norm:} If $p$ is a Schr\"oder path from $(0,0)$ to $(n,n)$, then $\norm{p} = n + 1$.

\vbox{
\textit{(1,2)-magma:} The (1,2)-magma begins (sorting by norm) as follows:
\vspace{-.5em}
\begin{center}
    \def\scl {0.45}
    \begin{tabular}{| l l |}
    \hline
    \textit{Norm 1:} & 
    $\begin{tikzpicture}[baseline={([yshift=-.5ex]current bounding box.center)}, scale = \scl]
        \fill (0,0) \smlnd;
    \end{tikzpicture}
    = \epsilon$ \\
    \hline
    \textit{Norm 2:} & 
    $\begin{tikzpicture}[baseline={([yshift=-.5ex]current bounding box.center)}, scale = \scl]
        \grd[1][1]
        \draw[thick] (0,0) \schDiag;
        \dummyNodes[1][-0.2][1][1.2]
    \end{tikzpicture} 
    = f(\epsilon)$, 
    \quad
    $\begin{tikzpicture}[baseline={([yshift=-.5ex]current bounding box.center)}, scale = \scl]
        \grd[1][1]
        \draw[thick] (0,0) \schAc \schUp;
        \dummyNodes[1][-0.2][1][1.2]
    \end{tikzpicture} 
    = \epsilon \star \epsilon$ \\
    \hline
    \textit{Norm 3:} & 
    $\begin{tikzpicture}[baseline={([yshift=-.5ex]current bounding box.center)}, scale = \scl]
        \grd[2][2]
        \draw[thick] (0,0) \schDiag \schDiag;
        \dummyNodes[1][-0.2][1][2.2]
    \end{tikzpicture} 
    = f(f(\epsilon))$, 
    \quad
    $\begin{tikzpicture}[baseline={([yshift=-.5ex]current bounding box.center)}, scale = \scl]
        \grd[2][2]
        \draw[thick] (0,0) \schAc \schUp \schDiag;
        \dummyNodes[1][-0.2][1][2.2]
    \end{tikzpicture}  
    = f(\epsilon \star \epsilon)$, 
    \quad
    $\begin{tikzpicture}[baseline={([yshift=-.5ex]current bounding box.center)}, scale = \scl]
        \grd[2][2]
        \draw[thick] (0,0) \schAc \schDiag \schUp;
        \dummyNodes[1][-0.2][1][2.2]
    \end{tikzpicture} 
    = \epsilon \star f(\epsilon)$, \\
    &
    $\begin{tikzpicture}[baseline={([yshift=-.5ex]current bounding box.center)}, scale = \scl]
        \grd[2][2]
        \draw[thick] (0,0) \schAc \schAc \schUp \schUp;
        \dummyNodes[1][-0.2][1][2.2]
    \end{tikzpicture} 
    = \epsilon \star (\epsilon \star \epsilon)$, 
    \quad
    $\begin{tikzpicture}[baseline={([yshift=-.5ex]current bounding box.center)}, scale = \scl]
        \grd[2][2]
        \draw[thick] (0,0) \schDiag \schAc \schUp;
        \dummyNodes[1][-0.2][1][2.2]
    \end{tikzpicture} 
    = f(\epsilon) \star \epsilon$, 
    \quad
    $\begin{tikzpicture}[baseline={([yshift=-.5ex]current bounding box.center)}, scale = \scl]
        \grd[2][2]
        \draw[thick] (0,0) \schAc \schUp \schAc \schUp;
        \dummyNodes[1][-0.2][1][2.2]
    \end{tikzpicture} 
    = (\epsilon \star \epsilon) \star \epsilon$ \\
    \hline
    \end{tabular}
\end{center}
}
\end{schFamily}

\addcontentsline{toca}{subsection}{\schFamRef{schroderFamily:dyckPathsColoured}: Dyck paths with coloured peaks}
\begin{schFamily}
[Dyck paths with coloured peaks \cite{Deutsch2001ABP}] \label{schroderFamily:dyckPathsColoured} 

\def\scl{0.3}

$S_n$ is the number of Dyck paths from $(0,0)$ to $(2n,0)$ with each peak coloured black or white. A peak is a point preceded by an up step and followed by a down step.

\textit{Generator:} The empty path which we represent by a single vertex which is coloured black:
\begin{equation*}
    \epsilon = 
    \begin{tikzpicture}[baseline={([yshift=-.7ex]current bounding box.center)}, scale = \scl]
        \fill (0,0) \smlnd;
    \end{tikzpicture}
\end{equation*}
Note that when applying the unary map to $\epsilon$ or applying the binary map with $\epsilon$ as the left factor that this is simply the empty path since this will not correspond to a peak and thus the node need not be coloured. 

\textit{Unary map:} 
\vspace{-1ex}
\begin{align*}
    f \left(
    \begin{tikzpicture}[baseline={([yshift=-2ex]current bounding box.center)}, scale = \scl]
        \draw[fill = \colOne, opacity = \opac] (6,0) -- (0,0) to [bend left = 90, looseness = 1.5] (6,0);
        \node at (3,1.3) {\footnotesize $p$};
    \end{tikzpicture}
    \right)
    & =
    \begin{tikzpicture}[baseline={([yshift=-2ex]current bounding box.center)}, scale = \scl]
        \draw[fill = \colOne, opacity = \opac] (6,0) -- (0,0) to [bend left = 90, looseness = 1.5] (6,0);
        \node at (3,1.3) {\footnotesize $p$};
        \draw[thick] (6,0) -- (7.5,1.5) -- (9,0);
        \draw[fill = white] (7.5,1.5) \smlnd;
    \end{tikzpicture}
\end{align*}

\textit{Binary map:}
\vspace{-1em}
\begin{align*}
    \begin{tikzpicture}[baseline=1.5ex, scale = \scl]
        \draw[fill = \colOne, opacity = \opac] (6,0) -- (0,0) to [bend left = 90, looseness = 1.5] (6,0);
        \node[anchor = mid] at (3,1.3) {\footnotesize $p_1$};
    \end{tikzpicture}
    \ \star \
    \begin{tikzpicture}[baseline=1.5ex, scale = \scl]
        \draw[fill = \colTwo, opacity = \opac] (6,0) -- (0,0) to [bend left = 90, looseness = 1.5] (6,0);
        \node[anchor = mid] at (3,1.3) {\footnotesize $p_2$};
    \end{tikzpicture}
    \ = \
    \begin{cases}    
   \begin{tikzpicture}[baseline=1.5ex, scale = \scl]
        \draw[fill = \colOne, opacity = \opac] (6,0) -- (0,0) to [bend left = 90, looseness = 1.5] (6,0);
        \node[anchor = mid] at (3,1.3) {\footnotesize $p_1$};
        \draw[thick] (6,0) -- (7.5,1.5);
         \draw[thick] (6,0) -- (7.5,1.5) -- (9,0);
        \draw[fill = black] (7.5,1.5) \smlnd;
    \end{tikzpicture} & \text{if $p_2=\epsilon$ }\\
    & \\
    \begin{tikzpicture}[baseline=1.5ex, scale = \scl]
        \draw[fill = \colOne, opacity = \opac] (6,0) -- (0,0) to [bend left = 90, looseness = 1.5] (6,0);
        \node[anchor = mid] at (3,1.3) {\footnotesize $p_1$};
        \draw[thick] (6,0) -- (7.5,1.5);
        \draw[fill = \colTwo, opacity = \opac] (13.5,1.5) -- (7.5,1.5) to [bend left = 90, looseness = 1.5] (13.5,1.5);
        \node at (10.5,2.8) {\footnotesize $p_2$};
        \draw[thick] (13.5,1.5) -- (15,0);
    \end{tikzpicture} & \text{if $p_2\ne\epsilon$ }
     \end{cases}
\end{align*}

\textit{Norm:} If $p$ is a Dyck path with coloured peaks from $(0,0)$ to $(2n,0)$, then $\norm{p} = n + 1$.

\textit{(1,2)-magma:} The (1,2)-magma begins (sorting by norm) as follows:
\vspace{-.5em}
\begin{center}
    \def\scl{0.45}
    \begin{tabular}{| l l |}
    \hline
    \textit{Norm 1:} & 
    $\begin{tikzpicture}[baseline={([yshift=-.5ex]current bounding box.center)}, scale = \scl]
        \blkNd[(0,0)]
    \end{tikzpicture} 
    = \epsilon$ \\
    \hline
    \textit{Norm 2:} & 
    $\begin{tikzpicture}[baseline={([yshift=-.5ex]current bounding box.center)}, scale = \scl]
        \grd[2][1]
        \draw[thick] (0,0) \dyckUp \dyckDn;
        \whtNd[(1,1)]
        \dummyNodes[1][-0.2][1][1.2]
    \end{tikzpicture} 
    = f(\epsilon)$, 
    \quad
    $\begin{tikzpicture}[baseline={([yshift=-.5ex]current bounding box.center)}, scale = \scl]
        \grd[2][1]
        \draw[thick] (0,0) \dyckUp \dyckDn;
        \blkNd[(1,1)]
        \dummyNodes[1][-0.2][1][1.2]
    \end{tikzpicture} 
    = \epsilon \star \epsilon$ \\
    \hline
    \textit{Norm 3:} & 
    $\begin{tikzpicture}[baseline={([yshift=-.5ex]current bounding box.center)}, scale = \scl]
        \grd[4][2]
        \draw[thick] (0,0) \dyckUp \dyckDn \dyckUp \dyckDn;
        \whtNd[(1,1)]
        \whtNd[(3,1)]
        \dummyNodes[1][-0.2][1][2.2]
    \end{tikzpicture} 
    = f(f(\epsilon))$, 
    \quad
    $\begin{tikzpicture}[baseline={([yshift=-.5ex]current bounding box.center)}, scale = \scl]
        \grd[4][2]
        \draw[thick] (0,0) \dyckUp \dyckDn \dyckUp \dyckDn;
        \blkNd[(1,1)]
        \whtNd[(3,1)]
        \dummyNodes[1][-0.2][1][2.2]
    \end{tikzpicture} 
    = f(\epsilon \star \epsilon)$, 
    \quad
    $\begin{tikzpicture}[baseline={([yshift=-.5ex]current bounding box.center)}, scale = \scl]
        \grd[4][2]
        \draw[thick] (0,0) \dyckUp \dyckUp \dyckDn \dyckDn;
        \whtNd[(2,2)]
        \dummyNodes[1][-0.2][1][2.2]
    \end{tikzpicture}   
    = \epsilon \star f(\epsilon)$, \\
    &
    $\begin{tikzpicture}[baseline={([yshift=-.5ex]current bounding box.center)}, scale = \scl]
        \grd[4][2]
        \draw[thick] (0,0) \dyckUp \dyckUp \dyckDn \dyckDn;
        \blkNd[(2,2)]
        \dummyNodes[1][-0.2][1][2.2]
    \end{tikzpicture}  
    = \epsilon \star (\epsilon \star \epsilon)$, 
    \quad
    $\begin{tikzpicture}[baseline={([yshift=-.5ex]current bounding box.center)}, scale = \scl]
        \grd[4][2]
        \draw[thick] (0,0) \dyckUp \dyckDn \dyckUp \dyckDn;
        \whtNd[(1,1)]
        \blkNd[(3,1)]
        \dummyNodes[1][-0.2][1][2.2]
    \end{tikzpicture}  
    = f(\epsilon) \star \epsilon$, 
    \quad
    $\begin{tikzpicture}[baseline={([yshift=-.5ex]current bounding box.center)}, scale = \scl]
        \grd[4][2]
        \draw[thick] (0,0) \dyckUp \dyckDn \dyckUp \dyckDn;
        \blkNd[(1,1)]
        \blkNd[(3,1)]
        \dummyNodes[1][-0.2][1][2.2]
    \end{tikzpicture}  
    = (\epsilon \star \epsilon) \star \epsilon$ \\
    \hline
    \end{tabular}
\end{center}
\end{schFamily}

\addcontentsline{toca}{subsection}{\schFamRef{schroderFamily:SSYT}: Semi-standard Young tableaux of shape $n \times 2$}
\begin{schFamily}
[Semi-standard Young tableaux of shape $\bm{n \times 2}$ \cite{SchroderOEIS}] \label{schroderFamily:SSYT}


$S_n$ is the number of semi-standard Young tableaux (SSYT) of shape $n \times 2$. Each tableau is filled with entries from $\{ 1, \hdots, r \}$ for some $r \in \mathbb{N}$, and we require that each number in this set appears in at least one cell. The rows must be weakly increasing from left to right and the columns must be strictly increasing from top to bottom.

\textit{Generator:} The empty SSYT: 
\begin{equation*}
    \epsilon = \emptyset.
\end{equation*}
When applying either of the two maps to the generator, every entry of the empty tableau is taken to be 0. In the map definitions which follow, we explicitly state what happens when applying the maps to the generator in order to make this clear.

\textit{Unary map:} 
\begin{align*}
    f \left(
    \resizebox{\SSYTscl}{!}{\addvbuffer[5pt 3pt]{\begin{tabular}{| c | c |}
        \hline
        \rowcolor{\colOne!\opacTen} \vdots & \vdots \\
        \hline
        \rowcolor{\colOne!50} $a$ & $b$ \\
        \hline
    \end{tabular}}}
    \right) =
    \resizebox{\SSYTscl}{!}{\addvbuffer[3pt 3pt]{\begin{tabular}{| c | c |}
        \hline
        \rowcolor{\colOne!\opacTen} \vdots & \vdots \\
        \hline
        \rowcolor{\colOne!\opacTen} $a$ & $b$ \\
        \hline
        \rowcolor{white} $b + 1$ & $b + 1$ \\
        \hline
    \end{tabular}}}
\end{align*}

Note that we have
\begin{equation*}
    f(\epsilon) =
    \resizebox{\SSYTscl}{!}{\begin{tabular}{| c | c |}
        \hline
        1 & 1 \\
        \hline
    \end{tabular}}
\end{equation*}

\textit{Binary map:}
\vspace{-1em}
\begin{align*}
    \resizebox{\SSYTscl}{!}{
    \begin{tabular}{| c | c |}
        \hline
        \rowcolor{\colOne!\opacTen} $a$ & $b$ \\
        \hline
        \rowcolor{\colOne!\opacTen} \vdots & \vdots \\
        \hline
        \rowcolor{\colOne!\opacTen} $c$ & $d$ \\
        \hline
    \end{tabular}
    }
    \ \star \
    \resizebox{\SSYTscl}{!}{\begin{tabular}{| c | c |}
        \hline
        \rowcolor{\colTwo!\opacTen} $s$ & $t$ \\
        \hline
        \rowcolor{\colTwo!\opacTen} $u$ & $v$ \\
        \hline
        \rowcolor{\colTwo!\opacTen} \vdots & \vdots \\
        \hline
        \rowcolor{\colTwo!\opacTen} $y$ & $z$ \\
        \hline
    \end{tabular}}
    =
    \resizebox{\SSYTscl}{!}{\addvbuffer[3pt 3pt]{\begin{tabular}{| c | c |}
        \hline
        \rowcolor{\colOne!\opacTen} $a$ & $b$ \\
        \hline
        \rowcolor{\colOne!\opacTen} \vdots & \vdots \\
        \hline
        \rowcolor{\colOne!\opacTen} $c$ & $d$ \\
        \hline
        \cellcolor{white} $d + 1$ & \cellcolor{\colTwo!\opacTen} $t + d + 1$ \\
        \hline
        \rowcolor{\colTwo!\opacTen} $s + d + 1$ & $v + d + 1$ \\
        \hline
        \rowcolor{\colTwo!\opacTen} $u + d + 1$ & \vdots \\
        \hline
        \rowcolor{\colTwo!\opacTen} \vdots & $z + d + 1$ \\
        \hline
        \cellcolor{\colTwo!\opacTen} $y + d + 1$ & \cellcolor{white}$z + d + 2$ \\
        \hline
    \end{tabular}}}
\end{align*}

If we apply the binary map with the generator, we have the following:
\begin{equation*}
    \epsilon \star \epsilon
    =
    \resizebox{\SSYTscl}{!}{
    \begin{tabular}{| c | c |}
        \hline
        1 & 2 \\
        \hline
    \end{tabular}
    }
    \qquad \quad
    \resizebox{\SSYTscl}{!}{
    \begin{tabular}{| c | c |}
        \hline
        \rowcolor{\colOne!\opacTen} $a$ & $b$ \\
        \hline
        \rowcolor{\colOne!\opacTen} \vdots & \vdots \\
        \hline
        \rowcolor{\colOne!\opacTen} $c$ & $d$ \\
        \hline
    \end{tabular}
    }
    \star \,
    \epsilon
    =
    \resizebox{\SSYTscl}{!}{
    \addvbuffer[3pt 3pt]{\begin{tabular}{| c | c |}
        \hline
        \rowcolor{\colOne!\opacTen} $a$ & $b$ \\
        \hline
        \rowcolor{\colOne!\opacTen} \vdots & \vdots \\
        \hline
        \rowcolor{\colOne!\opacTen} $c$ & $d$ \\
        \hline
        \rowcolor{white} $d + 1$ & $ d + 2$ \\
        \hline
    \end{tabular}}}
    \qquad \quad
    \epsilon
    \, \star \,
    \resizebox{\SSYTscl}{!}{\begin{tabular}{| c | c |}
        \hline
        \rowcolor{\colTwo!\opacTen} $s$ & $t$ \\
        \hline
        \rowcolor{\colTwo!\opacTen} $u$ & $v$ \\
        \hline
        \rowcolor{\colTwo!\opacTen} \vdots & \vdots \\
        \hline
        \rowcolor{\colTwo!\opacTen} $y$ & $z$ \\
        \hline
    \end{tabular}}
    =
    \resizebox{\SSYTscl}{!}{\addvbuffer[3pt 3pt]{\begin{tabular}{| c | c |}
        \hline
        \cellcolor{white} $1$ & \cellcolor{\colTwo!\opacTen} $t + 1$ \\
        \hline
        \rowcolor{\colTwo!\opacTen} $s + 1$ & $v + 1$ \\
        \hline
        \rowcolor{\colTwo!\opacTen} $u + 1$ & \vdots \\
        \hline
        \rowcolor{\colTwo!\opacTen} \vdots & $z + 1$ \\
        \hline
        \cellcolor{\colTwo!\opacTen} $y + 1$ & \cellcolor{white}$z + 2$ \\
        \hline
    \end{tabular}}}
\end{equation*}

\textit{Norm:} If $t$ is a semi-standard Young tableau of size $n \times 2$, then $\norm{t} = n + 1$.

\textit{(1,2)-magma:} The (1,2)-magma begins (sorting by norm) as follows:
\vspace{-.5em}
\begin{center}
    \begin{tabular}{| l l |}
    \hline
    \textit{Norm 1:} & 
    $\emptyset = \epsilon$ \\
    \hline
    \textit{Norm 2:} & 
    $\addvbuffer[3pt 3pt]{\begin{tabular}{| c | c |}
        \hline
        1 & 1 \\
        \hline
    \end{tabular}} 
    = f(\epsilon)$, 
    \quad
    $\addvbuffer[3pt 3pt]{\begin{tabular}{| c | c |}
        \hline
        1 & 2 \\
        \hline
    \end{tabular}} 
    = \epsilon \star \epsilon$ \\
    \hline
    \textit{Norm 3:} & 
    $\addvbuffer[3pt 3pt]{\begin{tabular}{| c | c |}
        \hline
        1 & 1 \\
        \hline
        2 & 2 \\
        \hline
    \end{tabular}} 
    = f(f(\epsilon))$, 
    \quad
    $\addvbuffer[3pt 3pt]{\begin{tabular}{| c | c |}
        \hline
        1 & 2 \\
        \hline
        3 & 3 \\
        \hline
    \end{tabular}} 
    = f(\epsilon \star \epsilon)$, 
    \quad
    $\addvbuffer[3pt 3pt]{\begin{tabular}{| c | c |}
        \hline
        1 & 2 \\
        \hline
        2 & 3 \\
        \hline
    \end{tabular}} 
    = \epsilon \star f(\epsilon)$, \\
    &
    $\addvbuffer[3pt 3pt]{\begin{tabular}{| c | c |}
        \hline
        1 & 3 \\
        \hline
        2 & 4 \\
        \hline
    \end{tabular}} 
    = \epsilon \star (\epsilon \star \epsilon)$, 
    \quad
    $\addvbuffer[3pt 3pt]{\begin{tabular}{| c | c |}
        \hline
        1 & 1 \\
        \hline
        2 & 3 \\
        \hline
    \end{tabular}}  
    = f(\epsilon) \star \epsilon$, 
    \quad
    $\addvbuffer[3pt 3pt]{\begin{tabular}{| c | c |}
        \hline
        1 & 2 \\
        \hline
        3 & 4 \\
        \hline
    \end{tabular}}  
    = (\epsilon \star \epsilon) \star \epsilon$ \\
    \hline
    \end{tabular}
\end{center}
\end{schFamily}

\addcontentsline{toca}{subsection}{\schFamRef{schroderFamily:rectangulations}: Rectangulations}
\begin{schFamily}
[Rectangulations \cite{Ackerman:2004:NRP:982792.982904}] \label{schroderFamily:rectangulations} 

\def\scl{0.6}

$S_n$ is the number of ways to divide a rectangle into $n + 1$ smaller rectangles using $n$ cuts through $n$ points placed equidistant from each other   inside the rectangle along the diagonal joining the bottom left corner and the top right corner. Each cut intersects one of the points and divides only a single rectangle in two. 

Curiously, this combinatorial family appears on the Schr\"oder numbers Wikipedia page with no citation. After some investigation, it was discovered that these are equivalent to ``point-constrained rectangular guillotine partitions'' as defined in \cite{Ackerman:2004:NRP:982792.982904}.

\textit{Generator:} The generator is taken to be the trivial way of dividing a rectangle into one rectangle using zero cuts:
\begin{equation*}
    \epsilon = 
    \begin{tikzpicture}[baseline={([yshift=-1ex]current bounding box.center)}, scale = 1.5*\scl]
        \draw (0,0) rectangle (0.5,0.5);
    \end{tikzpicture}
\end{equation*}

\textit{Unary map:} 
\begin{equation*}
    f \left(
    \begin{tikzpicture}[baseline={([yshift=-.5ex]current bounding box.center)}, scale = \scl]
        \draw[fill = \colOne, opacity = \opac] (0,0) rectangle (2,2);
        \node at (1,1) {\footnotesize $p$};
        \dummyNodes[1][-0.2][1][2.2]
    \end{tikzpicture}
    \right) = 
    \begin{tikzpicture}[baseline={([yshift=-.5ex]current bounding box.center)}, scale = \scl]
        \fill[\colOne, opacity = \opac] (0,0) rectangle (2,2);
        \node at (1,1) {\footnotesize $p$};
        \draw (-0.5,-0.5) rectangle (2,2);
        \draw[ultra thick] (0,2) -- (0,-0.5);
        \fill (0,0) \smlnd;
        \draw[thin, dashed, ->] (1,0) -- (1,-0.5);
    \end{tikzpicture}
\end{equation*}
After applying the unary map, any vertical cuts in $p$ are extended downwards until they reach the lower boundary. This is represented by the dashed arrow.

\textit{Binary map:} 
\def\scl{0.5}
\begin{align*}
    \begin{tikzpicture}[baseline={([yshift=-.5ex]current bounding box.center)}, scale = \scl]
        \draw[fill = \colOne, opacity = \opac] (0,0) rectangle (2,2);
        \node at (1,1) {\footnotesize $p_1$};
    \end{tikzpicture}
    \ \star \
    \begin{tikzpicture}[baseline={([yshift=-.5ex]current bounding box.center)}, scale = \scl]
        \draw[fill = \colTwo, opacity = \opac] (0,0) rectangle (2,2);
        \node at (1,1) {\footnotesize $p_2$};
    \end{tikzpicture}
    \ = \
    \begin{cases}
        \begin{tikzpicture}[baseline={([yshift=-.5ex]current bounding box.center)}, scale = \scl]
            \draw (0,0) rectangle (2.5,2.5);
            \fill[\colTwo, opacity = \opac] (0.5,0.5) rectangle (2.5,2.5);
            \node at (1.5,1.5) {\footnotesize $p_2$};
            \draw[ultra thick] (0,0.5) -- (2.5,0.5);
            \fill (0.5,0.5) \smlnd;
            \draw[thin, dashed, ->] (0.5,1.5) -- (0,1.5);
            \dummyNodes[1][-0.2][1][2.8]
            \dummyNodes[0][1][2.5][1]
        \end{tikzpicture}
        & \text{if 
            $\begin{tikzpicture}[baseline={([yshift=-.5ex]current bounding box.center)}, scale = \scl]
                \draw[fill = \colOne, opacity = \opac] (0,0) rectangle (2,2);
                \node at (1,1) {\footnotesize $p_1$};
            \end{tikzpicture}
            =
            \epsilon$,} \\
        \begin{tikzpicture}[baseline={([yshift=-.5ex]current bounding box.center)}, scale = \scl]
            \draw (0,0) rectangle (4,4);
            \fill[\colOne, opacity = \opac] (0,0) rectangle (2,2);
            \node at (1,1) {\footnotesize $p_1$};
            \fill[\colTwo, opacity = \opac] (2,2) rectangle (4,4);
            \node at (3,3) {\footnotesize $p_2$};
            \draw[ultra thick] (2,0) -- (2,4);
            \fill (2,2) \smlnd;
            \draw[thin, dashed, ->] (3,2) -- (3,0);
            \draw[thin, dashed, ->] (1,2) -- (1,4);
            \dummyNodes[1][-0.2][1][4.3]
            \dummyNodes[0][1][4][1]
        \end{tikzpicture}
        & \text{if 
            $\begin{tikzpicture}[baseline={([yshift=-.5ex]current bounding box.center)}, scale = \scl]
                \draw[fill = \colOne, opacity = \opac] (0,0) rectangle (2,2);
                \node at (1,1) {\footnotesize $p_1$};
            \end{tikzpicture}
            =
            \begin{tikzpicture}[baseline={([yshift=-.5ex]current bounding box.center)}, scale = \scl]
                \fill[\colOne, opacity = \opac] (0,0) rectangle (1,1) rectangle (2,2);
                \draw (0,0) rectangle (2,2);
                \draw[ultra thick] (0,1) -- (2,1);
                \node at (0.5,0.5) (p1d) {\footnotesize $p_1'$};
                \node at (1.5,1.5) (p1dd) {\footnotesize $p_1''$};
            \end{tikzpicture}$,} \\
        \begin{tikzpicture}[baseline={([yshift=-.5ex]current bounding box.center)}, scale = \scl]
            \draw (0,0) rectangle (4,4);
            \fill[\colOne, opacity = \opac] (0,0) rectangle (2,2);
            \node at (1,1) {\footnotesize $p_1$};
            \fill[\colTwo, opacity = \opac] (2,2) rectangle (4,4);
            \node at (3,3) {\footnotesize $p_2$};
            \draw[ultra thick] (0,2) -- (4,2);
            \fill (2,2) \smlnd;
            \draw[thin, dashed, ->] (2,3) -- (0,3);
            \draw[thin, dashed, ->] (2,1) -- (4,1);
            \dummyNodes[1][-0.2][1][4.3]
            \dummyNodes[0][1][4][1]
        \end{tikzpicture}
        & \text{if 
            $\begin{tikzpicture}[baseline={([yshift=-.5ex]current bounding box.center)}, scale = \scl]
                \draw[fill = \colOne, opacity = \opac] (0,0) rectangle (2,2);
                \node at (1,1) {\footnotesize $p_1$};
            \end{tikzpicture}
            =
            \begin{tikzpicture}[baseline={([yshift=-.5ex]current bounding box.center)}, scale = \scl]
                \fill[\colOne, opacity = \opac] (0,0) rectangle (1,1) rectangle (2,2);
                \draw (0,0) rectangle (2,2);
                \draw[ultra thick] (1,0) -- (1,2);
                \node at (0.5,0.5) (p1d) {\footnotesize $p_1'$};
                \node at (1.5,1.5) (p1dd) {\footnotesize $p_1''$};
            \end{tikzpicture}$.} \\
    \end{cases}
\end{align*}
After joining the two rectangulations as shown in the binary map, we then extend all cuts until they meet the boundary. This is represented by the dashed arrows.

Note that $p_1'$ and $p_1''$ above are not necessarily unique, however the product rule is well defined. In any rectangulation, there will be either a cut which joins the left and right boundaries or there will be a cut which joins the top and bottom boundaries. This follows from the constraint that each cut may divide only a single rectangle in two. It is the unique orientation of any longest cut which determines how we apply the product rule.

\textit{Norm:} If $p$ is a rectangle dissection using $n$ internal lines (so the rectangle consists of $n + 1$ smaller rectangles), then $\norm{p} = n + 1$.

\textit{(1,2)-magma:} The (1,2)-magma begins (sorting by norm) as follows:
\vspace{-.5em}
\begin{center}
    \def\scl {0.4}
    \begin{tabular}{| l l |}
    \hline
    \textit{Norm 1:} & 
    $\begin{tikzpicture}[baseline={([yshift=-.5ex]current bounding box.center)}, scale = \scl]
        \draw (0,0) rectangle (1,1);
        \dummyNodes[1][-0.2][1][1.2]
    \end{tikzpicture} 
    = \epsilon$ \\
    \hline
    \textit{Norm 2:} & 
    $\begin{tikzpicture}[baseline={([yshift=-.5ex]current bounding box.center)}, scale = \scl]
        \draw (0,0) rectangle (2,2);
        \draw[thick] (1,0) -- (1,2);
        \fill (1,1) \smlnd;
        \dummyNodes[1][-0.2][1][2.3]
    \end{tikzpicture} 
    = f(\epsilon)$, 
    \quad
    $\begin{tikzpicture}[baseline={([yshift=-.5ex]current bounding box.center)}, scale = \scl]
        \draw (0,0) rectangle (2,2);
        \draw[thick] (0,1) -- (2,1);
        \fill (1,1) \smlnd;
        \dummyNodes[1][-0.2][1][2.3]
    \end{tikzpicture} 
    = \epsilon \star \epsilon$ \\
    \hline
    \textit{Norm 3:} & 
    $\begin{tikzpicture}[baseline={([yshift=-.5ex]current bounding box.center)}, scale = \scl]
        \draw (0,0) rectangle (3,3);
        \draw[thick] (1,0) -- (1,3);
        \draw[thick] (2,0) -- (2,3);
        \fill (1,1) \smlnd (2,2) \smlnd;
        \dummyNodes[1][-0.2][1][3.3]
    \end{tikzpicture} 
    = f(f(\epsilon))$, 
    \quad
    $\begin{tikzpicture}[baseline={([yshift=-.5ex]current bounding box.center)}, scale = \scl]
        \draw (0,0) rectangle (3,3);
        \draw[thick] (1,0) -- (1,3);
        \draw[thick] (1,2) -- (3,2);
        \fill (1,1) \smlnd (2,2) \smlnd;
        \dummyNodes[1][-0.2][1][3.3]
    \end{tikzpicture} 
    = f(\epsilon \star \epsilon)$, 
    \quad
    $\begin{tikzpicture}[baseline={([yshift=-.5ex]current bounding box.center)}, scale = \scl]
        \draw (0,0) rectangle (3,3);
        \draw[thick] (0,1) -- (3,1);
        \draw[thick] (2,1) -- (2,3);
        \fill (1,1) \smlnd (2,2) \smlnd;
        \dummyNodes[1][-0.2][1][3.3]
    \end{tikzpicture} 
    = \epsilon \star f(\epsilon)$, \\
    &
    $\begin{tikzpicture}[baseline={([yshift=-.5ex]current bounding box.center)}, scale = \scl]
        \draw (0,0) rectangle (3,3);
        \draw[thick] (0,1) -- (3,1);
        \draw[thick] (0,2) -- (3,2);
        \fill (1,1) \smlnd (2,2) \smlnd;
        \dummyNodes[1][-0.2][1][3.3]
    \end{tikzpicture} 
    = \epsilon \star (\epsilon \star \epsilon)$, 
    \quad
    $\begin{tikzpicture}[baseline={([yshift=-.5ex]current bounding box.center)}, scale = \scl]
        \draw (0,0) rectangle (3,3);
        \draw[thick] (0,2) -- (3,2);
        \draw[thick] (1,0) -- (1,2);
        \fill (1,1) \smlnd (2,2) \smlnd;
        \dummyNodes[1][-0.2][1][3.3]
    \end{tikzpicture} 
    = f(\epsilon) \star \epsilon$, 
    \quad
    $\begin{tikzpicture}[baseline={([yshift=-.5ex]current bounding box.center)}, scale = \scl]
        \draw (0,0) rectangle (3,3);
        \draw[thick] (2,0) -- (2,3);
        \draw[thick] (0,1) -- (2,1);
        \fill (1,1) \smlnd (2,2) \smlnd;
        \dummyNodes[1][-0.2][1][3.3]
    \end{tikzpicture}
    = (\epsilon \star \epsilon) \star \epsilon$ \\
    \hline
    \end{tabular}
\end{center}
\end{schFamily}

\addcontentsline{toca}{subsection}{\schFamRef{schroderFamily:schroderTrees}: Schr\"oder unary-binary trees}
\begin{schFamily}
[Schr\"oder unary-binary trees \cite{SchroderOEIS}] \label{schroderFamily:schroderTrees} 

\def\scl{0.15}
$S_n$ is equal to the number of rooted trees with $n$ non-leaf nodes in which every vertex has degree at most 3, and in which the root has degree at most 2.

\textit{Generator:} The generator is a single vertex:
\begin{equation*}
    \epsilon =
    \begin{tikzpicture}[baseline={([yshift=-.5ex]current bounding box.center)}, scale = \scl]
        \fill (0,0) \smlnd;
    \end{tikzpicture} 
\end{equation*}

\textit{Unary map:} 
\begin{equation*}
    f \left(
        \begin{tikzpicture}[baseline={([yshift=-1ex]current bounding box.center)}, scale = \scl]
            \draw[fill = \colOne, opacity = \opac] (0,0) -- (-5,-10) -- (5,-10) -- (0,0);
            \node at (0,0) {\smlroot};
            \node at (0,-6) {\footnotesize $t$};
        \dummyNodes[0][1][0][-11]
        \end{tikzpicture}
    \right)
    =
    \begin{tikzpicture}[baseline={([yshift=-1ex]current bounding box.center)}, scale = \scl]
        \draw[fill = \colOne, opacity = \opac] (0,0) -- (-5,-10) -- (5,-10) -- (0,0);
        \draw (0,0) -- (0,3);
        \node at (0,3) {\smlroot};
        \draw[fill] (0,0)\smlnd;
        \node at (0,-6) {\footnotesize $t$};
    \end{tikzpicture}
\end{equation*}

\textit{Binary map:}
\begin{equation*}
    \begin{tikzpicture}[baseline={([yshift=-1ex]current bounding box.center)}, scale = \scl]
        \draw[fill = \colOne, opacity = \opac] (0,0) -- (-5,-10) -- (5,-10) -- (0,0);
        \node at (0,0) {\smlroot};
        \node at (0,-6) {\footnotesize $t_1$};
    \end{tikzpicture}
    \ \star \ 
    \begin{tikzpicture}[baseline={([yshift=-1ex]current bounding box.center)}, scale = \scl]
        \draw[fill = \colTwo, opacity = \opac] (0,0) -- (-5,-10) -- (5,-10) -- (0,0);
        \node at (0,0) {\smlroot};
        \node at (0,-6) {\footnotesize $t_2$};
    \end{tikzpicture}
    =
    \begin{tikzpicture}[baseline={([yshift=-1ex]current bounding box.center)}, scale = \scl]
        \draw[fill = \colOne, opacity = \opac] (0,0) -- (-5,-10) -- (5,-10) -- (0,0);
        \draw[fill = \colTwo, opacity = \opac] (12,0) -- (7,-10) -- (17,-10) -- (12,0);
        \draw (0,0) -- (6,5) -- (12,0);
        \node at (6,5) {\smlroot};
        \fill (0,0) \smlnd (12,0) \smlnd;
        \node at (0,-6) {\footnotesize $t_1$};
        \node at (12,-6) {\footnotesize $t_2$};
    \end{tikzpicture}
\end{equation*}

\textit{Norm:} If $t$ is a tree with $n$ non-leaf nodes, then $\norm{t} = n + 1$. Note that we consider the only node in $\epsilon$ to be a leaf node, and hence $\norm{\epsilon} = 1$.

\pagebreak
\textit{(1,2)-magma:} The (1,2)-magma begins (sorting by norm) as follows:
\vspace{-.5em}
\begin{center}
    \def\scl{0.5}
    \begin{tabular}{| l l |}
    \hline
    \textit{Norm 1:} & 
    $\begin{tikzpicture}[baseline={([yshift=-.5ex]current bounding box.center)}, scale = \scl]
        \fill (0,0) \smlnd;
    \end{tikzpicture}
    = \epsilon$ \\
    \hline
    \textit{Norm 2:} & 
    $\begin{tikzpicture}[baseline={([yshift=-.5ex]current bounding box.center)}, scale = \scl]
        \draw (0,0) \un;
        \fill (0,0) \smlunnd;
        \node at (0,0) (root) {\smlroot};
        \dummyNodes[0][0.2][0][-1.2]
    \end{tikzpicture} 
    = f(\epsilon)$, 
    \quad
    $\begin{tikzpicture}[baseline={([yshift=-.5ex]current bounding box.center)}, scale = \scl]
        \draw (0,0) \bin;
        \fill (0,0) \smlbinnd;
        \node at (0,0) (root) {\smlroot};
        \dummyNodes[0][0.2][0][-1.2]
    \end{tikzpicture} 
    = \epsilon \star \epsilon$ \\
    \hline
    \textit{Norm 3:} & 
    $\begin{tikzpicture}[baseline={([yshift=-.5ex]current bounding box.center)}, scale = \scl]
        \draw (0,0) \un (0,-1) \un;
        \fill (0,0) \smlunnd (0,-1) \smlunnd;
        \node at (0,0) (root) {\smlroot};
        \dummyNodes[0][0.2][0][-2.2]
    \end{tikzpicture} 
    = f(f(\epsilon))$, 
    \quad
    $\begin{tikzpicture}[baseline={([yshift=-.5ex]current bounding box.center)}, scale = \scl]
        \draw (0,0) \un (0,-1) \bin;
        \fill (0,0) \smlunnd (0,-1) \smlbinnd;
        \node at (0,0) (root) {\smlroot};
        \dummyNodes[0][0.2][0][-2.2]
    \end{tikzpicture} 
    = f(\epsilon \star \epsilon)$, 
    \quad
    $\begin{tikzpicture}[baseline={([yshift=-.5ex]current bounding box.center)}, scale = \scl]
        \draw (0,0) \bin (1,-1) \un;
        \fill (0,0) \smlbinnd (1,-1) \smlunnd;
        \node at (0,0) (root) {\smlroot};
        \dummyNodes[0][0.2][0][-2.2]
    \end{tikzpicture} 
    = \epsilon \star f(\epsilon)$, \\ 
    &
    $\begin{tikzpicture}[baseline={([yshift=-.5ex]current bounding box.center)}, scale = \scl]
        \draw (0,0) \bin (1,-1) \bin;
        \fill (0,0) \smlbinnd (1,-1) \smlbinnd;
        \node at (0,0) (root) {\smlroot};
        \dummyNodes[0][0.2][0][-2.2]
    \end{tikzpicture}
    = \epsilon \star (\epsilon \star \epsilon)$, 
    \quad
    $\begin{tikzpicture}[baseline={([yshift=-.5ex]current bounding box.center)}, scale = \scl]
        \draw (0,0) \bin (-1,-1) \un;
        \fill (0,0) \smlbinnd (-1,-1) \smlunnd;
        \node at (0,0) (root) {\smlroot};
        \dummyNodes[0][0.2][0][-2.2]
    \end{tikzpicture} 
    = f(\epsilon) \star \epsilon$, 
    \quad
    $\begin{tikzpicture}[baseline={([yshift=-.5ex]current bounding box.center)}, scale = \scl]
        \draw (0,0) \bin (-1,-1) \bin;
        \fill (0,0) \smlbinnd (-1,-1) \smlbinnd;
        \node at (0,0) (root) {\smlroot};
        \dummyNodes[0][0.2][0][-2.2]
    \end{tikzpicture}
    = (\epsilon \star \epsilon) \star \epsilon$ \\
    \hline
    \end{tabular}
\end{center}
\end{schFamily}

\addcontentsline{toca}{subsection}{\schFamRef{schroderFamily:polygonDissections}: Polygon dissections}
\begin{schFamily}
[Polygon dissections \cite{SchroderOEIS}] \label{schroderFamily:polygonDissections} 

\def\scl{0.03}
\def\numPoints{4}

$S_n$ is the number of dissections of a regular $(n + 4)$-gon by diagonals that do not touch the base. A diagonal is a straight line joining two non-consecutive vertices and dissection means that diagonals are non-crossing though they may share an endpoint. We draw all polygons with a marked side which is denoted by a thick black line. This fixes the orientation of the polygon and distinguishes polygons which only differ by a rotation.

\textit{Generator:} The trivial dissection of a square obtained by placing no diagonals:
\begin{equation*}
    \epsilon =
    \begin{tikzpicture}[baseline={([yshift=-1ex]current bounding box.center)}, scale = \scl]
        \foreach \i in {1,...,\numPoints}
            {
            \draw[fill] (-90 - 180/\numPoints + 360/\numPoints * \i:10) circle [radius = 0.2];
            \coordinate (\i) at (-90 - 180/\numPoints + 360/\numPoints * \i:10);
            }
        
        \draw[ultra thick] (1) to (\numPoints);    
        \foreach \i [evaluate = \i as \ieval using \i - 1] in {2,...,\numPoints}
            {
            \draw (\i) to (\ieval);
            }
        
        \dummyNodes[0][-11][0][11]
    \end{tikzpicture}
\end{equation*}

\textit{Unary map:}

\def\numPoints{8}
\def\scl {0.08}

\begin{equation*}
    f \left(
    \begin{tikzpicture}[baseline={([yshift=-.5ex]current bounding box.center)}, scale = \scl]
        \foreach \i in {1,...,\numPoints}
            {
            \coordinate (\i) at (-90 - 180/\numPoints + 360/\numPoints * \i:10);
            }
            
        \fill[\colOne, opacity = \opac] (7) -- (8) -- (1) (0,0) (-70:10) arc (-70:203:10);
        
        \draw[ultra thick] (1) to (\numPoints);    
        \draw (7) to (8);
        \draw[dashed] (0,0) (-70:10) arc (-70:203:10);
        \node at (0,0) (p) {\footnotesize $p$};
        \node at (225:11) (e) {\scriptsize $e$};
        \dummyNodesPolar[-90:12][90:12]
    \end{tikzpicture}
    \right)
    =
    \begin{tikzpicture}[baseline={([yshift=-.5ex]current bounding box.center)}, scale = \scl]
        \def\numPoints{9}
        \foreach \i in {1,...,\numPoints}
            {
            \coordinate (\i) at (-90 - 180/\numPoints + 360/\numPoints * \i:10);
            }
        \begin{scope}
        \clip (0,0) circle [radius = 10];
        \fill[\colOne, opacity = \opac] (7) -- (8) to [bend left = 90] (9) -- (10,-10) -- (10,10) -- (-10,10) -- (7);
        \end{scope}
        \draw[ultra thick] (1) to (\numPoints);    
        \draw (7) to (8);
        \draw[red] (8) to (9);
        \draw[dotted] (8) to [bend left = 90] (9);
        \draw[dashed] (0,0) (-70:10) arc (-70:170:10);
        \node at (0,0) (p) {\footnotesize $p$};
        \node at (190:11) (e) {\scriptsize $e$};
        \dummyNodesPolar[-90:12][90:12]
    \end{tikzpicture}
\end{equation*}

\textit{Binary map:}
\def\numPoints{12}
\begin{equation*}
    \begin{tikzpicture}[baseline={([yshift=-.5ex]current bounding box.center)}, scale = \scl]
        \foreach \i in {1,...,\numPoints}
            {
            \coordinate (\i) at (-90 - 180/\numPoints + 360/\numPoints * \i:10);
            }

        \fill[\colOne, opacity = \opac] (11) -- (12) -- (1) -- (2) (0,0) (-45:10) arc (-45:225:10);
        
        \draw[ultra thick] (1) to (\numPoints);    
        \draw (1) to (2);
        \draw (11) to (12);
        
        \draw[dashed] (0,0) (-45:10) arc (-45:225:10);
        \node at (0,0) (p) {\footnotesize $p_1$};
        \node at (240:11.5) (f1) {\scriptsize $e_1$};
        \node at (-90:11.5) (f2) {\scriptsize $e_2$};
        \node at (-60:11.5) (f3) {\scriptsize $e_3$};
        \dummyNodesPolar[-90:12][90:12]
    \end{tikzpicture}
    \ \star \ 
    \begin{tikzpicture}[baseline={([yshift=-.5ex]current bounding box.center)}, scale = \scl]
        \foreach \i in {1,...,\numPoints}
            {
            \coordinate (\i) at (-90 - 180/\numPoints + 360/\numPoints * \i:10);
            }
        
        \fill[\colTwo, opacity = \opac] (10) -- (11) -- (12) -- (1) -- (2) (0,0) (-45:10) arc (-45:195:10);
        
        \draw[ultra thick] (1) to (\numPoints);    
        \draw (1) to (2);
        \draw (10) to (11);
        \draw (11) to (12);
        \draw[dashed] (0,0) (-45:10) arc (-45:195:10);
        \node at (0,0) (p) {\footnotesize $p_2$};
        \node at (210:11.7) (e1) {\scriptsize $f_1$};
        \node at (240:11.7) (e2) {\scriptsize $f_2$};
        \node at (-90:11.7) (e3) {\scriptsize $f_3$};
        \node at (-60:11.7) (e4) {\scriptsize $f_4$};
        \dummyNodesPolar[-90:12][90:12]
    \end{tikzpicture}
    =
    \begin{tikzpicture}[baseline={([yshift=-.5ex]current bounding box.center)}, scale = 1.5*\scl]
        \foreach \i in {1,...,\numPoints}
            {
            \coordinate (\i) at (-90 - 180/\numPoints + 360/\numPoints * \i:10);
            }
            
        \fill[\colTwo, opacity = \opac] (4) -- (11) -- (12) -- (1) -- (2) (0,0) (-45:10) arc (-45:15:10);
        \fill[\colOne, opacity = \opac] (10) -- (224:9.9) -- (16:9.9) (0,0) (15:10) arc (15:195:10);
        
        \draw[ultra thick] (1) to (\numPoints);
        \draw (224:9.9) to (16:9.9);
        \draw (1) to (2);
        \draw[ultra thick] (-7.1934,-6.84658) to (1.23293,-1.9792);
        \draw (10) to (224:9.9);
        \draw (4) to (11);
        \draw (11) to (12);
        \draw[dashed] (0,0) (15:10) arc (15:195:10);
        \draw[dashed] (0,0) (-45:10) arc (-45:15:10);
        \node at (3,-6) (p1) {\footnotesize $p_2$};
        \node at (2,-3.3) (e1) {\scriptsize $f_1$};
        \node at (240:11) (e2) {\scriptsize $f_2$};
        \node at (-90:11.2) (e3) {\scriptsize $f_3$};
        \node at (-60:11) (e4) {\scriptsize $f_4$};
        \node at (-2,3) (p2) {\footnotesize $p_1$};
        \node at (210:11) (f1) {\scriptsize $e_1$};
        \node at (-4,-3) (f2) {\scriptsize $e_2$};
        \node at (4,1.5) (f3) {\scriptsize $e_3$};
        \dummyNodesPolar[-90:12][90:12]
    \end{tikzpicture}
\end{equation*}
where the edge $f_1$ coincides with the edges $e_2e_3$ to form a single diagonal.

\textit{Norm:} If $p$ is a dissection of an $n$-gon, then $\norm{p} = n - 3$.

\textit{(1,2)-magma:} The (1,2)-magma begins (sorting by norm) as follows:
\vspace{-.5em}
\begin{center}
    \def\scl{0.03}
    \begin{tabular}{| l l |}
    \hline
    \textit{Norm 1:} & \def\numPoints {4}
    $\begin{tikzpicture}[baseline={([yshift=-.5ex]current bounding box.center)}, scale = \scl]
        \foreach \i in {1,...,\numPoints}
            {
            \draw[fill] (-90 - 180/\numPoints + 360/\numPoints * \i:10) circle [radius = 0.2];
            \coordinate (\i) at (-90 - 180/\numPoints + 360/\numPoints * \i:10);
            }
        
        \draw[ultra thick] (1) to (\numPoints);    
        \foreach \i [evaluate = \i as \ieval using \i - 1] in {2,...,\numPoints}
            {
            \draw (\i) to (\ieval);
            }
        
        \dummyNodes[0][-11][0][11]
    \end{tikzpicture}
    = \epsilon$ \\
    \hline
    \textit{Norm 2:} & \def\numPoints{5} \def\scl{0.04}
    $\begin{tikzpicture}[baseline={([yshift=-.5ex]current bounding box.center)}, scale = \scl]
        \foreach \i in {1,...,\numPoints}
            {
            \draw[fill] (-90 - 180/\numPoints + 360/\numPoints * \i:10) circle [radius = 0.2];
            \coordinate (\i) at (-90 - 180/\numPoints + 360/\numPoints * \i:10);
            }
        
        \draw[ultra thick] (1) to (\numPoints);    
        \foreach \i [evaluate = \i as \ieval using \i - 1] in {2,...,\numPoints}
            {
            \draw (\i) to (\ieval);
            }
        
        \dummyNodes[0][-11][0][11]
    \end{tikzpicture} = f(\epsilon)$, 
    \quad
    $\begin{tikzpicture}[baseline={([yshift=-.5ex]current bounding box.center)}, scale = \scl]
        \foreach \i in {1,...,\numPoints}
            {
            \draw[fill] (-90 - 180/\numPoints + 360/\numPoints * \i:10) circle [radius = 0.2];
            \coordinate (\i) at (-90 - 180/\numPoints + 360/\numPoints * \i:10);
            }
        
        \draw[ultra thick] (1) to (\numPoints);    
        \foreach \i [evaluate = \i as \ieval using \i - 1] in {2,...,\numPoints}
            {
            \draw (\i) to (\ieval);
            }
            
        \dissectionArc[2][4]
        
        \dummyNodes[0][-11][0][11]
    \end{tikzpicture} = \epsilon \star \epsilon$ \\
    \hline
    \textit{Norm 3:} & \def\numPoints{6} \def\scl{0.05}
    $\begin{tikzpicture}[baseline={([yshift=-.5ex]current bounding box.center)}, scale = \scl]
        \foreach \i in {1,...,\numPoints}
            {
            \draw[fill] (-90 - 180/\numPoints + 360/\numPoints * \i:10) circle [radius = 0.2];
            \coordinate (\i) at (-90 - 180/\numPoints + 360/\numPoints * \i:10);
            }
        
        \draw[ultra thick] (1) to (\numPoints);    
        \foreach \i [evaluate = \i as \ieval using \i - 1] in {2,...,\numPoints}
            {
            \draw (\i) to (\ieval);
            }
        
        \dummyNodes[0][-11][0][11]
    \end{tikzpicture} = f(f(\epsilon))$, \quad
    $\begin{tikzpicture}[baseline={([yshift=-.5ex]current bounding box.center)}, scale = \scl]
        \foreach \i in {1,...,\numPoints}
            {
            \draw[fill] (-90 - 180/\numPoints + 360/\numPoints * \i:10) circle [radius = 0.2];
            \coordinate (\i) at (-90 - 180/\numPoints + 360/\numPoints * \i:10);
            }
        
        \draw[ultra thick] (1) to (\numPoints);    
        \foreach \i [evaluate = \i as \ieval using \i - 1] in {2,...,\numPoints}
            {
            \draw (\i) to (\ieval);
            }
        
        \dissectionArc[2][4]
        
        \dummyNodes[0][-11][0][11]
    \end{tikzpicture} = f(\epsilon \star \epsilon)$, \quad
    $\begin{tikzpicture}[baseline={([yshift=-.5ex]current bounding box.center)}, scale = \scl]
        \foreach \i in {1,...,\numPoints}
            {
            \draw[fill] (-90 - 180/\numPoints + 360/\numPoints * \i:10) circle [radius = 0.2];
            \coordinate (\i) at (-90 - 180/\numPoints + 360/\numPoints * \i:10);
            }
        
        \draw[ultra thick] (1) to (\numPoints);    
        \foreach \i [evaluate = \i as \ieval using \i - 1] in {2,...,\numPoints}
            {
            \draw (\i) to (\ieval);
            }
        
        \dissectionArc[3][5]
        
        \dummyNodes[0][-11][0][11]
    \end{tikzpicture}
    = \epsilon \star f(\epsilon)$, \\ & \def\numPoints{6} \def\scl{0.05}
    $\begin{tikzpicture}[baseline={([yshift=-.5ex]current bounding box.center)}, scale = \scl]
        \foreach \i in {1,...,\numPoints}
            {
            \draw[fill] (-90 - 180/\numPoints + 360/\numPoints * \i:10) circle [radius = 0.2];
            \coordinate (\i) at (-90 - 180/\numPoints + 360/\numPoints * \i:10);
            }
        
        \draw[ultra thick] (1) to (\numPoints);    
        \foreach \i [evaluate = \i as \ieval using \i - 1] in {2,...,\numPoints}
            {
            \draw (\i) to (\ieval);
            }
        
        \dissectionArc[2][5]
        \dissectionArc[3][5]
        
        \dummyNodes[0][-11][0][11]
    \end{tikzpicture}
    = \epsilon \star (\epsilon \star \epsilon)$, \quad
    $\begin{tikzpicture}[baseline={([yshift=-.5ex]current bounding box.center)}, scale = \scl]
        \foreach \i in {1,...,\numPoints}
            {
            \draw[fill] (-90 - 180/\numPoints + 360/\numPoints * \i:10) circle [radius = 0.2];
            \coordinate (\i) at (-90 - 180/\numPoints + 360/\numPoints * \i:10);
            }
        
        \draw[ultra thick] (1) to (\numPoints);    
        \foreach \i [evaluate = \i as \ieval using \i - 1] in {2,...,\numPoints}
            {
            \draw (\i) to (\ieval);
            }
        \dissectionArc[2][5]
        
        \dummyNodes[0][-11][0][11]
    \end{tikzpicture} = f(\epsilon) \star \epsilon$, \quad
    $\begin{tikzpicture}[baseline={([yshift=-.5ex]current bounding box.center)}, scale = \scl]
        \foreach \i in {1,...,\numPoints}
            {
            \draw[fill] (-90 - 180/\numPoints + 360/\numPoints * \i:10) circle [radius = 0.2];
            \coordinate (\i) at (-90 - 180/\numPoints + 360/\numPoints * \i:10);
            }
        
        \draw[ultra thick] (1) to (\numPoints);    
        \foreach \i [evaluate = \i as \ieval using \i - 1] in {2,...,\numPoints}
            {
            \draw (\i) to (\ieval);
            }
        
        \dissectionArc[2][5]
        \dissectionArc[2][4]
        
        \dummyNodes[0][-11][0][11]
    \end{tikzpicture} = (\epsilon \star \epsilon) \star \epsilon$ \\
    \hline
    \end{tabular}
\end{center}
\end{schFamily}

\addcontentsline{toca}{subsection}{\schFamRef{schroderFamily:triangularGridMatchings}: Perfect matchings in an Aztec triangle}
\begin{schFamily}
[Perfect matchings in an Aztec triangle \cite{Ciucu1996}] \label{schroderFamily:triangularGridMatchings} 

\def\scl{0.25}

$S_n$ is the number of perfect matchings in an Aztec triangle, which is a triangular grid of $n^2$ squares. This grid is formed by starting with one square, then placing a centred row of 3 squares beneath it, followed by a centred row of 5 squares beneath this and so on. We then place a node at the corner of each square. We allow only perfect matchings in which no two edges cross when all edges are drawn as straight lines connecting two nodes. This enforces that any node is matched with a node placed on an adjacent corner of the same square ie.\ all matching edges are either a horizontal or vertical and unit length.

\textit{Generator:} The trivial empty matching in a triangular grid of zero squares:
\begin{equation*}
    \epsilon = \emptyset.
\end{equation*}

\textit{Unary map:} 

\begin{equation*}
    f \left(
    \begin{tikzpicture}[baseline={([yshift=-.5ex]current bounding box.center)}, scale = \scl]
        \fill[\colOne, opacity = \opac] (0,0) -- (11,0) -- (11,1) -- (10,1) -- (6,4) -- (5,4) -- (1,1) -- (0,1) -- (0,0);
        \draw (0,0) -- (11,0) -- (11,1) -- (10,1) -- (6,4) -- (5,4) -- (1,1) -- (0,1) -- (0,0);
        \dummyNodes[1][-0.3][1][4.3]
        \node at (5.5,1.75) {\footnotesize $m$};
    \end{tikzpicture}
    \right)
    =
    \begin{tikzpicture}[baseline={([yshift=-.5ex]current bounding box.center)}, scale = \scl]
        \fill[\colOne, opacity = \opac] (0,0) -- (11,0) -- (11,1) -- (10,1) -- (6,4) -- (5,4) -- (1,1) -- (0,1) -- (0,0);
        \draw (0,0) -- (11,0) -- (11,1) -- (10,1) -- (6,4) -- (5,4) -- (1,1) -- (0,1) -- (0,0);
        \fill[red] (6,5) \halfnd (7,5) \halfnd (7,4) \halfnd (8,4) \halfnd (8,3) \halfnd (9,3) \halfnd (11,2) \halfnd (12,2) \halfnd (12,1) \halfnd (13,1) \halfnd (12,0) \halfnd (13,0) \halfnd;
        \draw[red] (6,5) -- (7,5) (7,4) -- (8,4) (8,3) -- (9,3) (11,2) -- (12,2) (12,1) -- (13,1) (12,0) -- (13,0);
        \node[rotate = -20] at (10,2.3) (D) {\color{red}{\tiny $\cdots$}};
        \node at (5.5,1.75) {\footnotesize $m$};
    \end{tikzpicture}
\end{equation*}

\textit{Binary map:}
\def\scl{0.2}
\begin{align*}
    \begin{tikzpicture}[baseline={([yshift=-.5ex]current bounding box.center)}, scale = \scl]
        \fill[\colOne, opacity = \opac] (0,0) -- (11,0) -- (11,1) -- (10,1) -- (6,4) -- (5,4) -- (1,1) -- (0,1) -- (0,0);
        \draw (0,0) -- (11,0) -- (11,1) -- (10,1) -- (6,4) -- (5,4) -- (1,1) -- (0,1) -- (0,0);
        \node at (5.5,1.75) {\footnotesize $m_1$};
    \end{tikzpicture}
    \ \star \
    \begin{tikzpicture}[baseline={([yshift=-.5ex]current bounding box.center)}, scale = \scl]
        \fill[\colTwo, opacity = \opac] (0,0) -- (11,0) -- (11,1) -- (10,1) -- (6,4) -- (5,4) -- (1,1) -- (0,1) -- (0,0);
        \draw (0,0) -- (11,0) -- (11,1) -- (10,1) -- (6,4) -- (5,4) -- (1,1) -- (0,1) -- (0,0);
        \node at (5.5,1.75) {\footnotesize $m_2$};
    \end{tikzpicture}
    \ = \ \def\scl{0.3}
    \begin{tikzpicture}[baseline={([yshift=-.5ex]current bounding box.center)}, scale = \scl]
        \fill[\colOne, opacity = \opac] (0,0) -- (11,0) -- (11,1) -- (10,1) -- (6,4) -- (5,4) -- (1,1) -- (0,1) -- (0,0);
        \draw (0,0) -- (11,0) -- (11,1) -- (10,1) -- (6,4) -- (5,4) -- (1,1) -- (0,1) -- (0,0);
        \fill[red] (12,0) \halfnd (12,1) \halfnd (13,0) \halfnd (14,0) \halfnd (15,0) \halfnd (16,0) \halfnd (21,0) \halfnd (22,0) \halfnd (23,0) \halfnd (24,0) \halfnd (25,0) \halfnd (25,1) \halfnd;
        \draw[red] (12,0) -- (12,1) (13,0) -- (14,0) (15,0) -- (16,0) (21,0) -- (22,0) (23,0) -- (24,0) (25,0) -- (25,1);
        \node at (18.5,0) (a) {\color{red}{\tiny $\cdots$}};
        \fill[red] (11,2) \halfnd (12,2) \halfnd (10,3) \halfnd (11,3) \halfnd (12,3) \halfnd (13,3) \halfnd;
        \draw[red] (11,2) -- (12,2) (10,3) -- (11,3) (12,3) -- (13,3);
        \fill[red] (7,4.25) \halfnd (8,4.25) \halfnd (6,5) \halfnd (7,5) \halfnd (7,5.75) \halfnd (8,5.75) \halfnd (16,5) \halfnd (17,5) \halfnd (17,5.75) \halfnd (18,5.75) \halfnd (16,6.5) \halfnd (17,6.5) \halfnd (12.17,9.375) \halfnd (13.17,9.375) \halfnd (11.17,8.625) \halfnd (12.17,8.625) \halfnd (13.17,8.625) \halfnd (14.17,8.625) \halfnd (10.17,7.875) \halfnd (11.17,7.875) \halfnd (12.17,7.875) \halfnd (13.17,7.875) \halfnd (14.17,7.875) \halfnd (15.17,7.875) \halfnd;
        \draw[red] (7,4.25) -- (8,4.25) (6,5) -- (7,5) (7,5.75) -- (8,5.75) (16,5) -- (17,5) (16,6.5) -- (17,6.5) (17,5.75) -- (18,5.75) (12.17,9.375) -- (13.17,9.375) (11.17,8.625) -- (12.17,8.625) (13.17,8.625) -- (14.17,8.625) (10.17,7.875) -- (11.17,7.875) (12.17,7.875) -- (13.17,7.875) (14.17,7.875) -- (15.17,7.875);
        \node[rotate = -30] at (9,3.5) (b) {\color{red}{\tiny $\cdots$}};
        \node[rotate = 30] at (14.5,4) (c) {\color{red}{\tiny $\cdots$}};
        \node[rotate = 90] at (12.5,5.75) (d) {\color{red}{\tiny $\cdots$}};
        \node[rotate = 30] at (9,7) (e) {\color{red}{\tiny $\cdots$}};
        \node[rotate = -30] at (16,7.15) (f) {\color{red}{\tiny $\cdots$}};
        \fill[\colTwo, opacity = \opac] (13,1) -- (24,1) -- (24,2) -- (23,2) -- (19,5) -- (18,5) -- (14,2) -- (13,2) -- (13,1);
        \draw (13,1) -- (24,1) -- (24,2) -- (23,2) -- (19,5) -- (18,5) -- (14,2) -- (13,2) -- (13,1);
        \node at (5.5,1.75) {\footnotesize $m_1$};
        \node at (18.5,2.75) {\footnotesize $m_2$};
    \end{tikzpicture}
\end{align*}

\textit{Norm:} If $m$ is a perfect matching in an Aztec triangle of $n^2$ squares, then $\norm{m} = n + 1$.

\pagebreak
\textit{(1,2)-magma:} The (1,2)-magma begins (sorting by norm) as follows:
\vspace{-.5em}
\begin{center}
    \def\scl{0.5}
    \begin{tabular}{| l l |}
    \hline
    \textit{Norm 1:} & 
    $\emptyset = \epsilon$ \\
    \hline
    \textit{Norm 2:} & \def\numRows {2}
    $\begin{tikzpicture}[baseline={([yshift=-.5ex]current bounding box.center)}, scale = \scl]
        \foreach \i [evaluate = \i as \ieval using 2*(\i - 1)] in {2,...,\numRows}
            {
            \foreach \j in {1,...,\ieval}
                {
                \fill (\numRows - \i + \j - 1,\numRows - \i + 1) \halfnd;
                }
            }
        \foreach \i [evaluate = \i as \ieval using 2*(\i - 1)] in {\numRows}
            {
            \foreach \j in {1,...,\ieval}
                {
                \fill (\j - 1,0) \halfnd;
                }
            }
        \draw (0,0) -- (1,0) (0,1) -- (1,1);
        \dummyNodes[1][-0.3][1][1.2]
    \end{tikzpicture} 
    = f(\epsilon)$, 
    \quad
    $\begin{tikzpicture}[baseline={([yshift=-.5ex]current bounding box.center)}, scale = \scl]
        \foreach \i [evaluate = \i as \ieval using 2*(\i - 1)] in {2,...,\numRows}
            {
            \foreach \j in {1,...,\ieval}
                {
                \fill (\numRows - \i + \j - 1,\numRows - \i + 1) \halfnd;
                }
            }
        \foreach \i [evaluate = \i as \ieval using 2*(\i - 1)] in {\numRows}
            {
            \foreach \j in {1,...,\ieval}
                {
                \fill (\j - 1,0) \halfnd;
                }
            }
        \draw (0,0) -- (0,1) (1,0) -- (1,1);
        \dummyNodes[1][-0.3][1][1.2]
    \end{tikzpicture} 
    = \epsilon \star \epsilon$ \\
    \hline
    \textit{Norm 3:} & \def\numRows{3}
    $\begin{tikzpicture}[baseline={([yshift=-.5ex]current bounding box.center)}, scale = \scl]
        \foreach \i [evaluate = \i as \ieval using 2*(\i - 1)] in {2,...,\numRows}
            {
            \foreach \j in {1,...,\ieval}
                {
                \fill (\numRows - \i + \j - 1,\numRows - \i + 1) \halfnd;
                }
            }
        \foreach \i [evaluate = \i as \ieval using 2*(\i - 1)] in {\numRows}
            {
            \foreach \j in {1,...,\ieval}
                {
                \fill (\j - 1,0) \halfnd;
                }
            } 
        \draw (0,0) -- (1,0) (2,0) -- (3,0) (0,1) -- (1,1) (2,1) -- (3,1) (1,2) -- (2,2);
        \dummyNodes[1][-0.3][1][2.2]
    \end{tikzpicture}
    = f(f(\epsilon))$, 
    \quad
    $\begin{tikzpicture}[baseline={([yshift=-.5ex]current bounding box.center)}, scale = \scl]
        \foreach \i [evaluate = \i as \ieval using 2*(\i - 1)] in {2,...,\numRows}
            {
            \foreach \j in {1,...,\ieval}
                {
                \fill (\numRows - \i + \j - 1,\numRows - \i + 1) \halfnd;
                }
            }
        \foreach \i [evaluate = \i as \ieval using 2*(\i - 1)] in {\numRows}
            {
            \foreach \j in {1,...,\ieval}
                {
                \fill (\j - 1,0) \halfnd;
                }
            } 
        \draw (0,0) -- (0,1) (2,0) -- (3,0) (1,0) -- (1,1) (2,1) -- (3,1) (1,2) -- (2,2);
        \dummyNodes[1][-0.3][1][2.2]
    \end{tikzpicture} 
    = f(\epsilon \star \epsilon)$, 
    \quad
    $\begin{tikzpicture}[baseline={([yshift=-.5ex]current bounding box.center)}, scale = \scl]
        \foreach \i [evaluate = \i as \ieval using 2*(\i - 1)] in {2,...,\numRows}
            {
            \foreach \j in {1,...,\ieval}
                {
                \fill (\numRows - \i + \j - 1,\numRows - \i + 1) \halfnd;
                }
            }
        \foreach \i [evaluate = \i as \ieval using 2*(\i - 1)] in {\numRows}
            {
            \foreach \j in {1,...,\ieval}
                {
                \fill (\j - 1,0) \halfnd;
                }
            } 
        \draw (0,0) -- (0,1) (1,0) -- (2,0) (1,1) -- (2,1) (3,0) -- (3,1) (1,2) -- (2,2);
        \dummyNodes[1][-0.5][1][2.2]
    \end{tikzpicture}
    = \epsilon \star f(\epsilon)$, \\ 
    & \def\numRows{3}
    $\begin{tikzpicture}[baseline={([yshift=-.5ex]current bounding box.center)}, scale = \scl]
        \foreach \i [evaluate = \i as \ieval using 2*(\i - 1)] in {2,...,\numRows}
            {
            \foreach \j in {1,...,\ieval}
                {
                \fill (\numRows - \i + \j - 1,\numRows - \i + 1) \halfnd;
                }
            }
        \foreach \i [evaluate = \i as \ieval using 2*(\i - 1)] in {\numRows}
            {
            \foreach \j in {1,...,\ieval}
                {
                \fill (\j - 1,0) \halfnd;
                }
            } 
        \draw (0,0) -- (0,1) (1,0) -- (2,0) (1,1) -- (1,2) (3,0) -- (3,1) (2,1) -- (2,2);
        \dummyNodes[1][-0.3][1][2.8]
    \end{tikzpicture}
    = \epsilon \star (\epsilon \star \epsilon)$, 
    \quad
    $\begin{tikzpicture}[baseline={([yshift=-.5ex]current bounding box.center)}, scale = \scl]
        \foreach \i [evaluate = \i as \ieval using 2*(\i - 1)] in {2,...,\numRows}
            {
            \foreach \j in {1,...,\ieval}
                {
                \fill (\numRows - \i + \j - 1,\numRows - \i + 1) \halfnd;
                }
            }
        \foreach \i [evaluate = \i as \ieval using 2*(\i - 1)] in {\numRows}
            {
            \foreach \j in {1,...,\ieval}
                {
                \fill (\j - 1,0) \halfnd;
                }
            } 
        \draw (0,0) -- (1,0) (0,1) -- (1,1) (2,0) -- (2,1) (3,0) -- (3,1) (1,2) -- (2,2);
        \dummyNodes[1][-0.3][1][2.2]
    \end{tikzpicture}
    = f(\epsilon) \star \epsilon$, 
    \quad
    $\begin{tikzpicture}[baseline={([yshift=-.5ex]current bounding box.center)}, scale = \scl]
        \foreach \i [evaluate = \i as \ieval using 2*(\i - 1)] in {2,...,\numRows}
            {
            \foreach \j in {1,...,\ieval}
                {
                \fill (\numRows - \i + \j - 1,\numRows - \i + 1) \halfnd;
                }
            }
        \foreach \i [evaluate = \i as \ieval using 2*(\i - 1)] in {\numRows}
            {
            \foreach \j in {1,...,\ieval}
                {
                \fill (\j - 1,0) \halfnd;
                }
            } 
        \draw (0,0) -- (0,1) (1,0) -- (1,1) (2,0) -- (2,1) (3,0) -- (3,1) (1,2) -- (2,2);
        \dummyNodes[1][-0.3][1][2.2]
    \end{tikzpicture} 
    = (\epsilon \star \epsilon) \star \epsilon$ \\
    \hline
    \end{tabular}
\end{center}
\end{schFamily}

\addcontentsline{toca}{subsection}{\schFamRef{schroderFamily:zebras}: Coloured parallelogram polyominoes (zebras)}
\begin{schFamily}
[Coloured parallelogram polyominoes (zebras) \cite{Pergola98schrodertriangles}] \label{schroderFamily:zebras} 

\def\scl{0.34}

$S_n$ is the number of parallelogram polyominoes of perimeter $2n + 2$ with each column coloured black or white. A parallelogram polyomino is a translation invariant array of unit squares bounded by two lattice paths that use the steps $(0,1)$ and $(1,0)$ and that intersect only at their first and last vertices.

\textit{Generator:} The empty parallelogram polyomino, which we choose to represent as follows:
\begin{equation*}
    \epsilon = \, \begin{tikzpicture}[baseline={([yshift=-.7ex]current bounding box.center)}, scale = \scl]
        \draw (0,0) -- (0,1);
    \end{tikzpicture}
\end{equation*}
We take this to have perimeter 2.

\textit{Unary map:} Add a single white square to the top right of the coloured parallelogram polyomino:
\begin{equation*}
    f \left( \
    \begin{tikzpicture}[baseline={([yshift=-.5ex]current bounding box.center)}, scale = \scl]
        \fill[\colOne, opacity = \opac] (0,1) -- (0,0) -- (2,0) -- (6,3) -- (6,4) -- (4,4) -- (0,1);
        \draw (0,1) -- (0,0) -- (2,0) (6,3) -- (6,4) -- (4,4);
        \draw (2,0) -- (6,3) (4,4) -- (0,1);
        \dummyNodes[0.5][-0.2][0.5][4.2]
        \node at (3,2) {\footnotesize $p$};
    \end{tikzpicture}
    \ \right)
    =
    \begin{tikzpicture}[baseline={([yshift=-.5ex]current bounding box.center)}, scale = \scl]
        \fill[\colOne, opacity = \opac] (0,1) -- (0,0) -- (2,0) -- (6,3) -- (6,4) -- (4,4) -- (0,1);
        \draw (0,1) -- (0,0) -- (2,0) (6,3) -- (6,4) -- (4,4);
        \draw (2,0) -- (6,3) (4,4) -- (0,1);
        \draw (6,3) -- (7,3) -- (7,4) -- (6,4);
        \node at (3,2) {\footnotesize $p$};
    \end{tikzpicture}
\end{equation*}

\textit{Binary map:}
\begin{equation*}
    \begin{tikzpicture}[baseline={([yshift=-.5ex]current bounding box.center)}, scale = \scl]
        \fill[\colOne, opacity = \opac] (0,1) -- (0,0) -- (2,0) -- (6,3) -- (6,4) -- (4,4) -- (0,1);
        \draw (0,1) -- (0,0) -- (2,0) (6,3) -- (6,4) -- (4,4);
        \draw (2,0) -- (6,3) (4,4) -- (0,1);
        \node at (3,2) (p1) {\footnotesize $p_1$};
    \end{tikzpicture}
    \ \star \ 
    \begin{tikzpicture}[baseline={([yshift=-.5ex]current bounding box.center)}, scale = \scl]
        \fill[\colTwo, opacity = \opac] (0,1) -- (0,0) -- (2,0) -- (6,3) -- (6,4) -- (4,4) -- (0,1);
        \draw (0,1) -- (0,0) -- (2,0) (6,3) -- (6,4) -- (4,4);
        \draw (2,0) -- (6,3) (4,4) -- (0,1);
        \node at (3,2) (p2) {\footnotesize $p_2$};
    \end{tikzpicture}
    =
    \begin{cases}
        \begin{tikzpicture}[baseline={([yshift=-.5ex]current bounding box.center)}, scale = \scl]
            \fill[\colOne, opacity = \opac] (0,1) -- (0,0) -- (2,0) -- (6,3) -- (6,4) -- (4,4) -- (0,1);
            \draw[fill = gray] (6,3) -- (7,3) -- (7,4) -- (6,4);
            \draw (0,1) -- (0,0) -- (2,0) (6,3) -- (6,4) -- (4,4);
            \draw (2,0) -- (6,3) (4,4) -- (0,1);
            \node at (3,2) (p1) {\footnotesize $p_1$};
        \end{tikzpicture} & \text{if $p_2 = \epsilon$,} \\
        \begin{tikzpicture}[baseline={([yshift=-.5ex]current bounding box.center)}, scale = \scl]
            \fill[\colOne, opacity = \opac] (0,1) -- (0,0) -- (2,0) -- (6,3) -- (6,4) -- (4,4) -- (0,1);
            \fill[red, opacity = 0.1] (6,3) -- (8,3) -- (12,6) -- (12,7) -- (8,4) -- (6,4) -- (6,3);
            \draw (0,1) -- (0,0) -- (2,0) (6,3) -- (6,4) -- (4,4);
            \draw (2,0) -- (6,3) (4,4) -- (0,1);
            \node at (3,2) (p1) {\footnotesize $p_1$};
            \fill[\colTwo, opacity = \opac] (6,5) -- (6,4) -- (8,4) -- (12,7) -- (12,8) -- (10,8) -- (6,5);
            \draw (6,5) -- (6,4) -- (8,4) (12,7) -- (12,8) -- (10,8);
            \draw (8,4) -- (12,7) (10,8) -- (6,5);
            \node at (9,6) (p2) {\footnotesize $p_2$};
            \draw[red] (6,3) -- (8,3) -- (12,6) -- (12,7);
        \end{tikzpicture} & \text{otherwise.} \\
    \end{cases}
\end{equation*}
The red section means that we join $p_1$ and $p_2$ as shown and then add one cell at the bottom of each column of $p_2$, with this cell being given the colour of the column it is being added to. 

\textit{Norm:} If $p$ is a coloured parallelogram polyomino of perimeter $2n + 2$, then $\norm{p} = n + 1$.

\textit{(1,2)-magma:} The (1,2)-magma begins (sorting by norm) as follows:
\vspace{-.5em}
\begin{center}
    \def\scl{0.4}
    \begin{tabular}{| l l |}
    \hline
    \textit{Norm 1:} & 
    $\emptyset = \epsilon$ \\
    \hline
    \textit{Norm 2:} & 
    $\begin{tikzpicture}[baseline={([yshift=-.5ex]current bounding box.center)}, scale = \scl]
        \draw (0,0) rectangle (1,1);
        \dummyNodes[0.5][0][0.5][1]
    \end{tikzpicture} 
    = f(\epsilon)$, 
    \quad
    $\begin{tikzpicture}[baseline={([yshift=-.5ex]current bounding box.center)}, scale = \scl]
        \draw[fill = gray] (0,0) rectangle (1,1);
        \dummyNodes[0.5][0][0.5][1]
    \end{tikzpicture}
    = \epsilon \star \epsilon$ \\
    \hline
    \textit{Norm 3:} & 
    $\begin{tikzpicture}[baseline={([yshift=-.5ex]current bounding box.center)}, scale = \scl]
        \draw (0,0) rectangle (2,1);
        \draw (1,0) -- (1,1);
        \dummyNodes[0.5][0][0.5][1]
    \end{tikzpicture} 
    = f(f(\epsilon))$, 
    \quad
    $\begin{tikzpicture}[baseline={([yshift=-.5ex]current bounding box.center)}, scale = \scl]
        \fill[gray] (0,0) rectangle (1,1);
        \draw (0,0) rectangle (2,1);
        \draw (1,0) -- (1,1);
        \dummyNodes[0.5][0][0.5][1]
    \end{tikzpicture} 
    = f(\epsilon \star \epsilon)$, 
    \quad
    $\begin{tikzpicture}[baseline={([yshift=-.5ex]current bounding box.center)}, scale = \scl]
        \draw (0,0) rectangle (1,2);
        \draw (0,1) -- (1,1);
        \dummyNodes[0.5][-0.2][0.5][2]
    \end{tikzpicture} 
    = \epsilon \star f(\epsilon)$, \\
    &
    $\begin{tikzpicture}[baseline={([yshift=-.5ex]current bounding box.center)}, scale = \scl]
        \fill[gray] (0,0) rectangle (1,2);
        \draw (0,0) rectangle (1,2);
        \draw (0,1) -- (1,1);
        \dummyNodes[0.5][0][0.5][2]
    \end{tikzpicture}
    = \epsilon \star (\epsilon \star \epsilon)$, 
    \quad
    $\begin{tikzpicture}[baseline={([yshift=-.5ex]current bounding box.center)}, scale = \scl]
        \fill[gray] (1,0) rectangle (2,1);
        \draw (0,0) rectangle (2,1);
        \draw (1,0) -- (1,1);
        \dummyNodes[0.5][0][0.5][1]
    \end{tikzpicture}  
    = f(\epsilon) \star \epsilon$, 
    \quad
    $\begin{tikzpicture}[baseline={([yshift=-.5ex]current bounding box.center)}, scale = \scl]
        \fill[gray] (0,0) rectangle (2,1);
        \draw (0,0) rectangle (2,1);
        \draw (1,0) -- (1,1);
        \dummyNodes[0.5][-0.2][0.5][1.2]
    \end{tikzpicture} 
    = (\epsilon \star \epsilon) \star \epsilon$ \\
    \hline
    \end{tabular}
\end{center}
\end{schFamily}
 
    \pagebreak
    
\subsection{Fuss-Catalan Families} \label{appendix:FC}

\addcontentsline{toca}{section}{Fuss-Catalan families}

We present a number of Fuss-Catalan normed (3)-magmas for combinatorial families enumerated by the order 3 Fuss-Catalan numbers. We take the convention that each generator is called $\epsilon$ and that each ternary map is called $t$. Despite the same names being used for each \mbox{(3)-magma} presented, it is clear that these are all different maps and the generators are all different. 

\addcontentsline{toca}{subsection}{\fcFamRef{fcFamily:ternaryTrees}: Ternary trees}
\begin{fcFamily}
[Ternary trees \cite{AVAL20084660}] \label{fcFamily:ternaryTrees} 

\def \scl {0.15}

$T_n$ is the number of complete ternary trees with $2n + 1$ leaves. A complete ternary tree is a rooted tree such that every non-leaf node has three children.

\textit{Generator:} The generator is a single vertex:
\begin{equation*}
    \epsilon =
    \begin{tikzpicture}[baseline={([yshift=-.5ex]current bounding box.center)}, scale = \scl]
        \fill (0,0) \smlnd;
    \end{tikzpicture} 
\end{equation*}

\textit{Ternary map:} 
\begin{equation*}
    t \left( 
    \raisebox{-20pt}{
    \begin{tikzpicture}[baseline={([yshift=-5ex]current bounding box.center)}, scale = \scl]
        \draw[fill = \colOne, opacity = \opac] (0,0) -- (-5,-10) -- (5,-10) -- (0,0);
        \node at (0,-2.5) {\smlroot};
        \node at (0,-6) {\footnotesize $t_1$};
    \end{tikzpicture}
    , 
    \begin{tikzpicture}[baseline={([yshift=-5ex]current bounding box.center)}, scale = \scl]
        \draw[fill = \colTwo, opacity = \opac] (0,0) -- (-5,-10) -- (5,-10) -- (0,0);
        \node at (0,-2.5) {\smlroot};
        \node at (0,-6) {\footnotesize $t_2$};
    \end{tikzpicture}
    ,
    \begin{tikzpicture}[baseline={([yshift=-5ex]current bounding box.center)}, scale = \scl]
        \draw[fill = \colThree, opacity = \opac] (0,0) -- (-5,-10) -- (5,-10) -- (0,0);
        \node at (0,-2.5) {\smlroot};
        \node at (0,-6) {\footnotesize $t_3$};
    \end{tikzpicture}}
    \right)
    =
    \begin{tikzpicture}[baseline={([yshift=-1ex]current bounding box.center)}, scale = \scl]
        \draw[fill = \colOne, opacity = \opac] (-11,-5) -- (-16,-15) -- (-6,-15) -- (-11,-5);
        \draw[fill = \colTwo, opacity = \opac] (0,-5) -- (-5,-15) -- (5,-15) -- (0,-5);
        \draw[fill = \colThree, opacity = \opac] (11,-5) -- (16,-15) -- (6,-15) -- (11,-5);
        \draw (0,0) -- (0,-5) (0,0) -- (-11,-5) (0,0) -- (11,-5);
        \node at (0,0) {\smlroot};
        \fill (-11,-5) \smlnd (0,-5) \smlnd (11,-5) \smlnd;
        \node at (-11,-11) {\footnotesize $t_1$};
        \node at (0,-11) {\footnotesize $t_2$};
        \node at (11,-11) {\footnotesize $t_3$};
        \dummyNodes[0][0.2][0][-15.2]
    \end{tikzpicture}
\end{equation*}

\textit{Norm:} If $t$ is a ternary tree with $n$ leaves, then $\norm{t} = n$. Note that we consider the only node in $\epsilon$ to be a leaf node, and hence $\norm{\epsilon} = 1$.

\textit{(3)-magma:} The (3)-magma begins (sorting by norm) as follows:
\vspace{-.5em}
\begin{center}
    \def\scl{0.6}
    \begin{tabular}{| l l |}
    \hline
    \textit{Norm 1:} &
    $\begin{tikzpicture}[baseline={([yshift=-.5ex]current bounding box.center)}, scale = \scl]
        \fill (0,0) \smlnd;
        \dummyNodes[0][0.2][0][-0.2]
    \end{tikzpicture} 
    = \epsilon$ \\
    \hline
    \textit{Norm 3:} & 
    $\begin{tikzpicture}[baseline={([yshift=-.5ex]current bounding box.center)}, scale = \scl]
        \node at (0,0) {\smlroot};
        \draw (0,0) -- (-1,-1) (0,0) -- (0,-1) (0,0) -- (1,-1);
        \fill (-1,-1) \smlnd  (0,-1) \smlnd (1,-1) \smlnd;
        \dummyNodes[0][0.2][0][-1.2]
    \end{tikzpicture} 
    = t(\epsilon, \epsilon, \epsilon)$ \\
    \hline
    \textit{Norm 5:} &
    $\begin{tikzpicture}[baseline={([yshift=-.5ex]current bounding box.center)}, scale = \scl]
        \node at (0,0) {\smlroot};
        \draw (0,0) -- (-1,-1) (0,0) -- (0,-1) (0,0) -- (1,-1);
        \fill (-1,-1) \smlnd  (0,-1) \smlnd (1,-1) \smlnd;
        \draw (-1,-1) -- (-2,-2) (-1,-1) -- (-1,-2) (-1,-1) -- (0,-2);
        \fill (-2,-2) \smlnd  (-1,-2) \smlnd (0,-2) \smlnd;
        \dummyNodes[0][0.2][0][-2.2]
    \end{tikzpicture} 
    = t(t(\epsilon, \epsilon, \epsilon), \epsilon, \epsilon)$,
    \quad
    $\begin{tikzpicture}[baseline={([yshift=-.5ex]current bounding box.center)}, scale = \scl]
        \node at (0,0) {\smlroot};
        \draw (0,0) -- (-1,-1) (0,0) -- (0,-1) (0,0) -- (1,-1);
        \fill (-1,-1) \smlnd  (0,-1) \smlnd (1,-1) \smlnd;
        \draw (0,-1) -- (-1,-2) (0,-1) -- (0,-2) (0,-1) -- (1,-2);
        \fill (-1,-2) \smlnd  (0,-2) \smlnd (1,-2) \smlnd;
        \dummyNodes[0][0.2][0][-2.2]
    \end{tikzpicture} 
    = t(\epsilon, t(\epsilon, \epsilon, \epsilon), \epsilon)$, \\
    &
    $\begin{tikzpicture}[baseline={([yshift=-.5ex]current bounding box.center)}, scale = \scl]
        \node at (0,0) {\smlroot};
        \draw (0,0) -- (-1,-1) (0,0) -- (0,-1) (0,0) -- (1,-1);
        \fill (-1,-1) \smlnd  (0,-1) \smlnd (1,-1) \smlnd;
        \draw (1,-1) -- (0,-2) (1,-1) -- (1,-2) (1,-1) -- (2,-2);
        \fill (0,-2) \smlnd  (1,-2) \smlnd (2,-2) \smlnd;
        \dummyNodes[0][0.2][0][-2.2]
    \end{tikzpicture}  
    = t(\epsilon, \epsilon, t(\epsilon, \epsilon, \epsilon))$ \\
    \hline
    \end{tabular}
\end{center}
\end{fcFamily}

\pagebreak

\addcontentsline{toca}{subsection}{\fcFamRef{fcFamily:evenTrees}: Even trees}
\begin{fcFamily}
[Even trees \cite{FussCatalanOEIS}] \label{fcFamily:evenTrees} 

\def\scl{0.15}

$T_n$ is the number of rooted plane trees with $2n$ edges, where every vertex has even out-degree.

\textit{Generator:} The generator is a single vertex:
\begin{equation*}
    \epsilon =
    \begin{tikzpicture}[baseline={([yshift=-.5ex]current bounding box.center)}, scale = \scl]
        \fill (0,0) \smlnd;
    \end{tikzpicture} 
\end{equation*}

\textit{Ternary map:} 
\begin{equation*}
    t \left( 
    \raisebox{-20pt}{
    \begin{tikzpicture}[baseline={([yshift=-5ex]current bounding box.center)}, scale = \scl]
        \draw[fill = \colOne, opacity = \opac] (0,0) -- (-5,-10) -- (5,-10) -- (0,0);
        \node at (0,-2.5) {\smlroot};
        \node at (0,-6) {\footnotesize $t_1$};
    \end{tikzpicture}
    , 
    \begin{tikzpicture}[baseline={([yshift=-5ex]current bounding box.center)}, scale = \scl]
        \draw[fill = \colTwo, opacity = \opac] (0,0) -- (-5,-10) -- (5,-10) -- (0,0);
        \node at (0,-2.5) {\smlroot};
        \node at (0,-6) {\footnotesize $t_2$};
    \end{tikzpicture}
    ,
    \begin{tikzpicture}[baseline={([yshift=-5ex]current bounding box.center)}, scale = \scl]
        \draw[fill = \colThree, opacity = \opac] (0,0) -- (-5,-10) -- (5,-10) -- (0,0);
        \node at (0,-2.5) {\smlroot};
        \node at (0,-6) {\footnotesize $t_3$};
    \end{tikzpicture}}
    \right)
    =
    \begin{tikzpicture}[baseline={([yshift=-1ex]current bounding box.center)}, scale = \scl]
        \draw[fill = \colOne, opacity = \opac] (-9,-4) -- (-14,-14) -- (-4,-14) -- (-9,-4);
        \draw[fill = \colTwo, opacity = \opac] (0,0) -- (-5,-10) -- (5,-10) -- (0,0);
        \draw[fill = \colThree, opacity = \opac] (9,-4) -- (14,-14) -- (4,-14) -- (9,-4);
        \node at (0,0) {\root};
        \draw (0,0) -- (-9,-4) (0,0) -- (9,-4);
        \fill (-9,-4) \nd (9,-4) \nd;
        \node at (-9,-10) {\footnotesize $t_1$};
        \node at (0,-6) {\footnotesize $t_2$};
        \node at (9,-10) {\footnotesize $t_3$};
        \dummyNodes[0][0.2][0][-14.2]
    \end{tikzpicture}
\end{equation*}

\textit{Norm:} If $t$ is a tree with $n$ edges, then $\norm{t} = n + 1$.

\textit{(3)-magma:} The (3)-magma begins (sorting by norm) as follows:
\vspace{-.5em}
\begin{center}
    \def\scl{0.6}
    \begin{tabular}{| l l |}
    \hline
    \textit{Norm 1:} &
    $\begin{tikzpicture}[baseline={([yshift=-.5ex]current bounding box.center)}, scale = \scl]
        \fill (0,0) \smlnd;
    \end{tikzpicture} 
    = \epsilon$ \\
    \hline
    \textit{Norm 3:} &
    $\begin{tikzpicture}[baseline={([yshift=-.5ex]current bounding box.center)}, scale = \scl]
        \node at (0,0) (root) {\smlroot};
        \draw (0,0) -- (-1,-1) (0,0) -- (1,-1);
        \fill (-1,-1) \smlnd (1,-1) \smlnd;
        \dummyNodes[0][0.2][0][-1.2]
    \end{tikzpicture}
    = t(\epsilon, \epsilon, \epsilon)$ \\
    \hline
    \textit{Norm 5:} &
    $\begin{tikzpicture}[baseline={([yshift=-.5ex]current bounding box.center)}, scale = \scl]
        \node at (0,0) (root) {\smlroot};
        \draw (0,0) -- (-1,-1) (0,0) -- (1,-1) (-1,-1) -- (-2,-2) (-1,-1) -- (0,-2);
        \fill (-1,-1) \smlnd (1,-1) \smlnd (-2,-2) \smlnd (0,-2) \smlnd;
        \dummyNodes[0][0.2][0][-2.2]
    \end{tikzpicture} 
    = t(t(\epsilon, \epsilon, \epsilon), \epsilon, \epsilon)$,
    \quad
    $\begin{tikzpicture}[baseline={([yshift=-.5ex]current bounding box.center)}, scale = \scl]
        \node at (0,0) (root) {\smlroot};
        \draw (0,0) -- (-1.5,-1) (0,0) -- (-0.5,-1) (0,0) -- (0.5,-1) (0,0) -- (1.5,-1);
        \fill (-1.5,-1) \smlnd (-0.5,-1) \smlnd (0.5,-1) \smlnd (1.5,-1) \smlnd;
        \dummyNodes[0][0.2][0][-1.2]
    \end{tikzpicture} 
    = t(\epsilon, t(\epsilon, \epsilon, \epsilon), \epsilon)$, \\
    &
    $\begin{tikzpicture}[baseline={([yshift=-.5ex]current bounding box.center)}, scale = \scl]
        \node at (0,0) (root) {\smlroot};
        \draw (0,0) -- (-1,-1) (0,0) -- (1,-1) (1,-1) -- (2,-2) (1,-1) -- (0,-2);
        \fill (-1,-1) \smlnd (1,-1) \smlnd (2,-2) \smlnd (0,-2) \smlnd;
        \dummyNodes[0][0.2][0][-2.2]
    \end{tikzpicture}  
    = t(\epsilon, \epsilon, t(\epsilon, \epsilon, \epsilon))$ \\
    \hline
    \end{tabular}
\end{center}
\end{fcFamily}

\addcontentsline{toca}{subsection}{\fcFamRef{fcFamily:partitions}: Non-crossing partitions with blocks of even size}
\begin{fcFamily}
[Non-crossing partitions with blocks of even size \cite{FussCatalanOEIS}] \label{fcFamily:partitions} 

\def\scl{0.05}

$T_n$ is the number of non-crossing partitions of $\{ 1, 2, \hdots, 2n \}$ with all blocks of even size. We represent such a partition schematically by $n$ marked points on a circle, with a chord connecting points which are in the same block.

\textit{Generator:} The generator is the trivially empty partition of the empty set: 
\begin{equation*}
    \epsilon = 
    \begin{tikzpicture}[baseline={([yshift=-.5ex]current bounding box.center)}, scale = \scl]
        \draw (0,0) circle [radius = 5];
        \dummyNodes[0][-6][0][6]
    \end{tikzpicture}
\end{equation*}

\def\scl{0.1}
\textit{Ternary map:} 
\begin{equation*}
    \def\numPoints{5}
    t \left(
    \raisebox{-15pt}{
    \begin{tikzpicture}[baseline={([yshift=-4ex]current bounding box.center)}, scale = \scl]
        \blankPartitionSetup
        \node at (180:7) (11) {\scriptsize $1$};
        \node at (108:7.3) (22) {\scriptsize $2$};
        \node at (-108:7) (n) {\scriptsize $2n_1$};
        \node[rotate = 90] at (0:6) {\scriptsize $\cdots$};
        \fill[\colOne, opacity = \opac] (0,0) circle [radius = 5];
        \fill (1) \halfnd (2) \halfnd (5) \halfnd;
        \node at (0,0) {\footnotesize $p_1$};
    \end{tikzpicture}\spacecomma
    \begin{tikzpicture}[baseline={([yshift=-4ex]current bounding box.center)}, scale = \scl]
        \blankPartitionSetup
        \node at (180:7) (11) {\scriptsize $1$};
        \node at (108:7.3) (22) {\scriptsize $2$};
        \node at (-108:7) (n) {\scriptsize $2n_2$};
        \node[rotate = 90] at (0:6) {\scriptsize $\cdots$};
        \fill[\colTwo, opacity = \opac] (0,0) circle [radius = 5];
        \fill (1) \halfnd (2) \halfnd (5) \halfnd;
        \node at (0,0) {\footnotesize $p_2$};
    \end{tikzpicture}\spacecomma
    \begin{tikzpicture}[baseline={([yshift=-4ex]current bounding box.center)}, scale = \scl]
        \blankPartitionSetup
        \fill (1) \nd (2) \nd (5) \nd;
        \node at (180:7) (11) {\scriptsize $1$};
        \node at (108:7.3) (22) {\scriptsize $2$};
        \node at (-108:7) (n) {\scriptsize $2n_3$};
        \node[rotate = 90] at (0:6) {\scriptsize $\cdots$};
        \fill[\colThree, opacity = \opac] (0,0) circle [radius = 5];
        \fill (1) \halfnd (2) \halfnd (5) \halfnd;
        \node at (0,0) {\footnotesize $p_3$};
    \end{tikzpicture}}
    \right)
    = \def\numPoints {11} \def\scl{0.25} 
    \begin{cases}
        \begin{tikzpicture}[baseline={([yshift=-.5ex]current bounding box.center)}, scale = \scl]
            \blankPartitionSetup
            \node at (180:6) {\scriptsize $1$};
            \node at (147:6) {\scriptsize $2$};
            \node[rotate = 10] at (97:5.5) {\scriptsize $\cdots$};
            \node at ([shift = {(2,0)}]48:5.5) {\scriptsize $2n_2 + 1$};
            \node at ([shift = {(2,0)}]15:5.5) {\scriptsize $2n_2 + 2$};
            \node at ([shift = {(2,0)}]-18:5.5) {\scriptsize $2n_2 + 3$};
            \node at ([shift = {(2.3,0)}]-51:5.5) {\scriptsize $2n_2 + 4$};
            \node[rotate = -10] at (-100.5:5.5) {\scriptsize $\cdots$};
            \node at ([shift = {(-2.7,-0.8)}]213:5.5) {\scriptsize $\begin{aligned}2n_2 & + 2n_3 \\[-.5em] & + 2\end{aligned}$};
            \draw[dashed] (2) to [bend right = 50] (5) (7) to [bend right = 30] (11);
            \begin{scope}
                \clip (0,0) circle [radius = 5];
                \fill[\colTwo, opacity = \opac] (2) to [bend right = 50] (5) -- (7,7) -- (-7,7) -- (2);
            \end{scope}
            \begin{scope}
                \clip (0,0) circle [radius = 5];
                \fill[\colThree, opacity = \opac] (7) to [bend right = 30] (11) -- (-7,-7) -- (7,-7) -- (7);
            \end{scope}
            \fill (1) \halfnd (2) \halfnd (5) \halfnd (6) \halfnd (7) \halfnd (8) \halfnd (11) \halfnd;
            \draw (1) to [bend right = 10] (6);
            \node at (-0.5,3) {\footnotesize $p_2$};
            \node at (0.5,-2.75) {\footnotesize $p_3$};
        \end{tikzpicture}
        & \text{if $p_1 = \epsilon$,} \\
        \def\numPoints {17}
        \begin{tikzpicture}[baseline={([yshift=-.5ex]current bounding box.center)}, scale = \scl]
            \blankPartitionSetup
            \node at (180:5.7) {\scriptsize $1$};
            \node at (159:5.7) {\scriptsize $2$};
            \node[rotate = 40] at (127:5.5) {\scriptsize $\cdots$};
            \node at ([shift={(-1.5,0.3)}]90:5.5) {\scriptsize $2n_1$};
            \node at ([shift={(1,0.2)}]74:5.5) {\scriptsize $2n_1 + 1$};
            \node at ([shift={(2.2,0.2)}]53:5) {\scriptsize $2n_1 + 2$};
            \node at ([shift={(2.3,0)}]32:5) {\scriptsize $2n_1 + 3$};
            \node[rotate = 90] at (0:5.5) {\scriptsize $\cdots$};
            \node at ([shift={(3.3,0.5)}]-32:6) {\scriptsize $2n_1 + 2n_2 + 1$};
            \node at ([shift={(3.5,0.7)}]-53:6) {\scriptsize $2n_1 + 2n_2 + 2$};
            \node at ([shift={(3.3,0.2)}]-74:6) {\scriptsize $2n_1 + 2n_2 + 3$};
            \node at ([shift={(-3,0.2)}]-95:6) {\scriptsize $2n_1 + 2n_2 + 4$};
            \node[rotate = -40] at (-127:5.5) {\scriptsize $\cdots$};
            \node at ([shift={(-2.3,0)}]-159:6) {\scriptsize $\begin{aligned}2n_1 + 2n_2 \\[-.5em] + 2n_3 + 2\end{aligned}$};
            \draw[dashed] (1) to [bend right = 50] (5) (7) to [bend right = 30] (11) (13) to [bend right = 30] (17);
            \begin{scope}
                \clip (0,0) circle [radius = 5];
                \fill[\colOne, opacity = \opac] (1) to [bend right = 50] (5) -- (-7,7) -- (1);
            \end{scope}
            \begin{scope}
                \clip (0,0) circle [radius = 5];
                \fill[\colTwo, opacity = \opac] (7) to [bend right = 30] (11) -- (7,-7) -- (7,7) -- (7);
            \end{scope}
            \begin{scope}
                \clip (0,0) circle [radius = 5];
                \fill[\colThree, opacity = \opac] (13) to [bend right = 30] (17) -- (-7,-7) -- (7,-7) -- (13);
            \end{scope}
            \fill (1) \halfnd (2) \halfnd (5) \halfnd (6) \halfnd (7) \halfnd (8) \halfnd (11) \halfnd (12) \halfnd (13) \halfnd (14) \halfnd (17) \halfnd;
            \draw (1) to [bend right = 45] (6) to [bend right = 20] (12) to [bend right = 20] (1);
            \node at (-2.65,2.4) {\footnotesize $p_1$};
            \node at (3.85,0.5) {\footnotesize $p_2$};
            \node at (-1.5,-3.5) {\footnotesize $p_3$};
        \end{tikzpicture}
        & \text{if $p_1 \neq \epsilon$.} \\
    \end{cases}
\end{equation*}
In the second case, the chord shown connecting node 1 with node $2n_1 + 1$ means that we add node $2n_1 + 1$ to the block containing node 1. Note that this will not always result in the chord shown.

\textit{Norm:} If $p$ is a partition of $\{ 1, 2, \hdots, 2n \}$, then $\norm{p} = 2n + 1$.

\textit{(3)-magma:} The (3)-magma begins (sorting by norm) as follows:
\vspace{-.5em}
\begin{center}
    \begin{tabular}{| l l |}
    \hline
    \textit{Norm 1:} &  \def\scl{0.05}
    \ $\begin{tikzpicture}[baseline={([yshift=-.5ex]current bounding box.center)}, scale = \scl]
        \draw (0,0) circle [radius = 5];
        \dummyNodes[0][-6][0][6]
    \end{tikzpicture} = \epsilon$ \\
    \hline
    \textit{Norm 3:} & \def\numPoints{2} \def\scl{0.07}
    $\begin{tikzpicture}[baseline={([yshift=-.5ex]current bounding box.center)}, scale = \scl]
        \clip (-8,-7) rectangle (8,7);
        \partitionSetup
        \draw (1) to (2);
    \end{tikzpicture} = t(\epsilon, \epsilon, \epsilon)$ \\
    \hline
    \textit{Norm 5:} & \def\numPoints {4} \def\scl{0.09}
    $\begin{tikzpicture}[baseline={([yshift=-.5ex]current bounding box.center)}, scale = \scl]
        \clip (-8,-9) rectangle (8,9);
        \partitionSetup
        \partitionDrawArc[1][2]
        \partitionDrawArc[2][3]
        \partitionDrawArc[3][4]
        \partitionDrawArc[4][1]
    \end{tikzpicture} = t(t(\epsilon, \epsilon, \epsilon), \epsilon, \epsilon)$, 
    \quad
    $\begin{tikzpicture}[baseline={([yshift=-.5ex]current bounding box.center)}, scale = \scl]
        \clip (-8,-9) rectangle (8,9);
        \partitionSetup
        \partitionDrawArc[2][3]
        \partitionDrawArc[4][1]
    \end{tikzpicture} = t(\epsilon, t(\epsilon, \epsilon, \epsilon), \epsilon)$, \\
    & \def\numPoints{4} \def\scl{0.09}
    $\begin{tikzpicture}[baseline={([yshift=-.5ex]current bounding box.center)}, scale = \scl]
        \clip (-8,-9) rectangle (8,9);
        \partitionSetup
        \partitionDrawArc[1][2]
        \partitionDrawArc[3][4]
    \end{tikzpicture} = t(\epsilon, \epsilon, t(\epsilon, \epsilon, \epsilon))$
    \\
    \hline
    \end{tabular}
\end{center}
\end{fcFamily}

\addcontentsline{toca}{subsection}{\fcFamRef{fcFamily:quadrillages}: Quadrillages}
\begin{fcFamily}
[Quadrillages \cite{Baryshnikov2001}] \label{fcFamily:quadrillages} 

\def\scl{0.04}
\def\numPoints{2}

$T_n$ is the number of quadrillages of a $(2n + 2)$-gon. A quadrillage is a dissection of a polygon such that all sub-objects have four sides. We draw all polygons with a marked side which is denoted by a thick black line. This fixes the orientation of the polygon and distinguishes polygons which only differ by a rotation.

\textit{Generator:} The generator is taken to be a single edge, i.e.\ a 2-gon:
\begin{equation*}
    \epsilon = \begin{tikzpicture}[baseline={([yshift=-.7ex]current bounding box.center)}, scale = \scl]
    \polygonSetup
\end{tikzpicture}
\end{equation*}

\def\scl{0.12}
\textit{Ternary map:} 
\begin{equation*} \def\numPoints{8}
    t \left(
    \raisebox{-16pt}{
    \begin{tikzpicture}[baseline={([yshift=-4.5ex]current bounding box.base)}, scale = \scl]
        \blankPolygonSetup
        \fill[\colOne, opacity = \opac] (8) -- (1) (-67.5:5) arc (-67.5:247.5:5);
        \node at (0,0) {\footnotesize $q_1$};
        \node at (-90:6) {\scriptsize $e_1$};
        \blankPolygonSetup
    \end{tikzpicture},
    \begin{tikzpicture}[baseline={([yshift=-4.5ex]current bounding box.base)}, scale = \scl]
        \blankPolygonSetup
        \fill[\colTwo, opacity = \opac] (8) -- (1) (-67.5:5) arc (-67.5:247.5:5);
        \node at (0,0) {\footnotesize $q_2$};
        \node at (-90:6) {\scriptsize $e_2$};
        \blankPolygonSetup
    \end{tikzpicture},
    \begin{tikzpicture}[baseline={([yshift=-4.5ex]current bounding box.base)}, scale = \scl]
        \blankPolygonSetup
        \fill[\colThree, opacity = \opac] (8) -- (1) (-67.5:5) arc (-67.5:247.5:5);
        \node at (0,0) {\footnotesize $q_3$};
        \node at (-90:6) {\scriptsize $e_3$};
        \blankPolygonSetup
    \end{tikzpicture}}
    \right)
    = \def\numPoints{4} \def\scl{0.15}
    \begin{tikzpicture}[baseline={([yshift=-1ex]current bounding box.center)}, scale = \scl]
        \polygonSetup
        \draw[dashed] ([shift={(-7.07,0)}]45:5) arc (45:315:5);
         \begin{scope}
            \clip ([shift={(-7.07,0)}]45:5) arc (45:315:5);;
            \fill[\colOne, opacity = \opac] (3) -- (4) -- (0,-7) -- (-12,-7) -- (-12,7) -- (0,7) -- (3);
        \end{scope}
        \node at (-7.07,0) {\footnotesize $q_1$};
        
        \draw[dashed] ([shift={(0,7.07)}]-45:5) arc (-45:225:5);
        \begin{scope}
            \clip ([shift={(0,7.07)}]-45:5) arc (-45:225:5);
            \fill[\colTwo, opacity = \opac] (2) -- (3) -- (-7,0) -- (-7,12) -- (7,12) -- (7,0) -- (2);
        \end{scope}
        \node at (0,7.07) {\footnotesize $q_2$};
        
        \draw[dashed] ([shift={(7.07,0)}]-135:5) arc (-135:135:5);
        \begin{scope}
            \clip ([shift={(7.07,0)}]-135:5) arc (-135:135:5);
            \fill[\colThree, opacity = \opac] (2) -- (1) -- (0,-7) -- (12,-7) -- (12,7) -- (0,7) -- (2);
        \end{scope}
        \node at (7.07,0) {\footnotesize $q_3$};
        
        \node at (210:2.5) {\scriptsize $e_1$};
        \node at (90:2.3) {\scriptsize $e_2$};
        \node at (-30:2.5) {\scriptsize $e_3$};
    \end{tikzpicture}
\end{equation*}

\textit{Norm:} If $q$ is a quadrillage of a $(2n + 2)$-gon, then $\norm{q} = 2n + 1$.

\textit{(3)-magma:} The (3)-magma begins (sorting by norm) as follows:
\vspace{-.5em}
\begin{center}
    \begin{tabular}{| l l |}
    \hline
    \textit{Norm 1:} & \def\numPoints{2} \def\scl{0.04}
    $\begin{tikzpicture}[baseline={([yshift=-.7ex]current bounding box.center)}, scale = \scl]
        \polygonSetup
    \end{tikzpicture} \empty = \epsilon$ \\
    \hline
    \textit{Norm 3:} & \def\numPoints{4} \def\scl{0.08}
    $\begin{tikzpicture}[baseline={([yshift=-.5ex]current bounding box.center)}, scale = \scl]
        \polygonSetup
    \end{tikzpicture} 
    = t(\epsilon, \epsilon, \epsilon) $
    \\
    \hline
    \textit{Norm 5:} & \def\numPoints{6} \def\scl{0.1}
    $\begin{tikzpicture}[baseline={([yshift=-.5ex]current bounding box.center)}, scale = \scl]
        \polygonSetup
        \dissectionArc[3][6]
    \end{tikzpicture}
    = t(t(\epsilon, \epsilon, \epsilon), \epsilon, \epsilon)$, 
    \quad
    $\begin{tikzpicture}[baseline={([yshift=-.5ex]current bounding box.center)}, scale = \scl]
        \polygonSetup
        \dissectionArc[2][5]
    \end{tikzpicture}
    = t(\epsilon, t(\epsilon, \epsilon, \epsilon), \epsilon)$, \\
    & \def\numPoints{6} \def\scl{0.1}
    $\begin{tikzpicture}[baseline={([yshift=-.5ex]current bounding box.center)}, scale = \scl]
        \polygonSetup
        \dissectionArc[1][4]
    \end{tikzpicture}
    = t(\epsilon, \epsilon, t(\epsilon, \epsilon, \epsilon))$
    \\ \hline
    \end{tabular}
\end{center}
\end{fcFamily}

\addcontentsline{toca}{subsection}{\fcFamRef{fcFamily:polygonDissections}: Polygon dissection}
\begin{fcFamily}
[Polygon dissection \cite{FussCatalanOEIS}] \label{fcFamily:polygonDissections}

\def\scl{0.04}
\def\numPoints{2}

$T_n$ is the number of dissections of some convex polygon by non-intersecting chords into polygons with an odd number of sides and having a total number of $2n + 1$ edges (sides and diagonals). We draw all polygons with a marked side which is denoted by a thick black line. This fixes the orientation of the polygon and distinguishes polygons which only differ by a rotation.

\textit{Generator:} The generator is taken to be a single edge: 
\begin{equation*}
    \epsilon = 
    \begin{tikzpicture}[baseline={([yshift=-.7ex]current bounding box.center)}, scale = \scl]
        \polygonSetup
    \end{tikzpicture}
\end{equation*}

\def\scl{0.12}
\textit{Ternary map:} \def\numPoints{8}
\begin{equation*}
    t \left(
    \raisebox{-16pt}{
    \begin{tikzpicture}[baseline={([yshift=-4.5ex]current bounding box.base)}, scale = \scl]
        \blankPolygonSetup
        \fill[\colOne, opacity = \opac] (8) -- (1) (-67.5:5) arc (-67.5:247.5:5);
        \node at (0,0) {\footnotesize $p_1$};
        \node at (-90:6) {\scriptsize $e_1$};
    \end{tikzpicture},
    \begin{tikzpicture}[baseline={([yshift=-4.5ex]current bounding box.base)}, scale = \scl]
        \blankPolygonSetup
        \fill[\colTwo, opacity = \opac] (8) -- (1) (-67.5:5) arc (-67.5:247.5:5);
        \node at (0,0) {\footnotesize $p_2$};
        \node at (-90:6) {\scriptsize $e_2$};
    \end{tikzpicture},
    \begin{tikzpicture}[baseline={([yshift=-4.5ex]current bounding box.base)}, scale = \scl]
        \blankPolygonSetup
        \fill[\colThree, opacity = \opac] (8) -- (1) (-67.5:5) arc (-67.5:247.5:5);
        \node at (0,0) {\footnotesize $p_3$};
        \node at (-90:6) {\scriptsize $e_3$};
    \end{tikzpicture}}
    \right)
    = \def\numPoints{4} \def\scl{0.15}
    \begin{tikzpicture}[baseline={([yshift=-1ex]current bounding box.center)}, scale = \scl]
            \trulyBlankPolygonSetup
            \draw (1) -- (2) -- (3);
            
            \draw[dashed] ([shift={(-7.07,0)}]45:5) arc (45:315:5);
            \draw[dotted,line width=2pt] (3) to [bend right = 50] (4);
            \begin{scope}
                \clip ([shift={(-7.07,0)}]45:5) arc (45:315:5);;
                \fill[\colOne, opacity = \opac] (3) to [bend right = 50] (4) -- (0,-7) -- (-12,-7) -- (-12,7) -- (0,7) -- (3);
            \end{scope}
            \node at (-7.8,0) {\footnotesize $p_1$};
            
            \draw[dashed] ([shift={(0,7.07)}]-45:5) arc (-45:225:5);
            \begin{scope}
                \clip ([shift={(0,7.07)}]-45:5) arc (-45:225:5);
                \fill[\colTwo, opacity = \opac] (2) -- (3) -- (-7,0) -- (-7,12) -- (7,12) -- (7,0) -- (2);
            \end{scope}
            \node at (0,7.07) {\footnotesize $p_2$};
            
            \draw[dashed] ([shift={(7.07,0)}]-135:5) arc (-135:135:5);
            \begin{scope}
                \clip ([shift={(7.07,0)}]-135:5) arc (-135:135:5);
                \fill[\colThree, opacity = \opac] (2) -- (1) -- (0,-7) -- (12,-7) -- (12,7) -- (0,7) -- (2);
            \end{scope}
            \node at (7.07,0) {\footnotesize $p_3$};
            
            \node at (-90:4.7) {\scriptsize $e_1$};
            \node at (120:2.5) {\scriptsize $e_2$};
            \node at (-30:2.5) {\scriptsize $e_3$};
        \end{tikzpicture}
\end{equation*}

In the above figure, the dotted internal line for $p_1$ denotes the fact that we ``open up'' the polygon $p_1$ by disconnecting it at the vertex on the right hand end of the marked thick edge. We do this in such a way that any diagonals adjacent to this vertex remain connected to the endpoint of the non-thick edge. For example, we ``open up'' the following polygon dissection as shown:
\begin{equation*} \def\scl{0.15}
    \begin{tikzpicture}[baseline={([yshift=-1ex]current bounding box.center)}, scale = \scl]
        \def\numPoints{8}
        \polygonSetup
        \dissectionArc[1][3]
        \dissectionArc[1][5]
    \end{tikzpicture}
    \quad \longrightarrow \quad
    \begin{tikzpicture}[baseline={([yshift=-1ex]current bounding box.center)}, scale = \scl]
        \def\numPoints{10}
        \trulyBlankPolygonSetup
        \draw (3) -- (4) -- (5) -- (6) -- (7) -- (8) -- (9) -- (10);
        \dissectionArc[3][5]
        \dissectionArc[3][7]
    \end{tikzpicture}
\end{equation*}

\def\scl{0.1}
An example of this map is the following:
\begin{align*}
    t \left(
    \raisebox{-7pt}{
    \begin{tikzpicture}[baseline={([yshift=-2ex]current bounding box.base)}, scale = \scl]
        \def\numPoints{6}
        \polygonSetup
        \dissectionArc[1][5]
    \end{tikzpicture}, \
    \begin{tikzpicture}[baseline={([yshift=-1ex]current bounding box.base)}, scale = \scl]
        \def\numPoints{3}
        \polygonSetup
    \end{tikzpicture}\, , \
    \begin{tikzpicture}[baseline={([yshift=-1.5ex]current bounding box.base)}, scale = \scl]
        \def\numPoints{4}
        \polygonSetup
        \dissectionArc[2][4]
    \end{tikzpicture}}
    \right)
    & =
    t \left(
    \raisebox{-7pt}{
    \begin{tikzpicture}[baseline={([yshift=-2ex]current bounding box.base)}, scale = \scl]
        \def\numPoints{8}
        \trulyBlankPolygonSetup
        \draw (3) -- (4) -- (5) -- (6) -- (7) -- (8);
        \dissectionArc[3][7]
    \end{tikzpicture}, \
    \begin{tikzpicture}[baseline={([yshift=-1ex]current bounding box.base)}, scale = \scl]
        \def\numPoints{3}
        \polygonSetup
    \end{tikzpicture}\, , \
    \begin{tikzpicture}[baseline={([yshift=-1.5ex]current bounding box.base)}, scale = \scl]
        \def\numPoints{4}
        \polygonSetup
        \dissectionArc[2][4]
    \end{tikzpicture}}
    \right) = \def\scl{0.18}
    \begin{tikzpicture}[baseline={([yshift=-1.5ex]current bounding box.center)}, scale = \scl]
        \def\numPoints{11}
        \polygonSetup
        \dissectionArc[1][4]
        \dissectionArc[2][4]
        \dissectionArc[4][6]
        \dissectionArc[6][10]
    \end{tikzpicture}
\end{align*}

\textit{Norm:} If $p$ is a polygon dissection with a total of $n$ of edges (sides and diagonals), then $\norm{p} = n$.

\textit{(3)-magma:} The (3)-magma begins (sorting by norm) as follows:
\vspace{-.5em}
\begin{center}
    \def\scl{0.04}
    \begin{tabular}{| l l |}
    \hline
    \textit{Norm 1:} & \def\numPoints{2}
    $\begin{tikzpicture}[baseline={([yshift=-.7ex]current bounding box.center)}, scale = \scl]
        \polygonSetup
    \end{tikzpicture}
    = \epsilon$\\
    \hline
    \textit{Norm 3:} & \def\numPoints{3} \def\scl{0.07}
    $\begin{tikzpicture}[baseline={([yshift=-1ex]current bounding box.center)}, scale = \scl]
        \polygonSetup
    \end{tikzpicture}
    = t(\epsilon, \epsilon, \epsilon)$ \\
    \hline
    \textit{Norm 5:} & \def\numPoints{5} \def\scl{0.1}
    $\begin{tikzpicture}[baseline={([yshift=-1ex]current bounding box.center)}, scale = \scl]
        \polygonSetup
    \end{tikzpicture}
    = t(t(\epsilon, \epsilon, \epsilon), \epsilon, \epsilon)$,
    \quad \def\numPoints{4}
    $\begin{tikzpicture}[baseline={([yshift=-1ex]current bounding box.center)}, scale = \scl]
        \polygonSetup
        \dissectionArc[2][4]
    \end{tikzpicture}
    = t(\epsilon, t(\epsilon, \epsilon, \epsilon), \epsilon)$,
    \\ & \def\scl{0.1} \def\numPoints{4}
    $\begin{tikzpicture}[baseline={([yshift=-1ex]current bounding box.center)}, scale = \scl]
        \polygonSetup
        \dissectionArc[1][3]
    \end{tikzpicture}
    = t(\epsilon, \epsilon, t(\epsilon, \epsilon, \epsilon))$
    \\
    \hline
    \end{tabular}
\end{center}
\end{fcFamily}

\addcontentsline{toca}{subsection}{\fcFamRef{fcFamily:latticePaths}: Lattice paths}
\begin{fcFamily}
[Lattice paths \cite{FussCatalanLin}] \label{fcFamily:latticePaths} 

\def\scl{0.4}

$T_n$ is the number of lattice paths from $(0,0)$ to $(n, 2n)$ consisting of $n$ East steps $(1,0)$ and $2n$ North steps $(0,1)$ that lie weakly below the line $y = 2x$. The line $y = 2x$ is shown as a dashed line in the schematic diagram defining the map.

\textit{Generator:} The empty path which we represent by a single vertex:
\begin{equation*}
    \epsilon = 
    \begin{tikzpicture}[baseline={([yshift=-.7ex]current bounding box.center)}, scale = \scl]
        \fill (0,0) \smlnd;
    \end{tikzpicture}
\end{equation*}

\vbox{
\textit{Ternary map:}
\vspace{-1em}
\begin{align*}
    t \left(
    \raisebox{-15pt}{\!\!
    \begin{tikzpicture}[baseline={([yshift=-3ex]current bounding box.center)}, scale = \scl]
        \clip (0,-1) rectangle (3.25,4.25);
        \draw[fill = \colOne, opacity = \opac] (2,4) -- (0,0) to [bend right = 90, looseness = 1.5] (2,4);
        \node at (1.9,1.75) {\footnotesize $p_1$};
    \end{tikzpicture}\!,
    \begin{tikzpicture}[baseline={([yshift=-3ex]current bounding box.center)}, scale = \scl]
        \clip (0,-1) rectangle (3.25,4.25);
        \draw[fill = \colTwo, opacity = \opac] (2,4) -- (0,0) to [bend right = 90, looseness = 1.5] (2,4);
        \node at (1.9,1.75) {\footnotesize $p_2$};
    \end{tikzpicture}\!,
    \begin{tikzpicture}[baseline={([yshift=-3ex]current bounding box.center)}, scale = \scl]
        \clip (0,-1) rectangle (3.25,4.25);
        \draw[fill = \colThree, opacity = \opac] (2,4) -- (0,0) to [bend right = 90, looseness = 1.5] (2,4);
        \node at (1.9,1.75) {\footnotesize $p_3$};
    \end{tikzpicture}}
    \right)
    \ = \
    \begin{tikzpicture}[baseline={([yshift=-.5ex]current bounding box.center)}, scale = \scl]
        \draw[dashed] (-0.5,-1) -- (8,16);
        \draw[fill = \colOne, opacity = \opac] (2,4) -- (0,0) to [bend right = 90, looseness = 1.5] (2,4);
        \node at (1.9,1.75) {\footnotesize $p_1$};
        \draw[thick] (2,4) -- (3.5,4);
        \draw[fill = \colTwo, opacity = \opac] (5.5,8) -- (3.5,4) to [bend right = 90, looseness = 1.5] (5.5,8);
        \node at (5.4,5.75) {\footnotesize $p_2$};
        \draw[thick] (5.5,8) -- (5.5,9.5);
        \draw[fill = \colThree, opacity = \opac] (7.5,13.5) -- (5.5,9.5) to [bend right = 90, looseness = 1.5] (7.5,13.5);
        \node at (7.4,11.25) {\footnotesize $p_3$};
        \draw[thick] (7.5,13.5) -- (7.5,15);
    \end{tikzpicture}
\end{align*}
}

\textit{Norm:} If $p$ is a path from $(0,0)$ to $(n,2n)$, then $\norm{p} = 2n + 1$.

\textit{(3)-magma:} The (3)-magma begins (sorting by norm) as follows:
\vspace{-.5em}
\begin{center}
    \def\scl{0.42}
    \begin{tabular}{| l l |}
    \hline
    \textit{Norm 1:} &
    $\begin{tikzpicture}[baseline={([yshift=-.5ex]current bounding box.center)}, scale = \scl]
        \fill (0,0) circle (4pt);
    \end{tikzpicture} = \epsilon$ \\
    \hline
    \textit{Norm 3:} &
    $\begin{tikzpicture}[baseline={([yshift=-.5ex]current bounding box.center)}, scale = \scl]
        \grd[1][2]
        \draw[thick] (0,0) \schAc \schUp \schUp;
        \dummyNodes[1][-0.2][1][2.2]
    \end{tikzpicture}
    = t(\epsilon, \epsilon, \epsilon)$ \\
    \hline
    \textit{Norm 5:} &
    $\begin{tikzpicture}[baseline={([yshift=-.5ex]current bounding box.center)}, scale = \scl]
        \grd[2][4]
        \draw[thick] (0,0) \schAc \schUp \schUp \schAc \schUp \schUp;
        \dummyNodes[1][-0.2][1][4.2]
    \end{tikzpicture}
    = t(t(\epsilon, \epsilon, \epsilon), \epsilon, \epsilon)$,
    \quad
    $\begin{tikzpicture}[baseline={([yshift=-.5ex]current bounding box.center)}, scale = \scl]
        \grd[2][4]
        \draw[thick] (0,0) \schAc \schAc \schUp \schUp \schUp \schUp;
        \dummyNodes[1][-0.2][1][4.2]
    \end{tikzpicture}
    = t(\epsilon, t(\epsilon, \epsilon, \epsilon), \epsilon)$,
    \quad
    $\begin{tikzpicture}[baseline={([yshift=-.5ex]current bounding box.center)}, scale = \scl]
        \grd[2][4]
        \draw[thick] (0,0) \schAc \schUp \schAc \schUp \schUp \schUp;
        \dummyNodes[1][-0.2][1][4.2]
    \end{tikzpicture}
    = t(\epsilon, \epsilon, t(\epsilon, \epsilon, \epsilon))$\\
    \hline
    \end{tabular}
\end{center}
\end{fcFamily}

\addcontentsline{toca}{subsection}{\fcFamRef{fcFamily:twoDyckPaths}: 2-Dyck paths}
\begin{fcFamily}
[2-Dyck paths \cite{AVAL20084660}] \label{fcFamily:twoDyckPaths}

\def\scl{0.3}

$T_n$ is the number of paths from $(0,0)$ to $(3n,0)$ with steps $(1,1)$ and $(1,-2)$ which remain above the line $y = 0$.

\textit{Generator:} The empty path which we represent by a single vertex:
\begin{equation*}
    \epsilon = \begin{tikzpicture}[baseline={([yshift=-.5ex]current bounding box.center)}, scale = \scl]
    \fill (0,0) \smlnd;
\end{tikzpicture}
\end{equation*} 

\textit{Ternary map:}
\begin{align*}
    t \left(
    \raisebox{-8pt}{
    \begin{tikzpicture}[baseline={([yshift=-.5ex]current bounding box.base)}, scale = \scl]
        \draw[fill = \colOne, opacity = \opac] (6,0) -- (0,0) to [bend left = 90, looseness = 1.5] (6,0);
        \node at (3,1.3) {\footnotesize $p_1$};
        \node at (1,-0.3) {};
    \end{tikzpicture}\spacecomma
    \begin{tikzpicture}[baseline={([yshift=-.5ex]current bounding box.base)}, scale = \scl]
        \draw[fill = \colTwo, opacity = \opac] (6,0) -- (0,0) to [bend left = 90, looseness = 1.5] (6,0);
        \node at (3,1.3) {\footnotesize $p_2$};
        \node at (1,-0.3) {};
    \end{tikzpicture}\spacecomma 
    \begin{tikzpicture}[baseline={([yshift=-.5ex]current bounding box.base)}, scale = \scl]
        \draw[fill = \colThree, opacity = \opac] (6,0) -- (0,0) to [bend left = 90, looseness = 1.5] (6,0);
        \node at (3,1.3) {\footnotesize $p_3$};
        \node at (1,-0.3) {};
    \end{tikzpicture}}
    \right)
    \ = \
    \begin{tikzpicture}[baseline={([yshift=-3ex]current bounding box.center)}, scale = \scl]
        \draw[fill = \colOne, opacity = \opac] (6,0) -- (0,0) to [bend left = 90, looseness = 1.5] (6,0);
        \node at (3,1.3) {\footnotesize $p_1$};
        \draw[thick] (6,0) -- (7,1);
        \draw[fill = \colTwo, opacity = \opac] (13,1) -- (7,1) to [bend left = 90, looseness = 1.5] (13,1);
        \node at (10,2.3) {\footnotesize $p_2$};
        \draw[thick] (13,1) -- (14,2);
        \draw[fill = \colThree, opacity = \opac] (20,2) -- (14,2) to [bend left = 90, looseness = 1.5] (20,2);
        \node at (17,3.3) {\footnotesize $p_3$};
        \draw[thick] (20,2) -- (21,0);
    \end{tikzpicture}
\end{align*}

\textit{Norm:} If $p$ is a 2-Dyck path from $(0,0)$ to $(3n,0)$, then $\norm{p} = 2n + 1$.

\textit{(3)-magma:} The (3)-magma begins (sorting by norm) as follows:
\vspace{-.5em}
\begin{center}
    \def\scl{0.45}
    \begin{tabular}{| l l |}
    \hline
    \textit{Norm 1:} &
    $\begin{tikzpicture}[baseline={([yshift=-.5ex]current bounding box.center)}, scale = \scl]
        \fill (0,0) \smlnd;
    \end{tikzpicture} = \epsilon$ \\
    \hline
    \textit{Norm 3:} &
    $\begin{tikzpicture}[baseline={([yshift=-.5ex]current bounding box.center)}, scale = \scl]
        \grd[3][2]
        \draw[thick] (0,0) \dyckUp \dyckUp \dyckTwoDn;
        \dummyNodes[1][-0.2][1][2.2]
    \end{tikzpicture}
    = t(\epsilon, \epsilon, \epsilon)$ \\
    \hline
    \textit{Norm 5:} &
    $\begin{tikzpicture}[baseline={([yshift=-.5ex]current bounding box.center)}, scale = \scl]
        \grd[6][4]
        \draw[thick] (0,0) \dyckUp \dyckUp \dyckTwoDn \dyckUp \dyckUp \dyckTwoDn;
        \dummyNodes[1][-0.2][1][4.2]
    \end{tikzpicture}
    = t(t(\epsilon, \epsilon, \epsilon), \epsilon, \epsilon)$,
    \quad
    $\begin{tikzpicture}[baseline={([yshift=-.5ex]current bounding box.center)}, scale = \scl]
        \grd[6][4]
        \draw[thick] (0,0) \dyckUp \dyckUp \dyckUp \dyckTwoDn \dyckUp \dyckTwoDn;
        \dummyNodes[1][-0.2][1][4.2]
    \end{tikzpicture}
    = t(\epsilon, t(\epsilon, \epsilon, \epsilon), \epsilon)$, \\
    &
    $\begin{tikzpicture}[baseline={([yshift=-.5ex]current bounding box.center)}, scale = \scl]
        \grd[6][4]
        \draw[thick] (0,0) \dyckUp \dyckUp \dyckUp \dyckUp \dyckTwoDn \dyckTwoDn;
        \dummyNodes[1][-0.2][1][4.2]
    \end{tikzpicture}
    = t(\epsilon, \epsilon, t(\epsilon, \epsilon, \epsilon))$\\
    \hline
    \end{tabular}
\end{center}
\end{fcFamily}

 \pagebreak


\bibliographystyle{apa}
\bibliography{./Biblio}

\end{document}